\documentclass[reqno]{amsart}
\usepackage{amsthm,amsmath,amssymb}
\usepackage[pdftex]{graphicx}
\usepackage{braket}
\usepackage{color}
\usepackage{shadethm}
\usepackage{amscd}
\usepackage{mathrsfs}
\usepackage{comment}

\title{The $\nu^+$-equivalence classes of genus one knots}
\author{Kouki Sato}
\date{}

\newtheorem*{question}{Question}
\newtheorem{thm}{Theorem}[section]
\newtheorem{prop}[thm]{Proposition}
\newtheorem{lem}[thm]{Lemma}
\newtheorem{cor}[thm]{Corollary}
\newtheorem{claim}{Claim}

\theoremstyle{definition}

\newtheorem*{dfn}{Definition}
\newtheorem*{condition}{Condition}

\newtheorem*{remark}{Remark}
\newtheorem*{acknowledge}{Acknowledgements}

\DeclareMathOperator{\Spin}{Spin}

\DeclareMathOperator{\Int}{Int}
\DeclareMathOperator{\image}{Im}
\DeclareMathOperator{\nuplus}{\overset{\nu^+}{\sim}}

\DeclareMathOperator{\lk}{lk}

\DeclareMathOperator{\R}{\mathbb{R}}

\DeclareMathOperator{\z}{\mathbb{Z}}
\DeclareMathOperator{\Alex}{Alex}
\DeclareMathOperator{\Alg}{Alg}
\DeclareMathOperator{\gr}{gr}
\DeclareMathOperator{\F}{\mathbb{F}}
\DeclareMathOperator{\mF}{\mathcal{F}}
\DeclareMathOperator{\mKf}{\mathcal{K}^{\mathit{f}}}
\DeclareMathOperator{\mK}{\mathcal{K}}
\DeclareMathOperator{\mCf}{\mathcal{C}^{\mathit{f}}}
\DeclareMathOperator{\mC}{\mathcal{C}}
\DeclareMathOperator{\spanF}{span_{\F}}
\DeclareMathOperator{\spanFU}{span_{\F[U]}}
\DeclareMathOperator{\coker}{coker}
\DeclareMathOperator{\PL}{PL}
\DeclareMathOperator{\mdeg}{mdeg}
\DeclareMathOperator{\Mdeg}{Mdeg}

\DeclareMathOperator{\Hom}{Hom}

\DeclareMathOperator{\spanL}{span_{\Lambda}}
\DeclareMathOperator{\tG}{\widetilde{\mathcal{G}}}
\DeclareMathOperator{\G}{\mathcal{G}}
\DeclareMathOperator{\CR}{\mathcal{CR}}
\DeclareMathOperator{\mhF}{\mathcal{\widehat{F}}}
\DeclareMathOperator{\hC}{\mathit{\widehat{C}}}

\newcommand{\falex}[1]{\mF^{\Alex}_{#1}}
\newcommand{\falg}[1]{\mF^{\Alg}_{#1}}
\newcommand{\ot}[1]{\otimes_{#1}}
\def\np{\nu^+}
\def\bC{\bar{C}}
\def\bpartial{\bar{\partial}}
\def\mB{\mathcal{B}}
\def\sR{\mathscr{R}}
\def\bF{\bar{\mathcal{F}}}

\def\gen{\mathrm{gen}}
\def\tgen{\widetilde{\gen}}

\begin{document}

\begin{abstract}
The $\nu^+$-equivalence is an equivalence relation on the knot concordance group. 
This relation can be seen as a certain stable equivalence on knot Floer complexes $CFK^{\infty}$, and many concordance invariants derived from Heegaard Floer theory are invariant under the equivalence. In this paper, we show that any genus one knot is 
$\nu^+$-equivalent to one of the trefoil, its mirror and the unknot.
\end{abstract}
\maketitle

\tableofcontents

\section{Introduction}

Throughout this paper, all manifolds are assumed to be smooth, compact, connected, orientable and oriented unless otherwise stated.

\subsection{Back grounds and the main theorem}

Heegaard Floer homology \cite{OS04def} is a powerful set of invariants for 3- and 4-manifolds and knots in 3-manifolds. 
In Particular, the $\mathbb{Z}^2$-filtered chain complex $CFK^{\infty}(K)$ 
\cite{OS04knot}
associated to any knot $K$ in $S^3$ is 
a very effective tool in studying
knots and Dehn surgeries along knots. Indeed, 
from $CFK^{\infty}(K)$, we can compute
\begin{itemize}
\item the knot Floer homology $\widehat{HFK}(K)$ \cite{OS04knot}, 
and so we can detect the genus and fibredness of $K$ 
\cite{Ju08, Ni07, OS04genus},
\item the Floer homology groups $\widehat{HF}$, $HF^{\infty}$ and $HF^{\pm}$
and correction terms $d(-, \mathfrak{s})$ of all Dehn surgeries along $K$
\cite{OS08integer, OS11rational}, and
\item many knot concordance invariants including 
$\nu^+$, $\tau$, $\Upsilon$, $\Upsilon^2$, and so on. 
(See \cite{Ho17, KL18}
for details.)
\end{itemize}
In this paper,
to improve the understanding of $CFK^{\infty}$, we study
{\it $\nu^+$-equivalence} (denoted $\nuplus$)
introduced by 
Hom \cite{Ho17}
 and 
Kim-Park \cite{KP18}.
Here, two knots $K_1$ and $K_2$ are {\it $\nu^+$-equivalent}
if $\nu^+(K_1\#(-K_2^*))=\nu^+(K_2\#(-K_1^*))=0$,
where $-K$ and $K^*$ denote the inverse and the mirror of $K$ respectively,
and $\nu^+$ is a $\Bbb{Z}_{\geq0}$-valued concordance invariant defined by 
Hom-Wu 
\cite{HW16}. 
This relation is an equivalence relation on knots, and
if two knots are concordant then they are $\nu^+$-equivalent.
(We call the equivalence classes {\it $\np$-classes}.)
By the following Hom's theorem, 
$\nu^+$-equivalence can be seen as a `stable' filtered chain homotopy equivalence on $CFK^{\infty}$.
\begin{thm}[\cite{Ho17}]
Two knots $K_1$ and $K_2$ are $\nu^+$-equivalent
if and only if we have the following $\z^2$-filtered chain homotopy equivalence:
$$
CFK^{\infty}(K_1) \oplus A_1 \simeq CFK^{\infty}(K_2) \oplus A_2,
$$
where $A_1$,$A_2$ are acyclic, i.e., $H_*(A_1)=H_*(A_2)=0$.
\end{thm}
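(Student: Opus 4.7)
The plan is to convert $\np$-equivalence into an algebraic \emph{local equivalence} of the complexes $CFK^\infty(K_1)$ and $CFK^\infty(K_2)$, and then to promote local equivalence to the claimed stable filtered chain-homotopy equivalence by cancelling acyclic summands. The algebraic ingredient I would start from is the Hom--Wu reformulation of $\np$: writing $C=CFK^\infty(K)$, one has $\np(K)=0$ exactly when the tower in $HF^+$ is generated by a class represented by a cycle supported in the fourth quadrant $\{i\le 0,\, j\le 0\}$. Combined with multiplicativity $CFK^\infty(K\#K')\simeq CFK^\infty(K)\otimes CFK^\infty(K')$, duality $CFK^\infty(-K^*)\simeq CFK^\infty(K)^\vee$, and tensor--hom adjunction, the vanishing $\np(K_1\#(-K_2^*))=0$ becomes equivalent to the existence of a $\z^2$-filtered chain map $f\colon CFK^\infty(K_2)\to CFK^\infty(K_1)$ of bidegree $(0,0)$ that is a quasi-isomorphism after inverting $U$.

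For the \emph{if} direction, assume $CFK^\infty(K_1)\oplus A_1\simeq CFK^\infty(K_2)\oplus A_2$ with $A_i$ acyclic. Tensoring both sides with $CFK^\infty(-K_2^*)$ and using the three properties above, together with the fact that an acyclic complex tensored with anything remains acyclic, yields
\[
CFK^\infty(K_1\#(-K_2^*))\oplus B_1\simeq CFK^\infty(K_2\#(-K_2^*))\oplus B_2
\]
with $B_i$ acyclic. The knot $K_2\#(-K_2^*)$ is ribbon, hence slice, so $\np=0$; and since $\np$ is insensitive to acyclic summands, $\np(K_1\#(-K_2^*))=0$. The symmetric argument gives $\np(K_2\#(-K_1^*))=0$.

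For the \emph{only if} direction, assume both $\np$'s vanish. The reformulation above produces filtered chain maps $f\colon CFK^\infty(K_1)\to CFK^\infty(K_2)$ and $g\colon CFK^\infty(K_2)\to CFK^\infty(K_1)$ each inducing an isomorphism on $U^{-1}H_*$---a \emph{local equivalence}. It remains to show that locally equivalent finitely generated $\z^2$-filtered $\F[U]$-complexes become filtered chain homotopy equivalent after adding suitable acyclic summands. For this I would establish a structure theorem: every such $C$ decomposes as $C\simeq C^{\red}\oplus A$ with $A$ acyclic and $C^{\red}$ minimal, and $C^{\red}$ is determined up to filtered chain homotopy by the local equivalence class of $C$. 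Applied to our two complexes, this forces $CFK^\infty(K_1)^{\red}\simeq CFK^\infty(K_2)^{\red}$, whence the stable equivalence.

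The main obstacle is this structure/uniqueness result. Constructing a canonical reduced model and proving that it is an invariant of local equivalence requires a delicate ``simplified basis'' / cancellation argument on the bifiltered $\F[U]$-complex, in the spirit of Hom's earlier analysis of $\tau$. It is at exactly this step that the acyclic stabilization in the statement becomes essential, since local equivalence alone cannot in general be upgraded to a filtered homotopy equivalence without allowing the addition of such summands.
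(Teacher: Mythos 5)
Your ``if'' direction is essentially the paper's: tensor both sides with $CFK^{\infty}(-K_2^*)$, use the connected-sum and duality formulas, the sliceness of $K_2\#(-K_2^*)$, and the fact that $\nu^+$ is unchanged by acyclic summands. The gap is in the ``only if'' direction. The structure theorem you propose to rest it on --- that every bifiltered complex splits as a minimal reduced model plus an acyclic summand, with the reduced model determined up to filtered homotopy by the \emph{local} equivalence class --- is false. The reduced (minimal) model is an invariant of the filtered \emph{homotopy} type, not of local equivalence. Concretely, take $C=CFK^{\infty}(\text{unknot})=\Lambda$ and $C'=CFK^{\infty}(4_1)=\Lambda\oplus B$, where $B$ is the acyclic four-generator ``box'' all of whose differentials strictly drop the $\z^2$-filtration level. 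Both complexes are already reduced; they are locally equivalent (the inclusion of and projection onto the $\Lambda$ summand are $\z^2$-filtered quasi-isomorphisms in both directions); but they are not filtered homotopy equivalent. So no canonical reduced model is attached to a local equivalence class, and the final step of your argument cannot be carried out as described --- producing a ``locally minimal'' representative in this sense is not available and is at least as hard as the theorem itself.

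The paper sidesteps this entirely. The only cancellation argument it uses is Proposition~\ref{stable1}: if $\nu^+(D)=\nu^+(D^*)=0$, then $D\simeq\Lambda\oplus A$ with $A$ acyclic, proved by an explicit change of filtered basis that splits off a copy of $\Lambda$ generated by a homological generator at filtration level $(0,0)$. Writing $C=CFK^{\infty}(K_1)$ and $C'=CFK^{\infty}(K_2)$ and applying this to $D=C^*\ot{\Lambda}C'$ and $D=C\ot{\Lambda}C^*$ gives $C^*\ot{\Lambda}C'\simeq\Lambda\oplus A_1$ and $C\ot{\Lambda}C^*\simeq\Lambda\oplus A_2$, and then associativity of the tensor product yields
$$
C\oplus(C\ot{\Lambda}A_1)\simeq C\ot{\Lambda}(C^*\ot{\Lambda}C')\simeq(C\ot{\Lambda}C^*)\ot{\Lambda}C'\simeq C'\oplus(C'\ot{\Lambda}A_2),
$$
which is exactly the desired stable equivalence, since an acyclic complex tensored with anything remains acyclic. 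If you want to repair your argument, replace the appeal to uniqueness of reduced models by this tensor identity; the local maps $f$ and $g$ are then not needed for this direction at all.
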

This theorem shows that determining the $\nu^+$-class of knots is meaningful
in terms of $CFK^{\infty}$.
Moreover, the $\nu^+$-class of a knot $K$ determines
all correction terms of all Dehn surgeries along $K$ and many concordance invariants
including $\np$, $\tau$, $\Upsilon$ and $\Upsilon^2$ of $K$,
and hence classifying the $\nu^+$-classes is useful for computing these invariants.

The aim of this paper is to classify the $\nu^+$-classes of genus one knots
by using the $\tau$-invariant \cite{OS03tau};
in fact, we found that only three $\nu^+$-classes are realized by genus one knots. To state our theorem, we set some notations.
For any knot $K$, let $[K]_{\nu^+}$ denote the
$\nu^{+}$-class of $K$ and $g(K)$ the genus of $K$.
For coprime positive integers $p$ and $q$, let $T_{p,q}$ denote the $(p,q)$-torus knot.
\begin{thm}
\label{main thm}
For any knot $K$ with $g(K)=1$,
we have
$$
[K]_{\nu^+}
=
\begin{cases}
\left[ T_{2,3} \right]_{\nu^+} & \text{if } \tau(K)=1\\
\left[\text{{\rm unknot}}\right]_{\nu^+} & \text{if } \tau(K)=0\\
\left[ (T_{2,3})^* \right]_{\nu^+} & \text{if } \tau(K)=-1
\end{cases}.
$$
In other words, any genus one knot is $\nu^+$-equivalent
to one of the trefoil, its mirror and the unknot.
\end{thm}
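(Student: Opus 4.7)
The plan is to invoke the theorem of Hom stated above and reduce the claim to a statement about $\mathbb{Z}^2$-filtered chain homotopy equivalence of $CFK^{\infty}$-complexes up to acyclic summands. Since $\tau$ is a $\nu^+$-invariant that takes the three distinct values $1$, $0$, $-1$ on $T_{2,3}$, the unknot, and $(T_{2,3})^*$ respectively, the three classes on the right-hand side are pairwise distinct. It therefore suffices to show, for each genus-one $K$ with $\tau(K)=\epsilon\in\{-1,0,1\}$, that $CFK^{\infty}(K)\oplus A_1 \simeq CFK^{\infty}(K_\epsilon)\oplus A_2$ with $A_1,A_2$ acyclic, where $K_{-1}=(T_{2,3})^*$, $K_0 = $ unknot, $K_1 = T_{2,3}$. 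By the duality $CFK^{\infty}(K^*)\cong CFK^{\infty}(K)^{\vee}$ and $\tau(K^*)=-\tau(K)$, the case $\epsilon=-1$ reduces to $\epsilon=1$, so I would focus on $\epsilon\in\{0,1\}$.

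The main structural input is the genus bound $\widehat{HFK}(K,j)=0$ for $|j|>g(K)=1$, together with standard filtered cancellation, which replaces $CFK^{\infty}(K)$ by a reduced model whose $\F[U,U^{-1}]$-basis elements sit in Alexander-filtration levels $\{-1,0,1\}$ (modulo the $U$-shift). Such a narrow model admits a concrete combinatorial description in terms of horizontal and vertical arrows between level-$\pm 1$ and level-$0$ generators. The $\tau$-invariant then pins down which shape can occur: $|\tau|=1$ forces a nontrivial vertical staircase of length two, while $\tau=0$ forbids any such staircase.

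The key technical step is to argue, by an iterated filtered change of basis, that all generators beyond the staircase (for $\epsilon=\pm 1$) or beyond a single $(0,0)$ generator (for $\epsilon=0$) can be packaged into acyclic direct summands and split off. For $\epsilon=\pm 1$, what remains is a three-generator staircase that is filtered-isomorphic to the reduced $CFK^{\infty}$ of $T_{2,3}$ or $(T_{2,3})^*$; for $\epsilon=0$ the surviving complex is a single generator matching $CFK^{\infty}$ of the unknot. Combined with Hom's theorem, this yields the required $\nu^+$-equivalence.

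The main obstacle I anticipate is the $\epsilon=0$ case, where one must systematically pair every generator at Alexander level $\pm 1$ with a partner at level $0$ and split off the resulting acyclic boxes while respecting both filtrations. This likely proceeds by induction on the total number of generators and relies essentially on the narrow Alexander support together with the vanishing of $\tau$ to guarantee no obstruction to cancellation. A secondary subtlety for $\epsilon=\pm 1$ will be to ensure that the surviving staircase is genuinely filtered-isomorphic, not merely filtered chain homotopy equivalent, to the reference $CFK^{\infty}(T_{2,3})$ of the correct chirality.
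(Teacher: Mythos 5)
Your proposal is purely algebraic: it uses only the narrow Alexander support coming from $g(K)=1$, the value of $\tau$, and the formal axioms of $CFK^{\infty}$ (symmetry, $H_*\cong\Lambda$), and claims that after filtered changes of basis everything beyond a staircase (or beyond a single generator) splits off as an acyclic summand. This key step is false for the cases $\tau(K)=\pm 1$. The paper exhibits an infinite family $\{C^n\}_{n\geq 1}$ of genus one formal knot complexes with $\tau(C^n)=1$ satisfying every axiom your argument uses, whose $\nu^+$-classes are mutually distinct (Theorem~\ref{formal genus1}); in particular $C^n$ for $n\geq 2$ is \emph{not} stably equivalent to the trefoil staircase plus an acyclic complex, even though it is genus one with $\tau=1$. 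Since your argument invokes nothing that distinguishes $CFK^{\infty}$ of an honest genus one knot from such a formal complex, it cannot succeed as stated; the paper states explicitly that Theorem~\ref{main thm} ``cannot be proved purely algebraically.'' (Your $\tau=0$ case is salvageable: there the algebraic data alone does force $\nu^+(C)=\nu^+(C^*)=0$, hence $[C]_{\nu^+}=0$ by Proposition~\ref{stable1}, which is essentially the splitting you describe.)

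The missing ingredient is a genuinely geometric input. The paper's algebraic estimate (Theorem~\ref{alg tau}) only yields \emph{one-sided} inequalities for $\tau=\pm1$, namely $[K]_{\nu^+}\geq [T_{2,3}]_{\nu^+}$ when $\tau=1$, obtained by constructing a $\z^2$-filtered quasi-isomorphism from the trefoil staircase using a homological generator supported in the region $R^1$. The reverse inequality $[K]_{\nu^+}\leq g_4(K)[T_{2,3}]_{\nu^+}\leq [T_{2,3}]_{\nu^+}$ comes from Theorem~\ref{geom}: one replaces $K$ by a concordant knot bounding a ribbon surface of genus $g_4(K)$, unknots that surface by positive full-twists with linking number $0$ or $1$ to reach a connected sum of twist knots $K_{m_i,n_i}$, and applies the monotonicity of the $\nu^+$-order under such twists together with $[K_{m,n}]_{\nu^+}=-[T_{2,3}]_{\nu^+}$. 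It is the combination of these two opposite inequalities, not a structural decomposition of the complex, that pins down $[K]_{\nu^+}$. To repair your argument you would need to either import this geometric bound or identify and use a property of realized complexes that excludes the $C^n$, $n\geq 2$ (which, by Corollary~\ref{formal genus1 cor}, is exactly as hard).
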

Since the $\tau$-invariant is relatively understood,
Theorem \ref{main thm} enables us to determine the $\nu^+$-class of many
concrete examples.
For instance, Hedden \cite{He07} gives a formula 
for the $\tau$-invariant of the positive $t$-twisted Whitehead double of a knot $K$
(denoted by $D_+(K,t)$).
By Theorem \ref{main thm},
we can generalize his formula to a formula for the $\nu^+$-class
of $D_+(K,t)$.

\begin{cor}
\label{Wh}
For any knot $K$ and $t \in \z$, we have
$$
[D_+(K,t)]_{\nu^+}
=
\begin{cases}
[\text{{\rm unknot}}]_{\nu^+} & \text{for } t \geq 2\tau(K)\\
\left[ T_{2,3} \right]_{\nu^+} & \text{for } t < 2\tau(K)
\end{cases}.$$
\end{cor}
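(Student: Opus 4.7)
The plan is to combine Theorem \ref{main thm} with Hedden's formula for the $\tau$-invariant of positive Whitehead doubles from \cite{He07}. The key geometric observation is that $D_+(K,t)$ always admits a genus-one Seifert surface coming from the clasped annulus of the Whitehead pattern, so $g(D_+(K,t)) \leq 1$. Thus either $D_+(K,t)$ is the unknot, in which case the conclusion holds trivially, or it is a genus one knot to which Theorem \ref{main thm} directly applies.

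Given this reduction, computing $[D_+(K,t)]_{\nu^+}$ amounts to computing $\tau(D_+(K,t))$. Here I would invoke Hedden's formula
\[
\tau(D_+(K,t)) = \begin{cases} 1 & \text{if } t < 2\tau(K),\\ 0 & \text{if } t \geq 2\tau(K). \end{cases}
\]
Plugging these values into the three cases of Theorem \ref{main thm} yields $[D_+(K,t)]_{\nu^+} = [T_{2,3}]_{\nu^+}$ when $t < 2\tau(K)$, and $[D_+(K,t)]_{\nu^+} = [\text{unknot}]_{\nu^+}$ when $t \geq 2\tau(K)$. The $\tau=-1$ branch of Theorem \ref{main thm} is never triggered, because $\tau$ of a positive Whitehead double is always non-negative by Hedden's result.

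There is essentially no technical obstacle, since the real work has already been done in Theorem \ref{main thm} and in Hedden's paper; the corollary is a direct assembly of these two inputs. The only points requiring a brief comment are the genus-one reduction above and the remark that on the degenerate locus where $D_+(K,t)$ is actually the unknot, Hedden's formula still returns $\tau = 0$, so the stated formula remains consistent.
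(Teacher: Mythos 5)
Your proposal is correct and is exactly the argument the paper intends: the paper presents Corollary \ref{Wh} as an immediate consequence of Theorem \ref{main thm} combined with Hedden's computation of $\tau(D_+(K,t))$, using that $D_+(K,t)$ bounds a genus one Seifert surface. Your additional remarks on the degenerate unknotted case and on the $\tau=-1$ branch never occurring are correct and only make the assembly more careful than the paper's one-line justification.
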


Next, let us consider the quotient set $\mathcal{C}_{\nu^+}:=\{ \text{knots in }S^3\}/\nuplus$.
Note that since $\nuplus$ is weaker than knot concordance and the $\nu^+$-invariant has the sub-additivity, 
we can identify $\mathcal{C}_{\nu^+}$ with a quotient group of the knot concordance group $\mathcal{C}$. So
it is natural to ask how different these groups are.
To give an observation of the question, 
we set $\mathcal{F}_n$ to be the subgroup of $\mathcal{C}$ generated by
the knots with genus at most $g$.
Let $\pi_{\np} \colon \mathcal{C} \to \mathcal{C}_{\nu^+}$ be the projection,
and then the sequence $\{\mathcal{F}_g\}_{g\in \mathbb{Z}_{\geq 0}}$ gives filtrations

$$0= \mathcal{F}_0 \subset \mathcal{F}_1 \subset \mathcal{F}_2 
\subset \cdots \subset \mathcal{C}$$
and
$$0= \pi_{\np}(\mathcal{F}_0) \subset \pi_{\np}(\mathcal{F}_1) 
\subset \pi_{\np}(\mathcal{F}_2)
\subset \cdots \subset \mathcal{C}_{\nu^+}.$$
It is easy to show that $\mathcal{F}_1$ contains $\z^{\infty}$ as a summand.
(For instance, compute the $\omega$-signature for the ``twisted doubles'' of the unknot. We refer to \cite{Li97}.) 
Therefore, combining it with Theorem~\ref{main thm},
we have the following proposition, which shows a big gap between $\mathcal{C}$
and $\mathcal{C}_{\nu^+}$. 
\begin{prop}
\label{filter}
$\mathcal{F}_1$ contains $\mathbb{Z}^{\infty}$ as a summand, while
$\pi_{\np}(\mathcal{F}_1)$ is isomorphic to $\Bbb{Z}$.
\end{prop}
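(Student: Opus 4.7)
The plan is to treat the two assertions of the proposition separately. For the first, that $\mathcal{F}_1$ contains $\mathbb{Z}^\infty$ as a summand, I would invoke the argument already sketched in the paragraph preceding the statement: the positively-twisted Whitehead doubles $D_+(U,t)$ of the unknot are all genus one, and Livingston's computations of the Tristram--Levine $\omega$-signatures at countably many points $\omega\in S^1$ produce linearly independent concordance homomorphisms $\mathcal{F}_1\to\mathbb{Z}$ on this family, which collectively split off a $\mathbb{Z}^\infty$ summand (see \cite{Li97}). No new work is required here.

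The second claim, $\pi_{\nu^+}(\mathcal{F}_1)\cong\mathbb{Z}$, is where Theorem~\ref{main thm} is used. Since $\mathcal{F}_1$ is generated as a subgroup of $\mathcal{C}$ by knots of genus at most one, its image $\pi_{\nu^+}(\mathcal{F}_1)$ is generated by the $\nu^+$-classes of such knots. By Theorem~\ref{main thm}, every such class equals $[T_{2,3}]_{\nu^+}$, $0$, or $[(T_{2,3})^*]_{\nu^+}$. Because $T_{2,3}$ is invertible, $(T_{2,3})^*$ serves as a concordance inverse of $T_{2,3}$, so $[(T_{2,3})^*]_{\nu^+}=-[T_{2,3}]_{\nu^+}$ in $\mathcal{C}_{\nu^+}$, and $\pi_{\nu^+}(\mathcal{F}_1)$ is therefore the cyclic subgroup generated by $[T_{2,3}]_{\nu^+}$.

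It remains to verify that this generator has infinite order. As noted in the introduction, the $\tau$-invariant is additive under connected sum and is an invariant of $\nu^+$-equivalence, so it descends to a homomorphism $\bar\tau\colon\mathcal{C}_{\nu^+}\to\mathbb{Z}$. Since $\bar\tau([T_{2,3}]_{\nu^+})=\tau(T_{2,3})=1$, the cyclic subgroup is isomorphic to $\mathbb{Z}$, completing the argument. The whole proof is routine once Theorem~\ref{main thm} is granted; there is no substantive technical obstacle in this proposition, as all the difficulty has been concentrated in the main theorem.
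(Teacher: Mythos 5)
Your proposal is correct and is exactly the argument the paper intends: the paper gives no separate proof of Proposition~\ref{filter}, deriving it from the $\omega$-signature remark on twisted doubles of the unknot (citing \cite{Li97}) together with Theorem~\ref{main thm}, and your write-up simply fills in the routine details (the image is generated by $\pm[T_{2,3}]_{\nu^+}$ and $0$, and the additive $\nu^+$-invariant $\tau$ shows the generator has infinite order). No gaps.
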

In knot concordance theory, there are few kinds of filtrations with
each level finitely generated. Hence we suggest the following question.

\begin{question}
For each $g \in \z_{\geq 0}$, is $\pi_{\np}(\mathcal{F}_g)$ finitely generated?
\end{question}

\subsection{The idea of proof: Estimating $\nu^+$-classes}
In order to prove Theorem \ref{main thm}, we use a partial order on
$\mathcal{C}_{\nu^+}$ (denoted $\leq$) introduced in the author's paper \cite{Sa18full}.
We first study this partial order geometrically to give the following estimate for the $\nu^+$-class of any knot $K$.
Here $g_4(K)$ denotes the 4-genus of $K$, and we note that this estimate depends on $g_4(K)$ rather than $g(K)$.
\begin{thm}
\label{geom}
For any knot $K$, we have
$$
-g_4(K)[T_{2,3}]_{\nu^+} \leq [K]_{\nu^+} \leq g_4(K)[T_{2,3}]_{\nu^+}.
$$
\end{thm}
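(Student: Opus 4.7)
The plan is to use the partial order $\leq$ on $\mathcal{C}_{\nu^+}$ introduced in \cite{Sa18full}, under which $[K_1]_{\nu^+} \leq [K_2]_{\nu^+}$ is equivalent to $\nu^+(K_1 \# (-K_2^*)) = 0$. I would first prove a general cobordism inequality: if two knots $K_1$ and $K_2$ cobound a smoothly embedded, connected, oriented surface of genus $g$ in $S^3 \times [0,1]$, then
\[
[K_2]_{\nu^+} \leq [K_1]_{\nu^+} + g\,[T_{2,3}]_{\nu^+}.
\]

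Granting this cobordism inequality, Theorem \ref{geom} follows immediately. A minimum-genus surface for $K$ in $B^4$, punctured near an interior point and viewed in $S^3\times[0,1]$, exhibits $K$ and the unknot $U$ as cobounding a connected oriented surface of genus $g_4(K)$. Applying the cobordism inequality with $(K_1,K_2)=(U,K)$ gives $[K]_{\nu^+} \leq g_4(K)\,[T_{2,3}]_{\nu^+}$, and applying it with $(K_1,K_2)=(K,U)$ gives $0=[U]_{\nu^+} \leq [K]_{\nu^+} + g_4(K)\,[T_{2,3}]_{\nu^+}$, i.e.\ $-g_4(K)\,[T_{2,3}]_{\nu^+} \leq [K]_{\nu^+}$. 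Thus a single one-sided cobordism estimate yields the full two-sided bound.

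For the cobordism inequality itself, I would induct on $g$. The base case $g=0$ is a concordance, and since $\nu^+$ is a concordance invariant, concordant knots represent the same $\nu^+$-class. For the inductive step, a generic Morse function on the surface lets us split a genus-$g$ cobordism from $K_1$ to $K_2$ along an appropriate level set into a genus-$(g-1)$ cobordism from $K_1$ to some intermediate knot $K'$ followed by a genus-$1$ cobordism from $K'$ to $K_2$. The inductive hypothesis gives $[K']_{\nu^+} \leq [K_1]_{\nu^+} + (g-1)\,[T_{2,3}]_{\nu^+}$, and the step reduces to the genus-one base case $[K_2]_{\nu^+} \leq [K']_{\nu^+} + [T_{2,3}]_{\nu^+}$.

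The main obstacle will be this genus-one base case. After rearranging births and deaths, a genus-one cobordism decomposes into a pair of band surgeries, one merging two components and one splitting a knot. Showing that such a pair raises the $\nu^+$-class by at most $[T_{2,3}]_{\nu^+}$ appears to require a direct analysis of how $CFK^\infty$ transforms under these two band surgeries, tracking the induced effect on the subcomplexes used to define $\nu^+$. Once this genus-one estimate is established, the induction and the reduction to Theorem \ref{geom} are both formal.
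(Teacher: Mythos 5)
There is a genuine gap: the entire content of the theorem has been pushed into the ``genus-one base case,'' which you do not prove. Your reduction is fine as far as it goes --- the claimed cobordism inequality $[K_2]_{\nu^+} \leq [K_1]_{\nu^+} + g[T_{2,3}]_{\nu^+}$, specialized to a punctured minimal-genus surface in $B^4$, does yield both sides of the theorem, and the induction on $g$ via splitting the cobordism along a level set is formal. But unwinding definitions, the genus-one step $[K_2]_{\nu^+} \leq [K']_{\nu^+} + [T_{2,3}]_{\nu^+}$ is equivalent to $\nu^+\big(K_2\#(-K'^*)\#(T_{2,3})^*\big)=0$ for a knot $K_2\#(-K'^*)$ of $4$-genus at most one, i.e.\ it is exactly the $g_4\le 1$ case of the theorem itself. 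Saying that this ``appears to require a direct analysis of how $CFK^\infty$ transforms under these two band surgeries'' is not an argument: the intermediate stage of a genus-one cobordism is a two-component link, there is no transformation formula for $CFK^\infty$ under such band moves in this paper or in the tools it cites, and it is not clear the bound you need (an inequality against the specific class $[T_{2,3}]_{\nu^+}$, not merely a jump of $1$ in some numerical invariant) would fall out of such an analysis.

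For comparison, the paper avoids band moves entirely. It first replaces $K$ by a concordant knot bounding a \emph{ribbon} surface of genus $g_4(K)$ (Lemma~\ref{slice lem}), resolves each ribbon singularity by a positive full-twist with $1$-linking to obtain a genus-$g_4(K)$ Seifert surface in $S^3$ (Lemma~\ref{ribbon lem}), and then uses positive full-twists with $0$-linking to deform that surface into a standard one bounding $K_{m_1,n_1}\#\cdots\#K_{m_{g_4(K)},n_{g_4(K)}}$ with all $m_i,n_i>0$ (Lemma~\ref{3-dim lem}). The monotonicity of the partial order under such full-twists is imported from \cite{Sa18full} (Theorem~\ref{full-twist}), and the $\nu^+$-classes of the twist knots are computed from their signatures via the alternating-knot formula (Theorem~\ref{alt}), giving $[K]_{\nu^+}\ge -g_4(K)[T_{2,3}]_{\nu^+}$; the other inequality follows by applying this to $-K^*$. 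If you want to salvage your approach, you would need either to prove the genus-one cobordism estimate by some independent means or to replace your band-move decomposition with operations for which a monotonicity statement is actually available.
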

Next, we study the $\Bbb{Z}^2$-filtered structure of $CFK^{\infty}$
 with $g(K)=1$ algebraically to obtain another estimate,
and combine it with Theorem \ref{geom} to prove Theorem \ref{main thm}.
As another consequence of such estimates, we have the following discriminant
using the $\Upsilon$-invariant \cite{OSS17}.
\begin{thm}
\label{discriminant}
The equality $[K]_{\np}= -g(K)[T_{2,3}]_{\nu^+}$ holds
if and only if $\Upsilon_{K}(1)= g(K)$.
\end{thm}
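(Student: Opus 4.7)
The plan is to establish the two directions of the biconditional separately. For the ``only if'' direction, I would use that $\Upsilon$ is additive under connected sum, invariant under $\nu^+$-equivalence, and satisfies $\Upsilon_{T_{2,3}}(1) = -1$; these three facts combined with the hypothesis $[K]_{\np} = -g(K)[T_{2,3}]_{\np}$ immediately give $\Upsilon_K(1) = -g(K)\cdot(-1) = g(K)$.

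For the ``if'' direction, assume $\Upsilon_K(1) = g(K)$. The Ozsv\'ath--Stipsicz--Szab\'o slice-genus bound $|\Upsilon_K(1)|\leq g_4(K)$, combined with the trivial inequality $g_4(K)\leq g(K)$, forces $g_4(K) = g(K)$. Applying Theorem~\ref{geom} to $K$ then produces half of the desired equality,
$$
[K]_{\np} \geq -g_4(K)[T_{2,3}]_{\np} = -g(K)[T_{2,3}]_{\np}.
$$
The opposite inequality $[K]_{\np} \leq -g(K)[T_{2,3}]_{\np}$, equivalent to $\nu^+(K\# g(K) T_{2,3}) = 0$, is the heart of the argument. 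As an intermediate step I would exploit that the saturated equality $\Upsilon_K(1) = g_4(K)$, together with the initial-slope identity $\Upsilon'_K(0^+) = -\tau(K)$ and the pointwise slice-genus bound on $[0,1]$, pins down $\Upsilon_K$ to be linear of slope $g(K)$ on $[0,1]$; in particular $\tau(K) = -g(K)$.

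The main obstacle is to promote this rigid numerical picture into a structural statement on $CFK^\infty(K)$, namely that $CFK^\infty(K)$ contains, up to acyclic summands, the staircase summand realizing $-g(K)[T_{2,3}]_{\np}$. Once this is in hand, connect-summing with $g(K)$ copies of $T_{2,3}$ cancels the dominant staircases and forces $\nu^+$ of the resulting sum to vanish. For genus one this structural statement is essentially the algebraic estimate the paper develops in order to prove Theorem~\ref{main thm}; for arbitrary genus, an analogous but more elaborate analysis of the $\z^2$-filtered structure of $CFK^\infty(K)$ would be required, and this is where I would expect the main technical work to lie.
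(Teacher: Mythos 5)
Your ``only if'' direction and the inequality $[K]_{\np}\geq -g(K)[T_{2,3}]_{\np}$ (which in fact needs only $g_4(K)\leq g(K)$ and Theorem~\ref{geom}, not the saturation $g_4(K)=g(K)$) match the paper. The genuine gap is that you never prove the crucial inequality $[K]_{\np}\leq -g(K)[T_{2,3}]_{\np}$: you reduce it to a structural claim that $CFK^\infty(K)$ contains the staircase of $-g(K)[T_{2,3}]_{\np}$ as a summand up to acyclics, and then defer the proof of that claim to ``more elaborate analysis.'' That deferral is exactly where the content of the theorem lies, and moreover the summand statement you ask for is stronger than what is needed or what the paper establishes. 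By Theorem~\ref{quasi thm2}, the one-sided inequality is equivalent to the existence of a single $\z^2$-filtered quasi-isomorphism $C^{(T_{2,2g+1})^*}\to C^K$ (with $g=g(K)$), which is much weaker than a stable splitting; your detour through $\tau(K)=-g(K)$ and the linearity of $\Upsilon_K$ on $[0,1]$ plays no role in producing it.

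The paper's actual argument is short and uniform in the genus, so your expectation that higher genus requires substantially more work is misplaced. From $\Upsilon_K(1)=g$ one gets a homological generator in $C_{\{i+j\leq -g\}}$ for a representative $C$ of $[C^K]$ with $g(C)=g(K)$ (Theorem~\ref{knot genus}). Since the support of $C$ lies in the band $\{-g\leq j-i\leq g\}$ (Corollary~\ref{genus red}, Lemma~\ref{useful red}), the half-plane $\{i+j\leq -g\}$ may be replaced by the staircase region $R^g=\bigcup_{0\leq n\leq g}R_{(-g+n,-n)}$, so $C_{R^g}$ contains a homological generator. Proposition~\ref{compare} then splits that generator as $z=\sum_k z_k$ along the columns of $R^g$ and sends the staircase generators $a_k, b_l$ of $C^{(T_{2,2g+1})^*}$ to $z_k$ and $\partial(z_0+\cdots+z_l)$, producing the required $\z^2$-filtered quasi-isomorphism and hence $[K]_{\np}\leq[(T_{2,2g+1})^*]_{\np}=-g(K)[T_{2,3}]_{\np}$. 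Without this (or an equivalent) construction your proposal does not prove the ``if'' direction.
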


\subsection{Formal knot complexes and new concordance invariants}
To study the algebraic aspects of $\np$-classes deeply,
we consider an algebraic generalization of $CFK^{\infty}$ called 
{\it formal knot complexes}.
(The notion is originally considered in \cite{KL18}.)
In particular, we establish the category of such complexes, and obtain 
{\it the formal knot monoid} $\mathcal{K}$ and  {\it the formal knot concordance group}
$\mCf$, which are analogies of the knot monoid $\mathcal{K}$ and 
the knot concordance group $\mathcal{C}$, respectively.
Concretely, these monoids are related as follows.
\begin{thm}
\label{comm diag}
We have the following commutative diagram:
$$
\begin{CD}
\mK @>  [K] \mapsto [CFK^{\infty}(K)] >> \mKf \\
@V [K] \mapsto [K]_c VV @VV [C] \mapsto [C]_{\nu^+} V\\
\mC @>> [K]_c \mapsto [CFK^{\infty}(K)]_{\nu^+} > \mCf
\end{CD}
$$
Here, the bottom map coincides with $\pi_{\np}$. In particular, the image of the bottom map is $\mC_{\np}$.
\end{thm}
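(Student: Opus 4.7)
The plan is to build the commutative diagram piece by piece, verifying that each of the four maps is well-defined and then reading off commutativity and the identification of the bottom arrow with $\pi_{\np}$. First I would check that the top map $[K]\mapsto[CFK^{\infty}(K)]$ is a well-defined monoid homomorphism $\mK\to\mKf$: well-definedness uses the standard fact that $CFK^{\infty}(K)$ is a knot invariant up to $\z^2$-filtered chain homotopy equivalence, so it descends to a well-defined class in $\mKf$, while the homomorphism property uses the Künneth-type identity $CFK^{\infty}(K_1\#K_2)\simeq CFK^{\infty}(K_1)\otimes CFK^{\infty}(K_2)$. The right vertical arrow is the canonical quotient from $\mKf$ to $\np$-classes of formal complexes, and the left vertical arrow is the usual quotient $\mK\to\mC$; both are well-defined homomorphisms by construction of $\mCf$ and $\mC$ respectively.

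The substantive step is well-definedness of the bottom arrow. Suppose $K_1$ and $K_2$ are concordant; then $K_1\#(-K_2^*)$ is slice, which forces $\nu^+(K_1\#(-K_2^*))=0$, and symmetrically $\nu^+(K_2\#(-K_1^*))=0$. By the definition of $\nuplus$ recalled in the introduction this says $K_1\nuplus K_2$, and by Hom's theorem quoted above this is equivalent to $CFK^{\infty}(K_1)$ and $CFK^{\infty}(K_2)$ being equivalent under the formal $\np$-relation on $\mKf$. Hence the assignment $[K]_c \mapsto [CFK^{\infty}(K)]_{\np}$ is well defined. Commutativity of the square is then automatic on representatives, since both composites send a knot $K$ to $[CFK^{\infty}(K)]_{\np}$.

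To conclude, I would identify the bottom map with $\pi_{\np}$. Hom's theorem says the formal $\np$-equivalence on $\mKf$ restricts along $CFK^{\infty}$ to the geometric $\nuplus$ on knots, so the bottom map descends through $\mC_{\np}$ and agrees tautologically with $\pi_{\np}$ after identifying $\mC_{\np}$ with its image in $\mCf$. The image statement is then immediate: every element of $\mC_{\np}$ is by definition the $\np$-class of some knot $K$, and so lies in the image of $[K]_c$ under the bottom map.

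The step I expect to be the main obstacle is not any single computation but rather the categorical bookkeeping of setting up $\mKf$ and $\mCf$ so that the purely algebraic formal $\np$-equivalence really restricts, along the image of $CFK^{\infty}$, to Hom's geometric $\nuplus$. Once the definitions of $\mKf$ and $\mCf$ have been aligned with Hom's characterization, the commutative diagram is essentially forced; the genuine work lies in the careful definition of the formal category and its equivalence relations rather than in any individual verification above.
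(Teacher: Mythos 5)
Your proposal is correct and follows essentially the same route as the paper: the substantive step is exactly the paper's, namely that concordant knots $K_1,K_2$ make $K_1\#(-K_2^*)$ and $K_2\#(-K_1^*)$ slice, so $\nu^+$ vanishes by the Hom--Wu bound, and the rest is bookkeeping with the connected-sum and mirror formulas. The only minor quibble is that the identification of the geometric $\nu^+$-equivalence with the formal one on $\mKf$ is more directly supplied by $[C^{K\#J}]=[C^K\otimes_\Lambda C^J]$ together with $[C^{-K^*}]=[(C^K)^*]$ than by Hom's stable-equivalence theorem, but this does not affect the validity of the argument.
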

Moreover, we also introduce the genus of formal knot complexes, and 
define the genus filtration
$$
0=\mF^f_0 \subset \mF^f_1 \subset \mF^f_2 \subset \cdots \subset \mCf,
$$
where $\pi_{\np}(\mF_g) \subset \mF^f_g$.
For example, Figure~\ref{C^n} depicts an infinite family of genus one 
formal knot complexes, and hence $[C^n]_{\np} \in \mF^f_1$ for each 
$n \in \z_{>0}$.
Here we note that $C^1$ is $CFK^{\infty}(T_{2,3})$.
\begin{figure}[htbp]
\begin{center}
\includegraphics[scale= 0.3]{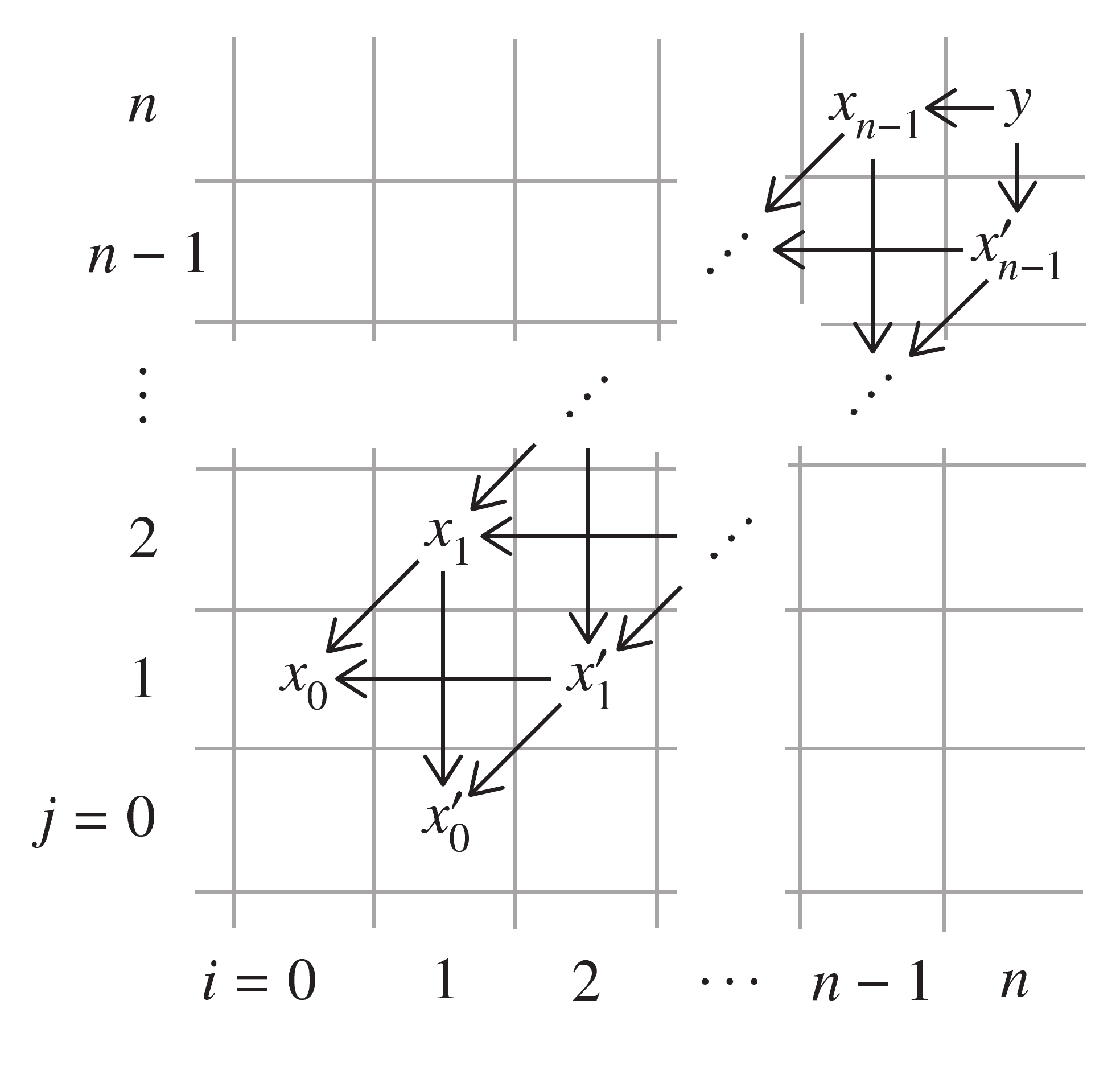}
\vspace{-4mm}
\caption{A formal knot complex $C^n$ with genus one.}
\label{C^n}
\end{center}
\end{figure}
We prove that the $[C^n]_{\np}$ are mutually distinct, which implies 
that Theorem~\ref{main thm} cannot be proved purely algebraically.
\begin{thm}
\label{formal genus1}
The $\np$-classes $\{[C^{n}]_{\np}\}_{n=1}^{\infty}$ are mutually distinct in $\mCf$,
while $\tau(C^n)=1$ for any $n$. In particular, the complement $\mF^f_1 \setminus \pi_{\np}(\mF_1)$ is infinite.
\end{thm}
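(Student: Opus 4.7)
The plan is to prove the three assertions of the theorem in order: compute $\tau(C^n)$, show the classes $\{[C^n]_{\np}\}$ are pairwise distinct in $\mCf$, and then deduce that $\mF^f_1 \setminus \pi_{\np}(\mF_1)$ is infinite.

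For the $\tau$-computation, I would read off the filtered chain homotopy type of $C^n$ from Figure~\ref{C^n} and identify the cycle of minimal Alexander filtration that survives to a nonzero class in the homology of the vertical complex; that filtration level is $\tau(C^n)$. Since $C^1 = CFK^{\infty}(T_{2,3})$ has $\tau = 1$ and the family is built so that the ``top corner'' of the staircase is unchanged as $n$ varies (only interior structure is modified), I expect each $C^n$ to share that top corner, giving $\tau(C^n) = 1$ for all $n$. This step should reduce to a routine diagram chase.

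For the pairwise distinctness of $\{[C^n]_{\np}\}$, which will be the main obstacle, I need a $\np$-invariant that is sensitive to $n$. A natural candidate is the $\Upsilon$-invariant $\Upsilon_{C^n}(t)$, which is piecewise linear on $[0,2]$ and a $\np$-invariant; it should suffice to locate a slope change of $\Upsilon_{C^n}$ at a value of $t$ that depends on $n$. An alternative route goes through the sequence $\{V_k(C^n)\}_{k \geq 0}$: since $\nu^+$ is the defining ingredient of $\np$-equivalence and $\nu^+(C) = \min\{k : V_k(C) = 0\}$, showing $V_k(C^n) \neq V_k(C^m)$ for some $k$ directly forces $[C^n]_{\np} \neq [C^m]_{\np}$. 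Either way the hard part is carrying out the explicit computation on the specific staircase-like family $C^n$; concretely, I would expect the parameter $n$ to control a breakpoint of $\Upsilon$ or an entry of the $V_k$-sequence in a monotone fashion, which is what makes the invariants separate the classes.

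Finally, for the ``in particular'' clause, by Proposition~\ref{filter} the group $\pi_{\np}(\mF_1)$ is isomorphic to $\z$ and generated by $[T_{2,3}]_{\np}$, which has $\tau = 1$. Since $\tau$ descends to an additive homomorphism on $\mCf$, the only element of $\pi_{\np}(\mF_1)$ with $\tau$-value $1$ is $[T_{2,3}]_{\np}$ itself. Combined with the first two steps, this forces all but at most one of the $\{[C^n]_{\np}\}$ to lie in $\mF^f_1 \setminus \pi_{\np}(\mF_1)$, which is therefore infinite.
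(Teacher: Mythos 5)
Your computation of $\tau(C^n)=1$ and your deduction of the ``in particular'' clause from $\pi_{\np}(\mF_1)\cong\z$ (generated by $[T_{2,3}]_{\np}$) together with the additivity of $\tau$ are both fine and agree with the paper. The gap is in the central step. You propose to separate the classes $[C^n]_{\np}$ by the $\Upsilon$-invariant or by the $V_k$-sequence, but these invariants provably cannot do it: by Proposition~\ref{regions of C^n}(1) we have $\G_0(C^n)=\{R_{(0,1)},R_{(1,0)}\}$ for \emph{every} $n$, and by Proposition~\ref{G_0 and others} (see also Theorem~\ref{G_0 properties}) the set $\G_0(C)$ determines all of $\np(C)$, $V_k(C)$, $\tau(C)$ and $\Upsilon_C(t)$. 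Hence $V_k(C^n)=V_k(T_{2,3})$ for every $k$ and $\Upsilon_{C^n}(t)=\Upsilon_{T_{2,3}}(t)=-\min(t,2-t)$ for every $t$, independently of $n$: there is no $n$-dependent breakpoint of $\Upsilon$ and no $n$-dependent entry of the $V_k$-sequence. Concretely, the minimal regions of homological generators of $C^n$ are realized by $x_0$ and $x_0'$, sitting at $(0,1)$ and $(1,0)$ for every $n$; the parameter $n$ only alters the interior of the complex, which these first-order invariants never see. This failure is precisely the point of the theorem (it is what shows Theorem~\ref{main thm} cannot be proved purely algebraically) and is the reason Section~5 introduces the higher invariants.

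The paper's actual argument uses the tower $\{\G_k\}$: Proposition~\ref{regions of C^n} computes $\G_k(C^n;\{R_{(j,j+1)},R_{(j+1,j)}\}_{j=0}^{k-1})=\{R_{(k,k+1)},R_{(k+1,k)}\}$ for $1\leq k\leq n-1$, while $\G_n(C^n;\{R_{(j,j+1)},R_{(j+1,j)}\}_{j=0}^{n-1})=\{R_{(n,n)}\}$; the level at which the invariant first collapses to a single region recovers $n$, and Corollary~\ref{G_n invariance} guarantees each $\G_k$ is an invariant of the $\np$-class. To repair your argument you would need either these higher-order invariants (or at least something not determined by $\G_0$, such as a secondary invariant like $\Upsilon^2$, though even that is only known to relate to $\G_1$ and would not obviously separate all $n\geq 2$), or else a direct computation of $\nu^+\bigl(C^n\ot{\Lambda}(C^m)^*\bigr)$ for $n\neq m$ from the definition of $\np$-equivalence. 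Your proposal does neither, so the main assertion remains unproved.
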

In addition, we will show that if a formal knot complex $C$ is realized as $CFK^{\infty}$ for some knot $K$,
then the genus of $C$ is at least $g(K)$. Since $C^n$ has genus one and 
$\tau(C^n)=1$ but
cannot be realized by any genus one knot, we have the following result,
which is related to the geography problem discussed in \cite{HW18}.
\begin{cor}
\label{formal genus1 cor}
The formal knot complexes $\{C^{n}\}_{n = 2}^{\infty}$ 
cannot be realized by any knot in $S^3$.
\end{cor}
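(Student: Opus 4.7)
The plan is to argue by contradiction, combining the genus inequality stated just before the corollary with Theorem~\ref{main thm} and the distinctness part of Theorem~\ref{formal genus1}. Suppose, for some $n \geq 2$, that $C^n \simeq CFK^{\infty}(K)$ for a knot $K \subset S^3$. Since the genus of the realizing formal complex dominates the Seifert genus of $K$ and $C^n$ has genus one, we get $g(K) \leq 1$.

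Next I would pin down $g(K)$ and $\tau(K)$. The $\tau$-invariant is an invariant of the underlying formal knot complex, so $\tau(K) = \tau(C^n) = 1$. In particular $K$ is not the unknot, which forces $g(K) = 1$. Now Theorem~\ref{main thm} applies: any genus one knot with $\tau = 1$ is $\nu^+$-equivalent to $T_{2,3}$, so $K \nuplus T_{2,3}$.

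Translating this back via the commutative diagram of Theorem~\ref{comm diag}, I obtain
\[
[C^n]_{\np} = [CFK^{\infty}(K)]_{\np} = [CFK^{\infty}(T_{2,3})]_{\np} = [C^1]_{\np}
\]
in $\mCf$. But Theorem~\ref{formal genus1} asserts that $[C^1]_{\np}, [C^2]_{\np}, [C^3]_{\np}, \ldots$ are pairwise distinct, so $[C^n]_{\np} \neq [C^1]_{\np}$ for $n \geq 2$, contradiction.

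The proof is essentially a packaging step: all the substantive work is carried by Theorem~\ref{main thm}, the distinctness of the classes $[C^n]_{\np}$, and the genus-bound property of realizable formal complexes. The only point that requires any care is confirming that $\tau$ is genuinely an invariant of the formal chain homotopy type (so that $\tau(K) = \tau(C^n)$ makes sense without ambiguity) and that the commutative diagram of Theorem~\ref{comm diag} is applicable here; both should be immediate from the setup in the paper, and there is no computational obstacle to overcome.
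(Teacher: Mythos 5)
Your proof is correct and follows essentially the same route as the paper's: use the genus bound to get $g(K)\leq 1$, use $\tau(K)=\tau(C^n)=1$ to force $g(K)=1$, apply Theorem~\ref{main thm} to conclude $[C^n]_{\np}=[T_{2,3}]_{\np}=[C^1]_{\np}$, and contradict the distinctness in Theorem~\ref{formal genus1}. The only cosmetic difference is that the paper deduces $g(K)\geq 1$ via the chain $g(K)\geq\tau(K)=1$ rather than by observing $K$ is not the unknot, which amounts to the same thing.
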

In order to distinguish the complexes $\{C^n\}$, we introduce an infinite family 
$\{\G_k\}_{k=0}^{\infty}$ of invariants of $\np$-classes, where
$\G_k(C)$ consists  of finitely many subsets of $\z^2$.
Since the $\np$-class of knots is a knot concordance invariant, 
$\{\G_n\}_{n=0}^{\infty}$ also gives new knot concordance invariants. 
In particular, the primary invariant $\G_0$ has the following properties.
\begin{thm}
\label{G_0 properties}
For any knot $K$, the following assertions hold:
\begin{enumerate}
\item $\G_0(K)$ determines all correction terms of all Dehn surgeries along $K$.
\item $\G_0(K)$ determines all of $\np$, $\tau$ and $\Upsilon$.
\item $[K]_{\np}=0$ if and only if $\G_0(K)$ has $\{(i,j) \in \z^2 \mid i \leq 0, j \leq 0\}$ as the unique element.
\end{enumerate}
\end{thm}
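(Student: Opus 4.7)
The plan is to establish each of the three properties by unpacking the definition of $\G_0(K)$ as (what I expect to be) the collection of $\Z^2$-downward-closed regions arising as supports of generators in a normal-form reduction of $CFK^{\infty}(K)$ up to acyclic summands. Since the author has already announced that $\G_0$ is an invariant of the $\nu^+$-class, the task reduces to showing that the standard descriptions of the correction terms and of $\nu^+$, $\tau$, $\Upsilon$ each factor through this region-level data, and that the ``minimal'' configuration detects the trivial class.

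For part (1), I would use the Ozsv\'ath--Szab\'o integer/rational surgery formulas \cite{OS08integer, OS11rational}, which compute $d(S^3_{p/q}(K), \mathfrak{s})$ from the $U$-tower generators of the subcomplexes $A_s^+ := C\{\max(i, j-s) \geq 0\}$ (and of $B^+ := C\{i \geq 0\}$), together with their inclusion-induced maps. Because each support region in $\G_0(K)$ records precisely where an $\F[U]$-tower generator sits relative to the bifiltration, the $A_s^+$-tower generators (and hence the correction terms) can be read off as maxima of coordinate-type functions over the regions in $\G_0(K)$. For part (2), I would apply the analogous well-known filtration extremization formulas: $\tau(K)$ is the minimal $s$ such that the inclusion $A_s \hookrightarrow CFK^{\infty}$ is nontrivial on the tower; $\nu^+(K)$ is obtained by a closely related minimization; and $\Upsilon_K(t)$ is the minimum, over cycles representing the tower class, of the $t$-weighted filtration $t \cdot i + (2-t) \cdot j$. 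Each of these is an extremization over the downward-closed regions in $\G_0(K)$ and therefore depends only on $\G_0(K)$.

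For part (3), the forward implication follows from invariance: the unknot has $CFK^{\infty} \cong \F[U, U^{-1}]$ with a single generator whose downward closure is precisely the southwest quadrant $\{i \leq 0,\, j \leq 0\}$, so $\G_0(\text{unknot}) = \{\{(i,j) : i \leq 0, j \leq 0\}\}$; since $\G_0$ is a $\nu^+$-invariant, the same holds for every $K$ with $[K]_{\nu^+} = 0$. For the converse, assume $\G_0(K) = \{\{(i,j) : i \leq 0, j \leq 0\}\}$. I would argue that this forces the reduced model of $CFK^{\infty}(K)$ to consist of a single $\F[U,U^{-1}]$-tower generator at $(0,0)$ with no additional structural contribution to the regions, so $CFK^{\infty}(K)$ splits as a trivial tower plus an acyclic summand; invoking Hom's filtered chain homotopy characterization then gives $[K]_{\nu^+} = [\text{unknot}]_{\nu^+} = 0$.

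The main obstacle will be the converse direction of (3). Parts (1) and (2) are essentially bookkeeping once the region interpretation of $\G_0$ is in place, but (3) requires showing that the support-region data is \emph{rich enough} to rule out all possible nontrivial behaviour -- that is, one must verify that no knot with nontrivial $\nu^+$-class can have a reduced model whose generator supports collapse to the southwest quadrant alone. This boils down to showing that the reduction process defining $\G_0$ retains precisely the $\np$-equivalence data of a rank-one summand, and not less, which is the delicate part of the construction.
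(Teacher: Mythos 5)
Your parts (1) and (2) follow essentially the paper's route: the paper proves explicit formulas (Propositions~\ref{G_0 and np} and \ref{G_0 and others}) expressing $\np(C)$, $\np(C^*)$, $V_k(C)$, $\tau(C)$ and $\Upsilon_C(t)$ as extremizations over the regions in $\G_0(C)$, and then feeds $V_k$ into the Ni--Wu surgery formula (Proposition~\ref{V_k d}) to get the correction terms. One point you gloss over: these formulas are naturally extremizations over $\tG_0(C)$ (all regions containing a homological generator), and to replace $\tG_0$ by the minimalization $\G_0$ you need Corollary~\ref{minimalize}, which in turn rests on the finiteness theorem $\G_0(C)=\G'_0(C)$ (Theorem~\ref{G'_0 thm}). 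That is bookkeeping, but it is not free.

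The genuine gap is in the converse of part (3). Your plan is to show that $\G_0(K)=\{R_{(0,0)}\}$ forces the reduced model of $CFK^\infty(K)$ to split as a trivial tower plus an acyclic summand, and then invoke Hom's characterization. This cannot work as a direct reconstruction argument, because $\G_0$ does \emph{not} determine the stable filtered homotopy type: the paper's own complexes $C^n$ all satisfy $\G_0(C^n)=\{R_{(0,1)},R_{(1,0)}\}$ yet are pairwise $\nu^+$-inequivalent (Proposition~\ref{regions of C^n} and Theorem~\ref{formal genus1}). So ``the support-region data is rich enough to rule out all nontrivial behaviour'' is false in general, and any splitting argument must secretly pass through the equalities $\nu^+(C)=\nu^+(C^*)=0$ --- which is exactly what is to be proved. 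The paper's actual argument is much shorter and avoids splitting entirely: by definition $[K]_{\np}=0$ means $\nu^+(C^K)=\nu^+((C^K)^*)=0$. The vanishing of $\nu^+(C^K)$ is immediate from $R_{(0,0)}\in\G_0(C^K)$. The vanishing of $\nu^+((C^K)^*)$ requires the duality statement (Lemma~\ref{stb lem1}): $\nu^+(C^*)\le m$ if and only if no homological generator of $C$ has its region contained in $\{i\le-1\}\cup\{j\le-m-1\}$, equivalently every such region contains $R_{(0,-m)}$. Since every region of a homological generator contains some element of $\G_0(C)=\{R_{(0,0)}\}$, this holds with $m=0$. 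That duality lemma --- reading off $\nu^+$ of the \emph{dual} complex from region data of the original --- is the missing idea in your proposal.
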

The definition of $\G_n$ and explicit formulas for computing the above invariants from 
$\G_0(K)$ is given in Section~5. 
In the section, we also discuss the relationship between our secondary invariant $\G_1$ and
the $\Upsilon^2$-invariant \cite{KL18}.
\subsection*{Organization}
In Section~2, we establish the category of formal knot complexes, and construct the monoid $\mathcal{K}^f$ and the abelian group $\mCf$. Theorem~\ref{comm diag} is also proved in this section.
In Section~3, we prove Theorem~\ref{geom}. In Section~4, we discuss algebraic estimates for $\np$-classes, and prove Theorem~\ref{main thm} and Theorem~\ref{discriminant}.
In Section~5, we introduce the invariants $\{\G_n\}$, and prove 
Theorem~\ref{formal genus1}, Corollary~\ref{formal genus1 cor} and 
Theorem~\ref{G_0 properties}.

\begin{acknowledge}
The authors would like to thank Jennifer Hom, Min Hoon Kim and JungHwan Park 
for many interesting conversations about the present work.
The author was supported by JSPS KAKENHI Grant Number 18J00808.
\end{acknowledge}


\section{Category of  formal knot complexes}
In this section, we establish the category of formal knot complexes.
\subsection{Poset filtered chain complexes}
Let $P$ be a {\it poset}, i.e.\ a set $P$ with partial order $\leq$.
For example, we often consider the partial order $\leq$ on $\z^2$ given by
 $(i,j)\leq(k,l)$ if $i \leq k$, $j \leq l$.
For a given poset $P$, a \textit{closed region} $R \subset P$ is a subset
such that for any $x \in P$, if there exists an element $y \in R$ satisfying $x \leq y$, then $x \in R$. We denote the set of closed regions of $P$ by $\CR(P)$.

Let $\F := \z /2\z$ and $\sR$ be a $\F$-algebra.
In this paper, we say that
$(C,\partial)$ is a {\it chain complex $C$ over $\sR$}
if $(C,\partial)$ satisfies the following:
\begin{itemize}
\item $C$ is a $\sR$-module and $\partial \colon C \to C$ is a
$\sR$-linear map with $\partial \circ \partial = 0$. 
\item As $\F$-vector space, $C$ is decomposed into $\bigoplus_{n \in \z} C_n$ and satisfies $\partial(C_n) \subset C_{n-1}$.
\end{itemize}
(Remark that the $\sR$-action does not preserve the grading in general.
We often abbreviate $(C,\partial)$ to $C$.)
Then, we say that $C$ is \textit{$P$-filtered} if 
a subcomplex $C_R$ of $C$ over $\F$ is associated to each closed region $R \subset P$
so that if $R \subset R'$ then $C_R \subset C_{R'}$. 
(Here we remark that $C_R$ is not a $\sR$-submodule of $C$ in general.)
We call the set $\{C_R\}_{R \in \CR(P)}$ a {\it $P$-filtration} on $C$.
For instance, a $\z$-filtration $\{C_{\{i \leq m\}}\}_{\{i \leq m\} \in \CR(\z)}$ is 
identified with an increasing sequence
$$
0 \subset \cdots \subset  \mF_m \subset \mF_{m+1} \subset \cdots \subset C
$$
of subcomplexes by the correspondence $\mF_m=C_{\{i \leq m\}}$.
Moreover, For two $\z$-filtrations $\{\mF^1_i\}_{i \in \z}$ and 
$\{\mF^2_j\}_{j \in \z}$ on $C$, the set
$$
\{C_R\}_{R \in \CR(\z^2)} := \{\sum_{(i,j)\in R} \mF^1_i \cap \mF^2_j\}_{R \in \CR(\z^2)}
$$
defines a $\z^2$-filtration on $C$.
We call it {\it the $\z^2$-filtration induced by the ordered pair} 
$(\{\mF^1_i\}_{i \in \z}, \{\mF^2_j\}_{j \in \z})$. 
For a complex $C$ with induced $\z^2$-filtration by
$(\{\mF^1_i\}, \{\mF^2_j\})$, 
$C^r$ denotes $C$ with induced $\z^2$-filtration by $(\{\mF^2_i\}, \{\mF^1_j\})$.

For any two $P$-filtered chain complexes $C$ and $C'$, a map 
$f: C \to C'$ is \textit{$P$-filtered}
if $f(C_R) \subset C'_R$ for any closed region $R$.
Two $P$-filtered chain complexes $C$ and $C'$ are 
\textit{$P$-filtered homotopy equivalent}
(and denoted $C \simeq C'$)
if there exists a chain homotopy equivalence map $f: C \to C'$ over $\sR$ 
such that the map, its inverse and all chain homotopies are $P$-filtered
and graded. (Then $f$ is called 
a \textit{$P$-filtered homotopy equivalence map}.
Particularly, we call the above $f$ a 
\textit{$P$-filtered chain isomorphism} if $f$ is a chain isomorphism.)
The following lemma immediately follows from the definition of 
$P$-filtered homotopy equivalence.
\begin{prop}
\label{exact}
Let $C$ and $C'$ be $P$-filtered chain complexes.
If $C \simeq C'$, then for any closed regions $R \subset R'$,
we have an isomorphism between the long exact sequences of $\sR$-modules:
$$
\begin{CD}
\cdots @>{\partial_{*}}>> H_n(C_R) 
@>{i_{*}}>> H_*(C_{R'}) @>{p_{*}}>> H_*(C_{R'}/C_{R}) 
@>{\partial_{*}}>>  \cdots \\
@.   @V{\cong}VV  @V{\cong}VV @V{\cong}VV  @. \\
\cdots @>{\partial_{*}}>> H_*(C'_R) @>{i_{*}}>> H_*(C'_{R'}) 
@>{p_{*}}>> H_*(C'_{R'}/C'_{R}) @>{\partial_{*}}>> \cdots \\
\end{CD}
$$
Here, $i \colon C_R \to C_{R'}$ (resp.\ $p \colon C_{R'} \to C_{R'}/C_R$)
denote the inclusion (resp.\ the projection).
Moreover, the above isomorphism induces an isomorphism  
between the long exact sequences of graded $\F$-vector spaces:
$$
\begin{CD}
\cdots @>{\partial_{*,n+1}}>> H_n(C_R) 
@>{i_{*,n}}>> H_n(C_{R'}) @>{p_{*,n}}>> H_n(C_{R'}/C_{R}) 
@>{\partial_{*,n}}>>  \cdots \\
@.   @V{\cong}VV  @V{\cong}VV @V{\cong}VV  @. \\
\cdots @>{\partial_{*,n+1}}>> H_n(C'_R) @>{i_{*,n}}>> H_n(C'_{R'}) 
@>{p_{*,n}}>> H_n(C'_{R'}/C'_{R}) @>{\partial_{*,n}}>> \cdots \\
\end{CD}
$$
\end{prop}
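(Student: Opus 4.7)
The plan is to exploit that a $P$-filtered homotopy equivalence $f\colon C\to C'$ automatically restricts to each subcomplex $C_R\subset C$ and descends to each quotient $C_{R'}/C_R$. Together with the corresponding restrictions and descents of the homotopy inverse $g$ and the chain homotopies $h$, $h'$, this produces a morphism of short exact sequences of chain complexes whose vertical arrows are chain homotopy equivalences; applying the zig-zag lemma then yields an isomorphism between the associated long exact sequences, which is the content of the proposition.

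First I would verify that, because $f$, $g$, $h$, $h'$ are all $P$-filtered, they send $C_R$ into $C'_R$ (or back), and hence restrict to maps $f_R\colon C_R\to C'_R$, $g_R$, $h_R$, $h'_R$, and similarly over $R'$. The identities $gf-\mathrm{id}_C=\partial h+h\partial$ and $fg-\mathrm{id}_{C'}=\partial h'+h'\partial$ hold pointwise and so are inherited by the restrictions; thus $f_R$ and $f_{R'}$ are chain homotopy equivalences between the corresponding subcomplexes. Since these restrictions commute with the natural inclusions $C_R\hookrightarrow C_{R'}$, they descend to chain maps on the quotients, yielding induced maps $\bar f\colon C_{R'}/C_R\to C'_{R'}/C'_R$, $\bar g$, $\bar h$, $\bar{h'}$ satisfying the analogous chain-homotopy identities, so $\bar f$ is a chain homotopy equivalence as well.

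Next I would assemble the morphism of short exact sequences
$$
\begin{CD}
0 @>>> C_R @>i>> C_{R'} @>p>> C_{R'}/C_R @>>> 0 \\
@. @VVf_RV @VVf_{R'}V @VV\bar f V @. \\
0 @>>> C'_R @>i>> C'_{R'} @>p>> C'_{R'}/C'_R @>>> 0
\end{CD}
$$
and apply the zig-zag lemma to both rows. Naturality gives a morphism of long exact sequences, and because $f_R$, $f_{R'}$, and $\bar f$ are chain homotopy equivalences, each vertical arrow is an isomorphism on $H_*$, so the morphism of long exact sequences is itself an isomorphism. Since all the maps involved are also graded, the same argument refines to an isomorphism of long exact sequences of graded $\F$-vector spaces.

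The main obstacle, such as it is, is the $\sR$-module claim: although $C_R$ need not be an $\sR$-submodule of $C$, the $\sR$-linearity of the ambient maps $f$, $g$, $h$, $h'$ is inherited by any restriction or descent that is well-defined, so wherever $H_*(C_R)$, $H_*(C_{R'})$, and $H_*(C_{R'}/C_R)$ carry an $\sR$-action the induced isomorphisms are automatically $\sR$-linear. Beyond this bookkeeping, the proof is essentially a direct application of the naturality of the snake lemma.
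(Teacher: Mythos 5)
Your proof is correct and matches the paper's intent: the paper states this proposition without proof, asserting that it ``immediately follows from the definition of $P$-filtered homotopy equivalence,'' and your argument --- restricting $f$, $g$, $h$, $h'$ to $C_R$, $C_{R'}$ and the quotient $C_{R'}/C_R$, assembling the morphism of short exact sequences, and invoking naturality of the connecting homomorphism --- is precisely the standard verification being left implicit. Your closing remark about where the $\sR$-action actually lives is a sensible way of reconciling the statement with the paper's own caveat that $C_R$ need not be an $\sR$-submodule of $C$.
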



\subsection{Formal knot complexes}
Now we state the precise definition of formal knot complex, and discuss several basic properties of it.
\subsubsection{Definition}
Let $\Lambda := \F[U, U^{-1}]$.
We call a tuple $$(C, \partial, \{ C_n\}_{n \in \z}, \{\falex{j}\}_{j \in \z}, 
\{\falg{i}\}_{i \in \z})$$
a \textit{formal knot complex} if it satisfies the following seven conditions;
\begin{enumerate}
\item $(C,\partial)$ is a chain complex over $\Lambda$
with decomposition $C= \bigoplus_{n \in \z}C_n$.
The grading of a homogeneous element $x$ is denoted $\gr(x)$ 
and called the \textit{Maslov grading} of $x$.
\item
$\{\falex{j}\}_{j \in \z}$ is a $\z$-filtration on $C$.
This filtration is called \textit{Alexander filtration}, and
the filtration level of an element $x \in C$ is denoted 
$\Alex (x)$ (i.e.\ $\Alex (x) := \min \{ j \mid x \in \falex{j} \}$).

\item
Similarly, $\{ \falg{i} \}_{i \in \z}$ is a $\z$-filtration on $C$, 
 called the \textit{algebraic filtration},
and filtration levels of elements are denoted $\Alg(x)$.
When we regard $C$ as a $\z^2$-filtered complex, we use 
the $\z^2$-filtration induced by the ordered pair 
$(\{\falg{i}\}_{i \in \z}, \{\falex{j}\}_{j \in \z})$.
\item
The action of $U$ lowers Maslov grading by $2$
and Alexander and algabraic filtration levels by $1$.
\item
As a $\Lambda$-module, $C$ is freely and finitely generated by 
elements $\{x_k\}_{1 \leq k \leq r}$ such that 
\begin{itemize}
\item
each $x_k$ is homogeneous
with respect to the Maslov grading,
\item
$\{ U^{\Alex(x_k)} x_k  \}_{1 \leq k \leq r}$ is a free basis for $\falex{0}$ as a 
$\F[U]$-module, and
\item
$\{ U^{\Alg(x_k)} x_k  \}_{1 \leq k \leq r}$ is a free basis for $\falg{0}$ as a 
$\F[U]$-module.
\end{itemize}
We call such $\{x_k\}_{1 \leq k \leq r}$ a \textit{filtered basis}.

\item
There exists a $\z^2$-filtered homotopy equivalence map $\iota: C \to C^r$.

\item
Regard $\Lambda$ as a chain complex with trivial boundary map, and define 
the Maslov grading by
$$
\Lambda_n =
\left\{
\begin{array}{ll}
\{0, U^{-n/2}\} &(n: \text{ even})\\
0 & (n: \text{ odd})
\end{array}
\right.
$$
and the Alexander and algebraic filtrations by
$$
\falex{i}(\Lambda)=\falg{i}(\Lambda) = U^{-i} \cdot \F[U].
$$
Then there exists a $\z$-filtered homotopy equivalence map
$f_{\Alex}$ (resp.\ $f_{\Alg}$) $:C \to \Lambda$ over $\Lambda$ 
with respect to the Alexander (resp.\ algebraic) filtration.
\end{enumerate}
We often abbreviate the tuple 
$$
(C, \partial, \{ C_n\}, \{\mF^{Alex}_j\}, \{\falg{i}\})
$$
to $C$ or $(C,\partial)$.
\begin{remark}
Note that 
$\{ U^{\Alex(x_k)-j} x_k  \}_{1 \leq k \leq r}$ 
(resp.\ $\{ U^{\Alg(x_k)-i} x_k  \}_{1 \leq k \leq r}$)
is a free basis for $\falex{j}$ (resp.\ $\falg{i}$) as a 
$\F[U]$-module. In particular, the equalities $$
U^k(\falex{j})=\falex{j-k} \text{ and }
U^k(\falg{i})=\falg{i-k}$$ hold for any $i,j,k \in \z$.
(These facts also imply that for any element $x \in C$, both 
$\Alex(x)$ and $\Alg(x)$ are finite.)
Similarly,  
$\{U^{\frac{\gr(x_k)-n}{2}}x_k\}_{k \in [n]}$
is a basis for $C_{n}$ as a $\F$-vector space,
where $[n]$ is a subset of $\{1, \ldots r \}$ consisting of elements with 
$\gr(x_k) \equiv n$ (mod  2), and the equality $U^k(C_n)=C_{n-2k}$ holds.
\end{remark}
As the simplest example,
the tuple 
$$
(\Lambda, \text{zero map}, \{\Lambda_n\}_{n \in \z}, 
\{ \falex{j}(\Lambda)\}_{j \in \z},\{ \falg{i}(\Lambda)\}_{i \in \z})
$$
is a formal knot complex.
In addition, it is easy to see that the following lemmas hold.
\begin{lem}
For any formal knot complex $C$,
the complex $C^r$ is also a formal knot complex.
\end{lem}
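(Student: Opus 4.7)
The plan is to verify the seven defining conditions of a formal knot complex for $C^r$, taking its Alexander filtration to be $\{\falg{j}\}_{j\in\z}$ and its algebraic filtration to be $\{\falex{i}\}_{i\in\z}$. Most conditions transfer symmetrically. Conditions (1) and (4) concern only the chain complex structure, the Maslov grading, and the action of $U$, each of whose constraint on the filtrations is symmetric in the two. Condition (5) is phrased symmetrically in $\Alex$ and $\Alg$, so the same filtered basis $\{x_k\}$ for $C$ serves for $C^r$. Condition (7) transfers by taking the new $f_{\Alex}$ to be the old $f_{\Alg}$ and vice versa. Finally, (2) and (3) hold immediately from the choices above together with the definition of the induced $\z^2$-filtration on $C^r$.

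The only axiom requiring any real checking is (6): the existence of a $\z^2$-filtered homotopy equivalence $\iota^r : C^r \to (C^r)^r = C$. The plan is to take $\iota^r$ to be the same underlying chain map as the $\iota : C \to C^r$ furnished by condition (6) for $C$. The property of being $\z^2$-filtered is a statement about the interaction with source and target filtrations: for $\iota^r : C^r \to (C^r)^r = C$ the required inclusion reads $\iota(\falex{i} \cap \falg{j}) \subset \falg{i} \cap \falex{j}$ as subsets of the underlying $\Lambda$-module, whereas the original condition on $\iota$ reads $\iota(\falg{i'} \cap \falex{j'}) \subset \falex{i'} \cap \falg{j'}$. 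Applying the latter at $(i',j') = (\Alg(x), \Alex(x))$ to an input $x$ with $\Alex(x) \leq i$ and $\Alg(x) \leq j$ gives $\Alex(\iota(x)) \leq \Alg(x) \leq j$ and $\Alg(\iota(x)) \leq \Alex(x) \leq i$, which is exactly what is needed. The identical argument shows that the homotopy inverse of $\iota$ and the chain homotopies witnessing that $\iota$ is a homotopy equivalence all remain $\z^2$-filtered in their swapped roles, so $\iota^r$ is a genuine $\z^2$-filtered homotopy equivalence.

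The only real difficulty is notational bookkeeping: one must carefully distinguish the two intrinsic $\z$-filtrations of the underlying complex from the roles they play as Alexander or algebraic filtration in $C$ versus $C^r$. Once this is set up, the lemma follows from the manifest invariance of the whole definition of formal knot complex under swapping the two $\z$-filtrations.
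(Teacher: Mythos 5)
Your proof is correct; the paper states this lemma without proof as an immediate consequence of the symmetry of the definition in the two filtrations, and your verification is exactly the intended argument, with condition (6) being the only point needing actual checking. As a small remark, condition (6) for $C^r$ also follows at once by taking $\iota^r$ to be the homotopy inverse of $\iota$, which is by definition already a $\z^2$-filtered homotopy equivalence map $C^r \to C = (C^r)^r$; your computation proves the slightly stronger (and also true) fact that a map is $\z^2$-filtered as a map $C \to C^r$ if and only if the same underlying map is $\z^2$-filtered as a map $C^r \to C$.
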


\begin{lem}
\label{construction}
Let $(\bC, \bpartial)$ be a chain complex over $\F$ generated by
a finite basis $\{x_k\}_{1 \leq k \leq r}$ with functions 
$$
\Alex \colon \{x_k\}_{1 \leq k \leq r} \to \z 
\text{ and } \Alg \colon \{x_k\}_{1 \leq k \leq r} \to \z
$$
satisfying the following:
\begin{itemize}
\item
The sequences
$$
\bF^{\Alex}_j :=\spanF\{x_k \mid \Alex(x_k) \leq j\} \text{ and }
\bF^{\Alg}_i :=\spanF\{x_k \mid \Alg(x_k) \leq i\}
$$
define $\z$-filtrations on $\bC$, respectively.
\item For the induced $\z^2$-filtration $(\{\bF^{\Alg}_i\}, \{\bF^{\Alex}_j\})$ on $\bC$,
we have a $\z^2$-filtered homotopy equivalence $\bC \simeq \bC^r$.
\item Regard $\F$ as a chain complex over $\F$ with trivial boundary map and grading $\F=\F_0$, and define a $\z$-filtration  by $\bF_i(\F) = \F$ if and only if $i \geq 0$. Then
we have $\z$-filtered homotopy equivalences $\bC \simeq \F$ with respect to
both $\{\bF^{\Alex}_j\}$ and $\{\bF^{\Alg}_i\}$.
\end{itemize}
If we set
\begin{itemize}
\item $C := \bC \otimes_{\F} \Lambda$ and $\partial := \bpartial \otimes 1$,
\item $C_n := \bigoplus_{m \in \z} (\bC_{n+2m} \otimes_{\F} U^m)$, and
\item $\falex{j} := \sum_{m \in \z} (\bF^{\Alex}_{j+m}\otimes_{\F} U^m \F[U])$ and 
$\falg{i} := \sum_{m \in \z} (\bF^{\Alg}_{i+m}\otimes_{\F} U^m \F[U])$,
\end{itemize}
then the tuple
$$
(C, \partial, \{ C_n\}_{n \in \z}, \{\falex{j}\}_{j \in \z}, 
\{\falg{i}\}_{i \in \z})
$$
is a formal knot complex.
\end{lem}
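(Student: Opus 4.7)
The plan is to verify the seven conditions of the formal knot complex definition one by one, exploiting the fact that extension of scalars $(-) \otimes_{\F} \Lambda$ is exact and commutes naturally with the inputs of the construction.

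I would first handle conditions $(1)$--$(4)$, which are essentially structural bookkeeping. $\Lambda$-linearity and $\partial^2 = 0$ are immediate for $\partial = \bpartial \otimes 1$. For the Maslov decomposition, one checks that $\bpartial$ sends $\bC_{n+2m} \otimes U^m$ into $\bC_{n-1+2m} \otimes U^m \subset C_{n-1}$, so $\partial(C_n) \subset C_{n-1}$. The nesting and $\partial$-invariance of $\{\falex{j}\}$ and $\{\falg{i}\}$ pass termwise through the direct-sum definition, using the corresponding hypotheses on $\bF^{\Alex}_{\bullet}$ and $\bF^{\Alg}_{\bullet}$. For condition $(4)$, multiplication by $U$ carries the summand $\bF^{\Alex}_{j+m} \otimes U^m \F[U]$ onto $\bF^{\Alex}_{j+m} \otimes U^{m+1} \F[U]$; reindexing with $m' = m + 1$ identifies this with the summand $\bF^{\Alex}_{(j-1)+m'} \otimes U^{m'}\F[U]$ of $\falex{j-1}$, and analogously for $\Alg$. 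The Maslov shift is immediate from the definition of $C_n$.

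Next I would verify condition $(5)$. Taking the basis $\{x_k\}$ of $\bC$ and viewing each $x_k$ as $x_k \otimes 1 \in C$, freeness and finite generation over $\Lambda$ follow from $\F$-freeness of $\{x_k\}$ together with freeness of $\Lambda$ over $\F$. The definition then yields $\falex{0} = \spanFU \{ U^{\Alex(x_k)} x_k \}$ directly, and similarly for $\Alg$, which is the required free-basis statement. Maslov homogeneity of each $x_k \otimes 1$ is inherited from the given homogeneity in $\bC$.

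Finally I would prove conditions $(6)$ and $(7)$ by lifting the given $\F$-linear filtered equivalences to $\Lambda$-linear ones via tensoring with $\Lambda$. For any $\F$-linear filtered chain map $\bar{f} \colon \bC \to \bC'$, set $f := \bar{f} \otimes 1$ and $\Lambda$-linearly extend any chain homotopy. Filtration preservation of $f$ reduces, summand by summand, to the filtration preservation of $\bar{f}$, and Maslov grading is preserved because $\bar{f}$ has degree $0$. For $(7)$, the target $\Lambda$ prescribed in the definition of formal knot complex coincides with $\F \otimes_{\F} \Lambda$ equipped with the recipe of the present lemma, so tensoring the assumed $\bC \simeq \F$ with $\Lambda$ yields the required filtered $\Lambda$-equivalences; for $(6)$, the same recipe applied to $\bC^r$ produces precisely $C^r$, so tensoring the assumed $\z^2$-filtered equivalence $\bC \simeq \bC^r$ with $\Lambda$ gives the desired $\iota \colon C \to C^r$. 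The most delicate step is making sure the doubly-indexed direct-sum definitions of $\falex{\bullet}$ and $\falg{\bullet}$ interact correctly with the reversal operation $(-)^r$ and with the collapse $\bC \simeq \F$; this is where careless bookkeeping of the shift $U^m$ versus the filtration index could produce a subtle mismatch. Once these compatibilities are pinned down, conditions $(6)$ and $(7)$ follow formally from the corresponding properties of $\bC$.
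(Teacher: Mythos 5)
Your verification is correct: the paper states Lemma~\ref{construction} without proof (prefacing it with ``it is easy to see that the following lemmas hold''), and your condition-by-condition check via exactness of $(-)\otimes_{\F}\Lambda$ is exactly the routine argument being left to the reader. The one compatibility you flag as delicate does go through: for any closed region $R$ one has $C_R=\bigoplus_{l}\bC_{R-(l,l)}\otimes U^l$, so filteredness of a map, its inverse, and the homotopies passes from $\bC$ to $C$ summand by summand, and the recipe applied to $\F$ and to $\bC^r$ reproduces precisely the model $\Lambda$ and the complex $C^r$ as required for conditions (6) and (7).
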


In \cite{OS04knot}, Ozsv\'ath and Szab\'o associate the $\z^2$-filtered homotopy type of a formal knot complex 
$CFK^{\infty}(K)$ to any knot $K$, and prove that it is an isotopy invariant.
To simplify notation,  we write $C^K$ for $CFK^{\infty}(K)$. 
\begin{thm}[\text{\cite{OS04knot}}]
If two knots $K$ and $J$ are isotopic, then $C^K \simeq C^J$.
\end{thm}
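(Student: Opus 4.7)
The plan is to construct $C^K = CFK^\infty(K)$ from a doubly pointed Heegaard diagram and verify its invariance under the moves relating such diagrams. First, given $K \subset S^3$, I would choose a doubly pointed Heegaard diagram $(\Sigma, \boldsymbol{\alpha}, \boldsymbol{\beta}, w, z)$, with $w$ and $z$ lying in the complementary regions on either side of a meridian of $K$. The complex is freely generated over $\Lambda$ by the intersection points $\mathbf{x} \in \mathbb{T}_{\alpha} \cap \mathbb{T}_{\beta}$, with differential
\[
\partial \mathbf{x} = \sum_{\mathbf{y}} \sum_{\phi} \#\widehat{\mathcal{M}}(\phi)\cdot U^{n_w(\phi)} \mathbf{y},
\]
where the inner sum runs over Whitney disks $\phi$ from $\mathbf{x}$ to $\mathbf{y}$ of Maslov index one. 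The two basepoints then yield two $\z$-filtrations: the algebraic filtration $\falg{i}$ from $n_w$, and the Alexander filtration $\falex{j}$ from $n_z - n_w$, shifted by the constant determined by the relative $\Spin^c$ structure.

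Next, I would verify the seven conditions in the definition of a formal knot complex. Finite generation over $\Lambda$ is immediate from finiteness of $\mathbb{T}_{\alpha}\cap \mathbb{T}_{\beta}$; the Maslov grading comes from the usual absolute $\mathbb{Q}$-grading of Heegaard Floer homology, which is integral in the case of $S^3$; the involution $\iota\colon C \to C^r$ of condition~(6) arises from the symmetry obtained by interchanging $w$ and $z$; and the existence in condition~(7) of $\z$-filtered equivalences to $\Lambda$ follows from the fact that forgetting $z$ (resp.\ $w$) recovers the complex $CF^\infty$ of $S^3$, together with the computation $HF^\infty(S^3) \cong \Lambda$.

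For isotopy invariance, I would invoke the Reidemeister-type theorem for doubly pointed Heegaard diagrams: any two such diagrams representing isotopic knots are related by a finite sequence of (i) isotopies of the $\boldsymbol{\alpha}$ and $\boldsymbol{\beta}$ curves avoiding $\{w,z\}$, (ii) handleslides among the $\boldsymbol{\alpha}$'s or among the $\boldsymbol{\beta}$'s, again avoiding the basepoints, and (iii) stabilizations. The task reduces to producing, for each move, an explicit $\z^2$-filtered chain homotopy equivalence: isotopies give continuation maps, handleslides yield equivalences via counts of pseudo-holomorphic triangles in an auxiliary Heegaard triple diagram, and stabilizations change $C^K$ by tensoring with an explicit contractible two-generator complex supported at filtration level $(0,0)$.

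The main obstacle is handleslide invariance. The argument hinges on the associativity of triangle maps in a suitable Heegaard quadruple diagram, which reduces the composition of two handleslide maps to the identity modulo counts of holomorphic rectangles. The delicate filtered refinement is to carry through this entire construction while keeping both basepoints $w$ and $z$ in fixed complementary regions, so that the continuation, triangle, and rectangle maps automatically respect both $\falg{\bullet}$ and $\falex{\bullet}$. Standard transversality and compactness results for moduli spaces of pseudo-holomorphic polygons, already developed for $\widehat{HF}$ and $HF^\infty$, handle the remaining analytic technicalities.
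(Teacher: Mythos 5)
The paper does not prove this statement: it is imported verbatim from Ozsv\'ath--Szab\'o's knot Floer homology paper \cite{OS04knot}, and your outline --- doubly pointed Heegaard diagrams, verification of the axioms, and invariance under pointed isotopies, handleslides (via filtered triangle and rectangle counts) and stabilizations --- is precisely the approach taken there, so this is essentially the same proof. As a sketch it is faithful, though a complete write-up would also need to note that the differential respects the Alexander filtration because $n_z(\phi)\geq 0$ for holomorphic disks, and that condition~(7) requires a filtered reduction (Gaussian elimination) argument pinning down the filtration level of the surviving generator, not merely the computation $HF^\infty(S^3)\cong\Lambda$.
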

Moreover, it is proved that the inverse has the same homotopy type as the original one.
\begin{thm}[\text{\cite{OS04knot}}]
\label{inverse}
For a knot $K$, we have $C^{-K} \simeq (C^K)^r \simeq C^K$.
\end{thm}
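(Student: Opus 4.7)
The plan is to derive each of the two filtered homotopy equivalences from naturality of $CFK^{\infty}$ under an appropriate symmetry of doubly pointed Heegaard diagrams, so the argument splits cleanly into two independent geometric moves on a chosen diagram for $K$.

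First, I would prove $C^{-K} \simeq (C^K)^r$ by exchanging basepoints. Fix a doubly pointed Heegaard diagram $(\Sigma, \vec{\alpha}, \vec{\beta}, w, z)$ representing the oriented knot $(S^3, K)$. Swapping the two basepoints yields $(\Sigma, \vec{\alpha}, \vec{\beta}, z, w)$, which represents the same underlying knot with its orientation reversed, i.e.\ $-K$. Since the algebraic filtration of $CFK^{\infty}$ is controlled by the intersection number $n_w$ of a Whitney disk with the $w$-basepoint and the Alexander filtration by $n_z$ (suitably normalised), exchanging $z$ and $w$ precisely interchanges the two filtrations on the underlying $\Lambda$-chain complex. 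This produces an identification $C^{-K} \cong (C^K)^r$ as $\z^2$-filtered chain complexes, with Maslov gradings matched on the nose.

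Next, I would establish $(C^K)^r \simeq C^K$ via the conjugation symmetry of Heegaard Floer homology. The conjugate diagram $(-\Sigma, \vec{\beta}, \vec{\alpha}, w, z)$, in which the Heegaard surface is orientation-reversed and the two curve systems are switched, again represents $(S^3, K)$ with its original orientation. Computing $CFK^{\infty}$ from the conjugate diagram gives the same underlying $\Lambda$-module, but the identification of pseudo-holomorphic disks across the orientation reversal of $\Sigma$ exchanges the roles of $n_w$ and $n_z$, so the Alexander and algebraic filtrations are swapped relative to the original computation. Invariance of $CFK^{\infty}$ under the choice of Heegaard diagram, together with this bookkeeping, then yields $C^K \simeq (C^K)^r$.

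The main obstacle will be the conjugation step. For closed $3$-manifolds, the analogous symmetry $HF^{\infty}(Y, \mathfrak{s}) \cong HF^{\infty}(Y, \bar{\mathfrak{s}})$ is classical, but upgrading it to a genuinely \emph{filtered} equivalence requires care: one must check that the identification of intersection points and holomorphic disks under the orientation reversal of $\Sigma$ preserves the Maslov grading and matches the Alexander/algebraic filtrations on the nose (any overall grading or filtration shift would have to be absorbed into the isomorphism of filtered bases). Once this compatibility is verified, composing the two equivalences produces the full chain $C^{-K} \simeq (C^K)^r \simeq C^K$.
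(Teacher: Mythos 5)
The paper does not prove this statement: it is quoted directly from Ozsv\'ath--Szab\'o \cite{OS04knot}, so there is no internal argument to compare against. Your two-step outline --- exchanging the basepoints to realize orientation reversal of the knot as the swap of the algebraic and Alexander filtrations, and the conjugation symmetry of the doubly pointed diagram to obtain $(C^K)^r \simeq C^K$ --- is exactly the argument given in that reference, and the one subtlety you flag (matching absolute Maslov gradings and the Alexander normalization under conjugation) is indeed where the real work lies.
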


\subsubsection{Relationship to abstract infinity complex}
Here, we compare formal knot complex with Hedden-Watson's
{\it abstract infitnity complex}.
First, a {\it graded, bifiltered complex}
is a chain complex over $\F$
which admits a basis $\mB$ with functions:
$$
m \colon \mB \to \z \text{ and } \mF \colon \mB \to \z^2
$$
such that for any $a,b \in \mB$, if the coefficient of $a$ in $\partial b$ is non-zero,
then
$$
m(a) = m(b) -1 \text{ and } \mF(a) \leq \mF(b).
$$
In other words, $C_n := \spanF\{a \in \mB \mid m(a)=n  \}$ ($n \in \z$) defines a grading
and $C_R := \spanF\{a \in \mB \mid \mF(a) \in R \}$ ($R \in \CR(\z^2)$) defines
a $\z^2$-filtration.
\begin{dfn}[\text{\cite[Definition~6.1]{HW18}}]
An {\it abstract infinity complex} is a graded, bifiltered complex
$(C, \partial, \mF)$ satisfying
\begin{enumerate}
\item
$(C, \partial)$ is freely generated as a chain complex over $\Lambda$
by a finite set of graded, bifiltered homogeneous generators.
\item Acting by $U$ shifts the grading by $-2$ and the bifiltration by $(-1,-1)$.
\item $H_*(C,\partial) \cong \Lambda$, where $1 \in \Lambda$ has grading $0$.
\item The complex $(C, \partial, \mF^r)$, where $\mF^r$ is the bifiltration
function $\mF^r(i,j) := \mF(i,j)$, is $\z^2$-filtered homotopy equivalent to 
$(C, \partial, \mF)$.
\end{enumerate}
\end{dfn}

\begin{prop}
Any formal knot complex is an abstract infinity complex.
\end{prop}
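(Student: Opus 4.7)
The plan is to use the filtered basis supplied by condition (5) of the formal knot complex definition to produce an $\F$-basis together with a bifiltration function, and then to match the seven axioms of a formal knot complex against the four axioms of an abstract infinity complex.

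Given a filtered basis $\{x_k\}_{1 \leq k \leq r}$ of $C$ over $\Lambda$, I would take $\mB := \{U^{\ell} x_k \mid \ell \in \z,\ 1 \leq k \leq r\}$ as an $\F$-basis, with $m(U^{\ell} x_k) := \gr(x_k) - 2\ell$ and $\mF(U^{\ell} x_k) := (\Alg(x_k) - \ell, \Alex(x_k) - \ell)$. The remark following the definition of formal knot complex says that $\{U^{\Alex(x_k) - j} x_k\}_k$ freely generates $\falex{j}$ over $\F[U]$ (and analogously for $\falg{i}$), so the induced $\z^2$-filtration $C_R = \sum_{(i,j) \in R} \falg{i} \cap \falex{j}$ coincides with $\spanF\{b \in \mB : \mF(b) \in R\}$ for every $R \in \CR(\z^2)$. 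To see that $(C, \partial, \mF)$ is then a graded, bifiltered complex in the Hedden--Watson sense, I would observe that $\partial$ lowers Maslov grading by $1$ (condition (1)) and preserves both $\z$-filtrations (conditions (2) and (3)), so whenever $a \in \mB$ has nonzero coefficient in $\partial b$, one has $m(a) = m(b) - 1$ and $\mF(a) \leq \mF(b)$.

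With this identification in place, the four axioms of abstract infinity complex are essentially rewrites of conditions (4)--(7). Axiom (1), free generation over $\Lambda$ by graded, bifiltered homogeneous generators, is condition (5) together with the Maslov-homogeneity of the $x_k$. Axiom (2), that $U$ shifts grading by $-2$ and bifiltration by $(-1,-1)$, is condition (4). Axiom (3), $H_*(C) \cong \Lambda$ with $1$ in grading zero, follows from condition (7): the graded $\Lambda$-linear chain homotopy equivalence $f_{\Alex}\colon C \to \Lambda$ induces a grading-preserving $\Lambda$-module isomorphism on homology, and $1 = U^0$ sits in $\Lambda_0$ by definition. Axiom (4), $\z^2$-filtered homotopy equivalence between $(C, \partial, \mF)$ and its coordinate-swapped version, is exactly the map $\iota \colon C \to C^r$ of condition (6).

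I do not anticipate a substantive obstacle here; the only place requiring genuine care is the bookkeeping in the second paragraph, where one must verify that the $\F$-basis $\mB$ realizes the induced $\z^2$-filtration on $C$ rather than only a coarser approximation, so that the bifiltration condition on $\partial$ with respect to $\mB$ follows from $\partial$-invariance of the two $\z$-filtrations. This step reduces directly to the remark immediately after the definition of formal knot complex.
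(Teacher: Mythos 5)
Your proposal is correct and takes essentially the same route as the paper: the paper's proof defines exactly the same basis $\mB=\{U^{l}x_k\}$, grading $m(U^{l}x_k)=\gr(U^{l}x_k)$ and bifiltration $\mF(U^{l}x_k)=(\Alg(U^{l}x_k),\Alex(U^{l}x_k))$, and then asserts that all four axioms hold. You simply spell out the verification (the compatibility of $\mB$ with the induced $\z^2$-filtration is the paper's Lemma on the decomposition $C_R=\bigoplus_{(i,j)\in R}C_{(i,j)}$) in more detail than the paper does.
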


\begin{proof}
For a given formal knot complex $C$, we take a filtered basis $\{x_k\}_{1 \leq k \leq r}$
and set
\begin{itemize}
\item
$\mathcal{B} := \{ U^l x_k \mid^{1 \leq k \leq r}_{ l \in \z} \}$,

\item
$m: \mathcal{B} \to \z : U^l x_k \mapsto \gr(U^l x_k)$,  and 

\item
$\mathcal{F}: \mathcal{B} \to \z \times \z: U^l x_k \mapsto (\Alg(U^l x_k), \Alex(U^l x_k)).$
\end{itemize}
Then $(C,\partial, \mathcal{F})$ satisfies the all conditions for being 
an abstract infinity complex.
\end{proof}

On the other hand, in general, an abstract infinity complex does not satisfy the condition (7) in the definition of formal knot complex.
For instance, $\Lambda$ with grading shifted by $2n$ is an abstract infinity complex, but
it is not $\z$-filtered homotopy equivalent to the original $\Lambda$ with respect to either Alexander or algebraic filtration.

\subsubsection{Basic properties}
Here, we discuss several basic properties of formal knot complexes. 
We first consider a change of filtered basis.
\begin{lem}
\label{filtered basis}
Let $C$ be a formal knot complex and $\{x_k\}_{1 \leq k\leq r}$
a filtered basis for $C$.
\begin{enumerate}
\item For any $l \in \z$ and $a \in \{1, \ldots, r\}$, 
the set $\{ x_k\}^{1 \leq k \leq r}_{k \neq a} \cup \{U^l x_a\}$
is also a filtered basis for $C$.
\item For $a,b \in \{1, \ldots, r\}$ with $a \neq b$,
if $\gr(x_a)=\gr(x_b)$, $\Alex(x_a) \geq \Alex(x_b)$ and $\Alg(x_a) \geq \Alg(x_b)$,
then the set $\{ x_k\}^{1 \leq k \leq r}_{k \neq a} \cup \{x_a+x_b\}$
is also a filtered basis for $C$.
Moreover, $\Alex(x_a+x_b)=\Alex(x_a)$ and $\Alg(x_a+x_b)=\Alg(x_a)$.
\end{enumerate}
\end{lem}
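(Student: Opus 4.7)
The plan is to verify the three defining properties of a filtered basis (condition (5) in the definition of formal knot complex) for the proposed new generating set in each case.

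For (1), scaling $x_a$ by $U^l$ preserves Maslov homogeneity since $U$ has pure Maslov degree $-2$, so $U^l x_a$ is homogeneous of grading $\gr(x_a) - 2l$. The corresponding generator of the Alexander level-zero submodule is
$$U^{\Alex(U^l x_a)}(U^l x_a) \;=\; U^{\Alex(x_a) - l}\,U^l x_a \;=\; U^{\Alex(x_a)} x_a,$$
which is literally the same element as in the original filtered basis, and the identical identity holds for the algebraic filtration. Hence the free-basis conditions for $\falex{0}$ and $\falg{0}$ transfer with no additional work.

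For (2), Maslov homogeneity of $x_a + x_b$ is immediate from $\gr(x_a) = \gr(x_b)$. To compute its filtration levels I would use that $C$ is $\Lambda$-free on $\{x_k\}$, so each element admits a unique expansion $\sum_k p_k(U,U^{-1}) x_k$, and (as noted in the remark following the definition) membership in $\falex{j}$ is equivalent to $p_k \in U^{\Alex(x_k) - j}\F[U]$ for every $k$. If $x_a + x_b$ were to lie in $\falex{\Alex(x_a)-1}$, the coefficient of $x_a$ (which equals $1$) would have to lie in $U\cdot\F[U]$, which is absurd. This forces $\Alex(x_a+x_b) = \Alex(x_a)$, and the identical argument yields $\Alg(x_a+x_b) = \Alg(x_a)$.

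Finally, I would verify that $\{U^{\Alex(x_k)} x_k\}_{k\neq a} \cup \{U^{\Alex(x_a)}(x_a + x_b)\}$ is still a free $\F[U]$-basis for $\falex{0}$ by expanding
$$U^{\Alex(x_a)}(x_a + x_b) \;=\; U^{\Alex(x_a)} x_a \,+\, U^{\Alex(x_a) - \Alex(x_b)}\cdot U^{\Alex(x_b)} x_b,$$
in which the second exponent is non-negative by hypothesis. The resulting change-of-basis matrix from the original generators of $\falex{0}$ to the new ones is the identity plus a single $\F[U]$-multiple in position $(b,a)$, hence unit-triangular and invertible over $\F[U]$. The analogous computation applies verbatim to $\falg{0}$. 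I do not anticipate any genuine obstacle; the only point requiring care is the use of $\Lambda$-free expansion to pin down $\Alex(x_a+x_b)$ and $\Alg(x_a+x_b)$ exactly rather than merely as upper bounds.
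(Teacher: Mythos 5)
Your proposal is correct and follows essentially the same route as the paper: part (1) via the identity $U^{\Alex(U^l x_a)}(U^l x_a)=U^{\Alex(x_a)}x_a$, and part (2) by using the unique $\Lambda$-expansion to rule out $x_a+x_b\in\falex{\Alex(x_a)-1}$ (the paper phrases this by multiplying by $U^{\Alex(x_a)-1}$ and comparing against the free $\F[U]$-basis of $\falex{0}$, which is the same contradiction). Your explicit unit-triangular change-of-basis computation for the $\falex{0}$ and $\falg{0}$ conditions fills in a step the paper dismisses as ``easy to check.''
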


\begin{proof}
It is obvious that both $\{ x_k\}^{1 \leq k \leq r}_{k \neq a} \cup \{U^l x_a\}$
and $\{ x_k\}^{1 \leq k \leq r}_{k \neq a} \cup \{x_a+x_b\}$
are free bases for $C$ as a $\Lambda$-module.
Therefore, the first assertion follows from $U^l x_a \in C_{\gr(x_a)-2l}$, $\Alex(U^lx_a)= \Alex(x_a)-l$
and $\Alg(U^l x_a)=\Alg(x_a)-l$.

We consider the second assertion.
Since $x_a+x_b \in C_{\gr(x_a)}=C_{\gr(x_b)}$, the element $x_a + x_b$ is homogeneous.
Next, let $j_a := \Alex(x_a)$, and then $x_a+x_b$ lies in $\falex{j_a}$.
Here we claim that $x_a+x_b \notin \falex{j_a-1}$.
Assume that $x_a+x_b \in \falex{j_a-1}$. Then
$U^{j_a-1}(x_a+x_b)=U^{j_a-1}x_a+ U^{j_a-1}x_b \in \falex{0}$,
and we have a linear combination
$$
U^{j_a-1}x_a+ U^{j_a-1}x_b = \sum_{1 \leq k \leq r} p_k(U) U^{\Alex(x_k)}x_k
$$
where $p_k(U) \in \F[U]$.
However,  the minimal degree of $p_a(U)U^{\Alex(x_a)}=p_a(U)U^{j_a}$ 
is at least $j_a$,
and hence we have $U^{j_a-1} \neq p_a(U)U^{j_a}$.
This contradicts the fact that $\{x_k\}_{1\leq k\leq r}$
is a free basis for $C$ as a $\Lambda$-module.
Therefore, we have $x_a+x_b \notin \falex{j_a-1}$ and 
$\Alex(x_a+x_b)=j_a$.
Now, it is easy to check that
$\{ U^{\Alex(x_k)}x_k\}^{1 \leq k \leq r}_{k \neq a} \cup \{U^{\Alex(x_a+x_b)}(x_a+x_b)\}$
is a free basis for $\falex{0}$ as a $\F[U]$-module.
Similarly, we can check that
$\{ U^{\Alg(x_k)}x_k\}^{1 \leq k \leq r}_{k \neq a} \cup \{U^{\Alg(x_a+x_b)}(x_a+x_b)\}$
is a free basis for $\falg{0}$ as a $\F[U]$-module.
\end{proof}

Next we consider the rank of formal knot complexes.

\begin{lem}
\label{odd}
For any formal knot complex $C$,
the rank of $C$ as a $\Lambda$-module is odd.
\end{lem}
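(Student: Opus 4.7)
The plan is to exploit condition (7) to identify the graded $\Lambda$-module structure of the homology, and then combine this with a parity decomposition of $C$ together with an Euler-characteristic argument over the PID $\Lambda = \F[U,U^{-1}]$.

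First I would use condition (7): the map $f_{\Alex} \colon C \to \Lambda$ is in particular a chain homotopy equivalence over $\Lambda$ (forgetting filtrations), so it induces an isomorphism of $\z$-graded $\Lambda$-modules $H_*(C) \cong H_*(\Lambda) = \Lambda$, concentrated in even Maslov gradings by the grading convention in (7).

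Next I would split $C$ by the parity of the Maslov grading. Set $C^{\mathrm{ev}} := \bigoplus_{n} C_{2n}$ and $C^{\mathrm{od}} := \bigoplus_{n} C_{2n+1}$. Since $U$ has even Maslov degree (condition (4)) and each element of a filtered basis $\{x_k\}_{1 \leq k \leq r}$ is homogeneous (condition (5)), the submodules $C^{\mathrm{ev}}$ and $C^{\mathrm{od}}$ are free $\Lambda$-modules with free bases $\{x_k : \gr(x_k) \text{ even}\}$ and $\{x_k : \gr(x_k) \text{ odd}\}$, of ranks $r_e$ and $r_o$ respectively, and $r = r_e + r_o$. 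Because $\partial$ lowers Maslov grading by $1$, it decomposes as two $\Lambda$-linear maps $\partial_e \colon C^{\mathrm{ev}} \to C^{\mathrm{od}}$ and $\partial_o \colon C^{\mathrm{od}} \to C^{\mathrm{ev}}$ with $\partial_e\partial_o = \partial_o\partial_e = 0$, so $(C^{\mathrm{ev}} \oplus C^{\mathrm{od}}, \partial_e + \partial_o)$ is a $2$-periodic complex of finitely generated free $\Lambda$-modules whose homology in the two parities equals $H_{\mathrm{ev}}(C) \oplus H_{\mathrm{od}}(C)$.

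Finally I would apply the Euler characteristic identity over the PID $\Lambda$: for a bounded complex of finitely generated free $\Lambda$-modules,
$$
\mathrm{rank}_{\Lambda}(C^{\mathrm{ev}}) - \mathrm{rank}_{\Lambda}(C^{\mathrm{od}}) \;=\; \mathrm{rank}_{\Lambda}H_{\mathrm{ev}}(C) - \mathrm{rank}_{\Lambda}H_{\mathrm{od}}(C).
$$
By Step~1 the right-hand side equals $1 - 0 = 1$, so $r_e - r_o = 1$ and therefore $r = 2r_o + 1$ is odd.

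There is essentially no hard step here; the only thing one must be a little careful about is justifying that the parity splitting really produces free $\Lambda$-modules (which is immediate from the homogeneity of a filtered basis and the parity of $\deg U$) and that the Euler-characteristic identity applies (which holds since $\Lambda$ is a PID and everything in sight is finitely generated and free). The conceptual content is just that forcing $H_*(C) \cong \Lambda$ together with the $\Lambda$-action already pins down the parity of the $\Lambda$-rank.
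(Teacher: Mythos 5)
Your proof is correct, but it takes a genuinely different route from the paper. The paper passes to the associated graded piece $\falg{0}/\falg{-1}$ of the algebraic filtration: the $\z$-filtered homotopy equivalence $C\simeq\Lambda$ of condition~(7) forces $H_*(\falg{0}/\falg{-1})\cong\F$ concentrated in degree $0$, so this finite-dimensional $\F$-complex has Euler characteristic $1$; since $\{U^{\Alg(x_k)}x_k\}_{1\leq k\leq r}$ is an $\F$-basis for it, $r\equiv 1\pmod 2$. You instead forget the filtrations entirely, use only $H_*(C)\cong\Lambda$ (rank $1$, concentrated in even Maslov gradings) together with the homogeneity of a filtered basis, and run the Euler-characteristic count over the PID $\Lambda$ for the parity decomposition $C=C^{\mathrm{ev}}\oplus C^{\mathrm{od}}$, getting $r_e-r_o=1$. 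Both are Euler-characteristic arguments in the end; yours uses less structure (no filtration at all beyond what is needed to produce a homogeneous free basis) and hence is slightly more robust, while the paper's reduction to a finite-dimensional $\F$-complex lets it quote the most elementary version of the parity count. One small wording issue in yours: the complex $(C^{\mathrm{ev}}\oplus C^{\mathrm{od}},\partial_e+\partial_o)$ is $\z/2$-graded (two-periodic), not bounded, so you should invoke the Euler-characteristic identity in its $\z/2$-graded form; this version does hold over a domain, since rank is additive in short exact sequences of finitely generated modules (localize at the fraction field), and the usual rank--nullity bookkeeping with $\ker\partial_e$, $\image\partial_o$, etc.\ goes through verbatim. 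With that phrasing fixed, the argument is complete.
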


\begin{proof}
Since there exists a chain homotopy equivalence map from $C$
to $\Lambda$ such that the map, its inverse and all chain homotopies are
graded and filtered with respect to the Maslov grading and the algebraic filtration,
we have $H_*(\falg{0}/\falg{-1})= H_0(\falg{0}/\falg{-1}) \cong \F$. In particular, the Euler characteristic of 
$\falg{0}/\falg{-1}$ is 1.
Here, as a $\F$-vector space,
$\{ U^{\Alg(x_k)}x_k\}_{1\leq k\leq r}$ is a basis for $\falg{0}/\falg{-1}$,
and hence $r$ is odd. This completes the proof.
\end{proof}

Finally, by using a fixed filtered basis $\{x_k\}_{1 \leq k \leq r}$, we consider a decomposition
$C= \bigoplus_{(i,j) \in \z^2}C_{(i,j)}$ as a $\F$-vector space, where $C_{(i,j)}$
is defined by 
$$
C_{(i,j)} := \spanF\{U^l x_k \mid (\Alg(U^l x_k),  \Alex(U^l x_k)) = (i,j) \}.
$$
We call it {\it the decomposition of $C$ induced by $\{x_k\}_{1 \leq k \leq r}$}.

\begin{lem}
\label{subcpx decomp}
For any $R \in \CR(\z^2)$, the equality
$$
C_R = \bigoplus_{(i,j) \in R} C_{(i,j)}
$$
holds.
\end{lem}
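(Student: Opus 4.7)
The plan is to unpack the definitions of $\falex{j}$, $\falg{i}$, the induced $\mathbb{Z}^2$-filtration, and the decomposition $\bigoplus_{(i,j)\in\z^2} C_{(i,j)}$, all in terms of the fixed filtered basis $\{x_k\}_{1\le k\le r}$, and then check set-theoretically that the two descriptions of $C_R$ agree on basis vectors of the form $U^l x_k$. Since $\{U^l x_k \mid 1\le k\le r,\ l\in\z\}$ is an $\F$-basis for $C$ and each $C_{(i,j)}$ is by construction the $\F$-span of such basis vectors with prescribed bifiltration levels, the directness of the asserted sum is automatic once the containment of supports is established.

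First, I would use the Remark following the definition of formal knot complex to rewrite the Alexander and algebraic filtrations on a basis level. Since $\{U^{\Alex(x_k)-j}x_k\}_{1\le k\le r}$ is a free $\F[U]$-basis of $\falex{j}$, the condition $U^l x_k\in\falex{j}$ is equivalent to $l\ge \Alex(x_k)-j$, i.e.\ $\Alex(U^l x_k)\le j$. Thus $\falex{j}=\spanF\{U^l x_k\mid \Alex(U^l x_k)\le j\}$, and by the same argument $\falg{i}=\spanF\{U^l x_k\mid \Alg(U^l x_k)\le i\}$. Because these are $\F$-spans of subsets of the $\F$-basis $\{U^l x_k\}$, their intersection is obtained by intersecting the index sets, giving
$$
\falg{i}\cap\falex{j}\;=\;\spanF\{U^l x_k\mid \Alg(U^l x_k)\le i,\ \Alex(U^l x_k)\le j\}\;=\;\bigoplus_{\substack{i'\le i\\ j'\le j}}C_{(i',j')}.
$$

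Next, by the definition of the $\mathbb{Z}^2$-filtration induced by $(\{\falg{i}\},\{\falex{j}\})$, we have $C_R=\sum_{(i,j)\in R}\falg{i}\cap\falex{j}$. Substituting the formula above,
$$
C_R\;=\;\spanF\bigl\{U^l x_k\,\big|\,\exists (i,j)\in R\text{ with }\Alg(U^l x_k)\le i,\ \Alex(U^l x_k)\le j\bigr\}.
$$
Here the closedness of $R$ enters: the existential condition is equivalent to $(\Alg(U^l x_k),\Alex(U^l x_k))\in R$, since $R$ contains every point below any of its elements and conversely the point itself witnesses the existential. Hence $C_R=\spanF\{U^l x_k\mid (\Alg(U^l x_k),\Alex(U^l x_k))\in R\}=\bigoplus_{(i,j)\in R}C_{(i,j)}$, with the direct sum following from disjointness of the defining index sets.

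There is no real obstacle; the only place that uses something beyond bookkeeping is the step where the $\F[U]$-basis description of $\falex{j}$ and $\falg{i}$ from condition~(5) of the definition is invoked to pass from "$U^l x_k\in\falex{j}$" to a purely arithmetic constraint on $l$, and the step where downward-closedness of $R$ collapses the existential quantifier. Both are clean, and the directness check is immediate from the fact that the $C_{(i,j)}$ are $\F$-spans of disjoint subsets of a single $\F$-basis of $C$.
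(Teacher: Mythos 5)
Your proof is correct and follows essentially the same route as the paper's: both reduce everything to the $\F$-basis $\{U^l x_k\}$, use the basis-level description of $\falg{i}\cap\falex{j}$ coming from condition~(5), and invoke the downward-closedness of $R$ to identify the two sides (the paper just phrases the intersection as the free $\F[U]$-module on $\{U^{\max\{\Alg(x_k)-i,\,\Alex(x_k)-j\}}x_k\}$ and checks the two inclusions separately, whereas you collapse the existential quantifier in one step). No gap.
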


\begin{proof}
By the definitions of $C_R$ and filtered basis, we see that
$$
C_R = \sum_{(i,j) \in R} (\falg{i} \cap \falex{j})
$$
and
$$
\falg{i} \cap \falex{j} = \spanFU\{ U^{\max\{\Alg(x_k)-i, \Alex(x_k) -j \}} x_k\}_{1\leq k\leq r}.
$$
Therefore, if $(i,j) \in R$ and $U^l x_k \in C_{(i,j)}$, then 
$$
l = \Alg(x_k) - i = \Alex(x_k) -j = \max \{\Alg(x_k)-i, \Alex(x_k) -j \},
$$
and hence $U^l x_k \in \falg{i} \cap \falex{j} \subset C_R$.
This implies $C_R \supset \bigoplus_{(i,j) \in R}C_{(i,j)}$.

Conversely, if $(i,j) \in R$ and $l \geq \max\{\Alg(x_k)-i, \Alex(x_k) -j \}$, then 
$$
U^l x_k \in C_{(\Alg(x_k)-l, \Alex(x_k)-l)}
$$
and
$$
(\Alg(x_k)-l, \Alex(x_k)-l) \leq (i,j).
$$
This implies $U^l x_k \in \bigoplus_{(i,j) \in R}C_{(i,j)}$, and hence 
$C_R \subset \bigoplus_{(i,j) \in R}C_{(i,j)}$.
\end{proof}
As a corollary, we have the following useful lemma.
\begin{lem}
\label{add}
For any $R,R' \in \CR(\z^2)$, we have $C_{R \cup R'} = C_R + C_{R'}$.
\end{lem}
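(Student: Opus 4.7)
The plan is to derive this corollary directly from Lemma~\ref{subcpx decomp}. Fix a filtered basis $\{x_k\}_{1 \leq k \leq r}$ for $C$ and consider the induced $\F$-vector space decomposition $C = \bigoplus_{(i,j) \in \z^2} C_{(i,j)}$. The first small check is that $R \cup R'$ itself lies in $\CR(\z^2)$: given any $x$ with $x \leq y$ for some $y \in R \cup R'$, closedness of whichever of $R, R'$ contains $y$ places $x$ in $R \cup R'$. So the left-hand side $C_{R \cup R'}$ is defined.

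Next I would apply Lemma~\ref{subcpx decomp} three times to obtain
$$
C_R = \bigoplus_{(i,j) \in R} C_{(i,j)}, \qquad
C_{R'} = \bigoplus_{(i,j) \in R'} C_{(i,j)}, \qquad
C_{R \cup R'} = \bigoplus_{(i,j) \in R \cup R'} C_{(i,j)}.
$$
Summing the first two as $\F$-subspaces of $C$, inside the common ambient decomposition $\bigoplus_{(i,j) \in \z^2} C_{(i,j)}$, yields precisely $\bigoplus_{(i,j) \in R \cup R'} C_{(i,j)}$, which is the third. This gives the desired equality $C_{R \cup R'} = C_R + C_{R'}$.

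There is essentially no obstacle here. The one subtlety worth flagging is that $R$ and $R'$ may overlap, so $C_R + C_{R'}$ is a sum of subspaces rather than an internal direct sum; but because the pieces $C_{(i,j)}$ simultaneously split both $C_R$ and $C_{R'}$, the subspace sum is still identified with the direct sum over $R \cup R'$, so no difficulty arises.
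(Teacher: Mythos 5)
Your proposal is correct and follows essentially the same route as the paper: both apply Lemma~\ref{subcpx decomp} to all three regions and identify $C_R + C_{R'}$ with $\bigoplus_{(i,j)\in R\cup R'} C_{(i,j)}$ inside the common decomposition. The extra check that $R\cup R'$ is itself a closed region is a reasonable (if minor) addition that the paper leaves implicit.
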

\begin{proof}
By Lemma~\ref{subcpx decomp}, we see that
$$
C_{R \cup R'} = \bigoplus_{(i,j) \in R \cup R'}C_{(i,j)}
= (\bigoplus_{(i,j) \in R}C_{(i,j)}) + (\bigoplus_{(i,j) \in R'}C_{(i,j)})
= C_{R} + C_{R'}.
$$
\end{proof}

\subsection{Commutative monoid structure}
In this subsection, we check that the tensor product of formal knot complexes is also
a formal knot complex.

Let $\mKf$ be the set of the $\z^2$- filtered homotopy equivalence classes of formal knot complexes.
\begin{prop}
\label{tensor}
For any two formal knot complexes $C$ and $C'$, the tuple
$$
\begin{array}{l}
(C \ot{\Lambda} C', \partial \otimes 1 + 1 \otimes \partial, 
\{ \spanF p(\bigcup_{m \in \z} C_m \times C'_{n-m}) \},  \\
\{  \spanF p(\falex{0} \times \falex{j} )\}, 
\{ \spanF p( \falg{0} \times \falg{i} )\})
\end{array}
$$
is a formal knot complex, where 
$p: \Lambda^{C \times C'} \twoheadrightarrow \ C \ot{\Lambda} C'$ is the projection.
Moreover, the set $\mKf$
with product
$$
\mKf \times \mKf \to \mKf : ([C], [C']) \mapsto [C\ot{\Lambda} C'] 
$$
is a commutative monoid.
\end{prop}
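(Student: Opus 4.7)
The plan is to verify the seven defining conditions of a formal knot complex for the tuple on $C \ot{\Lambda} C'$ one by one, and then check the monoid axioms. For conditions (1)--(4) the arguments are routine: the tensor product $(C \ot{\Lambda} C', \partial \otimes 1 + 1 \otimes \partial)$ is a chain complex over $\Lambda$ with Maslov grading given by the total grading, the two described sets of subspaces are readily seen to be $\z$-filtrations, and the $U$-action shifts Maslov grading by $-2$ and both filtrations by $-1$, since $U \cdot (x \otimes y) = (Ux) \otimes y = x \otimes (Uy)$ and the shifts on each factor are known.

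The heart of the proof is condition (5). Given filtered bases $\{x_k\}_{1 \leq k \leq r}$ for $C$ and $\{y_l\}_{1 \leq l \leq s}$ for $C'$, the natural candidate for a filtered basis of $C \ot{\Lambda} C'$ is $\{x_k \otimes y_l\}_{k,l}$, where I set $\Alex(x_k \otimes y_l) := \Alex(x_k) + \Alex(y_l)$ and analogously for $\Alg$. I need to verify that $\{U^{\Alex(x_k) + \Alex(y_l)}(x_k \otimes y_l)\}_{k,l}$ is a free $\F[U]$-basis of $\falex{0}(C \ot{\Lambda} C')$, and similarly for the algebraic filtration. Unwinding the definition $\falex{j}(C \ot{\Lambda} C') = \spanF p(\falex{0}(C) \times \falex{j}(C'))$, any generator arises as a product of elements of the $\F[U]$-bases of $\falex{0}(C)$ and $\falex{j}(C')$; collecting monomials by means of the relation $(Ux) \otimes y = x \otimes (Uy)$ in the tensor product over $\Lambda$ delivers precisely the required generating set, and freeness descends from freeness of $\{x_k\}$ and $\{y_l\}$ over $\Lambda$.

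Conditions (6) and (7) follow formally by tensoring. The maps $\iota_C$ and $\iota_{C'}$ produce a $\z^2$-filtered homotopy equivalence $\iota_C \otimes \iota_{C'} : C \ot{\Lambda} C' \to (C \ot{\Lambda} C')^r$. The maps $f_{\Alex}^C \otimes f_{\Alex}^{C'}$ and $f_{\Alg}^C \otimes f_{\Alg}^{C'}$ land in $\Lambda \ot{\Lambda} \Lambda \cong \Lambda$ with the stated filtrations, giving the required $\z$-filtered homotopy equivalences; here one uses that tensoring a filtered homotopy equivalence with another preserves the filtered homotopy equivalence property.

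For the monoid structure on $\mKf$, well-definedness requires $C \simeq D$ and $C' \simeq D'$ to imply $C \ot{\Lambda} C' \simeq D \ot{\Lambda} D'$, which follows by tensoring the given $\z^2$-filtered homotopy equivalences together with their inverses and homotopies. Associativity, commutativity, and the identity $[\Lambda]$ are inherited from the corresponding properties of $\ot{\Lambda}$ at the level of graded chain complexes over $\Lambda$. The main obstacle throughout is the bookkeeping in condition (5): the filtrations are defined only as $\F$-subspaces rather than $\Lambda$-submodules, so one must carefully track how the interplay between the $\Lambda$-tensor identification $(Ux) \otimes y = x \otimes (Uy)$ and the two filtrations preserves the free $\F[U]$-basis structure of $\falex{0}$ and $\falg{0}$ under tensor product.
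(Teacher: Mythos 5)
Your proposal is correct and follows essentially the same route as the paper: both verify the seven conditions using the filtered basis $\{x_k \otimes x'_l\}$ with additive Alexander and algebraic filtration levels as the key step, and both obtain conditions (6) and (7) by tensoring the maps $\iota$, $f_{\Alex}$, $f_{\Alg}$ and composing with the canonical identification $\Lambda \ot{\Lambda} \Lambda \cong \Lambda$. The monoid axioms are likewise handled identically, by tensoring filtered homotopy equivalences.
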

\begin{remark}
Note that $p(\falex{j_1} \times \falex{j_2})=
p(\falex{j'_1} \times \falex{j'_2})$ if $j_1+j_2=j'_1+j'_2$,
and hence the definition of the Alexander (resp.\ algebraic) filtration is symmetric.
\end{remark}
\begin{proof}
The fact that $(C \ot{\Lambda} C', \partial \otimes 1 + 1 \otimes \partial)$ is 
a chain complex follows from
ordinary arguments in homological algebra.
Let $\{ x_k\}_{1 \leq k \leq r}$ (resp.\ $\{ x'_l\}_{1 \leq l \leq s}$) be a filtered basis 
 for $C$ (resp.\ $C'$).
Then 
$\{ x_k \otimes x'_l  |^{1 \leq k \leq r}_{1 \leq l \leq s}\}$ is a free basis for 
$C \ot{\Lambda} C'$, and
$$
\{ U^n (x_k \otimes x'_l) \mid 1 \leq k \leq r, 1 \leq l \leq s, n \in \z \}
$$
is a basis for $C \ot{\Lambda} C'$ as a $\F$-vector space.
In particular, the subspace
$$
(C \ot{\Lambda} C')_n := \spanF p(\bigcup_{m \in \z} C_m \times C'_{n-m})
$$ 
is generated by
$
\{
U^{\frac{\gr(x_k)+\gr(x'_l)-n}{2}} (x_k \otimes x'_l) 
\}_{(k,l) \in [n]},
$
where $[n]$ is a subset  of $\{1, \ldots, r\} \times \{1, \ldots, s\}$
such that $(k,l) \in [n]$ if and only if $\gr(x_k)+\gr(x'_l) \equiv n$ (mod 2). This implies that  
$C \ot{\Lambda} C' = \bigoplus_{n \in \z} 
(C \ot{\Lambda} C')_n$ as a $\F$-vector space,
 $\partial((C \ot{\Lambda} C')_n) \subset (C \ot{\Lambda} C')_{n-1}$
and $U((C \ot{\Lambda} C')_n) \subset (C \ot{\Lambda} C')_{n-2}$.
Therefore, the first condition and a part of the fourth and fifth conditions hold.

Next, 
it is obvious that 
$
\{ \spanF p(\falex{0} \times \falex{j} )\}_{j \in \z}
$
gives an increasing sequence of subcomplexes, and
we see that 
$\{ U^{\Alex(x_k)+\Alex(x'_l)-j} (x_k \otimes x'_l)|^{1 \leq k \leq r}_{1 \leq l \leq s }\}$
is a free basis for 
$ \spanF p(\falex{0} \times \falex{j} )$ as a $\F[U]$-module.
Hence the second condition and a part of the fourth and fifth conditions hold.
Similarly, we can verify that the third condition and
the remaining part of the fourth and fifth conditions hold. 

Next we consider the seventh condition.
Here we note that it is easy to check that for the trivial case 
(i.e.\ the case of $C=C'=\Lambda$),
the seventh condition holds. Indeed, the canonical identification
$\Lambda \ot{\Lambda} \Lambda \cong \Lambda$
and its inverse are graded and $\z$-filtered chain isomorphisms (with respect to both filtrations).

Let $f_{\Alex}$ (resp.\ $f'_{\Alex}$) be a $\z$-filtered homotopy equivalence map from $C$ (resp.\ $C'$)
to $\Lambda$ with respect to the Alexander filtration.
Then 
the composition of $f_{\Alex} \otimes f'_{\Alex} : C \ot{\Lambda} C' 
\to \Lambda \ot{\Lambda} \Lambda$ with the canonical identification
$\Lambda \ot{\Lambda} \Lambda \cong \Lambda$ 
is a $\z$-filtered homotopy equivalence map 
with respect to
the grading $\{(C \ot{\Lambda} C')_n\}_{n \in \z}$ and
the filtration $\{ \spanF p(\falex{0} \times \falex{j} ) \}_{j \in \z}$.
Therefore, the seventh condition holds with respect to the Alexander filtration. 
In the same way, we can also prove the seventh condition with respect to the algebraic filtration,
and verify that
$
C \ot{\Lambda} \Lambda \simeq C
$,
$
C \ot{\Lambda} C' \simeq C' \ot{\Lambda} C
$, and if $C \simeq C''$ then
$
C \ot{\Lambda} C' \simeq C'' \ot{\Lambda} C'
$.

Now, to prove the proposition, it suffices to prove the sixth condition, and this follows from taking $\iota \otimes \iota'$, where $\iota: C \to C$
(resp.\ $\iota': C' \to C'$) is a map satisfying the sixth condition for $C$ (resp.\ $C'$).
This completes the proof.
\end{proof}
Now, let $\mK$ be the monoid of the isotopy classes of knots.
Then we see that the connected sum formula of $CFK^{\infty}$ gives a 
monoid homomorphism
$\mK \to \mKf$.
\begin{thm}[\text{\cite[Theorem~7.1]{OS04knot}}]
\label{monoid hom}
The map $\mK \to \mKf: [K] \mapsto [C^K]$ is a monoid homomorphism.
Equivalently, the equality $[C^{K\#J}]=[C^K \ot{\Lambda} C^J]$ holds.
\end{thm}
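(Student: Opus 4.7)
The statement has two assertions bundled together: that $[K] \mapsto [C^K]$ descends to isotopy classes (already recorded in the excerpt as an earlier theorem of Ozsv\'ath--Szab\'o), and that it respects the monoid operation, i.e. $[C^{K\#J}] = [C^K \ot{\Lambda} C^J]$. The only nontrivial content is the connected sum formula, so the plan is to prove that, following the Heegaard-theoretic strategy from \cite{OS04knot}.

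First, I would fix admissible doubly-pointed Heegaard diagrams $\mathcal{H}_1 = (\Sigma_1,\alpha_1,\beta_1,w_1,z_1)$ for $K \subset S^3$ and $\mathcal{H}_2 = (\Sigma_2,\alpha_2,\beta_2,w_2,z_2)$ for $J \subset S^3$. A standard construction produces a doubly-pointed Heegaard diagram $\mathcal{H}$ for $K\#J$ by connect-summing $\Sigma_1$ and $\Sigma_2$ at the $w$-basepoints, placing the new basepoint $w$ in the connect-sum tube and keeping $z_1, z_2$ (one of them relabeled $z$) separated by the tube on the knot strand running between them. The intersection points of the totally real tori in $\mathcal{H}$ biject canonically with pairs $(\mathbf{x}_1,\mathbf{x}_2)$ of generators from $\mathcal{H}_1$ and $\mathcal{H}_2$, which gives the underlying $\Lambda$-module identification $CFK^{\infty}(K\#J) \cong CFK^{\infty}(K) \ot{\Lambda} CFK^{\infty}(J)$ and, tautologically, the additivity of Maslov grading and of the Alexander and algebraic filtrations (the latter being $-n_{w}$ and $-n_z$ counts across the tube, which split into sums over the two pieces).

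Next comes the main point: identifying the differentials. I would stretch the neck along the connect-sum circle and apply a Gromov-style degeneration argument. For a sufficiently long neck, every holomorphic Whitney disk in $\mathcal{H}$ representing a homotopy class that avoids the relevant basepoints must degenerate into a pair of holomorphic disks, one on each side of the neck. The standard matching-at-the-neck argument, together with a transversality count showing the gluing is a bijection onto pairs of disks of the correct index, yields exactly $\partial = \partial_1 \otimes 1 + 1 \otimes \partial_2$ on the nose (over $\F = \z/2$). This shows that the canonical identification is a chain isomorphism, and since it sends filtered bases of $C^K$ and $C^J$ to a filtered basis of the tensor product, it is a $\z^2$-filtered chain isomorphism of formal knot complexes in the sense of Proposition~\ref{tensor}.

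The main obstacle is the holomorphic-disk analysis: one needs the neck-stretching/gluing package (existence of a uniform neck length beyond which all relevant moduli spaces decompose, plus an index and signs count over $\F$) to turn the naive generator-level bijection into a chain-level isomorphism. Once that is in hand, well-definedness (invariance of $[C^K]$ under the choice of Heegaard data and isotopies of $K$) combined with the tensor-product computation gives that $[K]\mapsto[C^K]$ is a monoid homomorphism; and the fact that the unknot has $CFK^{\infty}$ equal to $\Lambda$ with trivial differential confirms it sends the unit to the unit, completing the proof.
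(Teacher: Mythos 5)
The paper does not prove this statement: it is imported verbatim as a citation to \cite[Theorem~7.1]{OS04knot}, and is used as a black box, so there is no in-paper argument to compare yours against. Your sketch is an outline of the strategy of the cited proof (connected sum of doubly pointed Heegaard diagrams, generators biject with pairs, neck-stretching to identify the differential with $\partial_1\otimes 1 + 1\otimes\partial_2$), which is the right route.

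Two points in the sketch deserve flagging. First, the basepoint placement is off: the standard construction forms the connected sum of $\Sigma_1$ and $\Sigma_2$ in neighbourhoods of $z_1$ and $w_2$, and the resulting doubly pointed diagram for $K\# J$ uses $w_1$ and $z_2$ as its two basepoints. As written, your diagram retains three basepoints ($w$ in the tube plus $z_1$ and $z_2$), which is not a doubly pointed diagram and does not obviously present $K\# J$. Second, restricting attention to ``disks avoiding the relevant basepoints'' does not suffice for $CFK^{\infty}$: its differential counts index-one disks of arbitrary multiplicity at $w$ and $z$, so the degeneration argument must also dispose of homotopy classes crossing the neck. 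These correspond to matched pairs $(\phi_1,\phi_2)$ with equal multiplicity $n$ at the connect-sum point, and one needs the index relation $\mu(\phi_1\#\phi_2)=\mu(\phi_1)+\mu(\phi_2)-2n$ together with the two independent translation actions appearing in the stretched limit to see that such classes contribute nothing to an index-one count. Both issues are exactly the content being carried by the citation, so deferring them is consistent with how the paper treats the theorem, but the first one should be corrected and the second acknowledged if the sketch is meant to stand as a proof.
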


\subsection{The dual of a formal knot complex}
In this subsection, we check that the dual of a formal knot complex
is also a formal knot complex.

Let $C$ be a formal knot complex. Since $C$ is freely generated by 
a filtered basis $\{ x_k \}_{1\leq k \leq r}$ as a $\Lambda$-module,
the dual $C^* := \Hom_{\Lambda}(C,\Lambda)$ 
is freely generated by the dual basis $\{ x^*_k \}_{1\leq k \leq r}$.
We use the dual basis to define the Maslov grading and two filtrations on $C^*$.

Here we note that $C^*$ is a $\F$-vector space and 
$\{ U^l x^*_k \mid l \in \z, 1 \leq k \leq r \}$ is a basis for $C^*$ as a $\F$-vector space.
Hence we can define a $\F$-linear isomorphism $\Phi : C \to C^*$ 
by $\Phi(U^l x_k) = U^{-l} x_k^*$ . (Remark that since $C$ is infinite-dimensional 
$\F$-vector space, $C^*$ is not isomorphic to $\Hom_{\F}(C, \F)$.) We call $\Phi$
 \textit{the dual isomorphism induced by} $\{ x_k\}_{1 \leq k \leq r}$.

Next, 
let $C/\falex{j}$ (resp.\ $C/\falg{i}$) denote the subspace of $C$ (as a $\F$-vector space)
generated by $\{ U^{l} x_k\}^{l \leq \Alex(x_k) -j -1}_{1 \leq k \leq r }$
(resp.\ $\{ U^{l} x_k\}^{l \leq \Alg(x_k) -i -1}_{1 \leq k \leq r }$).
Then we have 
$$
\begin{array}{ll}
C= \falex{j} \oplus (C/ \falex{j}) &(\text{resp.\ }C= \falg{i} \oplus (C/ \falg{i})),\\
(C/ \falex{j+1}) \subset (C/ \falex{j})
& (\text{resp.\ }(C/ \falg{i+1}) \subset (C/ \falg{i})), \text{ and}\\
U(C/ \falex{j}) = (C/ \falex{j-1})
& (\text{resp.\ } U(C/ \falg{i}) = (C/ \falg{i-1})).
\end{array}
$$
In particular, we see that $\Phi(C/ \falex{j})$ 
(resp.\ $\Phi(C/ \falg{i})$) is a free $\F[U]$-module 
generated by $\{U^{-\Alex(x_k)+j+1} x^*_k \}_{1 \leq k \leq r}$
(resp.\ $\{U^{-\Alg(x_k)+i+1} x^*_k \}_{1 \leq k \leq r}$).
Now, the formal knot complex structure of $C^*$ is described as follows.
\begin{prop}
\label{dual}
Let $\partial^* : C^* \to C^*$ denote the dual of the differential $\partial$ on $C$. 
Then, the tuple
$$
(C^*, \partial^*, \Phi(C_{-n}), 
\Phi(C / \falex{-j-1}), \Phi( C / \falg{-i-1}) )
$$
is a formal knot complex. Moreover, 
for any formal knot complexes $C_1, C_2$, 
if $C_1 \simeq C_2$ then $C_1^* \simeq C_2^*$.
\end{prop}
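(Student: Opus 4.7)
The plan is to verify each of the seven defining conditions for the tuple $(C^*, \partial^*, \{\Phi(C_{-n})\}, \{\Phi(C/\falex{-j-1})\}, \{\Phi(C/\falg{-i-1})\})$ to be a formal knot complex, using the dual basis $\{x^*_k\}_{1 \leq k \leq r}$ as the candidate filtered basis, and then to deduce the \emph{moreover} clause by dualizing morphisms.

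Conditions (1)--(5) can be checked by direct computation. From the definitions one reads off $\gr(x^*_k) = -\gr(x_k)$, $\Alex^*(x^*_k) = -\Alex(x_k)$, and $\Alg^*(x^*_k) = -\Alg(x_k)$, where $\Alex^*$, $\Alg^*$ denote filtration levels on $C^*$; the key formula is $\Alex^*(U^n x^*_k) = -\Alex(x_k) - n$, which follows immediately from the explicit description $C/\falex{-j-1} = \spanF\{U^l x_k : l \leq \Alex(x_k) + j\}$ recalled just before the proposition. The differential $\partial^*$ is $\Lambda$-linear and lowers grading by $1$ (because $\partial$ does and the grading is reversed on $C^*$), the $U$-action on $C^*$ satisfies $U \cdot \Phi(z) = \Phi(U^{-1} z)$ (which forces condition (4)), and the filtered-basis condition (5) follows from the same explicit bases.

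The crux of the proof is the following duality lemma: if $g : A \to B$ is a graded, $\z$-filtered (Alexander, resp.\ algebraic) $\Lambda$-chain map between formal knot complexes, then $g^* : B^* \to A^*$ is graded and $\z$-filtered in the dual Alexander (resp.\ algebraic) filtration. The proof is a single direct calculation: expanding $g(x_k) = \sum_l p_{kl}(U) y_l$ in filtered bases, the Alexander-filtered condition on $g$ forces every monomial $U^n$ appearing in $p_{kl}(U)$ to satisfy $n \geq \Alex(y_l) - \Alex(x_k)$; since $g^*(y^*_l) = \sum_k p_{kl}(U) x^*_k$ and $\Alex^*(U^n x^*_k) = -\Alex(x_k) - n$, this is precisely the filtered condition $\Alex^*(g^*(y^*_l)) \leq -\Alex(y_l) = \Alex^*(y^*_l)$. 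The algebraic case is identical, and the same argument handles chain homotopies.

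Granted the lemma, conditions (6), (7), and the \emph{moreover} clause all follow by dualizing maps already supplied for $C$: for (6), dualize a chosen homotopy inverse $\iota'$ of $\iota : C \to C^r$ together with its chain homotopies to obtain a $\z^2$-filtered homotopy equivalence $(\iota')^* : C^* \to (C^r)^* = (C^*)^r$; for (7), dualize the two $\z$-filtered homotopy equivalences $C \to \Lambda$ and their inverses, using that the canonical identification $\Lambda^* \cong \Lambda$ is graded and filtered in both filtrations; and for the \emph{moreover} clause, dualize a $\z^2$-filtered homotopy equivalence $C_1 \to C_2$ together with its inverse. The main obstacle is the bookkeeping in the duality lemma: although the calculation is elementary, one must carefully translate the $\Lambda$-linear filtered condition on $g$ into a coefficient-wise condition on $p_{kl}(U)$ and then recognize the very same inequality as the filtered property of $g^*$ with respect to the explicitly defined dual filtration. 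A secondary subtlety is that $\Phi$ is only $\F$-linear, not $\Lambda$-linear, so the $U$-action on $C^*$ must be tracked via $U \Phi(z) = \Phi(U^{-1} z)$ rather than by naive transport.
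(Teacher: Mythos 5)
Your proposal is correct and follows essentially the same route as the paper: verify the seven conditions using the dual basis $\{x^*_k\}$, establish a duality lemma stating that graded, filtered $\Lambda$-chain maps dualize to graded, filtered maps for the dual filtrations (the paper's Lemma~\ref{lem dual 2}), and then obtain conditions (6), (7) and the \emph{moreover} clause by dualizing $\iota$, the maps $f_{\Alex}$, $f_{\Alg}$ (via the identification $\Psi\colon\Lambda\cong\Lambda^*$), and the given homotopy equivalence. The only difference is cosmetic: you prove the duality lemma by an explicit coefficient expansion in filtered bases, whereas the paper deduces it from the annihilator characterization $\Phi(C/\falex{j})=\{\varphi\mid\varepsilon\circ\varphi(\falex{j})=0\}$ of Lemma~\ref{lem dual}; both calculations are valid.
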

We call the formal knot complex $C^*$ 
\textit{the dual of $C$}.
Before proving Proposition \ref{dual},
we prove the following lemmas.
Here, $\varepsilon : \Lambda \to \F$ is a $\F$-linear map
defined by $\varepsilon (p(U)) = p(0)$ for each $p(U) \in \Lambda$
(i.e. $\varepsilon$ maps a Laurent polynomial to its constant term).
\begin{lem}
\label{lem dual}
We have the equalities
$$
\begin{array}{l}
\Phi(C_{n})= \left\{ \varphi \in C^* \mid \varepsilon \circ \varphi (\bigoplus_{m \neq n} C_m) = \{ 0\} \right\},\\
\Phi(C / \falex{j})= \left\{ \varphi \in C^* \mid \varepsilon \circ 
\varphi (\falex{j}) = \{ 0\} \right\}, \text{ and}\\
\Phi(C / \falg{i})= \left\{ \varphi \in C^* \mid \varepsilon \circ 
\varphi (\falg{i}) = \{ 0\} \right\}.
\end{array}
$$
In particular,
the subspaces $\Phi(C_{-n})$, $\Phi(C/\falex{-j-1})$ and $\Phi(C/\falg{-i-1})$
are independent of $\Phi$. (We often denote them by $C^*_n$, $\falex{j}(C^*)$ 
and $\falex{i}(C^*)$ respectively.)
\end{lem}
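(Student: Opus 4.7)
The plan is to verify the three set equalities by unfolding both sides in a fixed filtered basis, and then observe that each right-hand side is described intrinsically, which makes the ``in particular'' clause immediate. Fix a filtered basis $\{x_k\}_{1 \leq k \leq r}$ of $C$ with dual basis $\{x_k^*\}_{1 \leq k \leq r}$ of $C^*$, and write an arbitrary $\varphi \in C^*$ uniquely as $\varphi = \sum_{k,l} a_{k,l} U^l x_k^*$ with $a_{k,l} \in \F$ (finitely many nonzero for each $k$). A direct evaluation gives
\[
\varphi(U^j x_{k'}) = \sum_{l} a_{k',l}\, U^{l+j}, \qquad \varepsilon \circ \varphi(U^j x_{k'}) = a_{k',-j},
\]
so each condition of the form \emph{``$\varepsilon \circ \varphi$ vanishes on a specified $\F$-subspace''} becomes an explicit vanishing constraint on the coefficients $a_{k,l}$.

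For the grading equality, the Remark provides the $\F$-basis $\{U^{(\gr(x_k)-n)/2} x_k\}_{k \in [n]}$ of $C_n$, so $\varepsilon \circ \varphi$ vanishes on $\bigoplus_{m \neq n} C_m$ iff $a_{k,l} = 0$ whenever $\gr(x_k) + 2l \neq n$; matching this against $\Phi(U^{(\gr(x_k)-n)/2} x_k) = U^{(n-\gr(x_k))/2} x_k^*$ shows both sides coincide. For the Alexander filtration, the same Remark yields the $\F$-basis $\{U^l x_k : l \geq \Alex(x_k) - j\}$ of $\falex{j}$, hence $\varepsilon \circ \varphi$ vanishes on $\falex{j}$ iff $a_{k,l} = 0$ for every $l \leq -\Alex(x_k) + j$, which is exactly the condition cutting out $\Phi(C/\falex{j}) = \spanF\{U^l x_k^* : l \geq -\Alex(x_k)+j+1\}$. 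The algebraic case is word-for-word identical.

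The ``in particular'' statement then follows without additional work, because the right-hand sides of the three equalities are defined purely in terms of the intrinsic data $(C, \{C_n\}, \{\falex{j}\}, \{\falg{i}\})$ and the fixed map $\varepsilon$, with no reference to a choice of filtered basis or to $\Phi$. I do not anticipate a serious obstacle in carrying this out; the only technical point worth highlighting is that $C^* = \Hom_{\Lambda}(C,\Lambda)$ is not $\Hom_{\F}(C,\F)$, which is exactly the reason one composes with $\varepsilon$ before testing membership in these subspaces.
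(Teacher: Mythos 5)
Your proof is correct and follows essentially the same route as the paper: expand $\varphi$ in the dual basis, use the identity $\varepsilon\circ\varphi(U^{j}x_{k'})=a_{k',-j}$ to turn each vanishing condition into an explicit constraint on coefficients, and match it against the generators of $\Phi(C_n)$, $\Phi(C/\falex{j})$, $\Phi(C/\falg{i})$. The paper argues the two inclusions separately rather than characterizing both sides by one coefficient condition, but the underlying computation is identical.
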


\begin{proof}
We first note that  
$\Phi(C_{n})$ is generated by $\{U^{-\frac{\gr(x_k)-n}{2}}x^*_k\}_{k \in [n]}$.
Now, Suppose that $\varphi$ is in $\Phi(C_{n})$, and then
we have a $\F$-linear combination
$$
\varphi = \sum_{k \in [n]} a_k U^{-\frac{\gr(x_k)-n}{2}}x^*_k.
$$
Thus, for any element $x = \sum_{1\leq k \leq r} p_k(U) x_k \in \bigoplus_{m \neq n} C_m$,
we have
$$
\varphi(x) = \sum_{ k \in [n]} a_k U^{-\frac{\gr(x_k)-n}{2}}p_k(U).
$$
Here, since $x$ is in $\bigoplus_{m \neq n} C_m$, the coefficient of 
$U^{\frac{\gr(x_k)-n}{2}}$ in $p_k(U)$ is zero. 
This implies that 
$$
\varepsilon \circ \varphi(x) = \sum_{k \in [n]}
a_k \varepsilon(U^{-\frac{\gr(x_k)-n}{2}}p_k(U)) = 0.
$$
Conversely, suppose that 
$\varphi = \sum_{1\leq k \leq r} q_k(U) x^*_k \in C^*$ 
satisfies $\varepsilon \circ \varphi (\bigoplus_{m \neq -n} C_m) = \{0\}$.
Here we note that 
the coefficient of $U^l$ in $q_k(U)$ is zero
if and only if
$\varepsilon \circ \varphi (U^{-l} x_k) = 0$.
In addition, for any $k \in [n]$, $U^{-l}x_k$ is in $\bigoplus_{k \neq -n} C_k$
if and only if $l \neq -\frac{\gr(x_k)+n}{2}$,
and hence we have $q_k(U) = a_k U^{-\frac{\gr(x_k)-n}{2}}$ for some $a_k \in \F$.
Otherwise, $U^l x_k \in \bigoplus_{k \neq -n} C_k$ for any $l$,
and hence $q_k(U)=0$.
As a consequence, we have $\varphi = \sum_{k \in [n]} a_k U^{-\frac{\gr(x_k)-n}{2}}x^*_k$.
In a similar way, we can also prove the assertions for $\Phi(C/\falex{-j-1})$
and $\Phi(C/\falg{-i-1})$. 
\end{proof}

\begin{lem}
\label{lem dual 2}
Let $C,C'$ be formal knot complexes 
and $f: C \to C'$ be a $\Lambda$-linear map.
Define a map $f^*: C'^* \to C^*$ by $\varphi \mapsto \varphi \circ f$.
\begin{enumerate}
\item
Fix $k \in \z$. If $f(C_n) \subset C'_{n+k}$ for any $n$, then $f^* (\Phi(C'_{n+k})) \subset \Phi(C_{n})$.
\item
If $f(\falex{j}(C)) \subset \falex{j}(C')$, then 
$f^* (\Phi(C'/\falex{j})) \subset \Phi(C/\falex{j})$.
\item
If $f(\falg{i}(C)) \subset \falg{i}(C')$, then 
$f^* (\Phi(C'/\falg{i})) \subset \Phi(C/\falg{i})$.
\end{enumerate}
\end{lem}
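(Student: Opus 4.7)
The plan is to reduce all three assertions to the characterization of the images of $\Phi$ provided by Lemma~\ref{lem dual}, which describes each of $\Phi(C_n)$, $\Phi(C/\falex{j})$, $\Phi(C/\falg{i})$ as the set of $\varphi \in C^*$ whose composition with the evaluation $\varepsilon$ vanishes on a specific subspace (namely $\bigoplus_{m \neq n} C_m$, or $\falex{j}$, or $\falg{i}$ respectively). Once phrased this way, each assertion becomes a simple functoriality check using $f^*\varphi = \varphi \circ f$.

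Concretely, for assertion~(1), I would take $\varphi \in \Phi(C'_{n+k})$ and verify $f^*\varphi \in \Phi(C_n)$ via the characterization in Lemma~\ref{lem dual}. The hypothesis $f(C_m) \subset C'_{m+k}$ for all $m$ implies $f\bigl(\bigoplus_{m \neq n} C_m\bigr) \subset \bigoplus_{m \neq n} C'_{m+k} = \bigoplus_{l \neq n+k} C'_l$, and then
$$\varepsilon \circ (f^*\varphi)\Bigl(\bigoplus_{m \neq n} C_m\Bigr) = \varepsilon \circ \varphi \circ f\Bigl(\bigoplus_{m \neq n} C_m\Bigr) \subset \varepsilon \circ \varphi\Bigl(\bigoplus_{l \neq n+k} C'_l\Bigr) = \{0\},$$
as required. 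Assertions~(2) and~(3) follow by the same template: if $\varphi \in \Phi(C'/\falex{j})$ then $\varepsilon \circ \varphi(\falex{j}(C')) = \{0\}$, so
$$\varepsilon \circ (f^*\varphi)(\falex{j}(C)) = \varepsilon \circ \varphi(f(\falex{j}(C))) \subset \varepsilon \circ \varphi(\falex{j}(C')) = \{0\},$$
giving $f^*\varphi \in \Phi(C/\falex{j})$, and the algebraic case is identical.

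There is no genuine obstacle here; the only subtlety worth a remark is that Lemma~\ref{lem dual} guarantees that the subspaces $\Phi(C_n)$, $\Phi(C/\falex{j})$, $\Phi(C/\falg{i})$ are independent of the choice of filtered basis used to define $\Phi$, so the statement of the present lemma is unambiguous and the argument above does not depend on any auxiliary choices.
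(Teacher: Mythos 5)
Your proposal is correct and follows essentially the same route as the paper: both reduce each assertion to the characterization of $\Phi(C_n)$, $\Phi(C/\falex{j})$, $\Phi(C/\falg{i})$ in Lemma~\ref{lem dual} and then check that $\varepsilon \circ (f^*\varphi) = \varepsilon \circ \varphi \circ f$ vanishes on the relevant subspace using the hypothesis on $f$. The paper writes out only assertion~(1) and states that (2) and (3) are similar, exactly as you do.
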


\begin{proof}
Lemma \ref{lem dual} implies that for any $\varphi \in \Phi(C'_{n+k})$, the equalities
$$
\varepsilon \circ (f^* \varphi) (\bigoplus_{m \neq n}C_m) 
= \varepsilon \circ \varphi (f(\bigoplus_{m \neq n}C_m)) 
\subset \varphi(\bigoplus_{m \neq n+k}C'_m) = \{ 0\}
$$
hold, and hence $f^* \varphi \in \Phi (C_n)$.
Similarly, we can prove the second and third assertions in Lemma \ref{lem dual 2}.
\end{proof}

\def\proofname{Proof of Proposition \ref{dual}}
\begin{proof}
The first, second, third and forth conditions immediately follow from
the arguments above Proposition \ref{dual}, the above two lemmas and the equality $U \Phi = \Phi U^{-1}$.
So we first consider the fifth condition. We prove that 
$\{ x^*_k\}_{1\leq k \leq r}$ is a filtered basis.
First, $x^*_k$ is in $\Phi(C_{-\gr(x_k)})$ and hence it is homogeneous. 
Next, it is easy to see that  
$\{U^{-\Alex(x_k)} x^*_k \}_{1 \leq k \leq r}$
is a free basis for $\Phi(C/\falex{-1})$ as a $\F[U]$-module,
and $x^*_k \in \Phi(C/\falex{-j-1})$ if and only if $j = - \Alex(x_k)$. 
These imply that $\{x_k^*\}_{1 \leq k \leq r}$
satisfies the fifth condition with respect to the Alexander filtration.
Similarly, we can also prove that $\{ x_k^*\}_{1 \leq k \leq r}$ satisfies
the condition with respect to the algebraic filtration. Thus, the fifth condition holds.

Next, we consider the seventh condition.
Let $f_{\Alex}: C \to \Lambda$ be a $\z$-filtered homotopy equivalence map 
with respect to the Alexander filtration, and  $g_{\Alex}$ the inverse of $f_{\Alex}$.
Then the dual $g^*_{\Alex} : C^* \to \Lambda^*$ is a chain homotopy equivalence map over 
$\Lambda$, and
Lemma \ref{lem dual 2} implies that the duals of $f_{\Alex}$, $g_{\Alex}$ and all chain homotopies
are  graded  with respect to the pair 
$$
\left(\{ \Phi(C_{-n})\}_{n \in \z},  
\{\Phi(\Lambda_{-n})\}_{n\in \z}\right),
$$
 and $\z$-filtered with respect to the pair
$$
(\{\Phi(C/\falex{-i-1})\}_{i \in \z}, \{\Phi(\Lambda/\falex{-i-1})\}).
$$
Moreover, if we define a $\Lambda$-linear map $\Psi: \Lambda \to \Lambda^*$ 
by $\Psi(1) = 1^*$, then
$\Psi$ is a chain isomorphism satisfying
$$
\begin{array}{lll}
\Psi(\Lambda_n) &=& 
\left\{
\begin{array}{ll}
\{0, U^{-n/2} \cdot1^* \} & (n: \text{ even})\\
0 & (n: \text{ odd})
\end{array}
\right.\\
\ &=& 
\left\{
\begin{array}{ll}
\{0, \Phi(U^{n/2})\} & (n: \text{ even})\\
0 & (n: \text{ odd})
\end{array}
\right.\\
\ &=&
\Phi(\Lambda_{-n})
\end{array}
$$
and
$$
\begin{array}{lll}
\Psi(\falex{i}(\Lambda)) &=& \spanF\{ U^{l} \cdot 1^* \mid l \geq -i \}\\
\ &=& \spanF\{ \Phi(U^{l}) \mid l \leq i  \} \\
\ &=& \Phi(C/\falex{-i-1}).
\end{array}
$$
These imply that
$\Psi$ and the inverse $\Psi^{-1}$ are graded with respect to the pair
$$
\left(\{ \Lambda_{n}\}_{n\in \z},  
\{\Phi(\Lambda_{-n})\}_{n\in \z}\right),
$$
 and $\z$-filtered with respect to the pair
$$
(\{\falex{i}\}_{i \in \z}, \{\Phi(\Lambda/\falex{-i-1})\}).
$$
As a consequence, the composition 
$\Psi^{-1} \circ g^*: C^* \to \Lambda$
satisfies the seventh condition with respect to the Alexander filtration.
In the same way, we can prove the seventh condition with respect to 
the algebraic filtration. In addition, the sixth condition also follows from 
similar arguments.

Finally, we consider the last assertion in Proposition \ref{dual}.
Suppose that
$C_1, C_2$ are formal knot complexes and $f: C_1 \to C_2$ is
a $\z^2$-filtered homotopy equivalence map.
Then Lemma \ref{lem dual 2}
implies that
the dual $f^*: C^*_1 \to C^*_2$ is a
$\z^2$-filtered homotopy equivalence map.
This completes the proof.
\end{proof}
\def\proofname{Proof}
For  knot complexes, the dual complex corresponds to the mirror. 
(Note that the knot Floer homology $\widehat{HFK}$ is treated in 
\cite[Proposition~3.7]{OS04knot}, while the same proof can be applied to 
$CFK^{\infty}$.)
\begin{thm}[\text{\cite[Proposition~3.7]{OS04knot}}]
\label{dual thm}
For any knot $K$, 
the equality $[C^{K^*}]=[(C^K)^*]$ holds.
\end{thm}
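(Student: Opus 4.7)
The plan is to reduce to the Heegaard-diagrammatic definition of $CFK^{\infty}$ and construct an explicit $\z^2$-filtered chain isomorphism between $C^{K^*}$ and $(C^K)^*$. The dual operation in Proposition \ref{dual} is purely formal, while $C^{K^*}$ is defined geometrically, so the matching must be done by tracking the effect of the mirror operation at the level of a doubly-pointed Heegaard diagram.

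I would start with a doubly-pointed Heegaard diagram $(\Sigma,\alpha,\beta,w,z)$ presenting $(S^3,K)$ and observe that reversing the orientation of $\Sigma$ (keeping $\alpha$, $\beta$, $w$, $z$ fixed) yields a diagram for $(S^3,K^*)$. The intersection points $\mathbb{T}_{\alpha}\cap\mathbb{T}_{\beta}$ underlying the two diagrams are literally the same finite set, so there is a canonical bijection between free $\Lambda$-bases of $C^K$ and $C^{K^*}$. Under orientation reversal, the Maslov index of every Whitney disk flips sign, so the absolute Maslov grading on $C^{K^*}$ is the negative of that on $C^K$. The sign conventions governing the basepoints $w$ and $z$ conspire so that both the algebraic and the Alexander filtration levels also negate on each generator. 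After rescaling by appropriate powers of $U$, the identification on generators becomes precisely the abstract dual correspondence $x_k\leftrightarrow x_k^*$, $U^l x_k\leftrightarrow U^{-l}x_k^*$ of Lemma \ref{lem dual}.

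It remains to verify that the differential matches. Complex conjugation of the source disk turns a holomorphic Whitney disk from $x$ to $y$ in the original diagram into a holomorphic Whitney disk from $y$ to $x$ in the mirror diagram with the same multiplicities at $w$ and $z$. Hence the matrix of $\partial^{K^*}$ with respect to the identified generators is the transpose of that of $\partial^K$, which is exactly the definition of the dual differential $\partial^{*}$ used in Proposition \ref{dual}. Combining these observations delivers the required $\z^2$-filtered chain isomorphism $C^{K^*}\cong (C^K)^*$, proving the theorem. The main obstacle is bookkeeping: keeping careful track of orientation conventions for Whitney disks, matching the geometric filtration levels with the abstract ones defined via the dual basis, and checking that the $U$-rescaling genuinely realizes the isomorphism $\Phi$ of Lemma \ref{lem dual}. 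Independence of the choice of filtered basis and of the Heegaard diagram then follows from the last assertion of Proposition \ref{dual} combined with the standard invariance argument for Heegaard moves in \cite{OS04knot}.
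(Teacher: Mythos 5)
Your sketch is correct and is essentially the argument the paper relies on: the paper offers no proof of its own, merely citing Ozsv\'ath--Szab\'o's Proposition~3.7 and remarking that the same orientation-reversal argument for doubly-pointed Heegaard diagrams (same intersection points, negated Maslov grading and filtration levels, transposed differential via conjugated holomorphic disks) carries over from $\widehat{HFK}$ to $CFK^{\infty}$. Your outline reproduces that standard argument faithfully, including the identification with the formal dual of Proposition~\ref{dual}.
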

In particular, by combining the above theorem with Theorem~\ref{inverse},
we have 
$$[C^{-K^*}]=[(C^K)^*].$$
This fact is important in terms of knot concordance.
About dual complexes,  we give three more lemmas. 
\begin{lem}
\label{dual epsilon}
Let $C$ be a formal knot complex.
Then the $\F$-linear map $\varepsilon_n:C^*_{-n} \to \Hom_{\F}(C_n,\F)$
defined by $\varphi \mapsto \varepsilon \circ \varphi$
is a cochain isomorphism (where we see $\{C^*_{-n}\}_{n \in \z}$ as a graded cochain complex over $\F$).
In particular, we have $\F$-linear isomorphisms
$$
H_{-n}(C^*) \cong H^n(C_*;\F) \cong \Hom_{\F}(H_n(C_*),\F),
$$
where the first isomorphism is the isomorphism induced from $\varepsilon_n$.
\end{lem}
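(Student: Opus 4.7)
The plan is to verify, in coordinates with respect to a filtered basis, that each $\varepsilon_n$ is an $\F$-linear isomorphism, to check that $\{\varepsilon_n\}_{n \in \z}$ intertwines the coboundary maps, and then to invoke the universal coefficients theorem over the field $\F$ to obtain the final isomorphism.

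First, I fix a filtered basis $\{x_k\}_{1 \leq k \leq r}$ for $C$, set $[n] := \{k \mid \gr(x_k) \equiv n \pmod{2}\}$, and unpack the description of $C^*_{-n} = \Phi(C_n)$ from the proof of Lemma~\ref{lem dual}: it has the $\F$-basis
$$
\mathcal{B}_n^* := \left\{ U^{-(\gr(x_k)-n)/2}\, x_k^* \;\middle|\; k \in [n] \right\},
$$
while $C_n$ has the parallel $\F$-basis $\mathcal{B}_n := \{ U^{(\gr(x_k)-n)/2}\, x_k \mid k \in [n]\}$, yielding a dual basis of $\Hom_\F(C_n, \F)$. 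A direct computation using $x_k^*(x_l) = \delta_{kl}$, $\Lambda$-linearity of $x_k^*$, and $\varepsilon(U^m) = \delta_{m,0}$ gives
$$
\varepsilon_n\bigl(U^{-(\gr(x_k)-n)/2}\, x_k^*\bigr)\bigl(U^{(\gr(x_l)-n)/2}\, x_l\bigr) = \delta_{kl},
$$
so $\varepsilon_n$ sends $\mathcal{B}_n^*$ to the dual of $\mathcal{B}_n$ and is an $\F$-linear isomorphism.

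Next, I verify cochain compatibility. By Lemma~\ref{lem dual 2}(1) applied to $\partial$, the map $\partial^*$ restricts to $C^*_{-n} \to C^*_{-(n+1)}$, which is exactly the coboundary direction for the cochain complex $\{C^*_{-n}\}_{n \in \z}$. Since $\partial^* \varphi = \varphi \circ \partial$, for $\varphi \in C^*_{-n}$ and $x \in C_{n+1}$ we have
$$
\varepsilon_{n+1}(\partial^*\varphi)(x) = \varepsilon\bigl(\varphi(\partial x)\bigr) = (\varepsilon_n \varphi)(\partial x) = d(\varepsilon_n \varphi)(x),
$$
where $d$ denotes the coboundary on $\Hom_\F(C_*, \F)$. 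Hence $\varepsilon_\bullet$ is a cochain isomorphism and induces the first claimed isomorphism $H_{-n}(C^*) \cong H^n(C_*; \F)$. The remaining isomorphism $H^n(C_*; \F) \cong \Hom_\F(H_n(C_*), \F)$ is then the universal coefficients theorem for field coefficients, which collapses to the natural evaluation pairing since $\F$ is a field.

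I do not foresee a substantive obstacle here; the only subtlety is keeping straight the grading conventions (the sign in the exponent of $\Phi$, and the fact that $\partial^*$ raises the cochain degree on $\{C^*_{-n}\}$ by one), and the explicit bases $\mathcal{B}_n$ and $\mathcal{B}_n^*$ render this bookkeeping transparent.
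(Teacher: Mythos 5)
Your proposal is correct and follows essentially the same route as the paper: both verify the cochain compatibility from $\partial^*\varphi = \varphi\circ\partial$ and show that $\varepsilon_n$ carries the basis $\{U^{-(\gr(x_k)-n)/2}x_k^*\}_{k\in[n]}$ of $C^*_{-n}$ to the dual basis of $\{U^{(\gr(x_k)-n)/2}x_k\}_{k\in[n]}$ for $C_n$, with the final isomorphism being the universal coefficients theorem over the field $\F$. Your version merely makes the $\delta_{kl}$ computation and the degree bookkeeping more explicit than the paper does.
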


\begin{proof}
The equalities $\partial^* (\varepsilon_n \varphi)
=\varepsilon \circ \varphi \circ \partial = \varepsilon_{n+1}(\partial^*\varphi)$
show that $\{ \varepsilon_n\}_{n \in \z}$ is a cochain map.
We prove that $\varepsilon_n$ is a $\F$-linear isomorphism.
Let $\{x_k\}_{1 \leq k \leq r}$ be
a filtered basis for $C$ and
$\Phi$ the dual isomorphism induced by $\{x_k\}_{1 \leq k \leq r}$.
Then we see that $\{ \varepsilon \circ (U^{-\frac{\gr(x_k)-n}{2}}x^*_k) \}_{k \in [n]}$
coinsides with the dual basis for $\{ U^{\frac{\gr(x_k)-n}{2}} x_k\}_{k \in [n]}$.  Here we note that
$\{ U^{-\frac{\gr(x_k)-n}{2}}x^*_k \}_{k \in [n]}$
is a basis for $C^*_{-n}$, and hence $\varepsilon_n$ is an isomorphism.
\end{proof}

\begin{lem}
\label{dual dual}
For any formal knot complex $C$, 
the $\Lambda$-linear map $\Xi: C \to C^{**}$
defined by $\Xi (x) (\varphi) = \varphi(x)$
($x \in C$, $\varphi \in C^*$)
is a $\z^2$-filtered chain isomorphism.
In particular, $C^{**} \simeq C$.
\end{lem}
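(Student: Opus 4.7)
The plan is to fix a filtered basis $\{x_k\}_{1\leq k\leq r}$ of $C$ and verify that $\Xi$ carries it onto the double-dual basis $\{x_k^{**}\}$ of $C^{**}$ in a way that preserves the Maslov grading and both filtrations. Since any $\Lambda$-linear isomorphism that sends one filtered basis to another with the same grading and filtration data is automatically a graded $\z^2$-filtered chain isomorphism (the grading and the filtrations on each side being freely generated by the bases as $\F$- and $\F[U]$-modules respectively), this reduction will close out the lemma.

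First I would check that $\Xi$ is a $\Lambda$-linear chain map. Linearity over $\Lambda$ is immediate from commutativity of $\Lambda$, and the chain-map identity $\partial^{**}\circ\Xi = \Xi\circ\partial$ drops out of the one-line computation
$$\partial^{**}(\Xi(x))(\varphi) = \Xi(x)(\partial^*\varphi) = \varphi(\partial x) = \Xi(\partial x)(\varphi)$$
for $x\in C$ and $\varphi\in C^*$.

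Next, I would unpack the double-dual basis. By Proposition~\ref{dual}, the dual basis $\{x_k^*\}$ is a filtered basis for $C^*$, and iterating produces a filtered basis $\{x_k^{**}\}$ for $C^{**}$. The direct computation $\Xi(x_k)(x_l^*) = x_l^*(x_k) = \delta_{kl} = x_k^{**}(x_l^*)$, valid for all $k,l$, forces $\Xi(x_k) = x_k^{**}$. Hence $\Xi$ takes a $\Lambda$-basis of $C$ to a $\Lambda$-basis of $C^{**}$, and is therefore a $\Lambda$-linear isomorphism.

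Finally, applying Proposition~\ref{dual} twice yields $\gr(x_k^{**})=\gr(x_k)$, $\Alex(x_k^{**})=\Alex(x_k)$, and $\Alg(x_k^{**})=\Alg(x_k)$. Combined with the $\Lambda$-linearity of $\Xi$, this forces $\Xi(C_n)=C^{**}_n$, $\Xi(\falex{j}(C))=\falex{j}(C^{**})$, and $\Xi(\falg{i}(C))=\falg{i}(C^{**})$, with $\Xi^{-1}$ doing likewise, so $\Xi$ is a $\z^2$-filtered chain isomorphism and in particular $C^{**}\simeq C$. The argument is essentially formal; the only step that needs any care is confirming that the double dualization really reproduces the original numerical invariants on each basis element, which is exactly what Proposition~\ref{dual} packages.
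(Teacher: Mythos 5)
Your proposal is correct and follows essentially the same route as the paper: the paper factors $\Xi$ as $\Phi^*\circ\Phi$ through the dual isomorphisms induced by $\{x_k\}$ and $\{x_k^*\}$, which is exactly your observation that $\Xi$ is the $\Lambda$-linear map sending the filtered basis $\{x_k\}$ to the double-dual filtered basis $\{x_k^{**}\}$ with identical grading and filtration data. The only difference is cosmetic — you track the numerical invariants of basis elements where the paper invokes the subspace identities $\Phi(C_n)=C^*_{-n}$, $\Phi(\falex{j}(C))=C^*/\falex{-j-1}$, etc.
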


\begin{proof}
It is easy to check that $\Xi$ is a chain isomorphism over $\Lambda$.
Moreover, for a fixed filtered basis $\{x_k\}_{1 \leq k \leq r}$ for $C$,
let $\Phi : C \to C^*$ (resp.\ $\Phi^*: C^* \to C^{**}$) be
the dual isomorphism induced by $\{x_k\}_{1\leq k \leq r}$
(resp.\ $\{x^*_k\}_{1 \leq k \leq r}$), 
and then $\Phi^* \circ \Phi = \Xi$.
Hence we have
$$
\begin{array}{l}
\Xi(C_n) = \Phi^*(\Phi(C_n)) = \Phi^*(C^*_{-n}) = C^{**}_n,\\
\Xi(\falex{j}(C))= \Phi^*(\Phi(\falex{j}(C))) = \Phi^*(C^*/\falex{-j-1}) = \falex{j}(C^{**}),
\text{ and}\\
\Xi(\falg{i}(C))= \Phi^*(\Phi(\falg{i}(C))) = \Phi^*(C^*/\falg{-i-1}) = \falg{i}(C^{**}).
\end{array}
$$
These complete the proof.
\end{proof}

\begin{lem}
\label{dual tensor}
For any two formal knot complexes $C$ and $C'$,
the $\Lambda$-linear map 
$\Gamma: C^* \otimes C'^* \to (C \otimes C')^*$
defined by $\Gamma(\varphi \otimes \psi) (x \otimes y) = \varphi(x) \psi(y)$
$(\varphi \in C^*, \psi \in C'^*, x \in C, y \in C')$
is a $\z^2$-filtered chain isomorphism.
In particular, $(C \otimes C')^* \simeq C^* \otimes C'^*$.
\end{lem}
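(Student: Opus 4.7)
My plan is as follows. Fix filtered bases $\{x_k\}_{1\leq k\leq r}$ for $C$ and $\{x'_l\}_{1\leq l\leq s}$ for $C'$. By Proposition~\ref{tensor}, $\{x_k\otimes x'_l\}$ is a filtered basis for $C\ot{\Lambda}C'$, so its dual basis $\{(x_k\otimes x'_l)^*\}$ is a filtered basis for $(C\ot{\Lambda}C')^*$ by the proof of Proposition~\ref{dual}. Meanwhile, $\{x_k^*\otimes x'^*_l\}$ is a filtered basis for $C^*\ot{\Lambda}C'^*$. A direct check from the pairing formula shows $\Gamma(U^a x_k^*\otimes U^b x'^*_l)=U^{a+b}(x_k\otimes x'_l)^*$, so $\Gamma$ carries one $\Lambda$-basis to the other and is a $\Lambda$-linear isomorphism. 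The chain-map property $\partial^*\circ\Gamma=\Gamma\circ(\partial^*\otimes 1+1\otimes\partial^*)$ follows by evaluating both sides on a simple tensor $x\otimes y$ and applying the Leibniz rule for the differential on $C\ot{\Lambda}C'$; this is routine.

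The substantive step is checking that $\Gamma$ preserves the Maslov grading and both $\z$-filtrations. I would do this by direct computation on basis elements, using the formulas given in the proof of Proposition~\ref{dual} for the grading and filtration levels on a dual complex. Explicitly, $\gr(x_k^*)=-\gr(x_k)$, $\Alex(x_k^*)=-\Alex(x_k)$, and $\Alg(x_k^*)=-\Alg(x_k)$ (with $U$ still shifting by $(-2,-1,-1)$ on $C^*$), while the formulas from Proposition~\ref{tensor} give $\gr(x_k\otimes x'_l)=\gr(x_k)+\gr(x'_l)$, $\Alex(x_k\otimes x'_l)=\Alex(x_k)+\Alex(x'_l)$, and analogously for $\Alg$. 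Combining these, the basis element $U^a x_k^*\otimes U^b x'^*_l\in C^*\ot{\Lambda}C'^*$ sits in trigrading $(-\gr(x_k)-\gr(x'_l)-2(a+b),\,-\Alex(x_k)-\Alex(x'_l)-(a+b),\,-\Alg(x_k)-\Alg(x'_l)-(a+b))$, which agrees exactly with the trigrading of $U^{a+b}(x_k\otimes x'_l)^*\in (C\ot{\Lambda}C')^*$. Hence $\Gamma$ preserves the Maslov grading and is $\z^2$-filtered, and its inverse (likewise a basis-to-basis map) has the same property.

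The potential obstacle is bookkeeping: one must be careful that the intrinsic grading and filtration on a dual formal knot complex are characterized by the formulas from Lemma~\ref{lem dual}, independent of the chosen dual isomorphism $\Phi$, so that when we use a particular filtered basis on each side the identifications genuinely match. This is already handled by Lemma~\ref{lem dual}, so the verification reduces to the basis-level calculation above. The final assertion $(C\otimes C')^*\simeq C^*\otimes C'^*$ follows immediately since a $\z^2$-filtered chain isomorphism is in particular a $\z^2$-filtered homotopy equivalence.
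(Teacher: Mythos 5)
Your proposal is correct and follows essentially the same route the paper intends: the paper's proof is just ``similar to Lemma~\ref{dual dual}'', i.e.\ a direct verification on filtered bases using the dual-complex structure from Proposition~\ref{dual}, which is exactly the basis-level computation you carry out. The key identity $\Gamma(U^a x_k^*\otimes U^b x'^*_l)=U^{a+b}(x_k\otimes x'_l)^*$ together with the matching of trigradings is the whole content, and your bookkeeping checks out.
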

\begin{proof}
The proof is similar to Lemma~\ref{dual dual}.
\end{proof}
\subsection{Stabilizers}

Let $(A,\partial)$ be a chain complex over $\Lambda$.
We call a tuple $$(A, \partial, \{ A_n\}_{n \in \z}, \{\falex{j}\}_{j \in \z} , 
\{\falg{i}\}_{i \in \z})$$
a \textit{stabilizer} if it satisfies the first to sixth conditions in the
definition of formal knot complex and the following:
\begin{condition}
There exists a chain homotopy $\Phi_{\Alex}$ (resp.\ $\Phi_{\Alg}$) on $C$ connecting the identity and the zero-map
which is $\z$-filtered with respect to the Alexander filtration (resp.\ the algebraic filtration).
\end{condition}
\begin{remark} 
The above condition does not imply $A \simeq 0$.
The relation $A \simeq 0$ is corresponding to the existence of chain homotopies $\Phi_{\Alex}$ and $\Phi_{\Alg}$
satisfying the above condition and $\Phi_{\Alex}=\Phi_{\Alg}$.
\end{remark}
Let $C$ (resp.\ $C'$) be a chain complex over $\Lambda$ satisfying the first to sixth conditions
for being a formal knot complex and 
$\{x_k\}_{1 \leq k \leq r}$ (resp.\ $\{x'_l \}_{1 \leq l \leq s}$)
a filtered basis for $C$ (resp.\ $C'$). Then the tuple
$$
(C \oplus C', \partial \oplus \partial', \{ C_n \oplus C'_n \}_{n \in \z}, 
\{\falex{j}(C) \oplus \falex{j}(C') \}_{j \in \z} , 
\{\falg{i}(C) \oplus \falg{i}(C') \}_{i \in \z})
$$
also satisfies the first to sixth conditions for being a formal knot complex,
where $\{(x_k,0)\}_{1 \leq k \leq r} \cup \{(0,x'_l)\}_{1 \leq l \leq s}$
is a filtered basis for the tuple. 
We abbreviate the tuple to $C \oplus C'$. 
\begin{lem}
\label{acyclic}
Let $A$ be a chain complex over $\Lambda$
satisfying the first to sixth conditions for
being a formal knot complex.
Then $A$ is a stabilizer if and only if $H_*(\falex{0})=H_*(\falg{0})=0$.
\end{lem}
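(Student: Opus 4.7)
The plan is to handle the two implications separately, with nearly all of the content in the reverse direction.

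For the forward direction, assume $A$ is a stabilizer. The given homotopy $\Phi_{\Alex}$ is $\z$-filtered with respect to the Alexander filtration, so it restricts to a chain homotopy on the subcomplex $\falex{0}$ connecting the identity and the zero map; hence $H_*(\falex{0})=0$. Applying the same argument to $\Phi_{\Alg}$ gives $H_*(\falg{0})=0$.

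For the reverse direction, fix a filtered basis $\{x_k\}_{1\le k\le r}$ for $A$. By the fifth condition and the subsequent remark, $\falex{0}$ is a finitely generated free $\F[U]$-module on the homogeneous elements $\{U^{\Alex(x_k)}x_k\}_{1\le k\le r}$, whose Maslov gradings $\gr(x_k)-2\Alex(x_k)$ are bounded above. The key step is to produce a graded $\F[U]$-linear null-homotopy $h_0 \colon \falex{0} \to \falex{0}$ with $\partial h_0 + h_0\partial = \mathrm{id}$. Acyclicity gives $\Ker\partial = \mathrm{im}\,\partial$, which, as a graded submodule of the graded free $\F[U]$-module $\falex{0}$, is itself graded free by the (graded) PID property of $\F[U]$. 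Hence the short exact sequence $0 \to \Ker\partial \to \falex{0} \xrightarrow{\partial} \Ker\partial \to 0$ admits a graded $\F[U]$-linear splitting; picking any graded complement $V$ of $\Ker\partial$ in $\falex{0}$, the map $h_0 := (\partial|_V)^{-1}\circ\pi_{\Ker\partial}$ is the desired null-homotopy.

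It remains to promote $h_0$ to an Alexander-filtered $\Lambda$-linear chain homotopy on $A$. Since $\{x_k\}$ is a $\Lambda$-basis for $A$ and $U$ is a unit in $\Lambda$, the inclusion $\falex{0} \hookrightarrow A$ induces a canonical $\Lambda$-linear chain isomorphism $A \cong \Lambda \otimes_{\F[U]} \falex{0}$, under which one sets $\Phi_{\Alex} := 1 \otimes h_0$; equivalently, $\Phi_{\Alex}(U^{-j}y) := U^{-j}h_0(y)$ for $y \in \falex{0}$ and $j \in \z$. The identity $\partial \Phi_{\Alex} + \Phi_{\Alex}\partial = \mathrm{id}_A$ follows from the corresponding identity for $h_0$, and since $\falex{j} = U^{-j}(\falex{0})$ with $h_0(\falex{0}) \subset \falex{0}$, one has $\Phi_{\Alex}(\falex{j}) \subset \falex{j}$, so $\Phi_{\Alex}$ is Alexander-filtered. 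A symmetric construction using $H_*(\falg{0})=0$ produces $\Phi_{\Alg}$.

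The main obstacle is the algebraic step producing $h_0$: verifying that a finitely generated, acyclic, graded free chain complex over the graded PID $\F[U]$ admits a graded $\F[U]$-linear null-homotopy. Granting this, the rest of the argument is essentially formal bookkeeping with the filtered basis and the $U$-action.
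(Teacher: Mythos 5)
Your proof is correct, but it reaches the key construction by a genuinely different route from the paper. The paper passes to the associated graded $\falex{0}/\falex{-1}$, an acyclic finite-dimensional complex over $\F$: after normalizing the filtered basis so that $\Alex(x_k)=0$ for all $k$ (via Lemma~\ref{filtered basis}), it finds a sub-collection $\{x_{k_1},\dots,x_{k_{r/2}}\}$ whose images together with the images of $\partial x_{k_i}$ form a basis of $\falex{0}/\falex{-1}$, deduces that $\{x_{k_i},\partial x_{k_i}\}$ is a $\Lambda$-basis of $A$ with $\Alex(\partial x_{k_i})=0$, and defines $\Phi_{\Alex}$ explicitly by $x_{k_i}\mapsto 0$, $\partial x_{k_i}\mapsto x_{k_i}$. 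You instead work directly in $\falex{0}$ as a finitely generated graded free $\F[U]$-module and invoke graded structure theory over $\F[U]$ (a finitely generated torsion-free graded module is graded free, so the surjection $\partial\colon\falex{0}\to\image\partial=\Ker\partial$ splits) to produce the null-homotopy $h_0$, then extend $U$-equivariantly to $A$. Both arguments ultimately build the same kind of homotopy, namely projection onto $\Ker\partial$ along a complement $V$ followed by $(\partial|_V)^{-1}$ --- in the paper's proof $V=\spanL\{x_{k_i}\}$. Your version buys automatic filteredness (everything lives in $\falex{0}$ and is extended by $U$-equivariance, whereas the paper must separately check $\Alex(\partial x_{k_i})=0$) at the cost of the classification of finitely generated graded $\F[U]$-modules, which you correctly flag as the one step needing verification; the paper's route is more elementary, using only linear algebra over $\F$ plus a lifting of bases. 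You also spell out the forward direction, which the paper dismisses as obvious.
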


\begin{proof}
The only-if-part obviously holds. We prove the if-part.
Suppose that $H_*(\falex{0})=H_*(\falg{0})=0$.
Then, since $U:\falex{0}\to \falex{-1}$ is a chain isomorphism, we have
$H_*(\falex{-1})=0$ and $H_*(\falex{0}/\falex{-1})=0$.
Let $\{x_k\}_{1 \leq k \leq r}$ be a filtered basis for $A$.
By Lemma \ref{filtered basis},
we may assume that $\Alex(x_k)=0$ for any $k$.
Then we see $\falex{0}/\falex{-1}= \spanF\{p x_k\}_{1 \leq k \leq r}$,
where $p: \falex{0} \to \falex{0}/\falex{-1}$ is the projection. 
Moreover, it follows from $H_*(\falex{0}/\falex{-1})=0$ that $r$ is even and there exists a subset
$\{k_1, k_2, \ldots, k_{r/2}\}$ of $\{1, \ldots, r\}$
such that 
$$\spanF\{p x_{k_1}, \ldots, p x_{k_{r/2}}, 
\partial (p x_{k_1}), \ldots, \partial (p x_{k_{r/2}})\}= \falg{0}/\falg{-1}.
$$
This implies that $\Alex(\partial x_{k_i})=0$ for any $1 \leq i \leq r/2$
and
$$
\spanL\{x_{k_1}, \ldots, x_{k_{r/2}}, 
\partial x_{k_1}, \ldots, \partial x_{k_{r/2}}\}= A.$$
Now, define a $\Lambda$-linear map $\Phi_{\Alex}:A \to A$ by
$x_{k_i} \mapsto 0$ and $\partial x_{k_i} \mapsto x_{k_i}$.
Then, it is not hard to check that $\Phi_{\Alex}(C_n) \subset C_{n+1}$,
$\Phi_{\Alex}(\falex{i}) \subset \falex{i}$, and 
$\Phi_{\Alex} \circ \partial + \partial \circ \Phi_{\Alex}$ is equal to the identity on $A$.
This proves the condition for being a stabilizer with respect to the Alexander filtration.
In the same way, we can prove
the condition for being a stabilizer with respect to the algebraic filtration.
\end{proof}
In addition, we can easily check that the following lemmas hold.
\begin{lem}
\label{stabilizer sum}
For two stabilizers $A$ and $A'$, 
the direct sum $A \oplus A'$ is also a stabilizer.
Moreover, for a formal knot complex $C$,
the direct sum $C \oplus A$ is also a formal knot complex.
\end{lem}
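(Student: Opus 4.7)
The plan is to reduce both assertions to verifying the extra conditions (the chain homotopy condition for stabilizers, or condition (7) for formal knot complexes), since the underlying tuple-level conditions (1)--(6) are inherited from direct sums essentially for free. The discussion just above the lemma already explains that if $C$ and $C'$ both satisfy conditions (1)--(6) with filtered bases $\{x_k\}$ and $\{x'_l\}$, then $C\oplus C'$ satisfies (1)--(6) with filtered basis $\{(x_k,0)\}\cup\{(0,x'_l)\}$ and the componentwise grading and filtrations, and condition (6) is witnessed by $\iota^C\oplus\iota^{C'}$ under the identification $(C\oplus C')^r=C^r\oplus C'^r$. So I can focus on what is new in each assertion.

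For the first assertion, given stabilizer chain homotopies $\Phi^A_{\Alex},\Phi^{A'}_{\Alex}$ (resp.\ $\Phi^A_{\Alg},\Phi^{A'}_{\Alg}$), I would set
$$
\Phi^{A\oplus A'}_{\Alex}:=\Phi^A_{\Alex}\oplus\Phi^{A'}_{\Alex},\qquad
\Phi^{A\oplus A'}_{\Alg}:=\Phi^A_{\Alg}\oplus\Phi^{A'}_{\Alg}.
$$
Each is clearly $\z$-filtered with respect to the corresponding filtration on $A\oplus A'$, raises the Maslov grading by $1$, and satisfies $\partial\circ\Phi+\Phi\circ\partial=\mathrm{id}$ componentwise; this is exactly the stabilizer condition.

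For the second assertion, I need condition (7) for $C\oplus A$. Let $f^C_{\Alex}\colon C\to\Lambda$ be the $\z$-filtered homotopy equivalence coming from condition (7) for $C$, with filtered homotopy inverse $g^C_{\Alex}$ and with filtered chain homotopy $H_C$ satisfying $\partial H_C+H_C\partial=g^C_{\Alex}\circ f^C_{\Alex}+\mathrm{id}_C$ (here I use characteristic $2$). I would define
$$
\tilde f_{\Alex}:=f^C_{\Alex}\oplus 0\colon C\oplus A\to\Lambda,\qquad
\tilde g_{\Alex}:=g^C_{\Alex}\oplus 0\colon \Lambda\to C\oplus A.
$$
Then $\tilde f_{\Alex}\circ\tilde g_{\Alex}=f^C_{\Alex}\circ g^C_{\Alex}$ is already chain homotopic to $\mathrm{id}_{\Lambda}$ by a $\z$-filtered homotopy, while for the other composition,
$$
\tilde g_{\Alex}\circ\tilde f_{\Alex}+\mathrm{id}_{C\oplus A}
=(g^C_{\Alex}\circ f^C_{\Alex}+\mathrm{id}_C)\oplus\mathrm{id}_A,
$$
and this is the boundary of $H_C\oplus\Phi^A_{\Alex}$, which is $\z$-filtered with respect to the Alexander filtration on $C\oplus A$. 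The analogous argument using $f^C_{\Alg}$, $g^C_{\Alg}$, $H_C'$, and $\Phi^A_{\Alg}$ gives condition (7) for the algebraic filtration. This shows $C\oplus A$ is a formal knot complex.

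The only possible obstacle is bookkeeping around the fact that the Alexander and algebraic chain homotopies for a stabilizer need not agree (as emphasized in the remark after the definition of stabilizer), but this is never an issue here because condition (7) is imposed separately with respect to each of the two filtrations, so $\Phi^A_{\Alex}$ and $\Phi^A_{\Alg}$ are used independently in the two arguments.
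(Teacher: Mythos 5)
Your proof is correct, and it takes the natural route: the paper itself offers no argument for this lemma (it is stated after the remark that one "can easily check" it), and filling in the details by summing the stabilizer homotopies for the first claim and pairing the condition-(7) data for $C$ with the stabilizer homotopy for $A$ in the second claim is exactly the intended verification.
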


\begin{lem}
\label{stabilizer tensor}
For two stabilizers $A$ and $A'$, 
and a formal knot complex $C$,
the tensor products 
$A \otimes_{\Lambda} A'$ and $C \otimes_{\Lambda} A$ are also stabilizers.
\end{lem}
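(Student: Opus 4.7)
The plan is to reduce everything to the following elementary observation: a tensor product of chain complexes inherits a null-homotopy from either factor, and this null-homotopy remains filtered if the original one was. First, I note that conditions (1)--(6) for being a formal knot complex were established for tensor products in the proof of Proposition~\ref{tensor} by purely combinatorial manipulations of filtered bases, and that argument never invoked condition (7). Hence conditions (1)--(6) for $A \ot{\Lambda} A'$ and $C \ot{\Lambda} A$ follow verbatim from that proof, with filtered bases $\{(x_i \otimes x'_j)\}$ constructed by the same recipe. The only genuinely new content is verifying the stabilizer condition, i.e.\ producing the Alexander and algebraic null-homotopies.

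For $A \ot{\Lambda} A'$, let $\Phi^A_{\Alex}$ and $\Phi^A_{\Alg}$ be the chain homotopies given by the stabilizer structure on $A$. Define
\[
\widetilde{\Phi}_{\Alex} := \Phi^A_{\Alex} \otimes \id_{A'}, \qquad
\widetilde{\Phi}_{\Alg} := \Phi^A_{\Alg} \otimes \id_{A'}.
\]
Since we work over $\F = \z/2\z$, the cross-terms $\Phi^A \otimes \partial_{A'}$ appear twice and cancel, giving
\[
\partial \widetilde{\Phi}_{\Alex} + \widetilde{\Phi}_{\Alex}\partial
= (\partial_A \Phi^A_{\Alex} + \Phi^A_{\Alex}\partial_A)\otimes \id_{A'}
= \id_A \otimes \id_{A'},
\]
and similarly for $\widetilde{\Phi}_{\Alg}$. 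For $C \ot{\Lambda} A$ I would use $\id_C \otimes \Phi^A_{\Alex}$ and $\id_C \otimes \Phi^A_{\Alg}$ instead; the same cancellation works. Note it is essential that I choose the homotopies on the $A$ factor, as $C$ need not admit a null-homotopy.

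It remains to check that $\widetilde{\Phi}_{\Alex}$ is $\z$-filtered with respect to the Alexander filtration on $A \ot{\Lambda} A'$. By the remark following Proposition~\ref{tensor}, $\falex{j}(A \ot{\Lambda} A')$ is generated by images $p(\falex{j_1}(A) \times \falex{j_2}(A'))$ with $j_1 + j_2 = j$, and $\Phi^A_{\Alex} \otimes \id_{A'}$ sends such a generator into $p(\falex{j_1}(A) \times \falex{j_2}(A'))$ because $\Phi^A_{\Alex}$ preserves $\falex{j_1}(A)$. The same argument handles the algebraic filtration and the case $C \ot{\Lambda} A$. I do not expect any serious obstacle; the only subtlety worth tracking is that $\widetilde{\Phi}_{\Alex}$ and $\widetilde{\Phi}_{\Alg}$ are genuinely distinct maps, which mirrors the point (highlighted in the remark after the definition of stabilizer) that a stabilizer is not required to be $\z^2$-filtered contractible.
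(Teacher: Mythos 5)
Your proof is correct: the paper states this lemma without proof (``we can easily check that the following lemmas hold''), and your argument --- carrying conditions (1)--(6) over from the proof of Proposition~\ref{tensor}, which indeed never uses condition (7), and then taking $\Phi^A_{\Alex}\otimes \mathrm{id}$ and $\Phi^A_{\Alg}\otimes \mathrm{id}$ (resp.\ $\mathrm{id}\otimes\Phi^A_{\Alex}$, $\mathrm{id}\otimes\Phi^A_{\Alg}$) as the null-homotopies, with the cross-terms cancelling over $\F$ --- is exactly the intended easy check. The only point worth making explicit is that the $\Phi^A$'s are $\Lambda$-linear (as chain homotopies over $\sR=\Lambda$ in this paper's conventions) so that $\Phi^A\otimes\mathrm{id}$ is well defined on the tensor product over $\Lambda$; alternatively one could invoke Lemma~\ref{acyclic} and a K\"unneth argument over $\F[U]$, but your direct construction is cleaner.
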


\begin{lem}
\label{stabilizer dual}
For a stabilizer $A$, the dual $A^*$ is also a stabilizer.
\end{lem}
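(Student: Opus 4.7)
\medskip

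\noindent\textbf{Proof proposal.}
The plan is to verify that $A^*$ satisfies conditions (1)--(6) in the definition of a formal knot complex (since the definition of stabilizer requires exactly these plus the extra homotopy condition) and then to produce the required chain homotopies on $A^*$ by dualizing those on $A$.

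For conditions (1)--(6) on $A^*$, I would observe that the proof of Proposition~\ref{dual} establishing these conditions for the dual of a formal knot complex never invokes condition (7) of the original; it only uses the basis/grading/filtration structure coming from conditions (1)--(5) together with the involution from condition (6). Hence the same construction applies verbatim to a stabilizer $A$ and produces the tuple $(A^*,\partial^*,\Phi(A_{-n}),\Phi(A/\falex{-j-1}),\Phi(A/\falg{-i-1}))$ satisfying (1)--(6).

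For the stabilizer condition, let $\Phi_{\Alex}\colon A\to A$ be the chain homotopy from the stabilizer condition on $A$, so $\Phi_{\Alex}\circ \partial+\partial\circ\Phi_{\Alex}=\mathrm{id}_A$, $\Phi_{\Alex}(A_n)\subset A_{n+1}$, and $\Phi_{\Alex}(\falex{j}(A))\subset \falex{j}(A)$. Define its dual $\Phi_{\Alex}^*\colon A^*\to A^*$ by $\varphi\mapsto \varphi\circ\Phi_{\Alex}$. This is $\Lambda$-linear, and dualizing the homotopy identity gives $\Phi_{\Alex}^*\circ\partial^*+\partial^*\circ\Phi_{\Alex}^*=\mathrm{id}_{A^*}$. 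By Lemma~\ref{lem dual 2}(1) applied with $k=1$, the inclusion $\Phi_{\Alex}(A_n)\subset A_{n+1}$ gives $\Phi_{\Alex}^*(\Phi(A_{n+1}))\subset \Phi(A_n)$, which in the grading on $A^*$ reads $\Phi_{\Alex}^*(A^*_{-n-1})\subset A^*_{-n}$, i.e., $\Phi_{\Alex}^*$ raises Maslov grading by $1$. Next, applying Lemma~\ref{lem dual 2}(2) with the index $-j-1$ in place of $j$, the filtered property $\Phi_{\Alex}(\falex{-j-1}(A))\subset \falex{-j-1}(A)$ gives $\Phi_{\Alex}^*(\Phi(A/\falex{-j-1}))\subset \Phi(A/\falex{-j-1})$, which is precisely $\Phi_{\Alex}^*(\falex{j}(A^*))\subset \falex{j}(A^*)$. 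Thus $\Phi_{\Alex}^*$ is a $\z$-filtered chain homotopy on $A^*$ from the identity to zero with respect to the Alexander filtration. The identical argument with $\Phi_{\Alg}$ in place of $\Phi_{\Alex}$, using Lemma~\ref{lem dual 2}(3) instead of (2), yields the required chain homotopy with respect to the algebraic filtration.

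There is no substantive obstacle here; the proof is a straightforward dualization, and the only point requiring care is bookkeeping the sign flip in gradings and filtration levels under $\Phi$ (so that $\z$-filtered maps on $A$ with a given sign convention produce $\z$-filtered maps on $A^*$ with the correctly shifted indices), which is exactly what Lemma~\ref{lem dual 2} packages.
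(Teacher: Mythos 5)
Your proposal is correct; the paper itself states Lemma~\ref{stabilizer dual} without proof (as one of the lemmas one "can easily check"), and your argument is exactly the intended routine verification: conditions (1)--(6) for $A^*$ follow from the portion of the proof of Proposition~\ref{dual} that never uses condition (7), and the stabilizer homotopies dualize to graded, $\z$-filtered homotopies on $A^*$ via Lemma~\ref{lem dual 2}. No gaps.
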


\subsection{$\nu^+$-invariant}

For any formal knot complex $C$, we have
$$H_n(C) \cong H_n(\Lambda) \cong \left\{
\begin{array}{ll}
\F &(n: \text{ even})\\
0 & (\text{otherwise})
\end{array}
\right..
$$
In particular, $H_0(C) \cong \F$.
A cycle $x \in C$ is called a \textit{homological generator}
if $x$ is homogeneous with $\gr(x)=0$ and the homology class $[x] \in H_0(C)$
is non-zero. We define the {\it $\nu^+$-invariant} of $C$ by
$$
\nu^+(C):=\min\{ m \in \z_{\geq 0} \mid C_{\{i\leq0, j\leq m\}} \text{ contains a homological generator}  \}.
$$

\begin{remark}
The above definition of $\nu^+$ is originally that of $\nu^-$.
However, these invariants are the same, and hence  we may define $\nu^+$ as above. 
\end{remark}

Note that the equality 
$$
\nu^+(C)=\min\{ m \in \z_{\geq 0} \mid 
i_{*,0} : H_0(
C_{\{i\leq0, j\leq m\}} )
\to H_0(C) \text{ is surjective}
\}
$$
holds, and hence the value $\nu^+(C)$ is invariant under $\z^2$ -filtered homotopy equivalence.
\begin{prop}
\label{sub additivity}
$\nu^+(C \ot{\Lambda} C') \leq \nu^+(C) + \nu^+(C')$.
\end{prop}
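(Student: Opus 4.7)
The plan is to produce an explicit homological generator of $C \ot{\Lambda} C'$ that already lies in $(C \ot{\Lambda} C')_{\{i \leq 0,\, j \leq m+m'\}}$, where $m := \nu^+(C)$ and $m' := \nu^+(C')$; this realizes the bound directly. By the definition of $\nu^+$, I can fix homological generators $x \in C_{\{i \leq 0,\, j \leq m\}}$ and $x' \in C'_{\{i \leq 0,\, j \leq m'\}}$, and the candidate will simply be $x \otimes x'$.

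The verification splits into three pieces. First, $x \otimes x'$ is a cycle of Maslov grading $0$: additivity of the Maslov grading across $\ot{\Lambda}$ gives $\gr(x \otimes x') = 0$, and the Leibniz rule together with $\partial x = \partial x' = 0$ gives $\partial(x \otimes x') = 0$. Second, the filtration conditions both hold: $x \in \falg{0}(C)$ and $x' \in \falg{0}(C')$ directly place $x \otimes x' = p(x, x')$ in $\falg{0}(C \ot{\Lambda} C')$, while $x \in \falex{m}(C)$ and $x' \in \falex{m'}(C')$, combined with the symmetry remark after Proposition~\ref{tensor} identifying $p(\falex{m} \times \falex{m'})$ with $p(\falex{0} \times \falex{m+m'})$, place $x \otimes x'$ in $\falex{m+m'}(C \ot{\Lambda} C')$.

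The only substantive step is showing that $[x \otimes x'] \neq 0$ in $H_0(C \ot{\Lambda} C')$. I would prove this via condition~(7) in the definition of formal knot complex: fix $\Lambda$-linear, graded chain homotopy equivalences $f_{\Alex} \colon C \to \Lambda$ and $f'_{\Alex} \colon C' \to \Lambda$. Since $[x] \neq 0$ in $H_0(C) \cong \F$ and $f_{\Alex}$ induces a graded isomorphism on homology, $f_{\Alex}(x)$ must represent the unique nonzero class in $\Lambda_0 = \F$; because $\Lambda$ has zero differential, this forces $f_{\Alex}(x) = 1$, and likewise $f'_{\Alex}(x') = 1$. Tensoring yields a chain homotopy equivalence $f_{\Alex} \otimes f'_{\Alex} \colon C \ot{\Lambda} C' \to \Lambda \ot{\Lambda} \Lambda \cong \Lambda$ (as used in the proof of Proposition~\ref{tensor}) sending $x \otimes x'$ to $1 \otimes 1 = 1$, and hence $[x \otimes x'] \neq 0$.

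The only real obstacle is this last non-triviality step; it could in principle look K\"unneth-flavored, but because condition~(7) replaces each factor by the free $\Lambda$-module $\Lambda$, tensoring commutes with passing to homology and no correction term appears. The grading and filtration checks are routine consequences of the construction in Proposition~\ref{tensor}.
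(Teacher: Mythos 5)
Your proof is correct and follows the paper's strategy exactly at the level of the main construction: both take homological generators $x \in C_{\{i\le 0, j\le \nu^+(C)\}}$ and $x' \in C'_{\{i\le 0, j\le \nu^+(C')\}}$ and show that $x\otimes x'$ is a homological generator of $C\ot{\Lambda}C'$ lying in $(C\ot{\Lambda}C')_{\{i\le 0,\, j\le \nu^+(C)+\nu^+(C')\}}$, with the same filtration bookkeeping via $p(\falg{0}\times\falg{0})$ and $p(\falex{\nu^+(C)}\times\falex{\nu^+(C')})$. The only point of divergence is the non-vanishing of $[x\otimes x']$ in $H_0(C\ot{\Lambda}C')$: the paper invokes the K\"unneth injection $H_*(C)\ot{\Lambda}H_*(C')\hookrightarrow H_*(C\ot{\Lambda}C')$, whereas you push forward along $f_{\Alex}\otimes f'_{\Alex}$ to $\Lambda\ot{\Lambda}\Lambda\cong\Lambda$ using condition~(7), observing that the zero differential on $\Lambda$ forces $f_{\Alex}(x)=1$ and $f'_{\Alex}(x')=1$ on the nose. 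Both arguments are valid; yours has the mild advantage of avoiding any appeal to a K\"unneth-type statement (whose hypotheses over $\Lambda=\F[U,U^{-1}]$ one would otherwise want to check), at the cost of routing through the structure maps of the definition, and it is in the same spirit as the paper's later use of $f_{\Alex}\otimes f'_{\Alex}$ in the proof of Proposition~\ref{tensor}.
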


\begin{proof}
Note that 
$C_{\{i\leq0, j\leq m\}}=\falg{0} \cap \falex{m}$,
and hence there exists a homological generator 
$x \in C$ (resp.\ $x' \in C'$) lying in $\falg{0} \cap \falex{\nu^+(C)}$
(resp.\ $\falg{0} \cap \falex{\nu^+(C')}$).
This implies that $x\otimes x' \in C \ot{\Lambda}C'$ is
lying in 
$$
\begin{array}{ll}
p(\falg{0} \times \falg{0}) \cap p(\falex{\nu^+(C)} 
\times \falex{\nu^+(C')}) &
\subset \falg{0} \cap \falex{\nu^+(C)+\nu^+(C')}\\
\ &=(C\ot{\Lambda}C')_{\{i \leq 0, j \leq \nu^+(C)+\nu^+(C')\}}.
\end{array}
$$
Moreover, it is easily seen that $x \otimes x'$ is a homogeneous cycle with 
$\gr(x \otimes x')=0$ (and so $[x \otimes x'] \in H_0(C \ot{\Lambda}C')$), and
the K\"unneth formula 
$
H_*(C) \ot{\Lambda} H_*(C') \hookrightarrow  H_*(C \ot{\Lambda}C') 
$
implies that $[x \otimes x']$ is non-zero. 
Therefore, $x \otimes x'$ is a homological generator, and this completes the proof.
\end{proof}
It is easy to see that the value of $\np$ is unchanged under stabilization.
\begin{lem}
\label{nuplus stabilize}
For any formal knot complex $C$ and stabilizer $A$,
we have $\nu^+(C \oplus A) = \nu^+(C)$. 
\end{lem}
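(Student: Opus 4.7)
The plan is to prove both inequalities $\nu^+(C \oplus A) \leq \nu^+(C)$ and $\nu^+(C \oplus A) \geq \nu^+(C)$ by exploiting the acyclicity of $A$ together with the direct-sum decomposition of the $\z^2$-filtration on $C \oplus A$.

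First I would observe that the stabilizer condition forces $A$ to be acyclic, i.e.\ $H_*(A) = 0$. Indeed, the chain homotopy $\Phi_{\Alex}$ (forgetting its filtered structure) is a null-homotopy of the identity on $A$, which implies $H_*(A) = 0$. Consequently the projection $C \oplus A \to C$ induces an isomorphism $H_0(C \oplus A) \xrightarrow{\cong} H_0(C)$, and similarly the inclusion $C \hookrightarrow C \oplus A$ induces the inverse isomorphism. By Lemma~\ref{stabilizer sum}, $C \oplus A$ is a formal knot complex, so $\np(C \oplus A)$ is defined.

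Next I would verify that the $\z^2$-filtration on $C \oplus A$ splits as a direct sum in the obvious way: since $\falex{j}(C \oplus A) = \falex{j}(C) \oplus \falex{j}(A)$ and $\falg{i}(C \oplus A) = \falg{i}(C) \oplus \falg{i}(A)$ by construction, taking their intersections yields
\[
(C \oplus A)_{\{i \leq 0,\, j \leq m\}} = C_{\{i \leq 0,\, j \leq m\}} \oplus A_{\{i \leq 0,\, j \leq m\}}.
\]

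For the inequality $\np(C \oplus A) \leq \np(C)$, take a homological generator $x \in C_{\{i \leq 0,\, j \leq \np(C)\}}$. Then $(x,0) \in (C \oplus A)_{\{i \leq 0,\, j \leq \np(C)\}}$ is a homogeneous cycle with $\gr(x,0) = 0$, and under the isomorphism $H_0(C \oplus A) \cong H_0(C)$ it is sent to $[x] \neq 0$. Hence $(x,0)$ is a homological generator.

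For the reverse inequality, suppose $(x,a) \in (C \oplus A)_{\{i \leq 0,\, j \leq m\}}$ is a homological generator. By the splitting of the filtration, $x \in C_{\{i \leq 0,\, j \leq m\}}$ and $a \in A_{\{i \leq 0,\, j \leq m\}}$, both are cycles, and both are homogeneous of Maslov grading $0$. Since $H_0(A) = 0$ we have $[a] = 0$, so the class $[(x,a)] \in H_0(C \oplus A)$ is identified with $[x] \in H_0(C)$. As $[(x,a)] \neq 0$ by hypothesis, $[x] \neq 0$, so $x$ is a homological generator of $C$ in $C_{\{i \leq 0,\, j \leq m\}}$, giving $\np(C) \leq m$. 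Taking $m = \np(C \oplus A)$ completes the proof. There is no serious obstacle here; the only point requiring care is the splitting of the induced $\z^2$-filtration, which follows directly from the definition of $C \oplus A$ as a formal knot complex.
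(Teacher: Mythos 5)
Your proof is correct, and since the paper states this lemma without proof (``It is easy to see\dots''), your argument supplies exactly the natural details the author had in mind: acyclicity of $A$ forces $H_0(C\oplus A)\cong H_0(C)$, the induced $\z^2$-filtration splits as $(C\oplus A)_R = C_R\oplus A_R$, and homological generators then pass back and forth between $C$ and $C\oplus A$ to give both inequalities. No gaps.
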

Moreover, $\np$ also has the following property.
\begin{lem}
\label{nuplus dual}
For any formal knot complex $C$, we have
$$
\nu^+(C \otimes_{\Lambda} C^*)=0.
$$
\end{lem}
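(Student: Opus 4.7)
My plan is to exhibit an explicit homological generator of $C \otimes_{\Lambda} C^*$ living inside the region $\{i \leq 0, j \leq 0\}$, which immediately forces $\nu^+(C \otimes_\Lambda C^*) = 0$. The natural candidate is the element corresponding to $\mathrm{id}_C$ under the $\Lambda$-linear isomorphism $C \otimes_\Lambda C^* \cong \mathrm{End}_\Lambda(C)$: fixing a filtered basis $\{x_k\}_{1 \leq k \leq r}$ and letting $\{x_k^*\}$ be its dual basis, I consider
$$
z := \sum_{k=1}^{r} x_k \otimes x_k^* \in C \otimes_{\Lambda} C^*.
$$

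First I would check the filtration/grading data for $x_k^*$. Using the description of $C^*$ from Proposition~\ref{dual} together with the convention $\Phi(x_k) = x_k^*$, a direct bookkeeping on the generators $\{U^l x_k\}$ of $C/\falex{j}$ and $C/\falg{i}$ gives $\gr(x_k^*) = -\gr(x_k)$, $\Alex(x_k^*) = -\Alex(x_k)$ and $\Alg(x_k^*) = -\Alg(x_k)$. Combined with the description of the Alexander and algebraic filtrations on $C \otimes_\Lambda C^*$ from Proposition~\ref{tensor} (and the symmetry remark following it), this places each summand $x_k \otimes x_k^*$ in bigrading $(0,0)$, so $z \in (C \otimes_\Lambda C^*)_{\{i \leq 0,\, j \leq 0\}}$ and $\gr(z) = 0$.

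Next I would verify that $z$ is a cycle. Writing $\partial x_k = \sum_l a_{kl} x_l$ with $a_{kl} \in \Lambda$, the definition of $\partial^*$ gives $\partial^* x_k^* = \sum_l a_{lk} x_l^*$. Then
$$
\partial z = \sum_{k,l} a_{kl}\, x_l \otimes x_k^* + \sum_{k,l} a_{lk}\, x_k \otimes x_l^*,
$$
and after swapping the summation indices in the second sum the two sums coincide; since we are working over $\F = \F_2$, they cancel.

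The main point (and likely the trickiest piece) is nontriviality of $[z] \in H_0(C \otimes_\Lambda C^*)$. For this I would use the evaluation pairing
$$
\mathrm{ev}\colon C \otimes_\Lambda C^* \to \Lambda, \qquad x \otimes \varphi \mapsto \varphi(x),
$$
which is well-defined and $\Lambda$-linear because $\varphi$ is, and is a chain map because $\Lambda$ has trivial differential while $\varphi(\partial x) + (\partial^* \varphi)(x)$ vanishes in characteristic two by definition of $\partial^*$. Under $\mathrm{ev}$ the element $z$ maps to $\sum_k x_k^*(x_k) = r \cdot 1 \in \Lambda$. By Lemma~\ref{odd} the rank $r$ is odd, so $\mathrm{ev}(z) = 1 \neq 0$ in $H_0(\Lambda) = \F$, forcing $[z] \neq 0$. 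Thus $z$ is a homological generator of $C \otimes_\Lambda C^*$ in the region $\{i \leq 0,\, j \leq 0\}$, and the definition of $\nu^+$ yields $\nu^+(C \otimes_\Lambda C^*) = 0$.
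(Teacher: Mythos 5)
Your proof is correct and follows essentially the same route as the paper: the same candidate cycle $\sum_k x_k\otimes x_k^*$, the same cancellation argument over $\F_2$ for closedness, and the same appeal to Lemma~\ref{odd} for nontriviality. The only cosmetic difference is that you detect nontriviality by pushing forward along the evaluation chain map $C\otimes_\Lambda C^*\to\Lambda$, whereas the paper pairs the cycle against the cocycle $\sum_k x_k^*\otimes x_k$ under the identification $C^*\otimes C\cong (C\otimes C^*)^*$; these compute the same number $r=1$.
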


\begin{proof}
Let $\{x_k\}_{1 \leq k \leq r}$ be a 
filtered basis for $C$.
Then, the element $x = \sum_{1 \leq k \leq r} x_k \otimes x^*_k$ is lying in 
$(C \otimes_{\Lambda}C^*)_{\{i \leq 0, j \leq 0\}}$
and homogeneous with $\gr(x)=0$.
We prove that this $x$ is a homological generator.

Let $( a_{lk})_{1 \leq l,k \leq r} $ be the matrix 
of $\partial : C \to C$ with respect to $\{x_k\}_{1 \leq k \leq r}$,
i.e.\ $\partial x_k = \sum_{1 \leq l \leq r} a_{lk} x_l$.
Then its transpose $( a_{kl})_{1 \leq l,k \leq r} $ is the matrix 
of $\partial^* : C \to C$ with respect to $\{x^*_k\}_{1 \leq k \leq r}$,
and we have
$$
(\partial \otimes 1 + 1 \otimes \partial^*)(x_k \otimes x_k^*)
= \sum_{1 \leq l \leq r} a_{lk} x_l \otimes x_k^*
+ \sum_{1 \leq l \leq r} a_{kl} x_k \otimes x_l^*.
$$
This implies that 
$$
\big( \text{the coefficient of }x_l \otimes x_k^* \text{ in } 
(\partial \otimes 1 + 1 \otimes \partial^*)(x) \big)= 2 a_{lk} = 0
$$
for any $1 \leq l,k \leq r$.
Hence $x$ is a cycle.

Next, we prove that the homology class of $x$ is non-zero.
It is obvious that $\sum_{1 \leq k \leq r} x_k^* \otimes x_k \in C^* \otimes_{\Lambda} C$
is also a cycle.
Here, by using the chain isomorphisms $\Xi$ and $\Gamma$ in
Lemmas \ref{dual dual} and \ref{dual tensor},
we can identify $C^* \otimes C$ with $(C \otimes C^*)^*$
by $(\varphi \otimes y)( z \otimes \psi) = \varphi(z) \psi(y)$
($y,z \in C, \varphi, \psi \in C^*$).
(In other words, $\sum_{1 \leq k \leq r} x^*_k \otimes x_k$ can be seen as a cocycle.)
Now, it follows from Lemma \ref{odd} that $r$ is odd, and hence we have
$$
\big(\sum_{1 \leq k \leq r}x^*_k \otimes x_k \big)(x) 
= \sum_{1 \leq k \leq r} \big(x^*_k (x_k) \big)^2 = r =1 \in \Lambda.
$$
This implies that the homology class of $x$ is non-zero.
\end{proof}

The following proposition is originally proved by Hom \cite{Ho17} in the case of knot complexes.
\begin{prop}[\text{\cite[Proposition 3.11]{Ho17}}]
\label{stable1}
For a formal knot complex $C$,
the equalities $\nu^+(C)=\nu^+(C^*)=0$ holds
if and only if we have the $\z^2$-filtered homotopy equivalence
$$
C \simeq \Lambda \oplus A,
$$
where $A$ is a stabilizer.
\end{prop}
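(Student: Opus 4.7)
The backward direction is quick: if $C \simeq \Lambda \oplus A$ with $A$ a stabilizer, then Lemma~\ref{nuplus stabilize} gives $\nu^+(C) = \nu^+(\Lambda) = 0$, realized by $1 \in \falex{0}(\Lambda) \cap \falg{0}(\Lambda)$. Dualizing, a direct check gives $\Lambda^* \simeq \Lambda$ and Lemma~\ref{stabilizer dual} says $A^*$ is a stabilizer, so $C^* \simeq \Lambda \oplus A^*$ and the same argument gives $\nu^+(C^*) = 0$.

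For the forward direction, assume $\nu^+(C) = \nu^+(C^*) = 0$. The plan is to extract a homological generator $x \in \falex{0}(C) \cap \falg{0}(C)$ of grading $0$ and a cohomological generator $\xi \in \falex{0}(C^*) \cap \falg{0}(C^*)$ of grading $0$, use them to build a $\z^2$-filtered projection onto a summand $\Lambda \cdot x \cong \Lambda$, and identify the complement with a stabilizer. The first point to verify is that, regarded as a $\Lambda$-linear map $\xi: C \to \Lambda$, $\xi$ preserves grading and sends $\falex{j}(C)$ into $\falex{j}(\Lambda)$ and $\falg{i}(C)$ into $\falg{i}(\Lambda)$. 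This reduces to an elementary calculation using a filtered basis $\{x_k\}$ together with the explicit description $\falex{0}(C^*) = \Phi(C/\falex{-1})$ with $\F[U]$-generators $\{U^{-\Alex(x_k)} x_k^*\}$, and likewise for the algebraic side.

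Next, I would evaluate $\xi(x) \in \Lambda$. Since $\gr(\xi(x)) = 0$, it lies in $\Lambda_0 = \F$, and by Lemma~\ref{dual epsilon} the map $\varepsilon_0$ realizes the perfect duality $H_0(C^*) \cong \Hom_{\F}(H_0(C), \F)$; since $[\xi]$ and $[x]$ generate the respective $\F$-lines, $\varepsilon(\xi(x)) \neq 0$, forcing $\xi(x) = 1$. Filtration compatibility then yields $0 = \Alex(\xi(x)) \leq \Alex(x) \leq 0$, so $\Alex(x) = 0$, and likewise $\Alg(x) = 0$. Consequently $\pi := \xi(\cdot)\, x$ is a $\Lambda$-linear, grading-preserving, $\z^2$-filtered idempotent chain map whose image $L := \Lambda \cdot x$ is isomorphic to $\Lambda$ as a formal knot complex, precisely because $x$ achieves both filtration levels exactly $0$.

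This produces a $\z^2$-filtered splitting $C = L \oplus K$ with $K = \ker \pi$. To see $K$ is a stabilizer, I would modify a filtered basis of $C$ via Lemma~\ref{filtered basis} so that $x$ appears as one basis vector; the remaining vectors provide a filtered basis for $K$ and verify conditions (1)--(5), while condition (6) is obtained by conjugating $\iota: C \to C^r$ with the $\z^2$-filtered inclusion $K \hookrightarrow C$ and projection $1 - \pi$. Finally, by Lemma~\ref{acyclic} it suffices to check $H_*(\falex{0}(K)) = H_*(\falg{0}(K)) = 0$: the splitting $\falex{0}(C) = \falex{0}(L) \oplus \falex{0}(K)$, combined with condition~(7) for $C$ (which identifies $H_*(\falex{0}(C)) \cong \F[U]$, already realized by the powers of $[x]$ inside $L$), forces $H_*(\falex{0}(K)) = 0$, and symmetrically for the algebraic filtration. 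The main delicate points are the verification of condition~(6) on $K$ and the filtered basis bookkeeping; the crucial leverage is the perfect pairing of Lemma~\ref{dual epsilon}, which turns $\nu^+(C^*) = 0$ into a filtration-respecting retraction rather than a merely abstract splitting.
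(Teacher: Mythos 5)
Your argument is correct in substance but follows a genuinely different route from the paper's. The paper never dualizes explicitly: it converts $\nu^+(C^*)=0$ into the injectivity of $p_{*,0}\colon H_0(C)\to H_0(C/C_{\{i\leq -1 \text{ or } j\leq -1\}})$ via Lemma~\ref{stb lem1}, uses this to locate a filtered basis element at bifiltration exactly $(0,0)$ that can be traded for the homological generator $x$ by Lemma~\ref{filtered basis}, and then puts the matrix of $p\circ(\partial|_{C_1})$ into reduced column echelon form to adjust the grading-zero basis so that $x$ appears in no differential; the splitting $C=\spanL\{x\}\oplus\spanL\{\cdots\}$ is then read off from the basis. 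You instead promote the homological generator $\xi$ of $C^*$ supplied by $\nu^+(C^*)=0$ to a graded, $\z^2$-filtered retraction $C\to\Lambda$ (with $\xi(x)=1$ forced by the pairing of Lemma~\ref{dual epsilon}) and split off $\Lambda\cdot x$ as the image of the idempotent $\xi(\cdot)\,x$. Your route is more conceptual, avoids the echelon-form bookkeeping, and makes transparent exactly where $\nu^+(C^*)=0$ enters; the paper's route is more elementary and hands you a filtered basis of the complement directly. Three points in your sketch deserve an explicit line. First, to make $x$ a filtered basis vector you need a single basis term of $x$ sitting at exactly $(0,0)$, not merely $\Alex(x)=\Alg(x)=0$ (which could be achieved by distinct terms at $(0,-1)$ and $(-1,0)$); this does follow from $\xi(x)=1$, since any term with a negative coordinate is annihilated by $\xi$, but it is not implied by the filtration levels of $x$ alone. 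Second, the remaining basis vectors are not automatically in $\ker\pi$; you must first replace each $x_k$ by $x_k+\xi(x_k)x$, which Lemma~\ref{filtered basis} permits because $\xi(x_k)x$ has filtration levels dominated by those of $x_k$. Third, condition~(6) for the complement ($K\simeq K^r$) is the one place where your sketch, like the paper's, is thin: conjugating $\iota$ by the inclusion and $1-\pi$ yields a $\z^2$-filtered chain map $K\to K^r$, but exhibiting filtered homotopy inverses requires disposing of the error term $(1-\pi)\iota'\pi\iota$ factoring through $L$, and the paper offers no argument at all here, so you are no worse off than the source.
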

The proof  in \cite{Ho17} is naturally generalized to the case of formal knot complexes.
To prove Proposition~\ref{stable1}, we use the following lemma.
\begin{lem}
\label{stb lem1}
The inequality $\nu^+(C^*)\leq m$ holds
if and only if the projection
$
p_{*,0}: H_0(C) \to H_0(C/C_{\{i\leq -1 \text{ or } j \leq -m-1\}})
$
is injective.
\end{lem}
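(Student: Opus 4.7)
My plan is to reinterpret the homological-generator condition defining $\nu^+(C^*) \leq m$ as a cohomological condition on the pair $(C, C_R)$ for
$R := \{(i,j) \in \z^2 \mid i \leq -1 \text{ or } j \leq -m-1\}$,
and then close the loop via $\F$-vector-space duality.

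First I would unpack $C^*_{\{i \leq 0,\, j \leq m\}}$ explicitly. By the definition of the induced $\z^2$-filtration, it equals $\falg{0}(C^*) \cap \falex{m}(C^*)$. Lemma~\ref{lem dual} identifies these two factors as $\{\varphi \mid \varepsilon \circ \varphi(\falg{-1}) = 0\}$ and $\{\varphi \mid \varepsilon \circ \varphi(\falex{-m-1}) = 0\}$ respectively, and combining with Lemma~\ref{add} yields
$$C^*_{\{i \leq 0,\, j \leq m\}} \;=\; \{\varphi \in C^* \mid \varepsilon \circ \varphi (C_R) = 0\}.$$

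Next I would use Lemma~\ref{dual epsilon} to convert Maslov-$0$ elements of $C^*$ into $\F$-linear functionals on $C_0$. For $\varphi \in C^*_0$, set $\bar{\varphi} := \varepsilon \circ \varphi|_{C_0} \in \Hom_{\F}(C_0, \F)$. Because $\varepsilon_0$ is a cochain isomorphism, $\varphi$ is a cycle in $C^*$ iff $\bar{\varphi}$ is a cocycle of $C_*$, and $[\varphi]$ is a homological generator of $C^*$ iff $[\bar{\varphi}]$ generates $H^0(C_*;\F) \cong \F$. Moreover, since $\varepsilon$ kills $\Lambda_n$ for $n \neq 0$, Lemma~\ref{lem dual} also shows that $\varepsilon \circ \varphi$ vanishes on $\bigoplus_{n \neq 0} C_n$ automatically, so the filtered condition $\varepsilon \circ \varphi(C_R) = 0$ reduces to $\bar{\varphi}$ vanishing on the degree-$0$ part $(C_R)_0$, equivalently $\bar{\varphi}$ descending to a cocycle on $(C/C_R)_0$.

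Combining these conversions, $\nu^+(C^*) \leq m$ is equivalent to the existence of a class $[\bar{\psi}] \in H^0(C/C_R;\F)$ whose image under the pullback $p^* : H^0(C/C_R;\F) \to H^0(C;\F) \cong \F$ is a generator, i.e.\ to surjectivity of $p^*$. Since $\F$ is a field, the universal coefficient theorem identifies $H^0(X;\F) \cong \Hom_{\F}(H_0(X), \F)$, and under these identifications $p^*$ is the $\F$-dual of $p_{*,0} : H_0(C) \to H_0(C/C_R)$; so $p^*$ is surjective iff $p_{*,0}$ is injective. I expect the main friction to come from the middle step — verifying that ``cycle,'' ``filtration level,'' and ``homology generator'' all transport faithfully between $C^*_0$ and $\Hom_{\F}(C_0, \F)$ under $\varepsilon_0$; once that dictionary is laid out, the first and third steps are essentially formal.
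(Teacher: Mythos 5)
Your proposal is correct and follows essentially the same route as the paper's proof: both use Lemma~\ref{lem dual} (together with Lemma~\ref{add}) to identify $C^*_{\{i\leq 0,\,j\leq m\}}$ with the functionals killing $C_R$, and Lemma~\ref{dual epsilon} to translate homological generators of $C^*$ into generating classes of $H^0(C;\F)$ factoring through $C/C_R$. The only difference is presentational — the paper carries out the two implications by explicit element constructions (factoring $\varepsilon\circ\varphi$ through $p$ in one direction, extending $p_{*,0}([x])$ to a basis and dualizing in the other), whereas you package that final step as surjectivity of $p^*$ being dual to injectivity of $p_{*,0}$.
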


\begin{proof}
Let $\{ x_k\}_{1 \leq k \leq r}$ be a filtered basis for $C$
and $\Phi$ denote the dual isomorphism induced by $\{ x_k\}_{1 \leq k \leq r}$.
We first prove the only-if-part.

Suppose that $\nu^+(C^*)\leq m$.
Then there exists a homological generator $\varphi \in C^*_0$
lying in 
$$
\begin{array}{ll}
\falg{0}(C^*) \cap \falex{m}(C^*) &= \Phi(C/\falg{-1}) \cap \Phi(C/\falex{-m-1})\\
\ &= \spanFU\{U^{\max\{-\Alg(x_k),-\Alex(x_k)-m\}}x^*_k\mid 1 \leq k \leq r\}.
\end{array}
$$
In particular, we have
$
\varepsilon \circ \varphi(C_{\{i \leq -1 \text{ or } j \leq -m-1\}}) =
\varepsilon \circ \varphi(\falex{-1} + \falg{-m-1} ) =0,
$
and $\varepsilon \circ \varphi$ is
decomposed as $\varepsilon \circ \varphi = \widetilde{\varphi} \circ p$
where $\widetilde{\varphi} \in \Hom_{\F}(C_{\{i \leq -1 \text{ or } j \leq -m-1\}}, \F)$ 
is a cocycle and $p : C \to C/C_{\{i \leq -1 \text{ or } j \leq -m-1\}}$ 
is the projection.
Now, let $x \in C_0$ be a homological generator.
Then we have $\widetilde{\varphi}(p (x))= (\varepsilon \circ \varphi)(x)=1$.
This implies that the homology class 
$[p (x)] \in H_0(C/C_{\{i \leq -1 \text{ or } j \leq -m-1\}})$
is non-zero, and hence 
$p_{*,0}$ is injective.

Conversely, suppose that 
$p_{*,0}$ is injective. Let $x \in C_0$ be a homological generator,
and then we have $p_{*,0}([x]) \neq 0$.
In addition, $\dim_{\F}(C/C_{\{i \leq -1 \text{ or } j \leq -m-1\}})_0$
is finite, and hence we can take a finite $\F$-basis for 
$H_0(C/C_{\{i \leq -1 \text{ or } j \leq -m-1\}})$ containing $p_{*,0}([x])$.
Thus, by using the identification 
$$
\Hom_{\F}(H_0(C/C_{\{i \leq -1 \text{ or } j \leq -m-1\}}),\F) \cong H^0(C/C_{\{i \leq -1 \text{ or } j \leq -m-1\}} ;\F),
$$
 we can take a cocycle 
$\psi \in \Hom_{\F}((C/C_{\{i \leq -1 \text{ or } j \leq -m-1\}})_0,\F)$
whose cohomology class is the dual $(p_{*,0}([x]))^*$.
Moreover, the map 
$\varepsilon_0$ in Lemma \ref{dual epsilon} is bijective,
and hence we can take the inverse $\varphi := \varepsilon^{-1}_0(\psi \circ p) \in C^*_0$.
Note that since $\varepsilon \circ \varphi(x) = \psi(p(x))=1$,
the element $\varphi \in C^*_0$ is a homological generator.
Moreover, the equalities
$$
\varepsilon \circ \varphi (\falg{-1}+ \falex{-m-1}) = \psi \circ p(\falg{-1}+ \falex{-m-1})
= \{0\}
$$
hold,  and hence $\varphi$ lies in $\Phi(C/\falg{-1}) \cap \Phi(C/\falex{-m-1})= 
C^*_{\{i\leq0, j \leq m\}}$.
This proves that $\nu^+(C^*) \leq m$.
\end{proof}

\def\proofname{Proof of Proposition \ref{stable1}}
\begin{proof}
The if-part immediately follows from Lemma \ref{nuplus stabilize}.
To prove the only-if-part, 
we will prove that if $\nu^+(C)=\nu^+(C^*)=0$, then there exists 
a filtered basis $\{x_k\}_{1 \leq k \leq r}$
such that $C$ is decomposed into $\spanL\{x_1\} \oplus \spanL \{x_k\}_{2 \leq k \leq r}$
as a chain complex.
In the situation, the restriction of $\partial$ on $\spanL\{x_1\}$ is zero-map,
and hence it follows from Lemma \ref{acyclic} that $\spanL\{x_1\}$ is a formal knot complex with 
$\spanL\{x_1\} \simeq \Lambda$ and $\spanL \{x_k\}_{2 \leq k \leq r}$
is a stabilizer.

Suppose that $\nu^+(C)=\nu^+(C^*)=0$, and let $\{x_k\}_{1 \leq k \leq r}$ be 
a filtered basis for $C$.
By Lemma \ref{filtered basis},
we may assume that $\gr(x_k)=0$ for $k \in \{1 ,\ldots, r_0\}$ and $\gr(x_k)=1$ for
$k \in \{r_0+1 , \ldots r \}$. Set $r_1 := r-r_0$ and $y_l := x_{r_0+l}$ ($1 \leq l \leq r_1$).
Then, by the definition of $\nu^+$ and Lemma~\ref{stb lem1}, there exists a homological generator 
$x= \sum_{1 \leq k \leq r_0} a_k x_k \in C_0$ such that
$x \in C_{\{i \leq0, j \leq0\}}$, and the homology class of $p(x)$ is non-zero,
where $p: C \to C/C_{\{i \leq -1 \text{ or } j \leq -1 \}}$ is the projection.
This implies that 
\begin{itemize}
\item
if $a_k \neq 0$, then $x_k \in C_{\{i \leq0, j \leq0\}}$, and
\item
there exists a number $k \in \{1, \ldots, r_0\}$ with $a_k \neq 0$ and  
$ x_k \notin C_{\{i \leq -1 \text{ or } j \leq -1\}}$.
\end{itemize}
As a consequence, we have $k' \in \{ 1, \ldots, r_0\}$ such that $a_{k'} \neq 0$ 
and $\Alg(x_{k'}) = \Alex(x_{k'}) =0$.
Moreover, since the inequalities
$$\Alg(x_k) \leq 0 = \Alg( x_{k'})$$
and
$$
\Alex(x_k) \leq 0 = \Alex(x_{k'})
$$
hold for any $k \in \{1, \ldots, r_0\}$ with $a_k \neq 0$,
it follows from Lemma \ref{filtered basis}
that $\{x\} \cup \{ x_k\}^{1\leq k \leq r_0}_{k \neq k'} \cup \{y_l\}_{1\leq l \leq r_1}$
is a filtered basis. We reorder $\{ x_k\}^{1\leq k \leq r_0}_{k \neq k'}$
as $\{ x_k\}_{2\leq k \leq r_0}$.

Next, we will change $\{ x_k\}_{2\leq k \leq r_0}$ into $\{ x'_k\}_{2\leq k \leq r_0}$ so that
$\{x\} \cup \{ x'_k\}_{2\leq k \leq r_0} \cup \{y_l\}_{1\leq l \leq r_1}$ 
is still a filtered basis and
$\partial (\{y_l\}_{1 \leq l \leq r_1}) \subset \spanF \{x'_k\}_{2 \leq k \leq r_0}$.
Then, we can conclude that
both $\spanL\{x\}$ and 
$\spanL(\{ x'_k\}_{2\leq k \leq r_0} \cup \{y_l\}_{1\leq l \leq r_1})$
are subcomplexes, and this will complete the proof.
To obtain such $\{x'_k\}$, we first note that
$$
\{p x\} \cup \{ p x_k \mid 2 \leq k \leq r_0 \text{ and } \Alg(x_k),\Alex(x_k) \geq 0 \}
$$
is a basis for $p(C_0)$.
We reorder $\{x_k\}_{2 \leq k \leq r_0}$ so that
$\{p x\} \cup \{ p x_k \}_{2 \leq k \leq r'_0}$
is a basis for $p(C_0)$. (Here $r'_0 := \dim_{\F}p(C_0)$.)
Let $(a_{kl})^{1 \leq k\leq r'_0}_{1 \leq l \leq r_1}$
be the matrix of $p \circ (\partial|_{C_1}):C_1 \to p(C_0)$ with respect to
the pair $(\{y_l\}_{1\leq l \leq r_1},  \{ p x_k\}_{2\leq k \leq r'_0} \cup \{ p x\})$,
i.e.\ $p \circ \partial (y_l) = \sum_{1 \leq k \leq r'-1}a_{kl}p x_{k+1} + a_{r' l} p x$.
Then we can replace $\{y_l\}_{1 \leq l \leq r_1}$ with 
a basis $\{y'_l\}_{1 \leq l \leq r_1}$ 
so that the corresponding matrix 
$(a'_{kl})^{1 \leq k\leq r'_0}_{1 \leq l \leq r_1}$
is in reduced column echelon form.
Here, since $[p x] \neq 0$ in $H_0(C/ C_{\{i \leq -1 \text{ or } j \leq -1\}})$,
$p x$ is not contained in $p \circ \partial (C_1)$ and
the last row of $(a'_{kl})^{1 \leq k\leq r'_0}_{1 \leq l \leq r_1}$
does not contain any leading coefficient.
In particular, if $a'_{r'_0l}\neq0$,
then there exists a number $k_l$ in $\{1, \ldots, r'_0-1\}$
such the $k_l$-th row contains the $l$-th leading coefficient.
(Namely, $a'_{k_ll'}=\delta_{ll'}$, where $\delta_{ll'}$ is the Kronecker delta.)
Now, we define a set $\{x'_k \}_{2 \leq k \leq r_0}$ by
$$
x'_k =
\left\{
\begin{array}{ll}
x_k+x & (\text{if $k= k_l-1$ for some $l \in \{1, \ldots, r_1\}$ with $a'_{r'_0l}\neq0$})\\
x_k & (\text{otherwise})
\end{array}
\right..
$$
Then, it follows from Lemma \ref{filtered basis} that
$\{x\} \cup \{ x'_k\}_{2\leq k \leq r_0} \cup \{y_l\}_{1\leq l \leq r_1}$ 
is a filtered basis.
Moreover,
the replacement of $\{p x_k\}_{2 \leq k \leq r'_0}$ 
with $\{p x'_k\}_{2 \leq k \leq r'_0}$ changes
$(a'_{kl})^{1 \leq k\leq r'_0}_{1 \leq l \leq r_1}$
so that the last row is zero vector. 
This implies that
$$
\begin{array}{ll}
p \circ \partial(\{y_l\}_{1 \leq l \leq r_1}) 
&\subset
p \circ \partial(\spanF \{y_l\}_{1 \leq l \leq r_1}) \\
\ &=
p \circ \partial(\spanF \{y'_l\}_{1 \leq l \leq r_1}) \subset
\spanF \{ p x'_{k} \}_{2 \leq k \leq r'_0},
\end{array}
$$
and hence we have
$$
\partial({y_l}_{1 \leq l \leq r_1})
\subset p^{-1}(\spanF \{ p x'_{k} \}_{2 \leq k \leq r'_0})
= \spanF\{ x'_k\}_{2 \leq k \leq r_0}.
$$
This completes the proof.
\end{proof}
\def\proofname{Proof}

\begin{cor}
\label{cor stable}
Let $C$ and $C'$ be formal knot complexes.
If $\nu^+(C)=\nu^+(C^*)=0$,
then $\nu^+(C' \otimes_{\Lambda} C) = \nu^+(C')$.
\end{cor}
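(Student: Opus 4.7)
The plan is to reduce the corollary to the stability statement of Proposition~\ref{stable1} combined with the behaviour of stabilizers under tensor product. Since $\nu^+(C) = \nu^+(C^*) = 0$ by hypothesis, Proposition~\ref{stable1} gives a $\z^2$-filtered homotopy equivalence
$$C \simeq \Lambda \oplus A$$
with $A$ a stabilizer. My first step is to exploit this decomposition.

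Next, I would tensor both sides with $C'$. Since tensor product respects $\z^2$-filtered homotopy equivalence in each variable (as verified in the proof of Proposition~\ref{tensor}), and since $\otimes_{\Lambda}$ distributes over direct sum at the level of the underlying $\z^2$-filtered chain complexes, I would obtain
$$C' \otimes_{\Lambda} C \;\simeq\; C' \otimes_{\Lambda} (\Lambda \oplus A) \;\simeq\; (C' \otimes_{\Lambda} \Lambda) \oplus (C' \otimes_{\Lambda} A) \;\simeq\; C' \oplus (C' \otimes_{\Lambda} A),$$
where the last equivalence uses the identification $C' \otimes_{\Lambda} \Lambda \simeq C'$ noted in the proof of Proposition~\ref{tensor}. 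By Lemma~\ref{stabilizer tensor}, the summand $C' \otimes_{\Lambda} A$ is a stabilizer.

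Finally, because $\nu^+$ is invariant under $\z^2$-filtered homotopy equivalence and is unchanged by direct summing with a stabilizer (Lemma~\ref{nuplus stabilize}), I conclude
$$\nu^+(C' \otimes_{\Lambda} C) \;=\; \nu^+\bigl(C' \oplus (C' \otimes_{\Lambda} A)\bigr) \;=\; \nu^+(C').$$

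The only place requiring any care is the distributivity step: one must check that the canonical isomorphism $C' \otimes_{\Lambda}(\Lambda \oplus A) \cong (C' \otimes_{\Lambda}\Lambda) \oplus (C' \otimes_{\Lambda} A)$ actually respects the Maslov grading and both the Alexander and algebraic filtrations as defined in Proposition~\ref{tensor}. This is straightforward from the explicit description of the filtrations on a tensor product (since a filtered basis for $\Lambda \oplus A$ is the union of filtered bases for the two summands, and the induced filtered basis for the tensor product splits accordingly), so I do not anticipate a substantive obstacle; the entire argument is essentially a one-line consequence of the structural results already established.
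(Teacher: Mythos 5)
Your proposal is correct and follows exactly the same route as the paper's proof: apply Proposition~\ref{stable1} to write $C \simeq \Lambda \oplus A$, tensor with $C'$ and use distributivity to get $C' \otimes_{\Lambda} C \simeq C' \oplus (C' \otimes_{\Lambda} A)$, invoke Lemma~\ref{stabilizer tensor} to see that $C' \otimes_{\Lambda} A$ is a stabilizer, and conclude with Lemma~\ref{nuplus stabilize}. The extra care you take with the distributivity step is a reasonable elaboration of what the paper dismisses as ``easy to show.''
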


\begin{proof}
By Proposition \ref{stable1},
we have $C \simeq \Lambda \oplus A$.
Here, Lemma \ref{stabilizer tensor} says
that $C' \otimes_{\Lambda} A$ is a stabilizer,
and  it is easy to show that 
$C' \otimes_{\Lambda} (\Lambda \oplus A)
\simeq C' \oplus (C' \otimes_{\Lambda} A)$. 
Therefore, by Lemma \ref{nuplus stabilize},
we have
$$
\nu^+ (C' \otimes_{\Lambda} C) =
\nu^+(C' \oplus (C' \otimes_{\Lambda} A))=
 \nu^+(C').
$$
\end{proof}
Here we refer to the following theorem of Hom and Wu,
which is one of the most important facts 
for obtaining concordance invariants from $CFK^\infty$.
\begin{thm}[\cite{HW16}]
\label{nuplus and g_4}
For a knot $K$, the inequality $\nu^+(C^K) \leq g_4(K)$ holds.
In particular, if $K$ is a slice knot, then 
$\nu^+(C^K) = \nu^+((C^K)^*) = 0$.
\end{thm}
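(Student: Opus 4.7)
The plan is to prove $\np(C^K) \leq g_4(K)$ by converting $\np$ into a surgery invariant and applying Ozsv\'ath--Szab\'o's $d$-invariant inequality to a 4-manifold built from a genus-$g_4$ surface in $B^4$. The first step is to reinterpret $\np$: the condition $\np(C^K) \leq m$ is equivalent to surjectivity of $H_0(C^K_{\{i \leq 0,\, j \leq m\}}) \to H_0(C^K)$, whose failure is measured by an integer $V_m(K) \in \mathbb{Z}_{\geq 0}$, giving $\np(C^K) = \min\{m \geq 0 \mid V_m(K) = 0\}$. By the large surgery formula of Ozsv\'ath--Szab\'o, $V_s(K)$ equals half the gap between the correction terms of $S^3_N(K)$ and $S^3_N(U)$ in the spin$^c$ label $\mathfrak{s}_s$ for $N \gg 0$ and $0 \leq s \leq N/2$, so the goal becomes to show $V_s(K) = 0$ for every $s \geq g_4(K)$.

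The geometric input is a smoothly, properly embedded oriented surface $\Sigma \subset B^4$ of genus $g = g_4(K)$ with $\partial \Sigma = K$. I would attach a 2-handle to $B^4$ along $K$ with an appropriate definite framing to obtain a definite 4-manifold $W$ whose boundary is a large surgery on $K$, and cap off $\Sigma$ by the core of the 2-handle inside $W$ to produce a closed oriented genus-$g$ surface $\widehat{\Sigma}$ generating $H_2(W; \mathbb{Z})$. Two Ozsv\'ath--Szab\'o inequalities then enter: the $d$-invariant inequality for the definite manifold $W$ bounds $d(\partial W, \mathfrak{t}|_{\partial W})$ in terms of $c_1(\mathfrak{t})^2$, while the adjunction inequality applied to $\widehat{\Sigma}$ constrains which spin$^c$ structures $\mathfrak{t}$ on $W$ have restrictions carrying non-vanishing Floer invariants. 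For spin$^c$ structures with $\mathfrak{t}|_{\partial W} = \mathfrak{s}_s$ and $s \geq g$, these constraints combine to match the correction term of the corresponding surgery on the unknot, forcing $V_s(K) = 0$.

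This yields $\np(C^K) \leq g_4(K)$. For the slice corollary, $g_4(K) = 0$ implies $\np(C^K) = 0$; since the mirror $K^*$ of a slice knot is also slice, Theorem~\ref{dual thm} gives $\np((C^K)^*) = \np(C^{K^*}) \leq g_4(K^*) = 0$. The main technical obstacle is the third step: one must identify precisely which spin$^c$ structures on $W$ restrict to $\mathfrak{s}_s$ via a computation of the relative first Chern class in terms of a Seifert-type surface for $K$, and verify that at the threshold $s = g$ the adjunction inequality is sharp enough (given that $|[\widehat{\Sigma}]^2| = N$ is large) to pin down equality between the two correction terms rather than merely a one-sided bound.
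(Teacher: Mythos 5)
First, note that the paper does not prove this statement at all: it is quoted verbatim from Hom--Wu \cite{HW16} and used as a black box (it is the input that makes $[K]_{\nu^+}$ a concordance invariant). So there is no internal proof to compare against; what you have reconstructed is, in outline, the standard Rasmussen/Ozsv\'ath--Szab\'o argument that underlies the cited result. Your first reduction is correct and even matches the machinery the paper sets up in Section~2: $\nu^+(C)=\min\{k\mid V_k(C)=0\}$, so the theorem is equivalent to $V_s(K)=0$ for all $s\ge g_4(K)$, and the large-surgery formula converts this into a statement about correction terms of $S^3_N(K)$ for $N\gg 0$. Your handling of the slice case is also fine: the mirror of a slice knot is slice, and Theorem~\ref{dual thm} identifies $(C^K)^*$ with $C^{K^*}$ up to filtered homotopy, so both vanishings follow from the genus bound.

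Two remarks on the core step. The lesser one: your worry about needing an \emph{equality} of correction terms is misplaced, since $V_s(K)\ge 0$ holds by definition, i.e.\ $d(S^3_N(K),\mathfrak{s}_s)\le d(S^3_N(U),\mathfrak{s}_s)$ is automatic; you only need the reverse inequality for $s\ge g_4(K)$, so the 4-manifold argument has to produce a one-sided bound, not a sharpness statement. The more serious one: that one-sided bound is the entire content of the theorem, and your proposal defers it. Concretely, one must (i) fix the sign conventions so that the 2-handle cobordism $W$ (or its reversal) is negative definite and the Ozsv\'ath--Szab\'o correction-term inequality applies with the correct orientation of $\partial W$; (ii) compute $\langle c_1(\mathfrak{t}),[\widehat{\Sigma}]\rangle$ for the $\Spin^c$ structures $\mathfrak{t}$ on $W$ restricting to $\mathfrak{s}_s$, which is where the capped genus-$g$ surface and the label $s$ interact; and (iii) show that for $s\ge g$ the relevant cobordism map is surjective onto the top of the $U$-tower in the degrees predicted by the grading-shift formula $\frac{c_1(\mathfrak{t})^2-2\chi-3\sigma}{4}$. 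None of these is routine bookkeeping, and (iii) in particular is a genuine Floer-theoretic input (an adjunction-type nonvanishing/surjectivity statement), not a formal consequence of the definitions. As written, your text is a correct strategy map to the literature rather than a proof; to make it self-contained you would need to carry out steps (i)--(iii) or cite them precisely.
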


\subsection{$\nu^+$-equivalence}

Two elements $[C],[C'] \in \mKf$ are 
\textit{ $\nu^+$-equivalent} (and denoted $[C] \nuplus [C']$
or $C \nuplus C'$)
if $\nu^+(C \otimes_{\Lambda} C'^*) = \nu^+(C^* \otimes_{\Lambda} C')=0$.
Note that by Propositions \ref{tensor} and \ref{dual},
the values
$\nu^+(C \otimes_{\Lambda} C'^*)$ and $\nu^+(C^* \otimes_{\Lambda} C')$
are independent of the choice of representatives.
\begin{prop}
\label{nuplus equiv}
The relation $\nuplus$ is an equivalence relation on
$\mKf$.
\end{prop}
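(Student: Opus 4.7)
The plan is to verify in turn the three axioms of an equivalence relation: reflexivity, symmetry, and transitivity, leaning on the machinery built up in the preceding subsection, especially Lemma~\ref{nuplus dual}, Proposition~\ref{sub additivity}, and Corollary~\ref{cor stable}.

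Reflexivity is essentially Lemma~\ref{nuplus dual}: applied to $C$ it gives $\nu^+(C \otimes_{\Lambda} C^*)=0$, and since $\otimes_{\Lambda}$ is commutative up to $\z^2$-filtered homotopy equivalence (this was part of Proposition~\ref{tensor}), we also get $\nu^+(C^* \otimes_{\Lambda} C)=0$. Symmetry is immediate from the symmetric shape of the definition: the conditions $\nu^+(C \otimes_{\Lambda} C'^*)=\nu^+(C^* \otimes_{\Lambda} C')=0$ and $\nu^+(C' \otimes_{\Lambda} C^*)=\nu^+(C'^* \otimes_{\Lambda} C)=0$ coincide after using commutativity of $\otimes_{\Lambda}$.

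Transitivity is the substantive part. Suppose $C \nuplus C'$ and $C' \nuplus C''$. The plan is to insert $C' \otimes_{\Lambda} C'^*$ as a ``resolution of the identity'' and then apply sub-additivity. Concretely, the complex $D := C' \otimes_{\Lambda} C'^*$ satisfies $\nu^+(D)=0$ by Lemma~\ref{nuplus dual}, and by Lemma~\ref{dual tensor} together with Lemma~\ref{dual dual} its dual satisfies $D^* \simeq C'^* \otimes_{\Lambda} C'^{**} \simeq C'^* \otimes_{\Lambda} C'$, so Lemma~\ref{nuplus dual} applied to $C'^*$ (or symmetry) also gives $\nu^+(D^*)=0$. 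Corollary~\ref{cor stable} then tells us that tensoring with $D$ leaves $\nu^+$ unchanged, hence
\[
\nu^+(C \otimes_{\Lambda} C''^*) \;=\; \nu^+\bigl((C \otimes_{\Lambda} C''^*) \otimes_{\Lambda} D\bigr) \;=\; \nu^+\bigl((C \otimes_{\Lambda} C'^*) \otimes_{\Lambda} (C' \otimes_{\Lambda} C''^*)\bigr),
\]
where the last equality uses the commutativity and associativity of $\otimes_{\Lambda}$ from Proposition~\ref{tensor}. Now Proposition~\ref{sub additivity} gives
\[
\nu^+\bigl((C \otimes_{\Lambda} C'^*) \otimes_{\Lambda} (C' \otimes_{\Lambda} C''^*)\bigr) \;\leq\; \nu^+(C \otimes_{\Lambda} C'^*) + \nu^+(C' \otimes_{\Lambda} C''^*) \;=\; 0,
\]
and non-negativity of $\nu^+$ forces $\nu^+(C \otimes_{\Lambda} C''^*)=0$. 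The companion identity $\nu^+(C^* \otimes_{\Lambda} C'')=0$ is proved by the same insertion trick, using now the hypotheses $\nu^+(C^* \otimes_{\Lambda} C')=0$ and $\nu^+(C'^* \otimes_{\Lambda} C'')=0$. Hence $C \nuplus C''$.

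No step presents a genuine obstacle: Corollary~\ref{cor stable} already packages the nontrivial content (the splitting $C' \otimes_{\Lambda} C'^* \simeq \Lambda \oplus A$ coming from Proposition~\ref{stable1}), so the proof is really a formal consequence. The only point to watch is to keep the order of factors in the tensor products consistent so that the swap-via-commutativity to reorganize $C \otimes C''^* \otimes C' \otimes C'^*$ into $(C \otimes C'^*) \otimes (C' \otimes C''^*)$ is legitimate, which it is by Proposition~\ref{tensor}.
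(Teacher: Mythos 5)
Your proof is correct and follows essentially the same route as the paper: reflexivity via Lemma~\ref{nuplus dual}, symmetry from the definition, and transitivity by inserting $C'\otimes_{\Lambda}C'^*$ and applying Corollary~\ref{cor stable} together with the sub-additivity of $\nu^+$. The extra care you take in verifying $\nu^+(D^*)=0$ via Lemmas~\ref{dual tensor} and~\ref{dual dual} is a harmless elaboration of what the paper leaves implicit.
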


\begin{proof}
The reflexivity (i.e.\ $[C]\nuplus[C]$) follows from Lemma \ref{nuplus dual}.
The symmetry ($[C] \nuplus [C']$ if and only if $[C'] \nuplus [C]$)
directly follows from the definition.
We prove the transitivity.
Suppose that $[C_1] \nuplus [C_2]$ and $[C_2] \nuplus [C_3]$.
Then, Proposition \ref{sub additivity}, 
Lemma \ref{nuplus dual} and Corollary \ref{cor stable}
imply
$$
\begin{array}{lll}
\nu^+(C_1 \otimes_{\Lambda} C_3^*) &=&
\nu^+ \big(
(C_1 \otimes_{\Lambda} C_3^* ) \otimes_{\Lambda} 
(C_2 \otimes_{\Lambda} C_2^*) \big) \\
\ &=&
\nu^+ \big(
(C_1 \otimes_{\Lambda} C_2^* ) \otimes_{\Lambda} 
(C_2 \otimes_{\Lambda} C_3^*) \big)\\
\ &\leq&
\nu^+(C_1 \otimes_{\Lambda} C_2^* ) + 
\nu^+(C_2 \otimes_{\Lambda} C_3^*)=0.
\end{array}
$$
Similarly, we can prove that
$\nu^+(C_1^* \otimes C_3)=0$ holds.
\end{proof}
We call the equivalence class of a formal knot complex $C$ under $\nuplus$
\textit{the $\nu^+$-equivalence class} or \textit{$\nu^+$-class of $C$}, and denote
it by $[C]_{\nu^+}$.
Then, we can see that Hom's stable homotopy theorem in \cite{Ho17} is naturally generalized to
formal knot complexes.
\begin{thm}[\cite{Ho17}]
\label{stable2}
Two formal knot complexes $C$ and $C'$ are 
$\nu^+$-equivalent
if and only if we have the $\z^2$-filtered homotopy equivalence
$$
C \oplus A \simeq C' \oplus A',
$$
where $A,A'$ are stabilizers.
\end{thm}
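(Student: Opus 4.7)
The plan is to deduce the theorem from Proposition~\ref{stable1} together with the various closure properties of stabilizers (Lemmas~\ref{stabilizer sum}, \ref{stabilizer tensor}, \ref{stabilizer dual}) and the vanishing result $\nu^+(D\otimes_{\Lambda} D^*)=0$ (Lemma~\ref{nuplus dual}). A preliminary observation I would record is that $\nu^+(D^*\otimes_{\Lambda} D)=0$ as well: applying Lemma~\ref{nuplus dual} to $D^*$ gives $\nu^+(D^*\otimes_{\Lambda} D^{**})=0$, and Lemma~\ref{dual dual} identifies $D^{**}\simeq D$. I will also repeatedly use that duality distributes over direct sum and that tensor distributes over direct sum, both of which follow from picking filtered bases termwise.

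For the if-direction, assume $C\oplus A\simeq C'\oplus A'$ with $A,A'$ stabilizers. Tensor both sides with $(C'\oplus A')^*\simeq C'^{*}\oplus A'^{*}$ (using Lemma~\ref{stabilizer dual}) to obtain
$$
(C\oplus A)\otimes_{\Lambda}(C'^{*}\oplus A'^{*}) \;\simeq\; (C'\oplus A')\otimes_{\Lambda}(C'^{*}\oplus A'^{*}).
$$
Expand each side into a direct sum of four tensor products. On the left, three of the four summands involve a stabilizer factor and are therefore stabilizers by Lemma~\ref{stabilizer tensor}; the remaining one is $C\otimes_{\Lambda}C'^{*}$. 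By Lemma~\ref{nuplus stabilize}, the $\nu^+$ of the left-hand side equals $\nu^+(C\otimes_{\Lambda}C'^{*})$. On the right, one summand is $C'\otimes_{\Lambda}C'^{*}$, whose $\nu^+$ vanishes by Lemma~\ref{nuplus dual}, and the other three are stabilizers; so the $\nu^+$ of the right-hand side is $0$. Hence $\nu^+(C\otimes_{\Lambda}C'^{*})=0$, and the symmetric argument yields $\nu^+(C^{*}\otimes_{\Lambda}C')=0$.

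For the only-if direction, assume $\nu^+(C\otimes_{\Lambda}C'^{*})=\nu^+(C^{*}\otimes_{\Lambda}C')=0$. Using Lemmas~\ref{dual tensor} and~\ref{dual dual}, $(C\otimes_{\Lambda}C'^{*})^{*}\simeq C^{*}\otimes_{\Lambda}C'$, so both $\nu^+(C\otimes_{\Lambda}C'^{*})$ and $\nu^+((C\otimes_{\Lambda}C'^{*})^{*})$ vanish. Proposition~\ref{stable1} then furnishes a stabilizer $A_0$ with
$$
C\otimes_{\Lambda}C'^{*} \;\simeq\; \Lambda\oplus A_0.
$$
Similarly, the preliminary observation gives $\nu^+(C'\otimes_{\Lambda}C'^{*})=\nu^+(C'^{*}\otimes_{\Lambda}C')=0$, so by Proposition~\ref{stable1} there is a stabilizer $A'_0$ with
$$
C'\otimes_{\Lambda}C'^{*} \;\simeq\; \Lambda\oplus A'_0.
$$
Now tensor the first splitting on the right by $C'$ and compare with the second splitting tensored on the left by $C$:
$$
C\oplus (C\otimes_{\Lambda}A'_0) \;\simeq\; C\otimes_{\Lambda}(C'\otimes_{\Lambda}C'^{*}) \;\simeq\; (C\otimes_{\Lambda}C'^{*})\otimes_{\Lambda}C' \;\simeq\; C'\oplus (A_0\otimes_{\Lambda}C').
$$
By Lemma~\ref{stabilizer tensor}, both $A:=C\otimes_{\Lambda}A'_0$ and $A':=A_0\otimes_{\Lambda}C'$ are stabilizers, giving the desired equivalence $C\oplus A\simeq C'\oplus A'$.

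The main obstacle is really bookkeeping: verifying that $\nu^+$ is insensitive to stabilizer summands/factors in the precise form needed and that the distributivity and associativity isomorphisms for $\otimes_{\Lambda}$ and duality are $\mathbb{Z}^2$-filtered and graded. Each of those has been packaged as a separate lemma earlier, so the argument reduces to the bookkeeping above. The one nontrivial input is Proposition~\ref{stable1}, which is the formal-complex generalization of Hom's theorem and does the real work of producing a $\Lambda$-summand.
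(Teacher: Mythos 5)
Your proof is correct and follows essentially the same strategy as the paper: apply Proposition~\ref{stable1} to split the relevant tensor products as $\Lambda\oplus(\text{stabilizer})$ and then shuffle factors using associativity, commutativity, and the closure properties of stabilizers. The only differences are cosmetic --- the paper tensors with $C'^*$ alone (rather than $(C'\oplus A')^*$) in the if-direction, and in the only-if direction it splits $C^*\otimes_{\Lambda}C'$ and $C\otimes_{\Lambda}C^*$ instead of your $C\otimes_{\Lambda}C'^*$ and $C'\otimes_{\Lambda}C'^*$, yielding the same stable equivalence.
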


\begin{proof}
It follows from Lemma \ref{nuplus stabilize} and Proposition \ref{stable1}
that $C \nuplus C'$ 
if and only if 
$C \otimes_{\Lambda} C'^* \simeq \Lambda \oplus A$
where $A$ is a stabilizer.
Thus, if $C \nuplus C'$, then there exist
stabilizers $A_1, A_2$ so that $C^* \ot{\Lambda} C' \simeq \Lambda \oplus A_1$
and $C \ot{\Lambda} C^* \simeq \Lambda \oplus A_2$, and we have
$$
\begin{array}{ll}
C \oplus (C \ot{\Lambda} A_1)
&\simeq
C \ot{\Lambda} ( \Lambda \oplus A_1)
\simeq
C \ot{\Lambda} (C^* \ot{\Lambda} C')\\
\ &\simeq
(C \ot{\Lambda} C^*) \ot{\Lambda} C'
\simeq
(\Lambda \oplus A_2) \ot{\Lambda} C'
\simeq
C' \oplus (C' \ot{\Lambda} A_2).
\end{array}
$$
Conversely,
if 
$
C \oplus A \simeq C' \oplus A',
$
then there exists a stabilizer $A''$ so that
$$
\begin{array}{ll}
(C \ot{\Lambda} C'^* )
\oplus (A \ot{\Lambda} C'^*)
&\simeq
(C \oplus A )
\ot{\Lambda} C'^*
\simeq
(C' \oplus A') \ot{\Lambda} C'^*
\\
\ &\simeq
(C^* \ot{\Lambda} C'^* )
\oplus (A' \ot{\Lambda} C'^*)\\
\ &\simeq
\Lambda \oplus A''
\oplus (A' \ot{\Lambda} C'^*),
\end{array}
$$
and hence $\np(C \ot{\Lambda} C'^*) = \np((C \ot{\Lambda} C'^* )
\oplus (A \ot{\Lambda} C'^*)) = 0$.
Similarly, we have $\np(C^* \ot{\Lambda}C')=0$.
\end{proof}

Here, due to Theorem \ref{nuplus and g_4},
the $\nu^+$-class of $C^K$ can be seen 
as a knot concordance invariant of $K$.
\begin{cor}[\cite{Ho17}]
For a knot $K$, $[K]_{\nu^+}:=[C^K]_{\nu^+}$ is
a knot concordance invariant of $K$.
\end{cor}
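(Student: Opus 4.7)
The plan is to combine the $g_4$-bound from Theorem \ref{nuplus and g_4} with the compatibility of $C^K$ with the standard knot operations. Suppose $K_1$ and $K_2$ are concordant. Then $J := K_1 \# (-K_2^*)$ is slice, so $g_4(J) = 0$, and Theorem \ref{nuplus and g_4} gives
$$
\nu^+(C^J) = \nu^+((C^J)^*) = 0.
$$
The same conclusion applies to $J' := K_2 \# (-K_1^*)$.

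Next I would translate these vanishings into the algebraic condition defining $\nuplus$ on $\mKf$. Applying in succession the connected sum formula (Theorem \ref{monoid hom}), the orientation-reversal identification $C^{-K} \simeq C^K$ (Theorem \ref{inverse}), and the mirror-duality identification $C^{K^*} \simeq (C^K)^*$ (Theorem \ref{dual thm}), one obtains
$$
C^{J} \simeq C^{K_1} \ot{\Lambda} C^{-K_2^*} \simeq C^{K_1} \ot{\Lambda} (C^{K_2})^*,
$$
and analogously $C^{J'} \simeq (C^{K_1})^* \ot{\Lambda} C^{K_2}$. Substituting these into the vanishings above yields
$$
\nu^+\bigl(C^{K_1} \ot{\Lambda} (C^{K_2})^*\bigr) = \nu^+\bigl((C^{K_1})^* \ot{\Lambda} C^{K_2}\bigr) = 0,
$$
which is exactly the definition of $C^{K_1} \nuplus C^{K_2}$ in $\mKf$. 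Hence $[K_1]_{\nu^+} = [K_2]_{\nu^+}$, proving that $[K]_{\nu^+}$ is a knot concordance invariant.

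There is essentially no obstacle here: every ingredient has already been packaged as a prior theorem or lemma. The only point requiring care is tracking orientation and mirror conventions so that $K_1 \# (-K_2^*)$ is genuinely the concordance difference, hence slice whenever $K_1$ and $K_2$ are concordant; once this is fixed, the argument is a one-line assembly of Theorems \ref{nuplus and g_4}, \ref{monoid hom}, \ref{inverse}, and \ref{dual thm}.
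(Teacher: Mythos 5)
Your proposal is correct and follows essentially the same route as the paper: both observe that concordance makes $K_1\#(-K_2^*)$ and $(-K_1^*)\#K_2$ slice, apply Theorem~\ref{nuplus and g_4} to conclude the relevant $\nu^+$ values vanish, and identify the complexes of these connected sums with $C^{K_1}\otimes_{\Lambda}(C^{K_2})^*$ and $(C^{K_1})^*\otimes_{\Lambda}C^{K_2}$ via Theorems~\ref{monoid hom}, \ref{inverse} and \ref{dual thm}. The only cosmetic difference is that you invoke both vanishings $\nu^+(C^J)=\nu^+((C^J)^*)=0$ for a single slice knot $J$, whereas the paper applies the slice case of Theorem~\ref{nuplus and g_4} once to each of the two slice connected sums; the content is identical.
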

\begin{proof}
If two knots $K$ and $J$ are concordant, then both $K\#(-J^*)$ and $(-K^*)\#J$
are slice knots. Thus, by Theorem~\ref{nuplus and g_4}, we have
$$
\nu^+(C^K\otimes_{\Lambda}(C^J)^*)
= \nu^+(C^{K\# (-J^*)}) = 0
$$
and
$$
\nu^+((C^K)^*\otimes_{\Lambda}C^J)
= \nu^+(C^{(-K^*)\# J}) = 0.
$$
\end{proof}

\subsection{Formal knot concordance group}
Now, {\it the formal knot concordance group} $\mCf$ is obtained as follows.
\begin{prop}
\label{abelian}
The quotient set $\mCf :=\mKf / \nuplus$
with product
$$
\mCf \times \mCf \to \mCf : ([C]_{\nu^+}, [C']_{\nu^+}) 
\to [C \otimes_{\Lambda} C']_{\nu^+}
$$
is an abelian group.
In particular,
the projection $\mKf \twoheadrightarrow \mCf$
is a monoid homomorphism.
\end{prop}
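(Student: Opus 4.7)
The plan is to verify three things in order: (i) that the tensor product descends to a well-defined operation on $\np$-classes, (ii) that the resulting structure is a commutative monoid with identity $[\Lambda]_{\np}$, and (iii) that every class is invertible, with the inverse of $[C]_{\np}$ given by $[C^*]_{\np}$.

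For (i), I start with $C_1 \nuplus C_1'$ and $C_2 \nuplus C_2'$ and show $C_1 \ot{\Lambda} C_2 \nuplus C_1' \ot{\Lambda} C_2'$. Applying Lemma~\ref{dual tensor} together with the associativity and commutativity of $\ot{\Lambda}$ from Proposition~\ref{tensor} yields
$$
(C_1 \ot{\Lambda} C_2) \ot{\Lambda} (C_1' \ot{\Lambda} C_2')^* \simeq \bigl(C_1 \ot{\Lambda} (C_1')^*\bigr) \ot{\Lambda} \bigl(C_2 \ot{\Lambda} (C_2')^*\bigr).
$$
Set $Y := C_1 \ot{\Lambda} (C_1')^*$. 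From $C_1 \nuplus C_1'$ one has $\np(Y) = 0$, and using Lemmas~\ref{dual dual} and \ref{dual tensor} one identifies $Y^* \simeq C_1^* \ot{\Lambda} C_1'$, whose $\np$ also vanishes. Corollary~\ref{cor stable} then gives
$\np\bigl((C_2 \ot{\Lambda} (C_2')^*) \ot{\Lambda} Y\bigr) = \np(C_2 \ot{\Lambda} (C_2')^*) = 0$. The symmetric calculation yields the reversed vanishing, so the operation is well defined on $\np$-classes.

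For (ii), associativity, commutativity, and the identity $[\Lambda]_{\np}$ descend directly from the commutative monoid structure on $\mKf$ established in Proposition~\ref{tensor} via the projection, now that well-definedness is in hand. For (iii), to show $C \ot{\Lambda} C^* \nuplus \Lambda$, one checks that $\Lambda^* \simeq \Lambda$ by inspection and, via Lemmas~\ref{dual dual} and \ref{dual tensor}, that $(C \ot{\Lambda} C^*)^* \simeq C \ot{\Lambda} C^*$, so both equivalence conditions collapse to $\np(C \ot{\Lambda} C^*) = 0$, which is Lemma~\ref{nuplus dual}. The statement that $\mKf \twoheadrightarrow \mCf$ is a monoid homomorphism is then tautological.

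The only nontrivial step is (i), and its key input is Corollary~\ref{cor stable}, which says that tensoring with a complex $Y$ satisfying $\np(Y) = \np(Y^*) = 0$ leaves $\np$ unchanged. Everything else reduces to formal manipulations with the dual and tensor product constructions already set up, so I expect no additional obstacles once the stability corollary is invoked.
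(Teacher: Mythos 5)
Your proof is correct, and it differs from the paper's in two small but genuine ways. For well-definedness the paper changes one tensor factor at a time and concludes via the sub-additivity of $\nu^+$ (Proposition~\ref{sub additivity}) together with $\nu^+(C'\otimes_\Lambda C'^*)=0$ (Lemma~\ref{nuplus dual}): it writes $(C\otimes_\Lambda C')\otimes_\Lambda(C''\otimes_\Lambda C')^*\simeq(C\otimes_\Lambda C''^*)\otimes_\Lambda(C'\otimes_\Lambda C'^*)$ and bounds $\nu^+$ of the right-hand side by the sum of the two vanishing terms. You instead change both factors at once and invoke Corollary~\ref{cor stable}, i.e.\ the fact that tensoring with a complex $Y$ satisfying $\nu^+(Y)=\nu^+(Y^*)=0$ preserves $\nu^+$. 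Both inputs are available in the paper; the sub-additivity route is the more elementary of the two (Corollary~\ref{cor stable} rests on the structural splitting of Proposition~\ref{stable1}), while your single bilinearity computation is slightly cleaner and avoids the ``similarly for the other factor'' step. You also spell out the verification that $[C^*]_{\nu^+}$ is the inverse of $[C]_{\nu^+}$ — reducing both defining conditions of $C\otimes_\Lambda C^*\nuplus\Lambda$ to Lemma~\ref{nuplus dual} via $(C\otimes_\Lambda C^*)^*\simeq C\otimes_\Lambda C^*$ — which the paper's proof leaves implicit; this is a welcome addition rather than a deviation.
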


\begin{proof}
We first verify that the product is well-defined.
Suppose that $[C]_{\nu^+} = [C'']_{\nu^+}$, and then 
$\nu^+(C \otimes_{\Lambda} C''^*) = \nu^+(C^* \otimes_{\Lambda} C'') =0$.
Thus, it follows from Proposition \ref{sub additivity} and
Lemma \ref{nuplus dual} that
$$\nu^+((C \otimes_{\Lambda} C') \otimes_{\Lambda}
(C'' \otimes_{\Lambda} C')^*)
=\nu^+((C \otimes_{\Lambda} C''^*) \otimes_{\Lambda}
(C' \otimes_{\Lambda} C'^*))=0
$$
and
$$\nu^+((C \otimes_{\Lambda} C')^* \otimes_{\Lambda}
(C'' \otimes_{\Lambda} C'))
=\nu^+((C^* \otimes_{\Lambda} C'') \otimes_{\Lambda}
(C' \otimes_{\Lambda} C'^*))=0.
$$
Similarly, we can prove that if $[C']_{\nu^+} = [C'']_{\nu^+}$ then 
$[C \otimes_{\Lambda} C']_{\nu^+} =[C \otimes_{\Lambda} C'']_{\nu^+}$.
Now, the commutativity immediately follows from 
$C \otimes_{\Lambda} C' \simeq C' \otimes_{\Lambda} C$,
and obviously the projection $\mKf \twoheadrightarrow \mCf$ is a monoid homomorphism.
\end{proof}
As a consequence, we have the following theorem, which is stated in
Section~1 as Theorem~\ref{comm diag}.
\begin{thm}
The map $\mC \to \mCf : [K]_c \mapsto [C^K]_{\nu^+}$ is
a well-defined group homomorphism.
As a consequence, we have the following commutative diagram:
$$
\begin{CD}
\mK @>  [K] \mapsto [C^K] >> \mKf \\
@V [K] \mapsto [K]_c VV @VV [C] \mapsto [C]_{\nu^+} V\\
\mC @>> [K]_c \mapsto [C^K]_{\nu^+} > \mCf
\end{CD}
$$
\end{thm}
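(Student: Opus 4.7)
The plan is to assemble the theorem directly from pieces already in place: the monoid homomorphism $\mK \to \mKf$ of Theorem~\ref{monoid hom}, the monoid projection $\mKf \to \mCf$ of Proposition~\ref{abelian}, and the concordance invariance established in the preceding corollary. The strategy has three steps: well-definedness, multiplicativity, and then the automatic upgrade to a group homomorphism.

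First I would verify well-definedness of the proposed map $\mC \to \mCf,\ [K]_c \mapsto [C^K]_{\nu^+}$. If $[K]_c=[J]_c$, then $K$ and $J$ are concordant, and the corollary immediately preceding the theorem (attributed to Hom) asserts exactly that $[C^K]_{\nu^+}=[C^J]_{\nu^+}$ in this case. Hence the formula descends from representatives to concordance classes. Next, for multiplicativity, I would combine Theorem~\ref{monoid hom}, which gives $[C^{K\#J}]=[C^K \otimes_\Lambda C^J]$ in $\mKf$, with Proposition~\ref{abelian}, which says that the projection $\mKf \twoheadrightarrow \mCf$ is a monoid homomorphism. Composing, the map $\mK \to \mCf$ sending $[K] \mapsto [C^K]_{\nu^+}$ is a monoid homomorphism, and by well-definedness it factors through $\mC$. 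Since $\mC$ and $\mCf$ are both groups, any monoid homomorphism between them automatically preserves inverses, so we obtain the desired group homomorphism.

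Finally, commutativity of the diagram is by construction: the top map sends $[K]\mapsto [C^K]$ by Theorem~\ref{monoid hom}, the right map sends $[C]\mapsto[C]_{\nu^+}$ by Proposition~\ref{abelian}, and the left map is the defining projection $\mK \to \mC$, so both ways around send $[K]\mapsto [C^K]_{\nu^+}$. No real obstacle arises; the only subtle point worth spelling out is the implicit verification of inverses, which one could also make explicit using Theorem~\ref{inverse} ($C^{-K}\simeq C^K$) and Theorem~\ref{dual thm} ($[C^{K^*}]=[(C^K)^*]$) together with Lemma~\ref{nuplus dual} ($\nu^+(C\otimes_\Lambda C^*)=0$), but as noted this is automatic from the monoid-homomorphism-between-groups principle.
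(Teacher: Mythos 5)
Your proposal is correct and matches the paper's intent exactly: the paper states this theorem as an immediate consequence of the concordance-invariance corollary (well-definedness), Theorem~\ref{monoid hom} together with Proposition~\ref{abelian} (multiplicativity), and gives no further proof. Your explicit note that a monoid homomorphism between groups automatically preserves inverses fills in the only step the paper leaves unstated, and it is valid.
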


\subsection{Partial order on $\mCf$}
In this subsection, we introduce a partial order on $\mCf$,
which is a generalization of the partial order on $\mC_{\np}$
defined in \cite{Sa18full}.
Here, as a new observation, we give an interpretation of the $\np$-equivalence
and the partial order on $\mCf$ using quasi-isomorphisms.

For two $\nu^+$ -classes $[C]_{\nu^+},[C']_{\nu^+} \in \mCf$,
we denote $[C]_{\nu^+} \leq [C']_{\nu^+}$
if 
the equality $\nu^+(C\otimes_{\Lambda}C'^*)=0$ holds.

\begin{prop}
The relation $\leq$ is a partial order on $\mCf$.
\end{prop}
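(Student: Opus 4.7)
The plan is to verify the three axioms of a partial order (reflexivity, antisymmetry, transitivity), after first confirming that the relation $\leq$ is well-defined on $\np$-equivalence classes. All four checks reduce to combining previously established lemmas, so the proof is essentially bookkeeping; the only mild subtlety is making sure the various tensor/dual identities line up.

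\textbf{Well-definedness.} Suppose $[C_1]_{\np}=[C_2]_{\np}$ and I want $\nu^+(C_1 \ot{\Lambda} C'^*)=\nu^+(C_2 \ot{\Lambda} C'^*)$. Since $C_1 \nuplus C_2$, Theorem~\ref{stable2} gives stabilizers $A_1,A_2$ with $C_1 \oplus A_1 \simeq C_2 \oplus A_2$. Tensoring with $C'^*$ (which is a formal knot complex by Proposition~\ref{dual}) and using Lemma~\ref{stabilizer tensor} to see that $A_i \ot{\Lambda} C'^*$ remain stabilizers, Lemma~\ref{nuplus stabilize} yields the required equality. Alternatively, and more cleanly, I can invoke Corollary~\ref{cor stable} directly: since $\nu^+(C_1 \ot{\Lambda} C_2^*)=\nu^+(C_1^* \ot{\Lambda} C_2)=0$, and using Lemmas~\ref{dual dual} and~\ref{dual tensor} to identify $(C_1 \ot{\Lambda}C_2^*)^* \simeq C_1^* \ot{\Lambda} C_2$, the hypothesis of Corollary~\ref{cor stable} is met, so tensoring by $C_1 \ot{\Lambda} C_2^*$ does not change $\nu^+$.

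\textbf{Reflexivity and antisymmetry.} Reflexivity is exactly Lemma~\ref{nuplus dual}, which asserts $\nu^+(C \ot{\Lambda} C^*)=0$. For antisymmetry, suppose $[C]_{\np} \leq [C']_{\np}$ and $[C']_{\np} \leq [C]_{\np}$; then $\nu^+(C \ot{\Lambda} C'^*)=0$ and $\nu^+(C' \ot{\Lambda} C^*)=0$. The symmetry of the tensor product (noted in Proposition~\ref{tensor}) gives $C' \ot{\Lambda} C^* \simeq C^* \ot{\Lambda} C'$, so both conditions in the definition of $\nuplus$ hold, hence $[C]_{\np}=[C']_{\np}$.

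\textbf{Transitivity.} This is the one step that requires a small trick, modeled on the transitivity argument in Proposition~\ref{nuplus equiv}. Assume $[C_1]_{\np} \leq [C_2]_{\np}$ and $[C_2]_{\np} \leq [C_3]_{\np}$, i.e., $\nu^+(C_1 \ot{\Lambda} C_2^*)=\nu^+(C_2 \ot{\Lambda} C_3^*)=0$. By Lemma~\ref{nuplus dual} applied to both $C_2$ and $C_2^*$ (using Lemma~\ref{dual dual} to identify $C_2^{**}\simeq C_2$), the complex $C_2 \ot{\Lambda} C_2^*$ satisfies the hypothesis of Corollary~\ref{cor stable}. Hence
\[
\nu^+(C_1 \ot{\Lambda} C_3^*) = \nu^+\bigl((C_1 \ot{\Lambda} C_3^*) \ot{\Lambda} (C_2 \ot{\Lambda} C_2^*)\bigr) = \nu^+\bigl((C_1 \ot{\Lambda} C_2^*) \ot{\Lambda} (C_2 \ot{\Lambda} C_3^*)\bigr),
\]
where the second equality uses the commutativity and associativity of $\ot{\Lambda}$ up to $\z^2$-filtered homotopy equivalence. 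Finally, Proposition~\ref{sub additivity} bounds this last quantity by $\nu^+(C_1 \ot{\Lambda} C_2^*)+\nu^+(C_2 \ot{\Lambda} C_3^*)=0$, giving $[C_1]_{\np}\leq [C_3]_{\np}$.

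There is no real obstacle here; the only point that could slip is to remember that Corollary~\ref{cor stable} requires both $\nu^+$ and $\nu^+$-of-the-dual to vanish, which is why the dualities $C^{**}\simeq C$ and $(C \ot{\Lambda} C')^*\simeq C^* \ot{\Lambda} C'^*$ (Lemmas~\ref{dual dual} and~\ref{dual tensor}) need to be cited when applying it to $C_2 \ot{\Lambda} C_2^*$ in transitivity and to $C_1 \ot{\Lambda} C_2^*$ in well-definedness.
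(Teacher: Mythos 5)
Your proof is correct and uses exactly the ingredients the paper cites (Proposition~\ref{sub additivity}, Lemma~\ref{nuplus dual}, and the definition of $\nuplus$), with the transitivity trick of tensoring by $C_2 \ot{\Lambda} C_2^*$ lifted verbatim from the proof of Proposition~\ref{nuplus equiv}; the paper's own proof is a one-line citation of these facts. Your additional check of well-definedness on $\np$-classes is a point the paper leaves implicit, and it is handled correctly via Corollary~\ref{cor stable}.
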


\begin{proof}
This immediately follows from Proposition \ref{sub additivity},
Lemma \ref{nuplus dual} 
and the definition of $\nuplus$.
\end{proof}

For two formal knot complexes, a chain map $f: C \to C'$ over $\Lambda$ is
a \textit{$\z^2$-filtered quasi-isomorphism}
if $f$ is $\z^2$-filtered, graded, and induces an isomorphism
$f_*: H_*(C) \to H_*(C')$.
Then, the $\nu^+$-equivalence and the partial order on $\mCf$
can be translated into the words of the existence of 
$\z^2$-filtered quasi-isomorphisms.
\begin{thm}
\label{quasi thm1}
Two formal knot complexes $C$ and $C'$ are $\nu^+$-equivalent
if and only if 
there exist $\z^2$-filtered quasi-isomorphisms
$$f: C \to C' \text{ and } g: C' \to C.$$
\end{thm}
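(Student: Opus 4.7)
I will handle the two directions separately. The ``only if'' direction reduces immediately to Theorem~\ref{stable2}. Given $C\nuplus C'$, choose stabilizers $A,A'$ and a $\z^2$-filtered homotopy equivalence $\Psi\colon C\oplus A\simeq C'\oplus A'$. The chain homotopy $\Phi_{\Alex}$ in the stabilizer condition forces $H_*(A)=H_*(A')=0$, so the inclusion $\iota_C\colon C\hookrightarrow C\oplus A$ and the projection $\pi_{C'}\colon C'\oplus A'\twoheadrightarrow C'$ are $\z^2$-filtered, graded chain maps inducing isomorphisms on homology. Then $f:=\pi_{C'}\circ\Psi\circ\iota_C$ is the desired $\z^2$-filtered quasi-isomorphism $C\to C'$, and applying the same recipe to $\Psi^{-1}$ yields $g\colon C'\to C$.

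For the converse, suppose we have $\z^2$-filtered quasi-isomorphisms $f\colon C\to C'$ and $g\colon C'\to C$, and fix a filtered basis $\{x_k\}_{1\le k\le r}$ for $C$. Under the natural isomorphism $\Hom_{\Lambda}(C,C')\cong C'\ot{\Lambda} C^*$ (available because $C$ is finitely generated and free over $\Lambda$), the map $f$ corresponds to
$$
\tilde f \;:=\; \sum_{k=1}^r f(x_k)\otimes x_k^*.
$$
I claim that $\tilde f$ is a homological generator of $C'\ot{\Lambda}C^*$ lying in filtration $\{i\le 0,\ j\le 0\}$. Granting this, $\np(C'\ot{\Lambda}C^*)=0$, and by symmetry of the tensor product $\np(C^*\ot{\Lambda}C')=0$; running the same argument with $g$ gives $\np(C\ot{\Lambda}C'^*)=0$, hence $C\nuplus C'$.

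The claim has three parts. First, each summand $f(x_k)\otimes x_k^*$ has Maslov degree $\gr(f(x_k))+\gr(x_k^*)=\gr(x_k)-\gr(x_k)=0$, and lies in $p(\falex{\Alex(x_k)}(C')\times\falex{-\Alex(x_k)}(C^*))\subset\falex{0}(C'\ot{\Lambda}C^*)$ because $f$ is Alexander-filtered and $\Alex(x_k^*)=-\Alex(x_k)$; the algebraic filtration is identical. Second, $\partial f=f\partial$ together with the dual matrix-coefficient computation used in the proof of Lemma~\ref{nuplus dual} forces $(\partial\otimes 1+1\otimes\partial^*)\tilde f=0$. Third, $\tilde f=(f\otimes\mathrm{id}_{C^*})\bigl(\sum_k x_k\otimes x_k^*\bigr)$; the element inside the parentheses is a homological generator of $C\ot{\Lambda}C^*$ by Lemma~\ref{nuplus dual}, and since $H_*(C^*)\cong\Lambda$ is free, K\"unneth over the PID $\Lambda$ implies $f\otimes\mathrm{id}_{C^*}$ is also a quasi-isomorphism, so $[\tilde f]\ne 0$. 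The main technical point, where care is needed, is precisely this last step---either via K\"unneth as above, or by exhibiting an explicit $\Lambda$-linear functional detecting $[\tilde f]$ in the spirit of the closing computation of Lemma~\ref{nuplus dual}; the filtration and grading checks then drop out termwise from the filteredness and grading of $f$.
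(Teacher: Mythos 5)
Your proof is correct and takes essentially the same route as the paper: the only-if direction is the paper's argument verbatim (Theorem~\ref{stable2} composed with the inclusion $C \hookrightarrow C\oplus A$ and projection $C'\oplus A'\twoheadrightarrow C'$), and for the if direction your element $\tilde f=(f\otimes\mathrm{id}_{C^*})\bigl(\sum_k x_k\otimes x_k^*\bigr)$ is exactly what the paper's Lemma~\ref{quasi lem} produces by pushing the canonical generator of Lemma~\ref{nuplus dual} through $f\otimes\mathrm{id}_{C^*}$. You merely spell out the cycle, filtration, and nonvanishing checks that the paper leaves implicit.
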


\begin{thm}
\label{quasi thm2}
Two $\nu^+$-classes $[C]_{\nu^+}$ and $[C']_{\nu^+}$ satisfy
$[C]_{\nu^+} \geq [C']_{\nu^+}$ if and only if
there exists a $\z^2$-filtered quasi-isomorphism
 $C \to C'.$
\end{thm}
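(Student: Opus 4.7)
The plan is to translate the condition $\np(C' \ot{\Lambda} C^*) = 0$ into the existence of a $\z^2$-filtered quasi-isomorphism $C \to C'$ via the canonical identification of $C' \ot{\Lambda} C^*$ with a Hom complex. Unwinding the definition of the partial order, $[C]_{\np} \geq [C']_{\np}$ is equivalent to the existence of a homological generator of $C' \ot{\Lambda} C^*$ lying in $\{i \leq 0,\, j \leq 0\}$.

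First I would set up the Hom complex. Fix filtered bases $\{x_k\}_{1 \leq k \leq r}$ for $C$ and $\{y_l\}_{1 \leq l \leq s}$ for $C'$, and equip $\Hom_{\Lambda}(C, C')$ with the differential $\partial_{\Hom}(f) := \partial_{C'} \circ f + f \circ \partial_C$, the grading $\gr(f) = n$ if $f$ shifts Maslov by $n$, and the bi-filtration $\Alex(f) \leq j$ iff $f(\falex{k}(C)) \subset \falex{k+j}(C')$ for every $k$ (and analogously for $\Alg$). The evaluation map
\[
\Psi \colon C' \ot{\Lambda} C^* \longrightarrow \Hom_{\Lambda}(C, C'), \qquad \Psi(y \otimes \varphi)(x) = \varphi(x)\, y,
\]
is a $\Lambda$-linear chain isomorphism. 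I will verify that $\Psi$ and $\Psi^{-1}$ respect the grading and both filtrations, using the formulas for $C^*$ from Proposition \ref{dual} and the decomposition of Lemma \ref{subcpx decomp}. As a consequence, $\Psi$ restricts to a bijection between grading-$0$ cycles in $(C' \ot{\Lambda} C^*)_{\{i \leq 0,\, j \leq 0\}}$ and $\z^2$-filtered, grading-preserving chain maps $C \to C'$.

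Next I would identify $H_0(\Hom_{\Lambda}(C, C'))$. Condition (7) in the definition of formal knot complex yields a grading-preserving chain homotopy equivalence $C \simeq \Lambda$ over $\Lambda$, so $\Hom_{\Lambda}(C, C') \simeq \Hom_{\Lambda}(\Lambda, C') \cong C'$ as graded $\Lambda$-complexes. Hence $H_0(\Hom_{\Lambda}(C, C')) \cong H_0(C') \cong \F$, and one checks that this isomorphism sends the class of a chain map $f$ to $f_*([c])$, where $[c]$ is a generator of $H_0(C)$. Moreover, since $H_*(C) \cong \Lambda \cong H_*(C')$ are free of rank one as graded $\Lambda$-modules, any grading-preserving $\Lambda$-linear homomorphism $H_*(C) \to H_*(C')$ is either zero or an isomorphism; so $f$ represents the non-zero class of $H_0(\Hom_{\Lambda}(C, C'))$ if and only if $f_*$ is an isomorphism, i.e., $f$ is a quasi-isomorphism.

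Combining the two steps yields both directions simultaneously: a homological generator of $C' \ot{\Lambda} C^*$ inside $\{i \leq 0,\, j \leq 0\}$ corresponds under $\Psi$ to a $\z^2$-filtered, grading-preserving chain map $C \to C'$ representing the unique non-zero class of $H_0(\Hom_{\Lambda}(C, C'))$, which by the previous paragraph is precisely a $\z^2$-filtered quasi-isomorphism. I expect the main technical obstacle to be the verification that $\Psi$ and $\Psi^{-1}$ preserve the bi-filtrations: this reduces to chasing the formulas by expressing elements of $\falex{m}(C)$ and $\falg{i}(C)$ in terms of the filtered basis, but it requires careful bookkeeping. Once established, the homological identification in Paragraph~3 is essentially formal, and the theorem follows.
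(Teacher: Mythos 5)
Your proposal is correct, but it takes a genuinely different route from the paper. The paper gets the ``if'' direction from Lemma~\ref{quasi lem} (apply $f\otimes \mathrm{id}_{C^*}$ to the homological generator of $C\ot{\Lambda}C^*$ supplied by Lemma~\ref{nuplus dual}), and for the ``only if'' direction it defines $\Lambda \to C'\ot{\Lambda}C^*$ by sending $1$ to a homological generator in the $\{i\le 0, j\le 0\}$ part, tensors with $\mathrm{id}_C$, and then invokes Theorem~\ref{quasi thm1} to produce a quasi-isomorphism $C'\ot{\Lambda}C^*\ot{\Lambda}C \to C'$ to compose with; this leans on the stable-equivalence machinery (Theorem~\ref{stable2}, hence Proposition~\ref{stable1}). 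You instead identify $C'\ot{\Lambda}C^*$ with $\Hom_{\Lambda}(C,C')$ via evaluation and show that grading-zero homological generators lying in $R_{(0,0)}$ correspond exactly to $\z^2$-filtered quasi-isomorphisms $C\to C'$, which yields both implications at once and is self-contained (it does not pass through Theorem~\ref{quasi thm1} or the stabilizer decomposition). The checks you defer are genuinely the whole content and do go through: using that $\falex{k}(C)$ is freely generated over $\F[U]$ by $\{U^{\Alex(x_k)-k}x_k\}$, one finds $\Alex_{\Hom}(f)=\max_k\bigl(\Alex(f(x_k))-\Alex(x_k)\bigr)$, which matches the tensor filtration level of $\Psi^{-1}(f)$ computed via Lemma~\ref{subcpx decomp}, and preservation of both $\z$-filtrations implies $\z^2$-filteredness by the computation in Lemma~\ref{quasi-isom basis}; the identification of the nonzero class of $H_0(\Hom_{\Lambda}(C,C'))\cong\F$ with quasi-isomorphisms uses condition~(7) exactly as you say. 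The trade-off is that the paper avoids all Hom-complex bookkeeping by composing already-established quasi-isomorphisms, while your argument is more explicit and arguably more illuminating about what a homological generator of $C'\ot{\Lambda}C^*$ in $R_{(0,0)}$ \emph{is}.
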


To prove these theorems, we first prove the following lemma.

\begin{lem}
\label{quasi lem}
Let $C$ and $C'$ be formal knot complexes.
If there exists a $\z^2$-filtered quasi-isomorphism
$f: C \to C'$, then $[C]_{\nu^+} \geq [C']_{\nu^+}$.
\end{lem}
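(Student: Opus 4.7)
The plan is to exhibit a concrete homological generator of $C' \otimes_{\Lambda} C^*$ lying in the corner region $\{i \leq 0,\, j \leq 0\}$; by the definition of $\leq$ this is precisely what is needed to conclude $\nu^+(C' \otimes_\Lambda C^*) = 0$, i.e.\ $[C]_{\nu^+} \geq [C']_{\nu^+}$. The key observation is that the quasi-isomorphism $f$ itself, viewed as a $2$-tensor, is the desired generator.

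Concretely, fix a filtered basis $\{x_k\}_{1 \leq k \leq r}$ of $C$ and set
\[
e_f := \sum_{k=1}^r f(x_k) \otimes x_k^{*} \in C' \otimes_\Lambda C^*,
\]
which is the element corresponding to $f$ under the natural identification $\Hom_\Lambda(C, C') \cong C' \otimes_\Lambda C^*$ and reduces to the element used in Lemma~\ref{nuplus dual} when $C = C'$ and $f = \mathrm{id}$. First I would verify that $e_f$ is a homogeneous cycle of Maslov grading $0$ lying in $\falg{0} \cap \falex{0}$ of $C' \otimes_\Lambda C^*$. The grading and filtration statements use that $f$ is grading-preserving and $\z^2$-filtered, together with $\gr(x_k^*) = -\gr(x_k)$ and $\Alg(x_k^*) = -\Alg(x_k)$, $\Alex(x_k^*) = -\Alex(x_k)$; the latter identities can be read off from the explicit generators of $\Phi(C/\falg{-1})$ and $\Phi(C/\falex{-1})$ given before Proposition~\ref{dual}. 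The cycle property reduces to a short computation: writing $\partial x_k = \sum_l a_{lk} x_l$ and using the transpose description of $\partial^*$ from the proof of Lemma~\ref{nuplus dual}, both summands of $(\partial' \otimes 1 + 1 \otimes \partial^*)(e_f)$ equal $\sum_{k,l} a_{lk}\, f(x_l) \otimes x_k^*$, so they cancel in characteristic $2$.

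The second step, and the only place where the quasi-isomorphism hypothesis is used, is to show $[e_f] \neq 0$ in $H_0(C' \otimes_\Lambda C^*)$. For this I pick a homological generator $x_0 \in C$ and introduce the evaluation map
\[
\mu_{x_0} : C' \otimes_\Lambda C^* \longrightarrow C', \qquad y \otimes \varphi \longmapsto \varphi(x_0)\, y.
\]
A direct check shows $\mu_{x_0}$ is $\Lambda$-linear and, because $\partial x_0 = 0$, commutes with the differentials. It sends $e_f$ to $\sum_k x_k^*(x_0)\, f(x_k) = f(x_0)$, and $[f(x_0)]$ is non-zero in $H_0(C') \cong \F$ since $f$ induces an isomorphism on $H_0$. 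Consequently $[e_f]$ cannot vanish, and combined with the first step $e_f$ is a homological generator of $C' \otimes_\Lambda C^*$ sitting in $\{i \leq 0,\, j \leq 0\}$.

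I do not anticipate a serious obstacle. The main idea is really the observation that the $\z^2$-filtered, degree-$0$ chain map $f$ repackages as a cycle of bigrading $(0,0)$ in $C' \otimes_\Lambda C^*$; the quasi-isomorphism hypothesis enters only to certify, via a pairing with a homological generator of $C$, that this cycle is homologically non-trivial.
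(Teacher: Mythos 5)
Your proof is correct and is essentially the paper's argument: your cycle $e_f=\sum_k f(x_k)\otimes x_k^*$ is exactly the image of the canonical generator $\sum_k x_k\otimes x_k^*$ from Lemma~\ref{nuplus dual} under $f\otimes \mathrm{id}_{C^*}$, which is the witness the paper uses to show $\nu^+(C'\otimes_{\Lambda}C^*)=0$. The only difference is cosmetic: the paper certifies that $e_f$ is a homological generator in the corner region by observing that $f\otimes\mathrm{id}_{C^*}$ is itself a $\z^2$-filtered quasi-isomorphism, whereas you verify the cycle, grading, and filtration conditions by hand and detect non-vanishing via the evaluation map $\mu_{x_0}$ — both are valid.
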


\begin{proof}
Note that under the hypothesis of the lemma, 
$f \otimes id_{C^*}: C \ot{\Lambda} C^* \to C' \ot{\Lambda} C^*$
is also a $\z^2$-filtered quasi-isomorphism.
Moreover, by Lemma \ref{nuplus dual}, we can take a homological generator $x$ of 
$C \ot{\Lambda} C^*$ lying in $(C \ot{\Lambda} C^*)_{\{i\leq 0, j \leq 0\}}$.
Now, we see that $f \otimes id_{C^*}(x)$ is a homological generator 
of $C' \ot{\Lambda} C^*$ lying in $(C' \ot{\Lambda} C^*)_{\{i\leq0, j\leq 0\}}$,
and hence $\nu^+(C' \otimes C^*)=0$.
\end{proof}

\def\proofname{Proof of Theorem~\ref{quasi thm1}}
\begin{proof}
The if-part directly follows from Lemma \ref{quasi lem}.
We prove the only-if-part. Suppose that $C \nuplus C'$.
Then, by Theorem \ref{stable2},
we have a $\z^2$-filtered homotopy equivalence map
$$
f':C \oplus A \overset{\simeq}{\longrightarrow} C' \oplus A',
$$
where $A,A'$ are stabilizers.
Let $i : C \hookrightarrow C \oplus A$ be the inclusion
and $p: C' \oplus A' \twoheadrightarrow C'$ the projection.
Then, all of $i,$ $f'$ and $p$ are $\z^2$-filtered quasi-isomorphisms,
and hence we have the $\z^2$-filtered quasi-isomorphism
$$
f := p \circ f' \circ i: C \to C'.
$$
Similarly, we can construct a $\z^2$-filtered quasi-isomorphism 
$g:C' \to C$.
\end{proof}

\def\proofname{Proof of Theorem~\ref{quasi thm2}}
\begin{proof}
The if-part coincides with Lemma \ref{quasi lem}.
We prove the only-if-part.
Suppose that $[C]_{\nu^+} \geq [C']_{\nu^+}$.
Then the equality
$\nu^+(C' \otimes_{\Lambda} C^*)=0$ holds, and hence 
$(C' \otimes_{\Lambda} C^*)_{\{i\leq 0, j \leq 0\}}$ contains a homological
generator $x$. Hence, if we define a $\Lambda$-linear map
$$
f \colon \Lambda \to C' \otimes_{\Lambda} C^*
$$
so that $f(1) = x$, then $f$ is a $\z^2$-filtered quasi isomorphism.
In addition, the map
$$
f \otimes id_C \colon C  \to C' \otimes_{\Lambda} C^* \otimes_{\Lambda} C
$$
is also a $\z^2$-filtered quasi isomorphism.
Moreover, since $(C' \otimes_{\Lambda} C^* \otimes_{\Lambda} C) \nuplus C'$,
Theorem~\ref{quasi thm1} gives a $\z^2$-filtered quasi isomorphism
$$
C' \otimes_{\Lambda} C^* \otimes_{\Lambda} C \to C'.
$$
By combining these quasi isomorphisms, we obtained the desired quasi-isomorphism.
\end{proof}
\def\proofname{Proof}
When one wants to construct a $\z^2$-filtered quasi-isomorphism concretely,
the following lemma is useful.
\begin{lem}
\label{quasi-isom basis}
Let $C$ and $C'$ be formal knot complexes and $f \colon C \to C'$ be a chain map
over $\Lambda$ such that
\begin{itemize}
\item $f$ maps a homological generator $C$ to that of $C'$, and
\item
for a filtered basis $\{x_k\}_{1 \leq k \leq r}$ of $C$ and any $k$, 
we have $$(\Alg(f x_k),\Alex(f x_k)) \leq (\Alg(x_k),\Alex(x_k)).$$ 
\end{itemize}
Then, $f$ is a $\z^2$-filtered quasi-isomorphism.
\end{lem}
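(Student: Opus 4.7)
The plan is to verify the three conditions in the definition of a $\z^2$-filtered quasi-isomorphism: that $f$ is (i) $\z^2$-filtered, (ii) graded, and (iii) an isomorphism on homology. Condition (ii) is immediate, as it is built into the convention that $f$ is a chain map between $\z$-graded complexes. The remaining two steps require some argument.

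For (i), I would use Lemma~\ref{subcpx decomp} to reduce the check to the $\F$-basis elements $U^l x_k$ of the decomposition $C = \bigoplus_{(i,j) \in \z^2} C_{(i,j)}$ induced by the filtered basis $\{x_k\}_{1 \leq k \leq r}$. Fix a closed region $R \in \CR(\z^2)$ and suppose $U^l x_k \in C_{(i,j)}$ with $(i,j) \in R$. Combining $\Lambda$-linearity of $f$, the fact that $U$ lowers both filtrations by one, and the inequality $(\Alg(fx_k),\Alex(fx_k)) \leq (\Alg(x_k),\Alex(x_k))$ assumed in the lemma, I obtain
$$(\Alg(U^l f x_k),\Alex(U^l f x_k)) \leq (\Alg(U^l x_k),\Alex(U^l x_k)) = (i,j).$$
Since $R$ is closed, this forces $f(U^l x_k) = U^l f(x_k) \in \falg{i}(C') \cap \falex{j}(C') \subset C'_R$, so $f(C_R) \subset C'_R$ follows by $\F$-linearity.

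For (iii), I would exploit the structure of $H_*$ already recorded in the paper: for any formal knot complex, $H_n$ vanishes in odd degrees and is one-dimensional over $\F$ in even degree $n$, generated by $U^{-n/2}[x]$ for any homological generator $x$. Because the hypothesis forces $[fx]$ to be nonzero in $H_0(C') \cong \F$, the class $[fx]$ generates $H_0(C')$; by $\Lambda$-linearity of $f_*$, it follows that $f_*(U^{-n/2}[x]) = U^{-n/2}[fx]$ generates $H_n(C')$ for every even $n$, so $f_*$ is an isomorphism in every degree. The argument is essentially a bookkeeping exercise, and the only mild obstacle is to propagate the filtration hypothesis from the chosen basis to arbitrary elements, which is exactly what the decomposition result of Lemma~\ref{subcpx decomp} is designed to handle.
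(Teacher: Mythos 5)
Your proposal is correct and follows essentially the same route as the paper: both arguments reduce the filtration check to the $\F$-basis $\{U^l x_k\}$ furnished by the filtered basis (the paper organizes this via $f(\falex{j})\subset\falex{j}(C')$ and $f(\falg{i})\subset\falg{i}(C')$ and the formula $C_R=\sum_{(i,j)\in R}\falg{i}\cap\falex{j}$, while you route through the decomposition of Lemma~\ref{subcpx decomp}, which is the same computation). The homology step, which the paper dismisses as easy, is spelled out correctly in your version using $H_*(C)\cong\Lambda$ and $\Lambda$-linearity of $f_*$.
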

\begin{proof}
Since
$\falex{j}(C) = \spanFU\{ U^{\Alex(x_k) -j} x_k\}_{1\leq k\leq r}$,
 we have
\begin{eqnarray*}
f(\falex{j}(C)) &=& \spanFU\{ U^{\Alex(x_k) -j} f x_k\}_{1\leq k\leq r}\\
&\subset& \spanFU\{ U^{\Alex(f x_k) -j} f x_k\}_{1\leq k\leq r} \subset \falex{j}(C').\\
\end{eqnarray*}
Similarly, we have $f(\falg{i}(C)) \subset \falg{i}(C')$. Now, for any $R \in \CR(\z^2)$,
we see that
\begin{eqnarray*}
f(C_R) &=& f(\sum_{(i,j) \in R}\falg{i}(C)\cap \falex{j}(C))\\
&\subset& \sum_{(i,j) \in R}f(\falg{i}(C))\cap f(\falex{j}(C))\\
&\subset& \sum_{(i,j) \in R}\falg{i}(C')\cap \falex{j}(C') = C'_R.
\end{eqnarray*}
It is easy to see that $f$ is a quasi-isomorphism.
\end{proof}

Set
$\mC_{\nu^+} :=  \image(\mC \to \mCf \colon [K]_c \mapsto [C^K]_{\nu^+})$.
Then $\mC_{\nu^+}$ is naturally identified with a quotient group of $\mC$,
and the partial order on $\mCf$ induces a partial order on $\mC_{\nu^+}$.
We note that the induced partial order coincides with
the order defined in author's paper \cite{Sa18full}.
In particular, Proposition~1.5 in \cite{Sa18full} is naturally generalized to $\mCf$.
\begin{prop}[\text{\cite[Proposition~1.5]{Sa18full}}]
The partial order on $\mCf$ has the following properties:
\begin{enumerate}
\item For elements $x,y,z \in \mCf$, if $x \leq y$, then $x+z \leq y+z$.
\item For elements $x,y \in \mCf$, if $x \leq y$, then $-y \leq -x$.
\end{enumerate}
\end{prop}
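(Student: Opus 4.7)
The plan is to unpack both assertions directly from the definition $[C]_{\nu^+} \leq [C']_{\nu^+} \iff \nu^+(C \otimes_\Lambda C'^*) = 0$, and then reduce them to the already-established machinery: the sub-additivity of $\np$ (Proposition~\ref{sub additivity}), the vanishing $\nu^+(E \otimes_\Lambda E^*) = 0$ (Lemma~\ref{nuplus dual}), Corollary~\ref{cor stable}, and the identifications $(C \otimes C')^* \simeq C^* \otimes C'^*$ (Lemma~\ref{dual tensor}) and $C^{**} \simeq C$ (Lemma~\ref{dual dual}). I expect no genuine obstacles; both properties should fall out of symbol manipulation with these tools.

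For assertion (1), represent $x,y,z$ by formal knot complexes $C,D,E$ with $\nu^+(C \otimes_\Lambda D^*) = 0$. The claim $x+z \leq y+z$ amounts to showing
\[
\nu^+\bigl((C \otimes_\Lambda E) \otimes_\Lambda (D \otimes_\Lambda E)^*\bigr) = 0.
\]
First I would apply Lemma~\ref{dual tensor} to rewrite $(D \otimes_\Lambda E)^* \simeq D^* \otimes_\Lambda E^*$, and then reorganize the fourfold tensor product as $(C \otimes_\Lambda D^*) \otimes_\Lambda (E \otimes_\Lambda E^*)$. The factor $E \otimes_\Lambda E^*$ satisfies $\nu^+ = 0$ by Lemma~\ref{nuplus dual}, and its dual is $\simeq E^* \otimes_\Lambda E^{**} \simeq E^* \otimes_\Lambda E$ (by Lemmas~\ref{dual tensor} and~\ref{dual dual}), which also has $\nu^+ = 0$ for the same reason. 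Hence Corollary~\ref{cor stable} gives $\nu^+((C \otimes_\Lambda D^*) \otimes_\Lambda (E \otimes_\Lambda E^*)) = \nu^+(C \otimes_\Lambda D^*) = 0$, as required.

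For assertion (2), represent $x,y$ by $C,D$ with $\nu^+(C \otimes_\Lambda D^*) = 0$. Then $-y$ and $-x$ are represented by $D^*$ and $C^*$ respectively, so the claim $-y \leq -x$ translates to
\[
\nu^+\bigl(D^* \otimes_\Lambda (C^*)^*\bigr) = 0.
\]
Using $C^{**} \simeq C$ (Lemma~\ref{dual dual}) and commutativity of the tensor product (Proposition~\ref{tensor}), this complex is $\z^2$-filtered homotopy equivalent to $C \otimes_\Lambda D^*$, so the hypothesis applies directly.

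In both parts, the only thing one really needs to be careful about is confirming the symmetry of the chain-level identifications up to $\z^2$-filtered homotopy equivalence, so that $\nu^+$ is genuinely preserved; this is already guaranteed by Proposition~\ref{tensor} and the two dual lemmas. There is no delicate step: the partial order is designed precisely so that these compatibilities become transparent consequences of Lemma~\ref{nuplus dual} and Corollary~\ref{cor stable}.
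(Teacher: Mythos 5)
Your proof is correct. The paper itself states this proposition without proof, deferring to \cite[Proposition~1.5]{Sa18full}, so there is no in-paper argument to compare against; but your derivation is exactly the intended one, and in fact it mirrors almost verbatim the computation the paper does carry out in the proof of Proposition~\ref{abelian} (well-definedness of the product on $\mCf$), namely regrouping the fourfold tensor product as $(C\ot{\Lambda}D^*)\ot{\Lambda}(E\ot{\Lambda}E^*)$ via Lemma~\ref{dual tensor} and killing the second factor. The only remark worth making is that for assertion~(1) you do not even need Corollary~\ref{cor stable} (and hence need not verify $\nu^+\big((E\ot{\Lambda}E^*)^*\big)=0$): sub-additivity (Proposition~\ref{sub additivity}) together with Lemma~\ref{nuplus dual} already gives $\nu^+\big((C\ot{\Lambda}D^*)\ot{\Lambda}(E\ot{\Lambda}E^*)\big)\leq \nu^+(C\ot{\Lambda}D^*)+\nu^+(E\ot{\Lambda}E^*)=0$, which is the one-line route the paper takes in Proposition~\ref{abelian}; your version is equally valid since both hypotheses of Corollary~\ref{cor stable} do hold by Lemmas~\ref{dual tensor}, \ref{dual dual} and \ref{nuplus dual}.
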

On the other hand, 
for the case of $\mC_{\nu^+}$, 
we also have the following geometric estimates.
(Here, {\it full-twist operations} are defined as follows. 
Let $K$ be a knot and $D$ a disk in $S^3$ which intersects $K$ in its interior.
By performing $(-1)$-surgery along $\partial D$, we obtain a new knot $J$ in $S^3$ from $K$.
Let $n = \lk(K, \partial D)$. Since reversing the orientation of $D$ does not affect the result, we may assume that $n\geq0$. Then we say that $K$ is deformed into $J$ by 
{\it a positive full-twist with $n$-linking}, and call such an operation a {\it full-twist operation}.)
\begin{thm}[\text{\cite[Theorem~1.6]{Sa18full}}]
Suppose that a knot $K$ is deformed into a knot $J$ by 
a positive full-twist with $n$-linking.
\begin{enumerate}
\label{full-twist}
\item If $n=0$ or $1$, then $[J]_{\nu^+} \leq [K]_{\nu^+}$. 
\item If $n \geq 3$, then $[J]_{\nu^+} \nleq [K]_{\nu^+}$.
In particular, if the geometric intersection number between $K$ and $D$
is equal to $n$, then $[J]_{\nu^+} > [K]_{\nu^+}$.
\end{enumerate}
\end{thm}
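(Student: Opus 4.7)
The plan is to attack the two parts via different techniques: part (1) via a cobordism-map construction combined with Theorem~\ref{quasi thm2}, and part (2) via an obstruction from comparing against torus knots through Theorem~\ref{geom}.

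For part (1), I would first observe that $(-1)$-surgery along $\partial D$ extends to a $(-1)$-framed $2$-handle attachment, producing a smooth cobordism $W\colon S^3 \to S^3$ in which $K$ is carried to $J$. When $n = 0$ or $1$, the algebraic intersection of $D$ with $K$ is small enough that $D$, together with a portion of the co-core of the attached handle and an obvious product cylinder on $K$, can be assembled into a genus-zero cobordism between $K$ and $J$ sitting in $W$. The resulting cobordism map on $CFK^{\infty}$ should send a homological generator of $C^K$ to one of $C^J$ while respecting both the Alexander and algebraic filtrations in the sense demanded by Lemma~\ref{quasi-isom basis}. This yields a $\mathbb{Z}^2$-filtered quasi-isomorphism $C^K \to C^J$, and Theorem~\ref{quasi thm2} then gives $[J]_{\nu^+} \leq [K]_{\nu^+}$.

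For part (2), I would pursue a lower bound on $\nu^+(C^J \otimes_{\Lambda}(C^K)^*)$: the positive full-twist with $n$-linking locally inserts a pattern whose knot-Floer contribution is controlled below by the $(2,n)$-torus knot behaviour. Viewing $J\#(-K^*)$ as a cable-like modification of $K\#(-K^*) \nuplus \text{unknot}$ and invoking the sub-additivity of $\nu^+$ from Proposition~\ref{sub additivity}, one reduces to showing that the local twist contributes a strictly positive amount to $\nu^+$ when $n \geq 3$. The bound from Theorem~\ref{geom} relating $\nu^+$-classes to multiples of $[T_{2,3}]_{\nu^+}$ makes the comparison quantitative: for $n \geq 3$ the local model dominates one copy of $T_{2,3}$, forcing $\nu^+(C^J \otimes_{\Lambda}(C^K)^*) \geq 1$, so $[J]_{\nu^+} \not\leq [K]_{\nu^+}$. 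The geometric-intersection hypothesis in the ``in particular'' clause prevents any cancelling strands, so combined with part (1)-type arguments applied in reverse one obtains the strict inequality $[J]_{\nu^+} > [K]_{\nu^+}$.

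The main obstacle is part (2), in particular the sharpness at $n=3$. Coarser invariants such as the signature or $\tau$ alone do not register the threshold cleanly, and the $n=2$ case is genuinely delicate because adding a single positive crossing often leaves $\nu^+$ invariant. The technical core of the proof will therefore be a careful model computation isolating the local twist region and proving that, for $n \geq 3$, no $\mathbb{Z}^2$-filtered quasi-isomorphism $C^K \to C^J$ can exist; I expect this to be done by an explicit analysis of homological generators in the tensor complex $C^J \otimes_{\Lambda}(C^K)^*$, using the decomposition from Lemma~\ref{subcpx decomp} to show that no generator can lie in the region $\{i\leq 0,\, j \leq 0\}$.
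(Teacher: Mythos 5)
First, a structural point: this paper does not prove Theorem~\ref{full-twist} at all. It is imported verbatim from \cite[Theorem~1.6]{Sa18full} and used as a black box, so there is no in-paper argument to compare yours against; any proof would have to reconstruct the external one. Judged on its own terms, your proposal has genuine gaps in both parts. For part (1), the entire technical content is the assertion that the $(-1)$-framed $2$-handle attachment induces a map on $CFK^{\infty}$ carrying a homological generator of $C^K$ to one of $C^J$ while respecting both filtrations. No such ``cobordism map on $CFK^{\infty}$'' is available in this paper, and constructing one with the required $\z^2$-filtered property is precisely the hard step; Lemma~\ref{quasi-isom basis} and Theorem~\ref{quasi thm2} cannot be invoked until that map exists. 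Note also that the trace of the $(-1)$-surgery on $\partial D$ is a blow-up of $S^3\times[0,1]$, not a product, so the ``genus-zero cobordism'' you describe lives in a negative-definite $4$-manifold; extracting a $\nu^+$ inequality from that requires the correction-term machinery for negative-definite cobordisms (which is how \cite{Sa18full} argues, through the $V_k$'s and $d$-invariants of surgeries), not a filtered chain map that is simply asserted to exist.

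For part (2) the argument is both circular and incomplete. It is circular because you invoke Theorem~\ref{geom}, but in this paper Theorem~\ref{geom} is \emph{proved} in Section~3 by applying Theorem~\ref{full-twist} together with Lemma~\ref{twist knot}; you cannot use it to establish Theorem~\ref{full-twist}. It is incomplete because ``the local twist contributes a strictly positive amount to $\nu^+$ when $n\geq 3$'' is a restatement of the conclusion rather than a proof, and Proposition~\ref{sub additivity} gives an upper bound on $\nu^+$ of a tensor product, which points in the wrong direction when you need the lower bound $\nu^+(C^J\otimes_{\Lambda}(C^K)^*)\geq 1$. Nothing in the sketch isolates why the threshold sits at $n=3$ rather than $n=2$. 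Finally, the ``in particular'' clause needs the inequality $[J]_{\nu^+}\geq[K]_{\nu^+}$ under the coherent-intersection hypothesis, and ``part (1)-type arguments applied in reverse'' does not supply this, since part (1) only treats $n=0,1$ and the reverse operation is a \emph{negative} full twist. The honest conclusion is that this theorem cannot be proved from the tools developed in the present paper; one must go through the surgery/correction-term arguments of \cite{Sa18full}.
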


\subsection{Invariants of $\nu^+$-classes}
\label{section invariants}
In this subsection, we review the $V_k$-sequence \cite{NW15},
the $\tau$-invariant \cite{OS03tau}, the $\Upsilon$-invariant \cite{OSS17}
and the $\Upsilon^2$-invariant \cite{KL18}
as invariants of formal knot complexes under $\nu^+$-equivalence.
Here we use $\z^2$-filtered quasi-isomorphisms to prove the invariance of them.
\subsubsection{$V_k$-sequence}
The $V_k$-sequence defined by Ni and Wu \cite{NW15} is a family of 
$\z_{\geq 0}$-valued invariants which is parametrized by $\z_{\geq 0}$.
Concretely, for a formal knot complex $C$ and $k \in \z_{\geq 0}$, 
the value $V_k(C)$ is defined by
$$
V_k(C) = \dim_{\F} \Big(\coker (i_* \colon H_*(C_{\{i \leq 0, j \leq k\}}) \to 
H_*(C_{\{i \leq 0\}}))\Big).
$$
In particular, we have the equality
$$
\nu^+(C) = \min \{ k \in \z_{\geq 0} \mid V_k(C)= 0 \}.
$$
Moreover, we can use homological generators to determine $V_k(C)$.
\begin{lem}
\label{V_k homol gen}
For any $k \in \z_{\geq 0}$, the equality
$$
V_k(C) = \min \{ m \in \z_{\geq 0} \mid C_{\{i\leq m, j \leq k+m\}} 
\text{ {\rm contains a homological generator}} \}
$$
holds.
\end{lem}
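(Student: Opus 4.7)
The plan is to identify $V_k(C)$ with a single numerical invariant of a principal graded $\F[U]$-submodule, and then to set up a multiplication-by-$U^{\pm a}$ correspondence between cycles realising that submodule and homological generators in the shifted region.

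First I would use the seventh axiom in the definition of formal knot complex: the $\z$-filtered homotopy equivalence $f_{\Alg}\colon C \to \Lambda$ together with Proposition~\ref{exact} identifies $H_*(\falg{0})$ with $\F[U]$, generated by the class $[z_0]$ of any homological generator $z_0 \in C_0$, and realises $j_*\colon H_*(\falg{0}) \to H_*(C)$ as the natural injection $\F[U] \hookrightarrow \Lambda$. Because $U$ lowers both the algebraic and the Alexander filtration, $\falg{0} \cap \falex{k}$ is a $\F[U]$-submodule of $C$, so the image of $i_*\colon H_*(\falg{0}\cap\falex{k}) \to H_*(\falg{0})$ is a graded $\F[U]$-submodule of $\F[U]\cdot[z_0]$, nonzero because $U^N z_0 \in \falg{0}\cap\falex{k}$ for $N$ sufficiently large. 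It therefore equals $U^a \F[U] \cdot [z_0]$ for a unique $a \geq 0$, and consequently $V_k(C) = \dim_\F(\F[U]/U^a\F[U]) = a$.

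Next I would prove the two inequalities between $a$ and the right-hand side by exhibiting explicit multiplication-by-$U^{\pm a}$ correspondences. If $w \in \falg{0}\cap\falex{k}$ is a cycle whose class in $H_*(\falg{0})$ equals $U^a[z_0]$, then $U^{-a}w \in \falg{a}\cap\falex{k+a} = C_{\{i\leq a,\, j\leq k+a\}}$ is a Maslov-degree $0$ cycle with $[U^{-a}w] = [z_0]$ in $H_*(C)$, hence a homological generator in the required region; this yields the $\geq$ direction. Conversely, if $x \in C_{\{i\leq m,\, j\leq k+m\}}$ is a homological generator, then $U^m x \in \falg{0}\cap\falex{k}$ is a cycle whose $H_*(C)$-class is $U^m[z_0]$, and by injectivity of $j_*$ its $H_*(\falg{0})$-class is forced to be $U^m[z_0] \in \image(i_*)$, so $a \leq m$.

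There is no serious obstacle here; the proof is essentially bookkeeping. The two points to keep track of are that $\falg{0}\cap\falex{k}$ is stable under the $U$-action (which is what makes $\image(i_*)$ a principal graded $\F[U]$-submodule, hence of the form $U^a\F[U]\cdot[z_0]$) and that $j_*$ is injective (which lets one transfer the relevant computations between $H_*(\falg{0})$-classes and $H_*(C)$-classes). Once these are in place, the lemma reduces to keeping track of how $U^{\pm a}$ shifts the two filtrations simultaneously.
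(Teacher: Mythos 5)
Your proof is correct and follows essentially the same route as the paper's: both arguments observe that $\image(i_*)\subset H_*(C_{\{i\le 0\}})\cong\F[U]$ is a graded $\F[U]$-submodule, hence equal to $U^{V_k(C)}\F[U]$, and then pass between cycles in $C_{\{i\le 0,\,j\le k\}}$ and homological generators in $C_{\{i\le m,\,j\le k+m\}}$ by multiplying by $U^{\pm m}$. The only point to tidy is that the degree-$0$ generator of $H_*(\falg{0})$ is the class of a homological generator \emph{lying in} $\falg{0}$ (which exists because $H_0(\falg{0})\to H_0(C)$ is an isomorphism), rather than of an arbitrary homological generator $z_0\in C_0$.
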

\begin{proof}
Denote the value of the right-hand side of the equality in Lemma~\ref{V_k homol gen}
by $V'_k(C)$. We first prove that $V_k(C) \geq V'_k(C)$.
Since $H_*(C_{\{i \leq 0\}}) \cong \F[U]$ and the map
$i_* \colon H_*(C_{\{i \leq 0, j \leq k\}}) \to H_*(C_{\{i \leq 0\}})$
is a $\F[U]$-linear map, if $\image i_{*, 2m}=H_{2m}(C_{\{i \leq 0\}})$
then $\image i_{*, 2n}=H_{2n}(C_{\{i \leq 0\}})$ for any $n \leq m$.
This implies that
$$
i_{*, -2V_k(C)} \colon H_{-2V_k(C)}(C_{\{i \leq 0, j \leq k\}}) 
\to H_{-2V_k(C)}(C_{\{i \leq 0\}})
$$
is surjective. Moreover, the map 
$
i_{*, n} \colon H_{n}(C_{\{i \leq 0\}}) 
\to H_{n}(C)
$
is an isomorphism for any $n \leq 0$. Consequently, we see that
there exists a cycle $x \in C_{-2V_k(C)}$ lying in $C_{\{i \leq 0, j \leq k\}}$
such that the homology class $[x] \in H_{-2V_k(C)}(C)$ is non-zero.
This implies that $U^{-V_k(C)} x \in C_0$ is a homological generator lying 
in $C_{\{i \leq V_k(C), j \leq k+V_k(C)\}}$. 
Therefore, we have $V_k(C) \geq V'_k(C)$.

Conversely, since $C_{\{i \leq V'_k(C), j \leq k+V'_k(C)\}}$ contains a homological generator
$x$,  the cycle $U^{V'_k(C)}x \in C_{-2V'_k(C)}$ is lying in $C_{\{i \leq 0, j \leq k\}}$.
This implies that the map
$$
i_{*, -2V'_k(C)} \colon H_{-2V'_k(C)}(C_{\{i \leq 0, j \leq k\}}) 
\to H_{-2V'_k(C)}(C_{\{i \leq 0\}})
$$
is surjective, and hence $V_k(C) \leq V'_k(C)$.
\end{proof}
Now, we can easily see that $V_k$ is a well-defined map on $\mCf$ and preserve the 
partial order.
\begin{cor}
\label{V_k invariance}
If $[C]_{\nu^+} \leq [C']_{\nu^+}$, then $V_k (C)\leq V_k(C')$ for any $k \geq 0$.
In particular, $V_k$ is a well-defined map $\mCf \to \z_{\geq 0}$.  
\end{cor}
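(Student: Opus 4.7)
The plan is to combine the reformulation of $V_k$ via homological generators (Lemma~\ref{V_k homol gen}) with the quasi-isomorphism interpretation of the partial order (Theorem~\ref{quasi thm2}). Assuming $[C]_{\nu^+} \leq [C']_{\nu^+}$, Theorem~\ref{quasi thm2} (applied with the roles of $C$ and $C'$ interchanged) furnishes a $\z^2$-filtered quasi-isomorphism $f \colon C' \to C$. The rest is essentially a transport argument along $f$.

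More precisely, set $m := V_k(C')$. By Lemma~\ref{V_k homol gen}, there exists a homological generator $x' \in C'_{\{i \leq m,\, j \leq k+m\}}$, that is, a homogeneous cycle with $\gr(x')=0$ whose class $[x'] \in H_0(C')$ is non-zero. Since $f$ is graded of degree $0$ and $f_* \colon H_*(C') \to H_*(C)$ is an isomorphism, the element $f(x') \in C$ is again a homogeneous cycle with $\gr(f(x'))=0$ and $[f(x')] = f_*([x']) \neq 0$ in $H_0(C)$; thus $f(x')$ is a homological generator of $C$. Moreover, since $f$ is $\z^2$-filtered, $f(x') \in C_{\{i \leq m,\, j \leq k+m\}}$. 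Applying Lemma~\ref{V_k homol gen} once more, we conclude $V_k(C) \leq m = V_k(C')$.

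For the second assertion, if $[C]_{\nu^+} = [C']_{\nu^+}$ then both $[C]_{\nu^+} \leq [C']_{\nu^+}$ and $[C']_{\nu^+} \leq [C]_{\nu^+}$ hold, and applying the previous step twice yields $V_k(C)=V_k(C')$. Hence $V_k$ descends to a well-defined map $\mCf \to \z_{\geq 0}$.

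There is no real obstacle here: the only thing that needs care is checking that $f$ sends a homological generator to a homological generator, which is immediate from the fact that a $\z^2$-filtered quasi-isomorphism is by definition graded and induces an isomorphism on homology. In particular, once Lemma~\ref{V_k homol gen} is in hand, the corollary is essentially a one-line consequence of Theorem~\ref{quasi thm2}.
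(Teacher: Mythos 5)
Your proof is correct and follows essentially the same route as the paper: both obtain a $\z^2$-filtered quasi-isomorphism $f\colon C' \to C$ from the hypothesis $[C]_{\nu^+} \leq [C']_{\nu^+}$, push a homological generator of $C'_{\{i \leq V_k(C'),\, j \leq k+V_k(C')\}}$ forward along $f$, and invoke Lemma~\ref{V_k homol gen}. No issues.
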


\begin{proof}
Suppose that $[C]_{\nu^+} \leq [C']_{\nu^+}$, and then we have a $\z^2$-filtered
quasi-isomorphism $f \colon C' \to C$. Here, by using Lemma~\ref{V_k homol gen},
we can take a homological generator $x \in C'$ lying in 
$C'_{\{i \leq V_k(C') , j \leq k+V_k(C') \}}$.
Then, $f(x)$ is a homological generator of $C$ lying in 
$C_{\{i \leq V_k(C') , j \leq k+V_k(C') \}}$. This completes the proof.
\end{proof}
In addition, we also have the following properties of $V_k$. 
\begin{cor}
\label{V_k np}
For any $k \in \z_{\geq 0}$, we have
$$
V_k(C)-1 \leq V_{k+1}(C) \leq V_k(C).
$$
In particular, for any $0 \leq k \leq \np(C)$, the inequality $V_k(C)+k \leq \np(C)$ holds.
\end{cor}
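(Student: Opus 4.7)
The plan is to derive both inequalities directly from the characterization of $V_k(C)$ in terms of homological generators provided by Lemma~\ref{V_k homol gen}, and then iterate to obtain the ``in particular'' clause.

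For the upper bound $V_{k+1}(C) \leq V_k(C)$, I would set $m = V_k(C)$, so that by Lemma~\ref{V_k homol gen} the subcomplex $C_{\{i \leq m,\, j \leq k+m\}}$ contains a homological generator $x$. Since the closed region $\{i \leq m,\, j \leq k+m\}$ is contained in $\{i \leq m,\, j \leq (k+1)+m\}$, the element $x$ also lies in $C_{\{i \leq m,\, j \leq (k+1)+m\}}$, and Lemma~\ref{V_k homol gen} applied with exponent $k+1$ yields $V_{k+1}(C) \leq m$.

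For the lower bound $V_{k+1}(C) \geq V_k(C) - 1$, equivalently $V_k(C) \leq V_{k+1}(C) + 1$, I would set $m = V_{k+1}(C)$ and pick a homological generator $x \in C_{\{i \leq m,\, j \leq (k+1)+m\}}$. Using the containment of closed regions $\{i \leq m,\, j \leq (k+1)+m\} \subset \{i \leq m+1,\, j \leq k+(m+1)\}$, the same $x$ serves as a homological generator in $C_{\{i \leq m+1,\, j \leq k+(m+1)\}}$, so Lemma~\ref{V_k homol gen} gives $V_k(C) \leq m+1$.

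For the ``in particular'' clause, I would iterate the second inequality: for any $k \leq \np(C)$, applying $V_{l+1}(C) \geq V_l(C) - 1$ successively for $l = k, k+1, \ldots, \np(C)-1$ produces
\[
V_{\np(C)}(C) \geq V_k(C) - (\np(C) - k).
\]
Since $V_{\np(C)}(C) = 0$ by the definition of $\np(C)$ recorded just before Lemma~\ref{V_k homol gen}, rearrangement yields $V_k(C) + k \leq \np(C)$. The entire argument is purely formal: no step should present a genuine obstacle, as both inequalities reduce to monotonicity of the assignment $R \mapsto C_R$ under inclusion of closed regions in $\z^2$.
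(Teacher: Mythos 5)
Your proposal is correct and follows essentially the same route as the paper: both inequalities come from the inclusions of closed regions $\{i \leq m,\, j \leq k+m\} \subset \{i \leq m,\, j \leq (k+1)+m\}$ and $\{i \leq m,\, j \leq (k+1)+m\} \subset \{i \leq m+1,\, j \leq k+(m+1)\}$ combined with Lemma~\ref{V_k homol gen}, and the ``in particular'' clause is obtained by the same iteration down to $V_{\np(C)}(C)=0$. No gaps.
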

\begin{proof}
The first assertion immediately follows from the fact that 
\begin{eqnarray*}
C_{\{i \leq m-1 , j \leq (k+1)+(m-1) \}} &\subset&
C_{\{i \leq m , j \leq k+m \}}\\
&\subset& 
C_{\{i \leq m , j \leq (k+1)+m \}}.
\end{eqnarray*}
Next, for any $0 \leq k \leq \np(C)$, we see that 
$$
V_k(C) \leq V_{k+1}(C)+1 \leq \cdots \leq V_{\np(C)}(C) +(\np(C)-k) = \np(C)-k.
$$
This completes the proof.
\end{proof}
Moreover, we have a connected sum inequality for $V_k$.
(For knot complexes, it is given in \cite{BCG17}.)
\begin{cor}
For any formal knot complexes $C,C'$ and $k,k' \in \z_{\geq 0}$, we have 
$$
V_{k+k'}(C\otimes_{\Lambda} C') \leq V_k(C)+ V_{k'}(C').
$$
\end{cor}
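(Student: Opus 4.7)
The plan is to mimic the proof of the subadditivity of $\nu^+$ (Proposition~\ref{sub additivity}) using the homological-generator characterization of $V_k$ provided by Lemma~\ref{V_k homol gen}. That lemma lets us replace the cokernel-based definition of $V_k$ by a geometric statement about where a homological generator can sit in the $\z^2$-filtration, which is much better suited to products.

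Concretely, I would first apply Lemma~\ref{V_k homol gen} twice to extract a homological generator $x \in C$ lying in $C_{\{i \leq V_k(C),\, j \leq k+V_k(C)\}}$ and a homological generator $x' \in C'$ lying in $C'_{\{i \leq V_{k'}(C'),\, j \leq k'+V_{k'}(C')\}}$. Next, I would form $x \otimes x' \in C \otimes_{\Lambda} C'$ and observe, exactly as in the proof of Proposition~\ref{sub additivity}, that $x \otimes x'$ is a homogeneous cycle with $\gr(x \otimes x')=0$ and that its homology class is nonzero by the K\"unneth injection $H_*(C) \otimes_{\Lambda} H_*(C') \hookrightarrow H_*(C \otimes_{\Lambda} C')$. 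Hence $x \otimes x'$ is a homological generator of $C \otimes_{\Lambda} C'$.

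The filtration bookkeeping is then immediate from the definition of the induced $\z^2$-filtration on the tensor product: $x \otimes x'$ lies in the image of $\falg{V_k(C)} \times \falg{V_{k'}(C')}$ under the projection $p$, so its algebraic level is at most $V_k(C)+V_{k'}(C')$; similarly its Alexander level is at most $(k+V_k(C))+(k'+V_{k'}(C'))$. Thus
\[
x \otimes x' \in (C \otimes_{\Lambda} C')_{\{i \leq V_k(C)+V_{k'}(C'),\; j \leq (k+k')+V_k(C)+V_{k'}(C')\}}.
\]
Applying Lemma~\ref{V_k homol gen} in the other direction to the tensor product now yields $V_{k+k'}(C \otimes_{\Lambda} C') \leq V_k(C) + V_{k'}(C')$, as desired.

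There is no real obstacle here: the one point that requires care is just that one uses the homological-generator reformulation (Lemma~\ref{V_k homol gen}) rather than the original cokernel definition, since the latter does not behave cleanly under tensor product. Once that substitution is made, the argument is structurally identical to the $\nu^+$ subadditivity proof, and in particular recovers Proposition~\ref{sub additivity} as the special case $k=k'=0$.
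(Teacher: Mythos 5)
Your proposal is correct and follows essentially the same route as the paper: the paper's proof also invokes Lemma~\ref{V_k homol gen} to place homological generators $x$ and $x'$ in the appropriate filtration levels, forms $x \otimes x'$, and reads off the bound, with the fact that $x\otimes x'$ is again a homological generator borrowed implicitly from the argument for Proposition~\ref{sub additivity}. Your write-up just makes that last point and the filtration bookkeeping more explicit.
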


\begin{proof}
By Lemma~\ref{V_k homol gen}, we have a homological generator
$x \in C$ (resp.\ $x' \in C'$) which is lying in 
$C_{\{i \leq V_k(C) , i \leq k+V_k(C) \}}$ 
(resp.\ $C_{\{i \leq V_{k'}(C') , i \leq k+V_{k'}(C') \}}$).
This implies that $x \otimes x'$ is a homological generator of $C\otimes_{\Lambda}C'$
lying in 
$$
(C\otimes_{\Lambda}C')_{\{i \leq V_k(C)+V_{k'}(C') , i \leq (k+k')+V_k(C) +V_{k'}(C') \}}.
$$
This completes the proof.
\end{proof}

For the case of knot complexes, $V_k(K) := V_k(C^K)$ is an important invariant
because it completely determines all correction terms of all positive Dehn surgeries along $K$.
To state the fact precisely, we fix several notations.
For coprime integers $p,q>0$, let $S^3_{p/q}(K)$ denote the $p/q$-surgery along $K$. 
Note that there is a canonical identification between the set
of $\Spin^c$ structures over $S^3_{p/q}(K)$ 
and $\{ i \mid  0 \leq i \leq p-1\}$. This identification can be
made explicit by the procedure in \cite[Section 4, Section 7]{OS11rational}.
Let $d(S^3_{p/q}(K), i)$ denote the correction term of $S^3_{p/q}(K)$
with  the $i$-th $\Spin^c$ structure ($0 \leq i \leq p-1$). 
\begin{prop}[\text{\cite[Proposition~1.6]{NW15}}]
\label{V_k d}
The equality
$$
d(S^3_{p/q}(K),i) = d(S^3_{p/q}(O),i) - 2 
\max \left\{ V_{\lfloor \frac{i}{q} \rfloor}(K), V_{\lfloor \frac{p+q -1-i}{q} \rfloor}(K) \right\}
$$
holds,
where $O$ denotes the unknot and $\lfloor \cdot \rfloor$ is the floor function.
\end{prop}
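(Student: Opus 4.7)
The plan is to invoke the rational surgery formula of Ozsv\'ath--Szab\'o and translate the data it produces into the language of the $V_k$-invariants as defined above, essentially reproducing the argument of Ni--Wu in the present framework.

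First, I would recall that for $p,q > 0$ the Ozsv\'ath--Szab\'o rational surgery formula expresses $HF^+(S^3_{p/q}(K), i)$ as the homology of a mapping cone complex whose domain is a direct sum of certain ``truncation'' subquotients of $C^K$ (one summand for each integer $k$ in a prescribed range depending on $p$, $q$, $i$), and whose target is a corresponding direct sum of copies of $CF^+(S^3)$. The key local computation is that the homology of each such truncation is an $\F[U]$-module whose tower generator sits in Maslov grading exactly $2V_k(K)$ below the tower generator of the target copy of $CF^+(S^3)$; this is precisely the characterization of $V_k$ via homological generators given in Lemma~\ref{V_k homol gen}, applied after shifting back by $U^{V_k(K)}$.

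Next, I would invoke the explicit $\Spin^c$-bookkeeping of \cite[\S 4, \S 7]{OS11rational} to match the $i$-th $\Spin^c$ structure on $S^3_{p/q}(K)$ with a distinguished pair of truncation summands in the mapping cone. The two summands that control the correction term are indexed by $k = \lfloor i/q \rfloor$ and $k = \lfloor (p + q - 1 - i)/q \rfloor$; the second index arises from passing through the dual/reflected truncation, which, thanks to the involutive symmetry built into condition (6) of a formal knot complex (the map $\iota \colon C \to C^r$), is canonically identified with a truncation at the opposite end of the $\Spin^c$ range.

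To conclude, I would show that the correction term is governed by the \emph{maximum} of the two contributing $V_k(K)$'s: the tower generator of $HF^+(S^3_{p/q}(K), i)$ is the image of the worse of the two truncation tower generators, so its Maslov grading is shifted downward by $2\max\{V_{\lfloor i/q \rfloor}(K), V_{\lfloor (p+q-1-i)/q \rfloor}(K)\}$ relative to the unknot case (for which all $V_k$ vanish by Lemma~\ref{V_k homol gen}), yielding the stated formula. The main obstacle is the combinatorics of the $\Spin^c$-identification in the rational-surgery mapping cone: verifying that the Ozsv\'ath--Szab\'o labeling in \cite{OS11rational} lines up exactly with the floor-function indices $\lfloor i/q \rfloor$ and $\lfloor (p+q-1-i)/q \rfloor$ is the technical heart of the argument, while the remaining grading calculation follows directly from the homological-generator characterization of $V_k$ and the fact that this characterization is invariant under $\z^2$-filtered quasi-isomorphism.
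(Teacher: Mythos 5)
This proposition is quoted in the paper directly from \cite[Proposition~1.6]{NW15} and no proof is given in the text, so there is no internal argument to compare against; your sketch is, in effect, an outline of the Ni--Wu proof itself. As such an outline it is on the right track: the rational surgery mapping cone, the identification of the two relevant truncation indices $\lfloor i/q\rfloor$ and $\lfloor (p+q-1-i)/q\rfloor$ via the $\Spin^c$ labelling of \cite{OS11rational}, the fact that $v^+_k$ restricted to the tower is multiplication by $U^{V_k}$, and the use of the symmetry of $CFK^\infty$ to convert the ``$H_k$'' of the second summand into a $V$-invariant are exactly the ingredients of the original argument, and the maximum appears because the tower of the surgered manifold is the intersection of the images of the two towers. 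You correctly flag that the $\Spin^c$ bookkeeping is the technical heart and you do not carry it out, so this remains a proof sketch rather than a proof; but since the paper itself defers entirely to \cite{NW15} for this statement, that level of detail is consistent with how the result is used here.
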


\subsubsection{$\tau$-invariant}
\label{section tau}
Let $C$ be a formal knot complex.
Define 
$$
\hC:= C_{\{i\leq 0\}} / C_{\{i\leq -1\}} \\
$$
and
$$
\mhF_m := C_{\{i\leq 0, j \leq m\}} / C_{\{i\leq -1, j \leq m\}}
$$
for any $m \in \z$.
Then we see $H_*(\hC)=H_0(\hC) \cong \F$, and $\{ \mhF_m \}_{m \in \z}$ is an increasing sequence of subcomplexes on $\hC$, i.e.\ a $\z$-filtration on $\hC$. 
We call a cycle $x \in \hC$ a
{\it hat-generator}
if $x$ is homogeneous with $\gr(x)=0$ and the homology class $[x]\in H_0(\hC)$
is non-zero. We define the {\it $\tau$-invariant} of $C$ by
$$
\tau(C) := \min\{m \in \z \mid \mhF_m \text{ contains a hat-generator} \}.
$$
We can use homological generators to determine $\tau(C)$ like $V_k(C)$.
\begin{lem}
\label{tau homol gen}
The equality
$$
\tau(C) =\min \{ m \in \z_{\geq 0} \mid C_{\{i\leq -1\}\cup \{i \leq 0, j \leq m\}} 
\text{ {\rm contains a homological generator}} \}
$$
holds.
\end{lem}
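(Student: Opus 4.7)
The plan is to compare homological generators of $C$ lying in $C_{R_m}$, where $R_m := \{i \leq -1\} \cup \{i \leq 0, j \leq m\}$, with hat-generators of $\mhF_m$, by identifying the two quotients. By Lemma~\ref{add} and Lemma~\ref{subcpx decomp}, $C_{R_m} = C_{\{i \leq -1\}} + C_{\{i \leq 0, j \leq m\}}$ and $C_{\{i \leq -1\}} \cap C_{\{i \leq 0, j \leq m\}} = C_{\{i \leq -1, j \leq m\}}$. The second isomorphism theorem then provides a natural chain isomorphism
$$
\mhF_m = C_{\{i \leq 0, j \leq m\}}/C_{\{i \leq -1, j \leq m\}}
\overset{\cong}{\longrightarrow} C_{R_m}/C_{\{i \leq -1\}},
$$
which is compatible with the inclusions into $\hC = C_{\{i \leq 0\}}/C_{\{i \leq -1\}}$.

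Next I would invoke condition (7) of the definition of a formal knot complex, i.e.\ the $\z$-filtered homotopy equivalence $f_{\Alg} \colon C \to \Lambda$ with respect to the algebraic filtration. It restricts to homotopy equivalences $C_{\{i \leq k\}} \simeq U^{-k}\F[U]$, from which $H_0(C_{\{i \leq 0\}}) \cong H_0(C) \cong \F$, while both $H_0(C_{\{i \leq -1\}})$ and $H_{-1}(C_{\{i \leq -1\}})$ vanish (since $U\F[U]$ sits only in even Maslov gradings $\leq -2$).

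Plugging this vanishing into the long exact sequences of the pairs $(C_{R_m}, C_{\{i \leq -1\}})$ and $(C_{\{i \leq 0\}}, C_{\{i \leq -1\}})$, the quotient maps induce isomorphisms $H_0(C_{R_m}) \cong H_0(\mhF_m)$ and $H_0(C_{\{i \leq 0\}}) \cong H_0(\hC)$ in Maslov grading zero. These isomorphisms fit into a commutative square together with the inclusions $C_{R_m} \hookrightarrow C_{\{i \leq 0\}}$ and $\mhF_m \hookrightarrow \hC$, so a chase shows: $\mhF_m$ contains a hat-generator if and only if $C_{R_m}$ contains a Maslov-$0$ cycle whose class in $H_0(C)$ is nonzero, i.e.\ a homological generator. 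This gives the claimed equality of minima.

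The step I expect to be the main obstacle is the backward direction of the chase: given a hat-generator $\bar x \in \mhF_m$, one must lift it to a genuine cycle $x \in C_{R_m}$ whose class in $H_0(C)$ is nonzero. A naive lift only has boundary in $C_{\{i \leq -1\}}$ rather than zero, and one must correct it by a bounding chain; the obstruction to doing so vanishes precisely because $H_{-1}(C_{\{i \leq -1\}}) = 0$. Once this obstruction is packaged into the long exact sequence, the remainder is a purely formal diagram chase using the comparison with $\hC$.
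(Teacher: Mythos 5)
Your proposal is correct and follows essentially the same route as the paper: both arguments rest on the vanishing of $H_0$ and $H_{-1}$ of $C_{\{i\leq -1\}}$ (equivalently, that the projection induces an isomorphism $H_0(C_{\{i\leq 0\}})\cong H_0(\hC)$) and on identifying the image of $C_{\{i\leq -1\}\cup\{i\leq 0,\,j\leq m\}}$ in $\hC$ with $\mhF_m$. The paper merely carries out your "main obstacle" step at the chain level—correcting a lift $x$ by a $0$-chain $y\in C_{\{i\leq -1\}}$ with $\partial y=\partial x$—rather than packaging it into the long exact sequence, so the two proofs differ only in presentation.
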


\begin{proof}
Denote the value of right-hand side of the equality in Lemma~\ref{tau homol gen}
by $\tau'(C)$. We first prove that $\tau(C) \geq \tau'(C)$.
By the definition of $\tau(C)$, there exists a chain 
$x \in C_{\{i\leq -1\}\cup \{i \leq 0, j \leq \tau(C)\}}$ such that
$p(x) \in \mhF_{\tau(C)}$ is a hat-generator,
where $p \colon C_{\{i \leq 0\}} \to \hC$ is the projection.
Moreover, since the induced map 
$p_{*,0} \colon H_0(C_{\{i \leq 0\}}) \to H_0(\hC)$ is an isomorphism, 
there exists a 0-chain $y \in C_{\{i \leq -1\}}$ such that 
$\partial y = \partial x$. In particular, $x-y$ is a homological generator of $C$
lying in $C_{\{i\leq -1\}\cup \{i \leq 0, j \leq \tau(C)\}}$. (Note that 
$p_{*,0}([x-y])=[p(x-y)]=[p(x)] \neq 0$.)
Therefore, we have $\tau(C) \geq \tau'(C)$.

Conversely, since $C_{\{i\leq -1\}\cup \{i \leq 0, j \leq \tau'(C)\}}$ contains
a homological generator $x'$ and the above map $p_{*,0}$ is an isomorphism,
$p(x')$ is a hat-generator lying in $\mhF_{\tau'(C)}$. This gives $\tau(C) \leq \tau'(C)$.
\end{proof}
Now, by the same arguments as the proof of Corollary~\ref{V_k invariance},
we have the following.
\begin{cor}
\label{tau invariance}
If $[C]_{\nu^+} \leq [C']_{\nu^+}$, then $\tau (C)\leq \tau(C')$.
In particular, $\tau$ is a well-defined map $\mCf \to \z$.  
\end{cor}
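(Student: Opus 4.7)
The plan is to mimic the proof of Corollary~\ref{V_k invariance} essentially verbatim, replacing the closed region $\{i \leq 0, j \leq k\}$ with the closed region $R_m := \{i \leq -1\} \cup \{i \leq 0, j \leq m\}$, whose role in the description of $\tau$ is played by Lemma~\ref{tau homol gen}. The only thing to observe first is that $R_m$ really is a closed region of $\z^2$: if $(i,j) \leq (i_0, j_0) \in R_m$, then either $i_0 \leq -1$, which forces $i \leq -1$, or else $i_0 \leq 0$ and $j_0 \leq m$, which forces $i \leq 0$ and $j \leq m$; in both cases $(i,j) \in R_m$. Hence for any $\z^2$-filtered map $f \colon C' \to C$ we have $f(C'_{R_m}) \subset C_{R_m}$.

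Suppose $[C]_{\nu^+} \leq [C']_{\nu^+}$. By Theorem~\ref{quasi thm2} we obtain a $\z^2$-filtered quasi-isomorphism $f \colon C' \to C$. Apply Lemma~\ref{tau homol gen} to $C'$ to choose a homological generator $x' \in C'$ lying in $C'_{R_{\tau(C')}}$. Since $f$ is a chain map in degree $0$ and $f_* \colon H_0(C') \to H_0(C)$ is an isomorphism between copies of $\F$, it sends the non-zero class $[x']$ to the non-zero class $[f(x')]$; thus $f(x')$ is a homological generator of $C$. Because $f$ is $\z^2$-filtered, $f(x') \in C_{R_{\tau(C')}}$. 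Applying Lemma~\ref{tau homol gen} again, this time to $C$, we conclude $\tau(C) \leq \tau(C')$.

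For the second assertion, if $[C]_{\nu^+} = [C']_{\nu^+}$ then the inequality just proved, applied in both directions, yields $\tau(C) = \tau(C')$; hence $\tau$ descends to a well-defined map $\mCf \to \z$. I do not expect any real obstacle here: the whole argument is a transcription of the $V_k$-case, and the only step that required a moment of thought, the verification that $R_m \in \CR(\z^2)$, is immediate from the definition of closed region.
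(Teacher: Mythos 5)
Your proof is correct and is essentially the paper's own argument: the paper proves this corollary by saying it follows "by the same arguments as the proof of Corollary~\ref{V_k invariance}," which is exactly the transcription you carried out, using Theorem~\ref{quasi thm2} to get a $\z^2$-filtered quasi-isomorphism and Lemma~\ref{tau homol gen} to rephrase $\tau$ in terms of a homological generator lying in the closed region $\{i \leq -1\} \cup \{i \leq 0, j \leq m\}$. Your explicit check that this set is a closed region is a small but welcome addition that the paper leaves implicit.
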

In addition, $\tau$ is related to $\np$ as follows.
\begin{cor}
\label{tau np}
The inequality $\tau(C) \leq \np(C)$ holds.
\end{cor}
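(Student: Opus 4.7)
The plan is to read off the inequality directly from Lemma~\ref{tau homol gen} by comparing the two closed regions of $\z^2$ used in the characterizations of $\tau$ and $\nu^+$. Since $\nu^+(C)\geq0$ by definition, the inequality is trivial whenever $\tau(C)\leq 0$, so I would reduce to the case $\tau(C)\geq 0$ at the outset, which is precisely the regime in which Lemma~\ref{tau homol gen} is stated.

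In this regime, I would first invoke the definition of $\nu^+(C)$ to produce a homological generator $x\in C$ lying in the subcomplex $C_{\{i\leq 0,\,j\leq \nu^+(C)\}}$. Next, I would observe the elementary inclusion of closed regions
\[
\{i\leq 0,\,j\leq m\}\subset \{i\leq -1\}\cup\{i\leq 0,\,j\leq m\}
\]
for every $m\geq 0$, so that $C_{\{i\leq 0,\,j\leq \nu^+(C)\}}\subset C_{\{i\leq -1\}\cup\{i\leq 0,\,j\leq \nu^+(C)\}}$. Hence the same element $x$ is a homological generator lying in the larger subcomplex appearing in Lemma~\ref{tau homol gen}.

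Applying Lemma~\ref{tau homol gen} with $m=\nu^+(C)$ then yields $\tau(C)\leq \nu^+(C)$, completing the argument. There is essentially no obstacle here: the statement is a formal consequence of the two ``homological generator'' descriptions of $\tau$ and $\nu^+$ together with a containment of the defining regions, so the only care needed is to separate off the trivial case $\tau(C)<0$ before invoking Lemma~\ref{tau homol gen}, which is phrased only for non-negative $m$.
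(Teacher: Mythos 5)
Your argument is correct and is essentially identical to the paper's one-line proof, which likewise deduces the inequality from the inclusion $C_{\{ i \leq 0,\, j \leq \nu^+(C) \}} \subset C_{\{i \leq -1\} \cup \{ i \leq 0,\, j \leq \nu^+(C) \}}$ combined with the homological-generator characterizations of $\nu^+$ and $\tau$ (Lemma~\ref{tau homol gen}). Your extra care about the sign of $\tau(C)$ is a harmless refinement the paper leaves implicit.
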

\begin{proof}
This follows from
$
C_{\{ i \leq 0, j \leq \np(C) \}} \subset C_{\{i \leq -1\} \cup \{ i \leq 0, j \leq \np(C) \}}.
$
\end{proof}

One of the most important property of $\tau$-invariant
is the following additivity.
\begin{prop}
\label{tau hom}
$\tau$ is a group homomorphism as a map $\mCf \to \z$.
\end{prop}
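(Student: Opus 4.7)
The plan is to prove the single equality $\tau(C \otimes_{\Lambda} C') = \tau(C) + \tau(C')$ for any two formal knot complexes $C$ and $C'$; since $\mCf$ is already a group and Corollary~\ref{tau invariance} makes $\tau$ a well-defined map $\mCf \to \z$, this equality is exactly the statement that $\tau$ is a group homomorphism. I split the proof into the subadditivity $\tau(C \otimes_{\Lambda} C') \leq \tau(C) + \tau(C')$ and the reverse inequality, the latter obtained via a duality argument.

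For subadditivity, I use the characterization from Lemma~\ref{tau homol gen}: pick homological generators $x \in C$ and $x' \in C'$ with
$$
x \in C_{\{i \leq -1\} \cup \{i \leq 0, j \leq \tau(C)\}}, \quad x' \in C'_{\{i \leq -1\} \cup \{i \leq 0, j \leq \tau(C')\}}.
$$
By Lemma~\ref{add}, decompose $x = a + b$ with $a \in \falg{-1}(C)$ and $b \in \falg{0}(C) \cap \falex{\tau(C)}(C)$, and analogously $x' = a' + b'$. Expanding $x \otimes x' = a \otimes a' + a \otimes b' + b \otimes a' + b \otimes b'$, the first three summands each lie in $\falg{-1}(C \otimes_{\Lambda} C')$ by the definition of the algebraic filtration on the tensor product together with the symmetry remark following Proposition~\ref{tensor}, while $b \otimes b' \in \falg{0}(C \otimes_{\Lambda} C') \cap \falex{\tau(C)+\tau(C')}(C \otimes_{\Lambda} C')$. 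A second application of Lemma~\ref{add} then places $x \otimes x'$ in the region $(C \otimes_{\Lambda} C')_{\{i \leq -1\} \cup \{i \leq 0, j \leq \tau(C) + \tau(C')\}}$. The K\"unneth-type argument already used in the proof of Proposition~\ref{sub additivity} ensures $x \otimes x'$ is a homological generator, so Lemma~\ref{tau homol gen} yields the desired inequality.

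For the reverse inequality, I first establish the standalone identity $\tau(C^*) = -\tau(C)$. Since $C \otimes_{\Lambda} C^* \nuplus \Lambda$ by Lemma~\ref{nuplus dual} and $\tau(\Lambda) = 0$ is immediate from the definitions (the unit $1 \in \Lambda$ is a homological generator with $\Alg(1) = \Alex(1) = 0$), Corollary~\ref{tau invariance} gives $\tau(C \otimes_{\Lambda} C^*) = 0$; subadditivity applied to $C$ and $C^*$ then gives $\tau(C^*) \geq -\tau(C)$, and applying the same inequality to $C^*$ together with $C^{**} \simeq C$ (Lemma~\ref{dual dual}) gives the opposite bound. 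Finally, subadditivity applied to $C \otimes_{\Lambda} C'$ and $C'^*$, combined with $C \otimes_{\Lambda} C' \otimes_{\Lambda} C'^* \nuplus C$ (from Lemma~\ref{nuplus dual} and Corollary~\ref{tau invariance}), yields
$$
\tau(C) = \tau(C \otimes_{\Lambda} C' \otimes_{\Lambda} C'^*) \leq \tau(C \otimes_{\Lambda} C') + \tau(C'^*) = \tau(C \otimes_{\Lambda} C') - \tau(C'),
$$
which is the reverse inequality.

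The main obstacle is the bookkeeping in the tensor-product filtration step: verifying that the three non-$(b \otimes b')$ summands land in $\falg{-1}(C \otimes_{\Lambda} C')$ requires the unlabeled symmetry remark after Proposition~\ref{tensor}, namely that $p(\falg{i_1} \times \falg{i_2}) = p(\falg{i'_1} \times \falg{i'_2})$ whenever $i_1 + i_2 = i'_1 + i'_2$ (and similarly for the Alexander filtration). Once this is set up correctly the duality half is a short formal calculation using only previously established lemmas.
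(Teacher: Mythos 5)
Your proof is correct, but it takes a genuinely different route from the paper's. The paper proves the equality $\tau(C\otimes_{\Lambda}C')=\tau(C)+\tau(C')$ in one stroke by a structural splitting of the hat complexes: arguing as in the proof of Proposition~\ref{stable1}, it writes $\hC\simeq\spanF\{x\}\oplus A$ and $\hC'\simeq\spanF\{x'\}\oplus A'$ with $A,A'$ acyclic and $x,x'$ hat-generators at filtration levels $\tau(C),\tau(C')$, identifies $\widehat{C\otimes_{\Lambda}C'}$ with $\hC\otimes_{\F}\hC'$ as $\z$-filtered complexes, and reads off $\tau$ from the resulting splitting $\spanF\{x\otimes x'\}\oplus A''$; this yields both inequalities simultaneously. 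You instead prove only the subadditivity $\tau(C\otimes_{\Lambda}C')\leq\tau(C)+\tau(C')$ directly, by an element-level computation in the spirit of Proposition~\ref{sub additivity} but adapted to the region $\{i\leq-1\}\cup R_{(0,\tau(C))}$ of Lemma~\ref{tau homol gen} (the decomposition $x=a+b$ via Lemma~\ref{add} and the filtration bookkeeping for $a\otimes a'+a\otimes b'+b\otimes a'+b\otimes b'$ are all sound, and the symmetry remark after Proposition~\ref{tensor} is indeed the needed ingredient), and then obtain the reverse inequality purely formally from $\tau(C^*)=-\tau(C)$ together with $C\otimes_{\Lambda}C'\otimes_{\Lambda}C'^*\nuplus C$ and Corollary~\ref{tau invariance}. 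What your route buys is that it avoids the splitting lemma for hat complexes, which the paper only sketches by reference to Proposition~\ref{stable1}, replacing it with already-established formal machinery (Lemma~\ref{nuplus dual}, Lemma~\ref{dual dual}, Corollary~\ref{tau invariance}); as a byproduct you isolate the identity $\tau(C^*)=-\tau(C)$, which the paper uses elsewhere (e.g.\ in the proof of Theorem~\ref{alg tau}) without separate proof. What the paper's route buys is a cleaner structural statement about $\widehat{C\otimes_{\Lambda}C'}$ that makes the exact additivity transparent rather than assembled from two one-sided estimates.
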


\begin{proof}
Let $C$ and $C'$ be formal knot complexes.
Then we can see from Proposition~\ref{tensor} that
the $\z$-filtered homotopy equivalence
$$
(\widehat{C\otimes_{\Lambda}C'}, \{\mhF_m\}) \simeq (\hC \otimes_{\F} \hC', 
\{\spanF p(\bigcup_{\mu+\mu'=m}\mhF_{\mu}\times \mhF'_{\mu'})\})
$$
holds, where $p \colon \F^{\hC \times \hC'} \twoheadrightarrow \hC\otimes_{\F} \hC'$
is the projection.
Next, let $x \in \mhF_{\tau(C)}$ (resp.\ $x' \in \mhF'_{\tau(C')}$) a hat-generator.
Then, in a similar way to the proof of Proposition~\ref{stable1}, we have the $\z$-filtered 
homotopy-equivalence
$$
\hC \simeq \spanF\{x\} \oplus A \text{ (resp.\  
$\hC' \simeq \spanF\{x'\} \oplus A'$ )},
$$
where $A$ and $A'$ are acyclic $\z$-filtered chain complexes.
Consequently, the  $\z$-filtered homotopy equivalence
$$
\widehat{C\otimes_{\Lambda}C'} \simeq \spanF\{x\otimes x'\} \oplus A''
$$
holds for some acyclic $\z$-filtered chain complex $A''$, and this implies that
$\tau(C\otimes_{\Lambda}C')= \tau(C)+ \tau(C')$.
\end{proof}
As a consequence, we have the original $\tau$-invariant for knots.
\begin{cor}[\cite{OS03tau}]
The map $[K]_c \mapsto \tau(C^K)$ is a group homomorphism as a map 
$\mC \to \z$.
\end{cor}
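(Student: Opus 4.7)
The plan is to observe that the map $[K]_c \mapsto \tau(C^K)$ is literally the composition of two group homomorphisms that have already been constructed. First, by Theorem~\ref{comm diag}, the assignment $[K]_c \mapsto [C^K]_{\np}$ defines a well-defined group homomorphism $\mC \to \mCf$; this encodes both the fact that $[K] \mapsto [C^K]$ is a monoid homomorphism $\mK \to \mKf$ (Theorem~\ref{monoid hom}) and the fact that concordant knots have $\np$-equivalent knot Floer complexes, which in turn follows from Theorem~\ref{nuplus and g_4} applied to $K\#(-J^*)$ and $(-K^*)\# J$. Second, by Proposition~\ref{tau hom}, the map $\tau \colon \mCf \to \z$ is a group homomorphism.

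Composing these two homomorphisms yields the desired homomorphism $\mC \to \z$, $[K]_c \mapsto \tau(C^K)$. There is no additional work to do at this stage: the content has already been distilled into the two ingredients above.

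If I wanted to make the argument fully self-contained without invoking Theorem~\ref{comm diag} directly, I would unwind it as follows. Given knots $K$ and $J$, the connected sum formula $C^{K\# J} \simeq C^K \otimes_{\Lambda} C^J$ (Theorem~\ref{monoid hom}) together with Proposition~\ref{tau hom} gives $\tau(C^{K\# J}) = \tau(C^K) + \tau(C^J)$, which is additivity. For well-definedness on concordance classes, if $K$ is smoothly concordant to $J$ then $K\# (-J^*)$ is slice, so Theorem~\ref{nuplus and g_4} yields $\np(C^{K\#(-J^*)}) = \np((C^{K\#(-J^*)})^*) = 0$; equivalently $C^K \nuplus C^J$, and hence $\tau(C^K) = \tau(C^J)$ by Corollary~\ref{tau invariance}. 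There is no real obstacle here; every step is already packaged in the preceding subsection, and the role of this corollary is simply to record, in the knot setting, the consequence of the abstract homomorphism $\tau \colon \mCf \to \z$.
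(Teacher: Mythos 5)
Your proposal is correct and matches the paper's intent exactly: the corollary is stated as an immediate consequence of Proposition~\ref{tau hom} composed with the group homomorphism $\mC \to \mCf$ from Theorem~\ref{comm diag}, and the paper gives no further argument. Your optional unwinding (connected sum formula plus Theorem~\ref{nuplus and g_4} and Corollary~\ref{tau invariance}) is also consistent with how those ingredients are established earlier in the paper.
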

\subsubsection{$\Upsilon$-invariant}
For any $t \in [0,2]$ and $s \in \R$, the set
$$
R^t(s) := \{(i,j) \in \z^2 \mid (1-\frac{t}{2})i + \frac{t}{2}j \leq s \}
$$
is a closed region. Hence, if we denote $C_{R^t(s)}$ by $\mF^t_s$,
then we have a $\R$-filtration $\{\mF^t_s\}_{s \in \R}$ of $C$.
We define
$$
\upsilon_C(t) := \min\{ s \in \R \mid \mF^t_s \text{ contains a homological generator} \}
$$
and
$$
\Upsilon_C(t) := -2 \upsilon_C(t).
$$
\begin{remark}
This definition of $\Upsilon$ is due to Livingston \cite{Li17} rather than 
the original one \cite{OSS17}. 
\end{remark}
Since there exist finitely many homological generators of $C$ and their
Alexander and algebraic filtrations are finite, $\upsilon_C(t)$ and $\Upsilon_C(t)$
are finite values. In the same way as $V_k$ and $\tau$,
we can prove the following proposition.
\begin{prop}
\label{upsilon invariance}
If $[C]_{\nu^+} \leq [C']_{\nu^+}$, then $\Upsilon_C (t) \geq \Upsilon_{C'}(t)$
for any $t \in [0,2]$.
In particular, $\Upsilon(t) \colon [C]_{\np} \mapsto \Upsilon_{C}(t)$ is a well-defined map 
$\mCf \to \R$ for any $t \in [0,2]$.  
\end{prop}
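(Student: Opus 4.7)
The plan is to mimic exactly the strategy already used for $V_k$ and $\tau$: translate the hypothesis into the existence of a $\z^2$-filtered quasi-isomorphism via Theorem~\ref{quasi thm2}, and then track a homological generator through that map. The only substantive new input needed is the observation that $R^t(s)$ is a closed region of $\z^2$ in the sense of Section~2.1, so that the $\R$-filtration $\{\mF^t_s\}$ is automatically preserved by any $\z^2$-filtered map.

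First I would verify that $R^t(s) \in \CR(\z^2)$. For $t \in [0,2]$, both $1-t/2$ and $t/2$ are non-negative, so if $(i,j) \in R^t(s)$ and $(i',j') \leq (i,j)$, then
\[
(1-\tfrac{t}{2})i' + \tfrac{t}{2}j' \leq (1-\tfrac{t}{2})i + \tfrac{t}{2}j \leq s,
\]
so $(i',j') \in R^t(s)$. Consequently $\mF^t_s = C_{R^t(s)}$, and any $\z^2$-filtered chain map $f : C' \to C$ satisfies $f(\mF^t_s(C')) \subset \mF^t_s(C)$ for every $s \in \R$.

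Next, assuming $[C]_{\np} \leq [C']_{\np}$, Theorem~\ref{quasi thm2} produces a $\z^2$-filtered quasi-isomorphism $f \colon C' \to C$. By the definition of $\upsilon_{C'}(t)$, there exists a homological generator $x' \in \mF^t_{\upsilon_{C'}(t)}(C')$. Since $f$ is graded, $f(x') \in C_0$, and since $f_*$ is an isomorphism on $H_0$, the class $[f(x')] = f_*[x'] \in H_0(C)$ is non-zero, so $f(x')$ is a homological generator of $C$. By the preceding paragraph, $f(x') \in \mF^t_{\upsilon_{C'}(t)}(C)$, and hence
\[
\upsilon_C(t) \leq \upsilon_{C'}(t), \qquad \text{i.e.,} \qquad \Upsilon_C(t) \geq \Upsilon_{C'}(t).
\]

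For the well-definedness statement, if $[C]_{\np}=[C']_{\np}$ in $\mCf$ then both inequalities $[C]_{\np} \leq [C']_{\np}$ and $[C']_{\np} \leq [C]_{\np}$ hold, so applying the previous paragraph twice gives $\Upsilon_C(t) = \Upsilon_{C'}(t)$, showing that $\Upsilon(t)$ descends to a well-defined map $\mCf \to \R$. I do not expect any genuine obstacle here; the only point that might look subtle is the finiteness of $\upsilon_C(t)$, but this is already asserted in the paragraph preceding the proposition (from finiteness of $\Alex$ and $\Alg$ on the finitely many filtered basis elements), and the whole proof is a direct parallel of Corollaries~\ref{V_k invariance} and \ref{tau invariance}.
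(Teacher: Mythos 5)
Your proof is correct and is essentially the paper's own argument: the paper proves this proposition "in the same way as $V_k$ and $\tau$," i.e., by producing a $\z^2$-filtered quasi-isomorphism $C' \to C$ via Theorem~\ref{quasi thm2} and pushing forward a homological generator lying in $\mF^t_{\upsilon_{C'}(t)}$, exactly as you do. Your explicit check that $R^t(s)$ is a closed region is already recorded in the paper just before the definition of $\upsilon_C(t)$, so nothing is missing.
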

In addition, we can see $\Upsilon$ as a linear approximation of $V_k$
in the following sense.
\begin{prop}
For any $t \in [0,2]$ and $k \in \z_{\geq 0}$, 
the inequality
$$
\Upsilon_C(t) \geq -kt-2V_k(C)
$$
holds. In particular, $\Upsilon_C(t) \geq -\np(C)t$ holds.
\end{prop}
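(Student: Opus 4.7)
The plan is to exhibit, for each $k \in \z_{\geq 0}$, a single homological generator that simultaneously witnesses a small value of $\upsilon_C(t)$, and then to convert the resulting upper bound on $\upsilon_C(t)$ into a lower bound on $\Upsilon_C(t) = -2\upsilon_C(t)$. The key bridge between the two filtrations is Lemma~\ref{V_k homol gen}, which has already recast $V_k(C)$ in terms of homological generators — the same language used to define $\upsilon_C(t)$.

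First I would apply Lemma~\ref{V_k homol gen} with $m = V_k(C)$ to extract a homological generator $x \in C$ satisfying $\Alg(x) \leq V_k(C)$ and $\Alex(x) \leq k + V_k(C)$. Next I would check that this $x$ lies in $\mF^t_s$ for $s := V_k(C) + tk/2$. Because $t \in [0,2]$ makes both coefficients $1-t/2$ and $t/2$ non-negative, the closed rectangular region $\{(i,j) \mid i \leq V_k(C),\, j \leq k+V_k(C)\}$ is contained in $R^t(V_k(C) + tk/2)$ — one only needs to plug the corner $(V_k(C), k+V_k(C))$ into the defining inequality of $R^t(s)$. This containment, combined with the monotonicity of the assignment $R \mapsto C_R$, places $x$ inside $\mF^t_{V_k(C) + tk/2}$, so $\upsilon_C(t) \leq V_k(C) + tk/2$, and multiplying by $-2$ yields $\Upsilon_C(t) \geq -kt - 2V_k(C)$.

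For the ``in particular'' clause, I would simply specialize $k = \np(C)$. By the definition of $\np$ (or by Corollary~\ref{V_k np}), we have $V_{\np(C)}(C) = 0$, so the first inequality collapses to $\Upsilon_C(t) \geq -\np(C)\cdot t$. There is no serious obstacle to this argument; it is essentially a comparison of two closed regions in $\z^2$, with Lemma~\ref{V_k homol gen} supplying the translation from the algebraic definition of $V_k$ into a statement about which elements are available as homological generators at a prescribed bifiltration level.
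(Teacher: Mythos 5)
Your proof is correct and follows exactly the paper's argument: the paper's one-line proof is precisely the containment $C_{\{i \leq V_k(C),\, j \leq k+V_k(C)\}} \subset C_{\{(1-\frac{t}{2})i+\frac{t}{2}j \leq V_k(C)+\frac{t}{2}k\}}$, which you spell out via Lemma~\ref{V_k homol gen} and the non-negativity of the coefficients $1-\frac{t}{2}$ and $\frac{t}{2}$. The specialization $k=\np(C)$ with $V_{\np(C)}(C)=0$ for the second claim is likewise the intended route.
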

\begin{proof}
This follows from
$
C_{\{i \leq V_k(C), j \leq k + V_k(C)\}} \subset C_{\{(1- \frac{t}{2})i + \frac{t}{2}j \leq V_k(C) + \frac{t}{2}k \}}.
$
\end{proof}
Moreover, The additivity of $\Upsilon(t)$ is also obtained in the same way as $\tau$. 
\begin{prop}
\label{upsilon hom}
For any $t \in [0,2]$, $\Upsilon(t)$ is a 
group homomorphism as a map $\mCf \to \R$.
\end{prop}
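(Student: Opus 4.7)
The plan is to mirror the proof of Proposition~\ref{tau hom} as closely as possible. Since $\Upsilon(t) = -2\upsilon(t)$, it suffices to establish the additivity
\[
\upsilon_{C \otimes_{\Lambda} C'}(t) = \upsilon_C(t) + \upsilon_{C'}(t)
\]
for every pair of formal knot complexes $C, C'$ and every $t \in [0,2]$. To keep track of the $t$-filtration on filtered bases, I would fix filtered bases $\{x_k\}_{1 \leq k \leq r}$ of $C$ and $\{x'_l\}_{1 \leq l \leq s}$ of $C'$, and assign to each basis element the $t$-weight
\[
w_t(x) := \left(1-\frac{t}{2}\right) \Alg(x) + \frac{t}{2}\, \Alex(x).
\]
The description of the tensor-product filtration in Proposition~\ref{tensor} immediately gives $w_t(x_k \otimes x'_l) = w_t(x_k) + w_t(x'_l)$, so the $t$-weight is additive on tensor products of filtered basis elements.

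The inequality $\upsilon_{C \otimes_{\Lambda} C'}(t) \leq \upsilon_C(t) + \upsilon_{C'}(t)$ is the easy direction. Choosing homological generators $x \in \mF^t_{\upsilon_C(t)}(C)$ and $x' \in \mF^t_{\upsilon_{C'}(t)}(C')$, the K\"unneth-style inclusion $H_*(C) \otimes_{\Lambda} H_*(C') \hookrightarrow H_*(C \otimes_{\Lambda} C')$ used already in Proposition~\ref{sub additivity}, combined with weight additivity, shows that $x \otimes x'$ is a homological generator of $C \otimes_{\Lambda} C'$ lying in $\mF^t_{\upsilon_C(t)+\upsilon_{C'}(t)}$.

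For the opposite inequality I would adapt the decomposition argument used for $\tau$. The idea is to produce, for each fixed $t$, a $\z^2$-filtered homotopy equivalence
\[
C \simeq \spanL\{x_0\} \oplus A^t
\]
with $x_0$ a homological generator satisfying $w_t(x_0) = \upsilon_C(t)$, and with $A^t$ an $\mF^t$-acyclic complement in the sense that its underlying $\mF^t$-filtered complex contributes no homological generator at any $t$-weight. This is the direct analogue of the splitting $\hC \simeq \spanF\{x\} \oplus A$ exploited in the proof of Proposition~\ref{tau hom}. One obtains it by iterating the basis transformations of Lemma~\ref{filtered basis} in the spirit of the proof of Proposition~\ref{stable1}: first identify a basis element whose $t$-weight realizes $\upsilon_C(t)$ and whose projection to $H_0(C)$ is the distinguished homology class, promote it to $x_0$, and then systematically cancel boundary pairs in the remainder. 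Tensoring this decomposition with the analogous one for $C'$ would give
\[
C \otimes_{\Lambda} C' \simeq \spanL\{x_0 \otimes x'_0\} \oplus \widetilde{A}^t
\]
with $\widetilde{A}^t$ still $\mF^t$-acyclic, forcing any homological generator of $C \otimes_{\Lambda} C'$ to have $t$-weight at least $w_t(x_0 \otimes x'_0) = \upsilon_C(t) + \upsilon_{C'}(t)$.

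The hard part is the decomposition step. Unlike the $\tau$ setting, in which $\hC$ is finite-dimensional over $\F$ and the splitting is a standard filtered-complex fact, here the underlying complex is a $\Lambda$-module carrying a real-valued filtration, and the basis changes used to isolate the top homological generator must be chosen so as not to corrupt the $t$-weight bookkeeping. The essential technical input is that, by the second clause of Lemma~\ref{filtered basis}, replacing $x_a$ by $x_a+x_b$ does not alter the $\z^2$-filtration levels of the modified basis element, and so leaves $w_t$ unchanged for every $t$ simultaneously. This should make it possible to run the reduction argument of Proposition~\ref{stable1} while keeping $w_t(x_0) = \upsilon_C(t)$ fixed throughout.
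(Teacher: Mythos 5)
Your subadditivity direction is fine, and the overall strategy (split off a rank-one summand carrying the homological generator, then tensor) is the right analogue of the proof of Proposition~\ref{tau hom}, which is all the paper itself invokes. But the decomposition you place at the crux is false as stated. A \emph{$\z^2$-filtered} homotopy equivalence $C \simeq \spanL\{x_0\} \oplus A^t$ with $A^t$ acyclic would force $\upsilon_C(t') = w_{t'}(x_0) = (1-\tfrac{t'}{2})\Alg(x_0) + \tfrac{t'}{2}\Alex(x_0)$ for \emph{every} $t' \in [0,2]$: every homological generator of the right-hand side has the form $x_0 + \partial w$ with $\partial w \in A^t$, and in a direct sum of filtered complexes such an element lies in $\mF^{t'}_s$ only if $x_0$ does. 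Hence $\Upsilon_C$ would be affine on all of $[0,2]$. Already for $C = C^1 = CFK^{\infty}(T_{2,3})$ this fails: the two homological generators $x_0$, $x'_0$ sit at the incomparable bifiltration levels $(0,1)$ and $(1,0)$, so $\upsilon_{C^1}(t) = \min\{\tfrac{t}{2},\, 1-\tfrac{t}{2}\}$ has a corner at $t=1$. The same example shows why your appeal to the second clause of Lemma~\ref{filtered basis} breaks down: that lemma requires domination in \emph{both} coordinates, which is unavailable for incomparable levels, and this is precisely the situation in which no basis change can preserve $w_t$ "for every $t$ simultaneously."

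The repair — and the intended meaning of "the same way as $\tau$" — is to fix $t$ and discard the $\z^2$-filtration, retaining only the single $\R$-filtration $\mF^t$. The weight $w_t$ totally preorders the homogeneous generators, so whenever $\gr(x_a) = \gr(x_b)$ at least one of the replacements $x_a \mapsto x_a + x_b$ or $x_b \mapsto x_a + x_b$ is $w_t$-filtered; this one-filtration analogue of Lemma~\ref{filtered basis}~(2) is all that the reduction of Proposition~\ref{stable1} actually needs, and it yields an $\mF^t$-filtered (not $\z^2$-filtered) equivalence $C \simeq \spanL\{x_0\} \oplus A^t$ with $w_t(x_0) = \upsilon_C(t)$ and with $A^t$ genuinely depending on $t$. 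Since $\upsilon_{\bullet}(t)$ only sees $\mF^t$, this weaker splitting suffices, and the rest of your argument then goes through. (Alternatively, superadditivity follows with no splitting at all from the easy inequality applied to duals, using $(C \otimes_{\Lambda} C')^* \simeq C^* \otimes_{\Lambda} C'^*$ from Lemma~\ref{dual tensor} together with $\Upsilon_{C^*}(t) = -\Upsilon_C(t)$.)
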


We can generalize the following properties of the original $\Upsilon$-invariant
to formal knot complexes. The proof is similar to \cite[Theorem~8.1]{Li17}.
\begin{prop}
\label{upsilon prop}
For any formal knot complexes $C$, the following properties hold.
\begin{enumerate}
\item The map $\Upsilon_C \colon [0,2] \to \R, t \mapsto \Upsilon_C(t)$
is a continuous piecewise linear function.
\item For any regular point $t$ of $\Upsilon_C$ and filtered basis 
$\{x_k\}_{1 \leq k \leq r}$,
there exists an element $x_l \in \{x_k\}_{1 \leq k \leq r}$ with $\gr(x_l) =0$
such that 
$$
\Upsilon_C(t')= -2 \Alg(x_l) + (\Alg(x_l)-\Alex(x_l))t'
$$
at any point $t'$ nearby $t$.
\item Let $t$ be a singular point of $\Upsilon_C$ and 
$\{x_k\}_{1 \leq k \leq r}$ a filtered basis.
Then there exists two elements $x_l, x_{l'} \in \{x_k\}_{1 \leq k \leq r}$ with 
$\gr(x_l)=\gr(x_{l'})=0$
such that 
\begin{itemize}
\item $\Alex(x_l) - \Alex(x_{l'}) = (1 - \frac{2}{t}) (\Alg(x_l)- \Alg(x_{l'}))$,
\item the equality
$$
\Upsilon_C(t')= -2 \Alg(x_l) + (\Alg(x_l)-\Alex(x_l))t'
$$
holds at any point $t'$ nearby $t$ satisfying $t'< t$, and
\item the equality
$$
\Upsilon_C(t')= -2 \Alg(x_{l'}) + (\Alg(x_{l'})-\Alex(x_{l'}))t'
$$
holds at any point $t'$ nearby $t$ satisfying $t'> t$. 
\end{itemize}
\end{enumerate}
\end{prop}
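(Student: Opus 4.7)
The plan is to follow Livingston's approach in \cite[Theorem~8.1]{Li17}, translated into the language of formal knot complexes via the decomposition by filtered basis.

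Fix a filtered basis $\{x_k\}_{1 \leq k \leq r}$ for $C$. By Lemma~\ref{subcpx decomp}, the subspace $\mF^t_s = C_{R^t(s)}$ decomposes as $\bigoplus_{(i,j) \in R^t(s)} C_{(i,j)}$, so it is spanned as an $\F$-vector space by those monomials $U^l x_k$ whose bifiltration $(\Alg(x_k)-l, \Alex(x_k)-l)$ lies in $R^t(s)$, i.e.\ satisfies $(1-\tfrac{t}{2})(\Alg(x_k)-l) + \tfrac{t}{2}(\Alex(x_k)-l) \leq s$. In particular, the grade-$0$ part of $\mF^t_s$ is spanned by those grade-$0$ monomials $y_k := U^{\gr(x_k)/2} x_k$ (for $k$ with $\gr(x_k)$ even) satisfying $L_k(t) \leq s$, where
$$
L_k(t) := \bigl(1-\tfrac{t}{2}\bigr)\bigl(\Alg(x_k)-\tfrac{\gr(x_k)}{2}\bigr) + \tfrac{t}{2}\bigl(\Alex(x_k)-\tfrac{\gr(x_k)}{2}\bigr).
$$
Any homological generator is a grade-$0$ cycle of the form $x = \sum c_k y_k$ whose class in $H_0(C)$ is non-trivial, and such an $x$ lies in $\mF^t_s$ precisely when $\max\{L_k(t) : c_k \neq 0\} \leq s$. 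Therefore
$$
\upsilon_C(t) = \min_{x \in \mathrm{HomGen}} \ \max_{k \in \mathrm{supp}(x)} L_k(t),
$$
where $\mathrm{supp}(x) = \{k : c_k \neq 0\}$.

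For (1), observe that only finitely many supports $S \subset \{1,\ldots,r\}$ can occur, and for each such $S$ the map $t \mapsto \max_{k \in S} L_k(t)$ is a pointwise maximum of finitely many affine functions of $t$, hence continuous and piecewise linear. Thus $\upsilon_C$, being a minimum of finitely many such functions, is continuous and piecewise linear, and so is $\Upsilon_C(t) = -2\upsilon_C(t)$.

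For (2) and (3), at any $t_0 \in (0,2)$ choose a support $S_0$ attaining the outer minimum. At a regular point $t_0$, stability of the min--max under small perturbation of $t$ forces the same support $S_0$ to achieve the minimum on a neighborhood of $t_0$, and a single index $k_0 \in S_0$ to achieve the inner maximum there; hence $\upsilon_C(t') = L_{k_0}(t')$ locally. Applying Lemma~\ref{filtered basis}(1) to rescale $x_{k_0}$ by $U^{\gr(x_{k_0})/2}$, we may arrange $\gr(x_{k_0}) = 0$, and the resulting formula unwinds to the one asserted in (2). At a singular point $t_0$, two distinct indices $l, l' \in S_0$ attain the maximum simultaneously at $t_0$, with one dominating on each side; the equality $L_l(t_0) = L_{l'}(t_0)$ (after reducing to grade $0$ as above) translates directly into the algebraic identity $\Alex(x_l) - \Alex(x_{l'}) = (1 - \tfrac{2}{t_0})(\Alg(x_l) - \Alg(x_{l'}))$, and the one-sided formulas in (3) follow from the dominant $L_k$ on each side.

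The main obstacle is to justify rigorously the stability of the argmin--argmax and to perform the rescaling in Lemma~\ref{filtered basis}(1) consistently, so that the rescaled $x_l$ (and $x_{l'}$) still arise as the relevant minimizers in the formulation above; once this bookkeeping is in place, the piecewise-affine nature of each $L_k$ reduces the argument to a finite combinatorial case analysis. The rest is routine, since the formulas for $\Upsilon_C$ in (2) and (3) are just $-2 L_k(t')$ written out once the grade-$0$ normalization has been made.
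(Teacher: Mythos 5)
Your argument is correct and is exactly the intended one: the paper offers no proof beyond the remark that it is ``similar to \cite[Theorem~8.1]{Li17}'', and your identity $\upsilon_C(t)=\min_x\max_{k\in\mathrm{supp}(x)}L_k(t)$ over the finitely many homological generators, justified via Lemma~\ref{subcpx decomp}, is precisely Livingston's pivot-point argument transplanted to formal knot complexes, with the piecewise-linearity and the one-sided formulas at regular and singular points falling out of the finite min--max of affine functions as you describe. The only point worth recording is that the element you exhibit is the grade-zero rescaling $U^{\gr(x_l)/2}x_l$ of a basis element rather than a member of the originally given basis --- which is in fact unavoidable, since a filtered basis need not contain any element of Maslov grading zero, so the proposition must be read with this normalization built in (note that the slope $\Alg-\Alex$ is unaffected by the rescaling but the constant term $-2\Alg(x_l)$ is not).
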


As a consequence of the above arguments, we have the following corollaries.
Here, $\PL([0,2], \R)$ denotes the set of continuous piecewise linear functions on $[0,2]$.
\begin{cor}
The map $\Upsilon \colon [C]_{\np} \mapsto \Upsilon_C$ is a group homomorphism
$\mCf \to \PL([0,2], \R)$.
\end{cor}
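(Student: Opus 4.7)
The plan is to assemble this corollary directly from the three preceding results on $\Upsilon$, since it is essentially a packaging statement: the content is that the pointwise data $\{\Upsilon(t)\}_{t \in [0,2]}$ can be bundled into a single homomorphism into the function space $\PL([0,2], \R)$ (equipped with pointwise addition).

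First I would check that the map is well-defined on $\mCf$ with target $\PL([0,2], \R)$. For each fixed $t \in [0,2]$, Proposition~\ref{upsilon invariance} says that $\Upsilon_C(t)$ depends only on $[C]_{\np}$, so the function $\Upsilon_C \colon [0,2] \to \R$ depends only on $[C]_{\np}$. To see that $\Upsilon_C$ actually lies in $\PL([0,2], \R)$, I would invoke Proposition~\ref{upsilon prop}(1), which asserts exactly that $\Upsilon_C$ is a continuous piecewise linear function on $[0,2]$.

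Next I would verify the homomorphism property. Given two formal knot complexes $C, C'$, Proposition~\ref{upsilon hom} says that for each fixed $t \in [0,2]$,
\[
\Upsilon_{C \otimes_{\Lambda} C'}(t) = \Upsilon_C(t) + \Upsilon_{C'}(t).
\]
Since this equality holds at every $t \in [0,2]$, and addition in $\PL([0,2], \R)$ is by definition the pointwise addition of functions (and the sum of two continuous piecewise linear functions is again continuous piecewise linear), we obtain
\[
\Upsilon_{C \otimes_{\Lambda} C'} = \Upsilon_C + \Upsilon_{C'}
\]
as elements of $\PL([0,2], \R)$. Together with the compatibility $[C \otimes_{\Lambda} C']_{\np} = [C]_{\np} + [C']_{\np}$ in $\mCf$ from Proposition~\ref{abelian}, this is precisely the statement that $\Upsilon$ is a group homomorphism.

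There is essentially no obstacle here: the corollary is a restatement of the bundle of propositions that have already been established. The only point worth being explicit about is the observation that $\PL([0,2], \R)$ is closed under pointwise addition, so that the pointwise additivity of $\Upsilon(t)$ actually upgrades to additivity in the target group; this is immediate.
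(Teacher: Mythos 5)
Your proposal is correct and matches the paper's intent: the paper gives no separate proof, stating the corollary "as a consequence of the above arguments," which is exactly the assembly you carry out (well-definedness from Proposition~\ref{upsilon invariance}, membership in $\PL([0,2],\R)$ from Proposition~\ref{upsilon prop}(1), and additivity from Proposition~\ref{upsilon hom} under pointwise addition). Nothing is missing.
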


\begin{cor}
The map $[K]_c \mapsto \Upsilon_{C^K}$ is a group homomorphism
$\mC \to \PL([0,2], \R)$.
\end{cor}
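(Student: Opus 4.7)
The plan is to obtain this corollary as an immediate composition of two group homomorphisms that have already been established in the paper. First, Theorem~\ref{comm diag} tells us that the assignment $[K]_c \mapsto [C^K]_{\nu^+}$ is a well-defined group homomorphism $\mC \to \mCf$ (this is the bottom row of the commutative diagram, whose group structure on $\mCf$ comes from Proposition~\ref{abelian}). Second, the preceding corollary shows that $[C]_{\nu^+} \mapsto \Upsilon_C$ is a group homomorphism $\mCf \to \PL([0,2],\R)$.

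Composing these two maps, one gets $[K]_c \mapsto [C^K]_{\nu^+} \mapsto \Upsilon_{C^K}$, which is the map in the statement. Since it is a composition of group homomorphisms, it is itself a group homomorphism. The fact that the image lies in $\PL([0,2],\R)$ (rather than a larger space of real-valued functions) is guaranteed by part~(1) of Proposition~\ref{upsilon prop}, which shows that each $\Upsilon_C$ is continuous and piecewise linear on $[0,2]$.

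Since every ingredient is already in place, there is no genuine obstacle: the only thing one might wish to spell out is why $[K]_c \mapsto [C^K]_{\nu^+}$ is well-defined on concordance classes, but this was handled earlier via Theorem~\ref{nuplus and g_4} (sliceness of $K\#(-J^*)$ forces $\nu^+$-equivalence of $C^K$ and $C^J$). Thus the proof is a one-line invocation of the two homomorphism statements and can be written simply as: the map factors as $\mC \to \mCf \to \PL([0,2],\R)$, both factors of which are group homomorphisms.
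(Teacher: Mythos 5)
Your proposal is correct and matches the paper's (implicit) argument: the corollary is stated there as an immediate consequence of the preceding results, obtained by composing the homomorphism $\mC \to \mCf$ from Theorem~\ref{comm diag} with the homomorphism $\mCf \to \PL([0,2],\R)$ of the preceding corollary, with piecewise linearity supplied by Proposition~\ref{upsilon prop}. Nothing further is needed.
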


Here we mention that the gradient of $\Upsilon_C$ nearby $0$ is  equal to $-\tau(C)$.
The proof is the same as \cite[Theorem 14.1]{Li17}.
\begin{prop}
For any sufficiently small $t>0$, we have 
$\Upsilon_C(t)= -\tau(C) t$.
\end{prop}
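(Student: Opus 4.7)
The plan is to compute $\upsilon_C(t)$ directly for small $t>0$ and verify that it equals $\tfrac{t}{2}\tau(C)$; the identity $\Upsilon_C(t) = -2\upsilon_C(t) = -\tau(C)t$ then follows by definition. The approach rests on unpacking the filtration $\{\mF^t_s\}$ via the $(i,j)$-decomposition of Lemma~\ref{subcpx decomp} and feeding in the homological generator provided by Lemma~\ref{tau homol gen}.

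First I would fix a filtered basis $\{x_k\}_{1\leq k\leq r}$ and consider the associated decomposition $C = \bigoplus_{(i,j)\in\z^2}C_{(i,j)}$. By Lemma~\ref{subcpx decomp}, a cycle lies in $\mF^t_s = C_{R^t(s)}$ if and only if each of its $(i,j)$-components satisfies $L_t(i,j) := (1-\tfrac{t}{2})i + \tfrac{t}{2}j \leq s$. Since only finitely many basis elements have even Maslov grading, the possible $(i,j)$-positions of components of any grading-$0$ cycle form a finite set $S\subset\z^2$; fix an integer $N$ bounding $|i|$ and $|j|$ on $S$.

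For the upper bound $\upsilon_C(t)\leq\tfrac{t}{2}\tau(C)$, I would invoke Lemma~\ref{tau homol gen} to select a homological generator $x$ lying in $C_{\{i\leq-1\}\cup\{i\leq 0,\,j\leq\tau(C)\}}$. Each component $(i,j)$ of $x$ then falls in one of two cases: either $i=0$ with $j\leq\tau(C)$, giving $L_t(i,j) = \tfrac{t}{2}j \leq \tfrac{t}{2}\tau(C)$, or $i\leq -1$, giving $L_t(i,j) \leq -(1-\tfrac{t}{2})+\tfrac{t}{2}N$. The latter is at most $\tfrac{t}{2}\tau(C)$ whenever $t(1+N-\tau(C))\leq 2$ (and vacuously so otherwise), hence for all $t$ in a small enough interval $(0,t_0)$ the element $x$ lies in $\mF^t_{(t/2)\tau(C)}$.

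For the lower bound $\upsilon_C(t)\geq\tfrac{t}{2}\tau(C)$ I would argue by contradiction: suppose some homological generator $y$ has every component $(i,j)$ satisfying $L_t(i,j)<\tfrac{t}{2}\tau(C)$. For sufficiently small $t$, no such component can have $i\geq 1$ since $L_t(i,j)\geq (1-\tfrac{t}{2})-\tfrac{t}{2}N$ exceeds $\tfrac{t}{2}\tau(C)$, and components with $i=0$ are forced into $j\leq\tau(C)-1$. Consequently $y$ lies in $C_{\{i\leq -1\}\cup\{i\leq 0,\,j\leq\tau(C)-1\}}$, contradicting the minimality in Lemma~\ref{tau homol gen}. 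The only technical wrinkle is choosing a single $t_0$, depending on $N$ and $\tau(C)$, so that both the upper-bound and lower-bound estimates remain valid uniformly on $(0,t_0)$; once the $(i,j)$-decomposition of Lemma~\ref{subcpx decomp} is in hand, the rest is purely elementary linear algebra on the finite set $S$.
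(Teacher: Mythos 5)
Your proof is correct. The paper itself gives no argument here---it simply cites \cite[Theorem~14.1]{Li17}---and your direct small-$t$ estimate (bounding $L_t(i,j)=(1-\tfrac{t}{2})i+\tfrac{t}{2}j$ on the finite set of bifiltration levels occurring in grading $0$, and playing the homological generator from Lemma~\ref{tau homol gen} against the minimality of $\tau(C)$) is essentially the standard Livingston argument with the details filled in, including the needed uniform choice of $t_0$.
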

\subsubsection{$\Upsilon^2$-invariant}
Let $C$ be a formal knot complex, $\{x_k\}_{1 \leq k \leq r}$ a filtered basis
and $\{C_{(i,j)}\}$ the induced decomposition of $C$.
Define {\it the support of $\{C_{(i,j)}\}$} by
$$
\mathcal{P} :=\{(i,j)\in \z^2 \mid C_{(i,j)} \neq 0 \}.
$$
In addition, consider {\it the support line for} $\mF^t_s$ by
$$
\mathcal{L}^t_s:=\{ (i,j) \in \z^2 \mid (1- \frac{t}{2})i + \frac{t}{2}j =s \}.
$$
Now, for any $t \in [0,2]$, set
$$
\mathcal{P}_{t} :=\mathcal{P} \cap \mathcal{L}^t_{\upsilon_C(t)}.
$$
Then, we see that $\mathcal{P}_{t} \neq \emptyset$ for any $t$.
Moreover, from Proposition~\ref{upsilon prop}, we have the following proposition.
\begin{prop}
\label{pivot}
The following assertions hold:
\begin{enumerate}
\item
For any $t \in [0,2]$ and small $\delta >0$, the
intersection $\mathcal{P}_{t}\cap \mathcal{P}_{t- \delta}$ 
(resp.\ $\mathcal{P}_{t}\cap \mathcal{P}_{t+ \delta}$)
has exactly one point. (We denote these points by $p^-_t$ and $p^+_t$, respectively.)
\item 
The function $\Upsilon_K$ has a singularity at $t$
if and only if $p^-_t \neq p^+_t$.
\end{enumerate}
\end{prop}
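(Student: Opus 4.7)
The plan is to deduce Proposition~\ref{pivot} directly from the piecewise-linear description of $\Upsilon_C$ given in Proposition~\ref{upsilon prop}. The first key observation is purely geometric: as $t$ ranges over $[0,2]$, the slope of the support line $\mathcal{L}^t_s$ in $\R^2$ (which is $-(2-t)/t$ for $t \in (0,2)$, vertical at $t=0$, horizontal at $t=2$) is a strictly monotonic function of $t$. Hence for $t \neq t'$ the lines $\mathcal{L}^t_{\upsilon_C(t)}$ and $\mathcal{L}^{t'}_{\upsilon_C(t')}$ are non-parallel and meet in at most one point of $\R^2$, which already yields $|\mathcal{P}_t \cap \mathcal{P}_{t \pm \delta}| \leq 1$.

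For existence, I would use the following translation: for any filtered basis element $x_l$ with $\gr(x_l)=0$, the identity
$$
\Upsilon_C(t') = -2\Alg(x_l) + (\Alg(x_l)-\Alex(x_l))t'
$$
is equivalent to $\upsilon_C(t') = (1 - t'/2)\Alg(x_l) + (t'/2)\Alex(x_l)$, i.e.\ the point $(\Alg(x_l), \Alex(x_l)) \in \mathcal{P}$ lies on $\mathcal{L}^{t'}_{\upsilon_C(t')}$. So whenever $x_l$ governs $\Upsilon_C$ on an open interval, the corresponding lattice point of $\mathcal{P}$ lies in $\mathcal{P}_{t'}$ throughout that interval. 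Applying Proposition~\ref{upsilon prop}(2) to a regular point $t - \delta$ (respectively $t + \delta$) for sufficiently small $\delta>0$, and using continuity of $\Upsilon_C$ to deduce that the same point lies on $\mathcal{L}^t_{\upsilon_C(t)}$, furnishes an element of $\mathcal{P}_t \cap \mathcal{P}_{t - \delta}$ (respectively $\mathcal{P}_t \cap \mathcal{P}_{t + \delta}$), completing part~(1).

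For part~(2), I combine this with Proposition~\ref{upsilon prop}(3). At a singular $t$, the two governing generators $x_l$ and $x_{l'}$ satisfy
$$
\Alex(x_l) - \Alex(x_{l'}) = \bigl(1 - 2/t\bigr)\bigl(\Alg(x_l) - \Alg(x_{l'})\bigr),
$$
which is precisely the condition that both lattice points $(\Alg(x_l), \Alex(x_l))$ and $(\Alg(x_{l'}), \Alex(x_{l'}))$ lie on the common line $\mathcal{L}^t_{\upsilon_C(t)}$. Thus the first point realises $p^-_t$ and the second realises $p^+_t$. At a regular $t$, Proposition~\ref{upsilon prop}(2) gives a single generator governing both sides, so $p^-_t = p^+_t$, establishing one direction; for the converse, if $(\Alg(x_l), \Alex(x_l)) = (\Alg(x_{l'}), \Alex(x_{l'}))$ in the singular case, the two linear formulas for $\Upsilon_C$ on either side of $t$ coincide, contradicting the singularity at $t$.

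The main subtlety will be verifying the distinctness of the two lattice points at singular $t$ without circularity, and handling the endpoints $t = 0, 2$ (where only one side of~(1) is meaningful, and where the transversality of $\mathcal{L}^0_{\ast}$ with $\mathcal{L}^{\delta}_{\ast}$, or of $\mathcal{L}^{2-\delta}_{\ast}$ with $\mathcal{L}^{2}_{\ast}$, has to be observed separately). Everything else reduces to elementary planar geometry once Proposition~\ref{upsilon prop} is in hand.
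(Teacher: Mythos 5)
Your proof is correct and follows the same route the paper intends: the paper offers no written proof beyond the remark that the proposition follows from Proposition~\ref{upsilon prop}, and your argument supplies exactly those details (non-parallelism of the support lines for the upper bound, the governing generators from Proposition~\ref{upsilon prop}(2)--(3) plus continuity for existence and for the singularity criterion). The subtleties you flag — distinctness of the two lattice points at a singular $t$, which you correctly resolve by noting that equal points would force the two linear formulas to agree, and the one-sided nature of the endpoints $t=0,2$ — are handled appropriately.
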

In light of this proposition, for small $\delta >0$, we set
$$
\mathcal{Z}^{\pm}_t(C) := \{\text{homological generator in } 
\mF^{t \pm \delta}_{\upsilon_C(t \pm \delta)}  \}.
$$
If $\mathcal{Z}^-_t(C)  \cap \mathcal{Z}^+_t(C) = \emptyset$, 
then for any $s \in [0,2]$, we define
$$
\upsilon^2_{C,t}(s) := 
\min \{r \in \R \mid \exists z^{\pm}\in \mathcal{Z}^{\pm}_t(C), 
[z^-]=[z^+] \text{ in } H_0(\mF^t_{\upsilon_C(t)} + \mF^{s}_{r}) \}.
$$
Now, we can define the {\it $\Upsilon^2$-invariant} of $C$ as 
$$
\Upsilon^2_{C,t}(s) := 
\begin{cases}
-2 (\upsilon^2_{C,t}(s)-\upsilon_C(t)) & \text{if } \mathcal{Z}^-(C)  \cap 
\mathcal{Z}^+(C)  = \emptyset\\
\infty & \text{if } \mathcal{Z}^-(C)   \cap \mathcal{Z}^+(C)  \neq \emptyset
\end{cases}
.
$$
From the view point of $\z^2$-filtered quasi isomorphism, we have 
the following inequality.
\begin{prop}
Fix $t \in (0,2)$. If $[C]_{\np} \leq [C']_{\np}$ and $\Upsilon_C(t')=\Upsilon_{C'}(t')$ 
for any point $t'$ nearby $t$, then $\Upsilon^2_{C,t}(s) \geq \Upsilon^2_{C',t}(s)$
for any $s \in [0,2]$.
\end{prop}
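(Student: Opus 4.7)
The plan is to apply Theorem~\ref{quasi thm2} to produce a $\z^2$-filtered quasi-isomorphism $f \colon C' \to C$ from the hypothesis $[C]_{\np} \leq [C']_{\np}$, and then push the chain-level witness of $\upsilon^2_{C',t}(s)$ through $f$ to obtain the corresponding bound for $C$. First I will reformulate the conclusion in terms of $\upsilon^2$: the hypothesis $\Upsilon_C = \Upsilon_{C'}$ near $t$ forces $\upsilon_C(t \pm \delta) = \upsilon_{C'}(t \pm \delta)$ for all sufficiently small $\delta \geq 0$, and then $\Upsilon^2_{C,t}(s) \geq \Upsilon^2_{C',t}(s)$ is equivalent (through the definition $\Upsilon^2_{*,t}(s) = -2(\upsilon^2_{*,t}(s) - \upsilon_*(t))$) to the geometric inequality $\upsilon^2_{C,t}(s) \leq \upsilon^2_{C',t}(s)$, with the convention that both sides equal $\infty$ precisely when $\mathcal{Z}^{-}_{t} \cap \mathcal{Z}^{+}_{t}$ is non-empty.

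Next I will check that $f$ sends each $\mathcal{Z}^{\pm}_{t}(C')$ into $\mathcal{Z}^{\pm}_{t}(C)$. Every subcomplex $\mF^{t \pm \delta}_{r}$ has the form $C_{R^{t \pm \delta}(r)}$ for the closed region $R^{t\pm\delta}(r) \subset \z^2$, so the $\z^2$-filtered map $f$ automatically satisfies $f(\mF^{t \pm \delta}_{r}(C')) \subset \mF^{t \pm \delta}_{r}(C)$. Since $f$ induces an isomorphism on $H_0$, it sends homological generators to homological generators, and combining these observations with the equality $\upsilon_C(t\pm\delta) = \upsilon_{C'}(t\pm\delta)$ yields the desired inclusion. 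In particular, if $\mathcal{Z}^{-}_{t}(C') \cap \mathcal{Z}^{+}_{t}(C')$ contains an element $z$, then $f(z)$ lies in $\mathcal{Z}^{-}_{t}(C) \cap \mathcal{Z}^{+}_{t}(C)$, so both $\Upsilon^2$-values equal $\infty$; this disposes of the degenerate case.

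In the remaining case, choose $z'^{\pm} \in \mathcal{Z}^{\pm}_{t}(C')$ and a $1$-chain $w$ in the sum $\mF^{t}_{\upsilon_{C'}(t)}(C') + \mF^{s}_{\upsilon^2_{C',t}(s)}(C')$ with $\partial w = z'^{-} + z'^{+}$ that realizes $\upsilon^2_{C',t}(s)$. By Lemma~\ref{add}, this sum of subcomplexes equals $C'_{R}$ for the single closed region $R = R^{t}(\upsilon_{C'}(t)) \cup R^{s}(\upsilon^2_{C',t}(s))$, and therefore is carried by $f$ into the analogous subcomplex of $C$. Hence $f(w)$ witnesses $[f(z'^{-})] = [f(z'^{+})]$ in $H_{0}(\mF^{t}_{\upsilon_C(t)}(C) + \mF^{s}_{\upsilon^{2}_{C',t}(s)}(C))$, which gives the bound $\upsilon^{2}_{C,t}(s) \leq \upsilon^{2}_{C',t}(s)$ and completes the argument. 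The only point demanding care is recognizing, via Lemma~\ref{add}, that the mixed sum $\mF^{t}_{r_{1}} + \mF^{s}_{r_{2}}$ is itself a $C_{R}$-type subcomplex; without this observation the $\z^{2}$-filtered hypothesis on $f$ would not suffice to transport the bounding chain across.
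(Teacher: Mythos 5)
Your proof is correct and follows essentially the same route as the paper's: both obtain a $\z^2$-filtered quasi-isomorphism $f\colon C'\to C$ from Theorem~\ref{quasi thm2}, use the hypothesis $\upsilon_C(t')=\upsilon_{C'}(t')$ near $t$ to see that $f$ carries $\mathcal{Z}^{\pm}_t(C')$ into $\mathcal{Z}^{\pm}_t(C)$, push the connecting $1$-chain through $f$ into $\mF^{t}_{\upsilon_C(t)}(C)+\mF^{s}_{\upsilon^2_{C',t}(s)}(C)$, and dispose of the $\infty$ cases separately. (Your appeal to Lemma~\ref{add} is harmless but not actually needed, since $f(A+B)\subset f(A)+f(B)$ already places $f(w)$ in the required sum of subcomplexes.)
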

\begin{proof}
Take $\delta>0$ sufficiently small so that Propsition~\ref{pivot}
holds at given $t$ for both $C$ and $C'$.
Let $z'^{\pm} \in \mathcal{Z}^{\pm}_t(C')$ such that 
$$[z'^-]=[z'^+] \text{ in }H_0(\mF^r_{\upsilon_{C'}(t)}(C')
+ \mF^s_{\upsilon^2_{C',t}(s)}(C')),
$$
and $f \colon C' \to C$ a $\z^2$-filtered quasi isomorphism.
Then, since $\upsilon_C(t')= \upsilon_{C'}(t')$
for any point $t'$ nearby $t$, 
we see  $f(z'^{\pm}) \in \mathcal{Z}^{\pm}_t(C)$.
Now, we have the equalities
$$
[f(z'^-)]=f_*([z'^-])=f_*([z'^+])= 
[f(z'^+)]
$$
as elements of $H_0(\mF^r_{\upsilon_C(t)}(C)+ \mF^s_{\upsilon^2_{C',t}(s)}(C))$.
Hence, if $\mathcal{Z}^-_t(C) \cap \mathcal{Z}^+_t(C) = \emptyset$, 
then $C'$ also satisfies $\mathcal{Z}^-_t(C') \cap \mathcal{Z}^+_t(C') = \emptyset$
and
we have the inequality
$$
\upsilon^2_{C,t}(s) \leq \upsilon^2_{C,t}(s),
$$
which gives the desired inequality. Otherwise, $\Upsilon^2_{C,t}(s) = \infty$,
and hence the desired inequality obviously holds.
\end{proof}

As a corollary, we have the invariance of $\Upsilon^2$.
(Note that $\Upsilon^2$ is originally given as an invariant of formal knot complexes
in \cite{KL18}.)
\begin{thm}[\text{\cite[Theorem~4.8]{KL18}}]
For any $t \in (0,2)$ and $s \in [0,2]$, 
$\Upsilon^2_{t}(s) \colon [C]_{\np} \mapsto \Upsilon^2_{C,t}(s)$
is a well-defined map $\mCf \to \R \cup \{\infty\}$.
In particular, $\Upsilon^2_{K,t}(s) := \Upsilon^2_{C^K,t}(s)$
is a knot concordance invariant.
\end{thm}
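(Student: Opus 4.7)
The plan is to deduce the theorem as an immediate corollary of the preceding proposition (the inequality $\Upsilon^2_{C,t}(s) \geq \Upsilon^2_{C',t}(s)$ under the partial order together with local agreement of $\Upsilon$) combined with the already-established invariance of $\Upsilon$ under $\nu^+$-equivalence (Proposition~\ref{upsilon invariance}).

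First I would show well-definedness on $\mCf$. Suppose $[C]_{\np} = [C']_{\np}$. By definition of the partial order, this means both $[C]_{\np} \leq [C']_{\np}$ and $[C']_{\np} \leq [C]_{\np}$. Proposition~\ref{upsilon invariance} applied in both directions yields $\Upsilon_C(t') = \Upsilon_{C'}(t')$ for every $t' \in [0,2]$; in particular this holds at every point near the fixed $t \in (0,2)$. Hence the hypotheses of the preceding proposition are met in both directions, giving $\Upsilon^2_{C,t}(s) \geq \Upsilon^2_{C',t}(s)$ and $\Upsilon^2_{C',t}(s) \geq \Upsilon^2_{C,t}(s)$, so equality holds in $\R \cup \{\infty\}$. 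Thus $[C]_{\np} \mapsto \Upsilon^2_{C,t}(s)$ does not depend on the chosen representative.

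Next I would transport this invariance to knots. Given knots $K, J$ that are smoothly concordant, Theorem~\ref{nuplus and g_4} (applied to $K \#(-J^*)$ and $(-K^*)\# J$, which are slice) together with Theorem~\ref{monoid hom} and the commutative diagram of Theorem~\ref{comm diag} shows that $[C^K]_{\np} = [C^J]_{\np}$ in $\mCf$. Combining this with the well-definedness from the previous paragraph, we obtain $\Upsilon^2_{C^K,t}(s) = \Upsilon^2_{C^J,t}(s)$, so $\Upsilon^2_{K,t}(s) := \Upsilon^2_{C^K,t}(s)$ is a knot concordance invariant, as claimed.

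No obstacle of substance remains: the content was carried by the preceding proposition, whose proof used the chain-level translation of $[C]_{\np} \leq [C']_{\np}$ into the existence of a $\z^2$-filtered quasi-isomorphism $C' \to C$ (Theorem~\ref{quasi thm2}) and the fact that such a map sends a representative of $\mathcal{Z}^{\pm}_t(C')$ to one of $\mathcal{Z}^{\pm}_t(C)$ when $\upsilon_C = \upsilon_{C'}$ near $t$. The only point worth double-checking is that ``$\infty$'' is handled consistently in the inequality $\Upsilon^2_{C,t}(s) \geq \Upsilon^2_{C',t}(s)$, but this is built into the case distinction in the definition of $\Upsilon^2_{C,t}$, so symmetric application produces equality without ambiguity.
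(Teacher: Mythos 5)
Your proposal is correct and is exactly the argument the paper intends: the theorem is stated as a corollary of the preceding proposition, obtained by applying it in both directions when $[C]_{\np}=[C']_{\np}$ (using Proposition~\ref{upsilon invariance} to verify the hypothesis that $\Upsilon_C$ and $\Upsilon_{C'}$ agree near $t$), and the concordance statement follows since $[C^K]_{\np}$ is a concordance invariant. No discrepancy with the paper's route.
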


We also mention the following sub-additivity of $\Upsilon^2_{C,t}(t)$.
\begin{thm}[\text{\cite[Theorem~5.1]{KL18}}]
For any formal knot complexes $C, C'$ and $t \in (0,2)$, we have
$$
\Upsilon^2_{C\otimes_{\Lambda}C', t}(t) 
\geq \min\{\Upsilon^2_{C,t}(t), \Upsilon^2_{C',t}(t)\}.
$$
\end{thm}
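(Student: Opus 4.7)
The plan is to build explicit witnesses for the desired bound by tensoring witnesses for $\Upsilon^2_{C,t}(t)$ and $\Upsilon^2_{C',t}(t)$. The first step is to check that the construction $z^\pm \otimes z'^\pm$ behaves well: since $\Upsilon$ is a group homomorphism (Proposition~\ref{upsilon hom}), we have the additivity $\upsilon_{C \ot{\Lambda} C'}(t') = \upsilon_C(t') + \upsilon_{C'}(t')$ for every $t' \in [0,2]$, and filtration levels add under the tensor product (via Proposition~\ref{tensor} and Lemma~\ref{subcpx decomp}). Combined with the K\"unneth-type non-vanishing used in the proof of Lemma~\ref{nuplus dual}, this shows that if $z^\pm \in \mathcal{Z}^\pm_t(C)$ and $z'^\pm \in \mathcal{Z}^\pm_t(C')$, then $\zeta^\pm := z^\pm \otimes z'^\pm$ belongs to $\mathcal{Z}^\pm_t(C \ot{\Lambda} C')$. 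Passing to the limit $\delta \to 0^+$, continuity of $\upsilon$ also gives the useful containment $z^\pm \in \mF^t_{\upsilon_C(t)}(C)$ and $z'^\pm \in \mF^t_{\upsilon_{C'}(t)}(C')$.

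The case $\Upsilon^2_{C \ot{\Lambda} C',t}(t) = \infty$ is trivial, so I assume $\mathcal{Z}^-_t(C \ot{\Lambda} C') \cap \mathcal{Z}^+_t(C \ot{\Lambda} C') = \emptyset$. When both $\Upsilon^2_{C,t}(t)$ and $\Upsilon^2_{C',t}(t)$ are finite, I would choose $z^\pm, z'^\pm$ together with chains $w \in \mF^t_{\upsilon^2_{C,t}(t)}(C)$ and $w' \in \mF^t_{\upsilon^2_{C',t}(t)}(C')$ realizing the defining minima of $\upsilon^2$, i.e., $\partial w = z^- - z^+$ and $\partial w' = z'^- - z'^+$. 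The key algebraic identity is then
$$
\zeta^- - \zeta^+ = (z^- - z^+) \otimes z'^- + z^+ \otimes (z'^- - z'^+) = \partial\!\left(w \otimes z'^- + z^+ \otimes w'\right),
$$
using only that $z'^-$ and $z^+$ are cycles. Since filtrations add under tensor, the chain $w \otimes z'^- + z^+ \otimes w'$ lies in $\mF^t_M(C \ot{\Lambda} C')$ with $M := \max\{\upsilon^2_{C,t}(t) + \upsilon_{C'}(t),\ \upsilon_C(t) + \upsilon^2_{C',t}(t)\}$. Plugging this into the definition of $\upsilon^2$ and using additivity of $\upsilon$ yields
$$
\upsilon^2_{C \ot{\Lambda} C',t}(t) - \upsilon_{C \ot{\Lambda} C'}(t) \leq \max\{\upsilon^2_{C,t}(t) - \upsilon_C(t),\ \upsilon^2_{C',t}(t) - \upsilon_{C'}(t)\},
$$
and multiplying by $-2$ converts the maximum into the desired minimum.

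The main obstacle I expect is the degenerate situation in which one of $\Upsilon^2_{C,t}(t)$ or $\Upsilon^2_{C',t}(t)$ equals $\infty$, because then the corresponding witness $w$ or $w'$ does not literally exist. The cleanest fix is to pick a common element $z \in \mathcal{Z}^-_t(C) \cap \mathcal{Z}^+_t(C)$ (guaranteed by the $\infty$ designation), set $z^- = z^+ = z$ and $w = 0$; the first summand of the boundary identity then vanishes, the second goes through unchanged, and the filtration bound holds with the effective value $\upsilon^2_{C,t}(t) = \upsilon_C(t)$, so that $\upsilon^2_{C,t}(t) - \upsilon_C(t) = 0$. One verifies that the resulting bound still reads $\Upsilon^2_{C \ot{\Lambda} C',t}(t) \geq \Upsilon^2_{C',t}(t) = \min\{\infty, \Upsilon^2_{C',t}(t)\}$, the maximum in the displayed inequality being attained by the $C'$-term. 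The symmetric case is identical, and when both invariants are $\infty$ the witness $z \otimes z'$ lies in both $\mathcal{Z}^\pm_t(C \ot{\Lambda} C')$, contradicting our earlier assumption and placing us back in the trivial case.
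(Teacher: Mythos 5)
The paper does not actually prove this statement: it is quoted from \cite[Theorem~5.1]{KL18} without proof, so there is no internal argument to compare against. Your proposal is, as far as I can check, a correct and complete proof, and it follows the standard route one would expect (and that \cite{KL18} takes): tensor the witnessing generators, use additivity of $\upsilon$ to see that $z^\pm\otimes z'^\pm$ lands in $\mathcal{Z}^\pm_t(C\otimes_\Lambda C')$, and apply the Leibniz rule to $w\otimes z'^- + z^+\otimes w'$ to produce a null-homology at filtration level $\max\{\upsilon^2_{C,t}(t)+\upsilon_{C'}(t),\ \upsilon_C(t)+\upsilon^2_{C',t}(t)\}$, which becomes the desired minimum after multiplying by $-2$. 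The two points that genuinely need justification --- that $\mathcal{Z}^\pm_t(C)\subset \mF^t_{\upsilon_C(t)}$ (via finiteness of the set of homological generators together with continuity of $\upsilon_C$, so that the $\delta\to 0^+$ limit is legitimate), and the treatment of the $\infty$ cases by collapsing $z^-=z^+$ and $w=0$ --- are both addressed. The only residual caveat is definitional rather than a flaw in your argument: you implicitly use $\upsilon^2_{C,t}(t)\geq \upsilon_C(t)$ when replacing $\mF^t_{\upsilon_C(t)}+\mF^t_{\upsilon^2_{C,t}(t)}$ by $\mF^t_{\upsilon^2_{C,t}(t)}$; this is the standard convention and is harmless here, but it is worth a sentence.
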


\section{Geometric estimates}
In this section, we prove the following theorem.

\setcounter{section}{1}
\setcounter{thm}{4}
\begin{thm}
For any knot $K$, we have
$$
-g_4(K)[T_{2,3}]_{\nu^+} \leq [K]_{\nu^+} \leq g_4(K)[T_{2,3}]_{\nu^+}.
$$
\end{thm}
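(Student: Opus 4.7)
The plan is to reduce to the upper bound $[K]_{\nu^+} \leq g_4(K)[T_{2,3}]_{\nu^+}$ and establish it by induction on $g := g_4(K)$. The lower bound follows by symmetry: applying the upper bound to the mirror $K^*$ (for which $g_4(K^*) = g_4(K)$) gives $[K^*]_{\nu^+} \leq g_4(K)[T_{2,3}]_{\nu^+}$; the order-reversing property of negation in $\mCf$ (Proposition~1.5 of~\cite{Sa18full}, restated in the excerpt above) then yields $-g_4(K)[T_{2,3}]_{\nu^+} \leq -[K^*]_{\nu^+}$, and Theorem~\ref{inverse} together with the sliceness of $K\#(-K^*)$ identifies $-[K^*]_{\nu^+}$ with $[K]_{\nu^+}$ in the abelian group $\mC_{\nu^+}$.

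For the base case $g_4(K) = 0$, $K$ is slice, so Theorem~\ref{nuplus and g_4} gives $\nu^+(C^K) = \nu^+((C^K)^*) = 0$, hence $[K]_{\nu^+} = 0$. For the inductive step, the plan is to find a knot $J$ with $g_4(J) \leq g - 1$ satisfying
$$
[K]_{\nu^+} \leq [J]_{\nu^+} + [T_{2,3}]_{\nu^+},
$$
after which the inductive hypothesis applied to $J$ completes the argument. To construct such a $J$, I would start from a smooth genus-$g$ surface $\Sigma \subset B^4$ with $\partial \Sigma = K$. Morse theory on the radial function puts $\Sigma$ in standard form, presenting $K$ as the result of attaching $2g$ bands to a single disk; two consecutive bands form one handle of $\Sigma$. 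The idea is to replace this handle by a local trefoil modification, producing a knot $J$ that bounds a surface of genus $g - 1$ in $B^4$ and that satisfies the $\nu^+$-order estimate above.

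The main obstacle is precisely this geometric step --- showing that removing a single handle of $\Sigma$ costs at most one copy of $[T_{2,3}]_{\nu^+}$ in the partial order. The toolkit from~\cite{Sa18full}, particularly the full-twist analysis recalled in Theorem~\ref{full-twist}, controls how $[\,\cdot\,]_{\nu^+}$ behaves under surgeries along curves piercing an embedded disk, and I expect the required estimate to emerge from choosing a carefully placed non-separating simple closed curve on $\Sigma$ and performing the appropriate full-twist near it. Alternatively, by Theorem~\ref{quasi thm2} the target inequality is equivalent to the existence of a $\z^2$-filtered quasi-isomorphism $(C^{T_{2,3}})^{\otimes g} \to C^K$, which one could try to build directly from a banded-unknot presentation of $\Sigma$ via Lemma~\ref{quasi-isom basis}; either route reduces the global $4$-genus bound to a single elementary surface modification, which is the real heart of the proof.
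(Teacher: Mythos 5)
Your reduction of the lower bound to the upper bound via mirroring is fine, and the base case $g_4(K)=0$ is handled correctly by Theorem~\ref{nuplus and g_4}. But the inductive step --- the claim that one can pass from $K$ to a knot $J$ with $g_4(J)\leq g_4(K)-1$ at the cost of a single $[T_{2,3}]_{\nu^+}$ in the partial order --- is not proved; you yourself flag it as ``the main obstacle'' and offer only the expectation that it ``emerges'' from a well-chosen full-twist. That step is the entire content of the theorem, so as written the proposal has a genuine gap. Two concrete problems stand against the plan as stated. First, the full-twist machinery of Theorem~\ref{full-twist} applies to a disk embedded in $S^3$ meeting the knot, whereas the handle you want to excise lives on a surface $\Sigma$ properly embedded in $B^4$; there is no curve on $\Sigma$ in $S^3$ along which to twist until the surface has been pushed into the three-sphere. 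Second, part~(1) of Theorem~\ref{full-twist} only ever gives $[J]_{\nu^+}\leq[K]_{\nu^+}$ for a positive full-twist with $0$- or $1$-linking, i.e.\ the available moves are one-sided and naturally produce a \emph{lower} bound on $[K]_{\nu^+}$, not the upper bound your induction is organized around; there is no companion move in the toolkit that raises the class by exactly $[T_{2,3}]_{\nu^+}$.

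The paper's proof is structured quite differently and avoids both issues. It first replaces $K$ by a concordant knot $K'$ bounding a \emph{ribbon} surface of genus $g_4(K)$ (Lemma~\ref{slice lem}, by cutting the normal form of $\Sigma$ at its middle level), so that all subsequent surgery happens on a surface visible in $S^3$. It then resolves each ribbon singularity by a positive full-twist with $1$-linking (Lemma~\ref{ribbon lem}) to obtain a genuine Seifert surface of genus $g_4(K)$, and standardizes that Seifert surface by positive full-twists with $0$-linking realizing pass moves (Lemma~\ref{3-dim lem}), arriving at $K_{m_1,n_1}\#\cdots\#K_{m_{g_4(K)},n_{g_4(K)}}$ with all $m_i,n_i>0$. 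Since every move decreases the class, $[K]_{\nu^+}\geq\sum_i[K_{m_i,n_i}]_{\nu^+}$, and the cost accounting is done all at once at the end: each $K_{m_i,n_i}$ is alternating with signature $2$, so $[K_{m_i,n_i}]_{\nu^+}=-[T_{2,3}]_{\nu^+}$ by Theorem~\ref{alt}. There is no handle-by-handle induction and no need for a move that removes one handle at bounded cost. If you want to salvage your outline, the missing single-handle estimate would itself require essentially all of Lemmas~\ref{slice lem}--\ref{twist knot}, at which point the induction is superfluous.
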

\setcounter{thm}{0}
\setcounter{section}{3}

To prove the theorem, we consider replacing a given knot $K$ several times.
We start with the following lemma.
\begin{lem}
\label{slice lem}
For any knot $K$,
there exists a knot $K'$ concordant to $K$ which bounds a ribbon surface with genus $g_4(K)$.
\end{lem}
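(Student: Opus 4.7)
The plan is to start with a smoothly embedded surface $F \subset B^4$ realizing $\partial F = K$ and $g(F) = g_4(K)$, and to successively eliminate the index-$2$ critical points of a Morse function $h \colon B^4 \to [0,1]$ (with $h^{-1}(1) = \partial B^4$) restricted to $F$, by introducing cancelling $1$--$2$ handle pairs in an added boundary collar. Each such modification will replace $K$ by a concordant knot and preserve the genus, so an induction on the number $c_2$ of index-$2$ critical points of $h|_F$ will produce the desired $K'$.

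First, I would arrange $h|_F$ in normal form with critical points stacked by increasing index, and induct on $c_2$. The base case $c_2 = 0$ is immediate: $F$ is already a ribbon surface and one sets $K' := K$. For the inductive step, pick the topmost index-$2$ critical point $p$, a regular value $t_0$ just below it, and the associated attaching circle $U \subset S^3_{t_0}$. The Morse local model near $p$, together with the product-cobordism structure of $F$ in $[t_0, h(p))$, shows that $U$ is unknotted in $S^3_{t_0}$; moreover, the pairwise disjointness of all the $2$-handle disks of $F$ and of the cylinder on $K^\flat := F \cap S^3_{t_0}$ inside $S^3 \times [t_0, 1]$ forces, by an elementary duality argument in this product, $\lk(U, K^\flat) = 0$, and likewise $\lk(U_i, U_j) = 0$ for distinct attaching circles.

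Next, I would perform the key boundary modification: enlarge $B^4$ by a collar $S^3 \times [1, 2]$ (still topologically a $4$-ball), and in this collar attach to $F$ a $0$-handle (a small disk $\Delta$ bounding a trivial unknot $U'$ near $K$) together with a $1$-handle (a band $\beta$ joining $U'$ to $K$). Since $U'$ is a standardly unknotted and unlinked unknot near $K$, the new boundary $K \#_\beta U'$ is isotopic, hence concordant, to $K$, and the Euler characteristic of the enlarged surface is unchanged. By positioning $U'$ and $\beta$ carefully---using the unknottedness of $U$ and the vanishing linking number---one arranges, after a controlled isotopy, that the cocore of $\beta$ meets $U$ transversely in exactly one point, so that $\beta$ together with the $2$-handle $D$ over $U$ forms a Morse-theoretic cancelling pair. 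Cancelling $(\beta, D)$ produces a surface $F'$ with $\partial F' = K'$ concordant to $K$, $g(F') = g_4(K)$, and $c_2(F') = c_2 - 1$.

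The hard part will be justifying the existence of the isotopy that brings $\beta$ into cancelling position with $D$. Morally, this amounts to isotoping $U$ from its interior level $S^3_{t_0}$ out to the boundary $S^3_1$ through the complement in $S^3 \times [t_0, 1]$ of the remaining $2$-handle disks and of the product cobordism on $K^\flat$; the vanishing linking numbers $\lk(U_i, K^\flat) = \lk(U_i, U_j) = 0$, both forced by the pairwise disjointness of the disks $D_i$, are exactly what make such an isotopy available. Once this inductive step is established, iterating it $c_2$ times produces the desired ribbon surface of genus $g_4(K)$ bounded by a knot concordant to $K$.
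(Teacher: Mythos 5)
There is a genuine gap, and it sits exactly where you flag ``the hard part'': the existence of the controlled isotopy that brings $\beta$ into cancelling position with the $2$-handle $D$ over $U$. The conditions you extract --- $U$ unknotted in its level and $\lk(U,K^\flat)=\lk(U_i,U_j)=0$ --- are homological; they are necessary for the cap over $U$ to be isotoped out to the boundary level through the complement of the rest of $F$ in $S^3\times[t_0,1]$, but they are nowhere near sufficient. The cap can be clasped or Whitehead-linked with the trace of $K^\flat$ or with the other caps while all linking numbers vanish, and then no such isotopy exists; nothing in the Morse local model or in duality in the product rules this out. The decisive evidence that the step cannot be repaired is that your construction changes the boundary only by an isotopy in $S^3$ (you connect-sum $K$ with a split unknot in an external collar), and it preserves $\chi$ and the number of boundary components; iterating it would therefore show that $K$ \emph{itself} bounds a ribbon surface of genus $g_4(K)$. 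For $g_4(K)=0$ this is the slice--ribbon conjecture. The lemma asks only for a knot $K'$ \emph{concordant} to $K$ precisely because the interior local maxima cannot in general be cancelled by ambient isotopy.

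The paper's proof takes a route that never attempts such a cancellation. One isotopes $F$ into normal form: all $b$ births at one level near the boundary, then $b$ saddles chosen so that the next level set is a single knot $K'$, then the remaining saddles, then all deaths. The piece between $K$ and the connected middle level is an annulus, hence a concordance from $K$ to $K'$ --- this is where the knot is allowed to change --- while the inner piece, read from $K'$ inward, has only saddles and deaths and is therefore a ribbon surface of genus $g_4(K)$. In other words, the offending critical points are not cancelled; they are pushed into the concordance and cut off. If you want to keep your handle-cancellation framework, you must allow $\beta$ to be a band realized inside a genuine concordance (not an isotopy) from $K$ to a new knot, and at that point you are essentially rebuilding the normal-form argument.
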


\begin{proof}
Let $F$ be a surface in $B^4 \cong (S^3 \times [0,1]) / (S^3 \times \{ 1 \})$ with genus $g_4(K)$ and $\partial F = K \subset S^3 \times \{0\}$.
Then, a similar argument to \cite[Lemma2.1]{Sa11}
shows that $F$ can be isotoped to a surface $F'$ in $B^4$ such that
the composition $f: F' \hookrightarrow (S^3 \times [0,1]) / (S^3 \times \{ 1 \}) 
\overset{p_2}{\twoheadrightarrow} [0,1]$ is a Morse function, and $f$ satisfies
\begin{enumerate}
\item all births happen at time $\frac{1}{6}$ (we denote the number of births by $b$),
\item $b$ saddles happen at time $\frac{2}{6}$,
\item the time $\frac{3}{6}$ is a regular value and $f^{-1}(\frac{3}{6})$ is connected,
\item the remaining saddles happen at time $\frac{4}{6}$, and
\item all deaths happen at $\frac{5}{6}$.
\end{enumerate}
In particular, we see that $f^{-1}([0,\frac{3}{6}])$ is a (ribbon) concordance from $K$ to
$K':= f^{-1}(\frac{3}{6})$, and $f^{-1}([\frac{3}{6},1])$ is a ribbon surface in 
$(S^3 \times [\frac{3}{6},1]) / (S^3 \times \{ 1 \}) \cong B^4$ whose boundary is $K'$ and genus is $g_4(K)$. This completes the proof.
\end{proof}
Next, by using full-twists, we construct a surface embedded in $S^3$.
\begin{lem}
\label{ribbon lem}
If a knot $K$ bounds a genus $g$ ribbon surface, then there exists a
knot $K'$ with genus $g$
which obtained from $K$ only by adding positive full-twists with $1$-linking.
\end{lem}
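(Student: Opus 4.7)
The plan is to realize the ribbon surface $\Sigma \subset B^4$ as an immersed surface $\tilde\Sigma \subset S^3$ with only ribbon self-intersections, and then to resolve each such singularity by a single positive full-twist with $1$-linking applied to $K$. The result is an embedded Seifert surface of genus $g$ in $S^3$ bounded by the modified knot $K'$.

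First I would push $\Sigma$ along the radial direction into $S^3$, using the standard correspondence between ribbon surfaces in $B^4$ and immersed ribbon surfaces in $S^3$, to obtain an immersed surface $\tilde\Sigma \subset S^3$ of genus $g$ with $\partial\tilde\Sigma = K$, whose self-intersection set is a disjoint union of arcs $\gamma_1, \ldots, \gamma_k$. At each $\gamma_i$, two local sheets of $\tilde\Sigma$ meet transversally: one sheet $S_i$ contains $\gamma_i$ as an interior arc, and the other sheet, a band $B_i$, contains $\gamma_i$ as a properly embedded arc with both endpoints on $K$.

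For each ribbon arc $\gamma_i$, I would then construct a disk $D_i \subset S^3$ whose boundary $\alpha_i$ is a meridian loop of the band $B_i$ near $\gamma_i$, arranged so that $D_i$ meets $K$ transversally at exactly one point (where the band is attached to $K$), meets $S_i$ in a single arc, and is otherwise disjoint from $\tilde\Sigma$. Then $\lk(K,\alpha_i)=1$, and performing the associated positive full-twist (that is, the $(-1)$-Rolfsen twist on $\alpha_i$) inserts one full twist in the strand of $K$ passing through $D_i$. Geometrically this pulls $B_i$ off of $S_i$, removing the ribbon singularity $\gamma_i$ while leaving the other singularities and the genus of $\tilde\Sigma$ intact.

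Iterating this resolution over all $k$ ribbon arcs yields an embedded surface of genus $g$ in $S^3$ bounded by the knot $K'$, where $K'$ is obtained from $K$ by $k$ positive full-twists with $1$-linking, so $g(K') \leq g$. The main obstacle is the local construction in the second step: verifying that a single positive full-twist with $1$-linking, using a disk transverse to the passing band near the ribbon arc, precisely resolves one ribbon singularity. This amounts to a careful analysis of the local model near $\gamma_i$, producing a meridian disk with the required intersection data with $K$ and $\tilde\Sigma$, and checking that the $(-1)$-Rolfsen twist on $\alpha_i$ has the advertised effect of unhooking $B_i$ from $S_i$.
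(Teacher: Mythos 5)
The strategy (realize the ribbon surface as an immersed surface in $S^3$ and kill the ribbon singularities one at a time by positive full-twists with $1$-linking) is the same as the paper's, but the local construction you propose is not merely unverified --- it cannot work as stated. You ask for a disk $D_i$ that meets $K$ transversally in exactly one point and is otherwise disjoint from the surface except for one arc of $S_i$. A full twist along a disk whose geometric intersection with $K$ is a single point only inserts a kink into one strand and is an isotopy of $K$; iterating such moves leaves the knot type of $K$ unchanged, so the output would be an embedded genus $g$ Seifert surface for $K$ itself, i.e.\ $g(K)\le g_4(K)$ for every knot, which is false. There is also an internal inconsistency: if $\alpha_i=\partial D_i$ is a meridian loop of the band $B_i$, then $\alpha_i$ bounds a small disk meeting $K$ either in the two oppositely oriented edges of $B_i$ or in no points at all (depending on where the cross-section is taken), so $\lk(\alpha_i,K)=0$, and no disk bounded by $\alpha_i$ can meet $K$ algebraically --- let alone geometrically --- in exactly one point.

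The twist that actually resolves the singularity uses a disk meeting $K$ geometrically in \emph{three} points while still having linking number one: its boundary encircles the band $B_i$ (two strands of $K$, cancelling in the linking number) together with one strand of $K$ lying on the boundary of the pierced sheet $S_i$. To reach that third strand one must first choose an embedded arc $a$ in $S_i$ from an endpoint of the interior preimage arc of the singularity out to $\partial F=K$, with interior avoiding all other singularities; the twisting disk lives in a small ball around $f(a)\cup b$, and the full twist reroutes $B_i$ around the arc of $K$ on the edge of $S_i$ instead of through its interior, yielding a ribbon surface of the same genus with one fewer singularity. The access arc $a$ and the three-point geometric intersection of the twisting disk with $K$ are the missing ingredients in your proposal.
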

\begin{proof}
Suppose that $K$ bounds a genus $g$ ribbon surface $F$ with $n$ ribbon singularities.
Then, for proving the lemma, it suffices to find a positive full-twist with $1$-linking deforming $K$ into a knot $K'$
which bounds a genus $g$ ribbon surface with $n-1$ ribbon singularities.

Let $\Sigma_g$ be an abstract genus $g$ surface with $\partial \Sigma_g \cong S^1$,
and $f: \Sigma_g \to S^3$ an immersion with $f(\Sigma_g)=F$.
Choose a ribbon singularity $b$ on $F$. Then 
$f^{-1}(b)$ consists of two arcs in $\Sigma_g$, one of which is properly embedded and the other is lying in $\Int \Sigma_g$. Denote the arc in $\Int \Sigma_g$ by $\widetilde{b}$, and take an arc $a$
in $\Sigma_g$ 
such that $\Int a$ avoids the preimage of all ribbon singularities on $F$, and
one end of $a$ is in $\partial \widetilde{b}$ and the other is in $\partial \Sigma_g$.
Then $f(a)$ is an arc in $F$ which connects $b$ to $\partial F$, and $\Int f(a)$ avoids all singularities on $F$. Thus, we can take a (small) tubular neighborhood $N$ of $f(a) \cup b$
such that $(N, F \cap N)$ is diffeomorphic to the pair of the 3-ball and the immersed surface
shown in Figure~\ref{local1}.

Now, we take a twisting disk $D$ as shown in the right-hand side of Figure~\ref{local2}.
After adding a positive full-twist along $D$, we have a new ribbon surface $F'$ which coincides
with $F$ in $S^3 \setminus N$, and $(N, F' \cap N)$ is diffeomorphic to the pair of the 3-ball and the embedded surface shown in the right-hand side of Figure~\ref{local2}. By the construction, it is obvious that $K':=\partial F'$
is obtained from $K$ by a positive full-twist with $1$-linking, and 
$F'$ is a genus $g$ ribbon surface with $n-1$ ribbon singularities.
This completes the proof.
\end{proof}

\begin{figure}[htbp]
\begin{center}
\includegraphics[scale= 1.0]{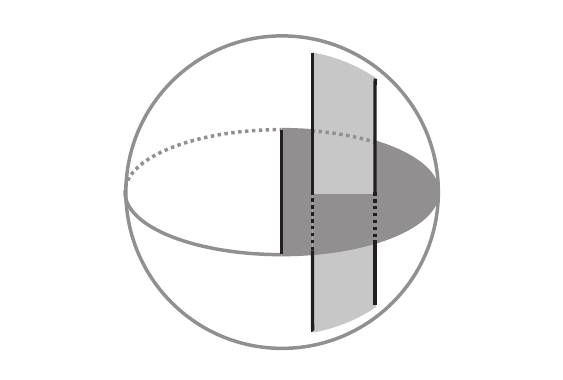}
\caption{A local picture near $f(a) \cup b$}
\label{local1}
\end{center}
\end{figure}
\begin{figure}[htbp]
\begin{minipage}[]{0.45\hsize}
\includegraphics[scale = 1.0]{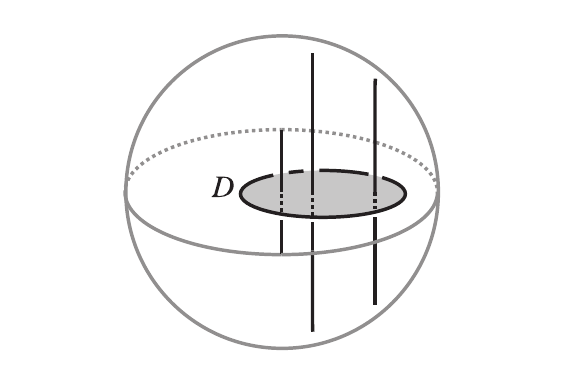}
 \end{minipage}
 \begin{minipage}[]{0.45\hsize}
\begin{center}
\includegraphics[scale = 1.0]{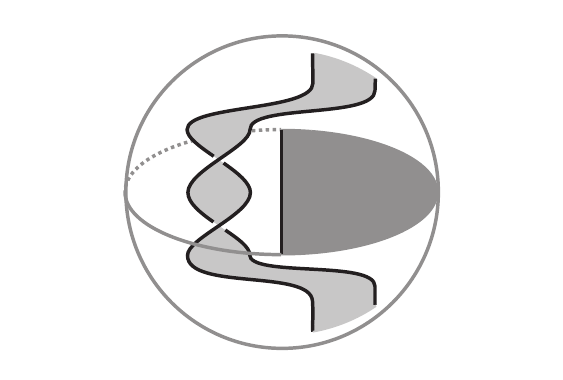}
\end{center}
 \end{minipage}
\caption{A disk $D$ and a surface $F'$}
\label{local2}
\end{figure}

For $m,n \in \Bbb{Z}$, 
let $K_{m,n}$ denote the $(m,n)$-twist knot, whose diagram is shown in Figure~\ref{fig twist knot}.
Then, the last replacement is stated as follows.

\begin{lem}
\label{3-dim lem}
Any genus $g$ knot is deformed into 
$K_{m_1,n_1}\# \cdots \# K_{m_{g}, n_{g}}$ only by adding positive full-twists 
with $0$-linking, where $m_i, n_i \in \Bbb{Z}_{>0}$ (for all $i \in \{1, \ldots, g\}$). 
\end{lem}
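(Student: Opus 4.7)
The plan is to simplify a minimal genus Seifert surface of $K$ via positive full-twist operations with $0$-linking until $K$ becomes a connected sum of twist knots with positive parameters. The key point is that such a full-twist along a disk $D$ with $\lk(K,\partial D)=0$ preserves the algebraic structure of the Seifert surface while changing its embedding in $S^3$, which gives enough flexibility to separate bands and to standardize their twists.

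First, I would take a minimal genus Seifert surface $F$ for $K$ and present it as a disk $D^2$ with $2g$ bands attached along $\partial D^2$ in the standard symplectic cyclic order, so that pairs of bands $(a_i, b_i)$ form a symplectic basis for $H_1(F)$. In this presentation, the embedding of $F$ in $S^3$ is captured by the twists on each band and the mutual linking between the bands.

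Next, I would use positive full-twist operations with $0$-linking to separate the $g$ symplectic pairs of bands. For any two pairs that are linked in $S^3$, I would choose a simple closed curve $\gamma \subset S^3 \setminus F$ bounding an embedded disk $D_\gamma$ with $\lk(K, \gamma) = 0$ such that $D_\gamma$ separates the two pairs, and perform a positive full-twist along $D_\gamma$. After finitely many such operations, each symplectic pair $(a_i, b_i)$ lies in a 3-ball $B^{(i)}$ with the $B^{(i)}$ pairwise disjoint. Consequently, $F$ becomes a boundary-connected sum of $g$ once-punctured tori, and $K = K^{(1)} \# \cdots \# K^{(g)}$ where each $K^{(i)}$ is a genus-1 knot. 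A further round of positive full-twist operations with $0$-linking, now along disks local to each $B^{(i)}$, adjusts the self-twists of $a_i$ and $b_i$ to strictly positive integers $m_i, n_i$ and brings the mutual linking of $a_i, b_i$ into the standard twist-knot configuration. This identifies each $K^{(i)}$ with the twist knot $K_{m_i, n_i}$ for some $m_i, n_i \in \z_{>0}$.

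The main obstacle, I expect, will be the second step: explicitly constructing the separating disks $D_\gamma$ with $\lk(K, \gamma) = 0$ that eliminate inter-pair linking, while respecting the positivity constraint that only allows twists in one direction. One must orchestrate the order of the operations carefully so that each positive full-twist strictly reduces some combinatorial measure of complexity (for example, the sum of geometric intersection numbers between the bands of distinct symplectic pairs, or the norms of the off-diagonal blocks of the Seifert matrix). A secondary subtlety is that the final standardization inside each $B^{(i)}$ must not reintroduce linking between distinct pairs, which can be arranged by performing those operations with supports contained in the already-disjoint balls $B^{(i)}$.
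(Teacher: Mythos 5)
Your overall plan (put a minimal genus Seifert surface into band form and use positive full-twists with $0$-linking to unlink the bands and then normalize the framings) is the same strategy as the paper's, but the step you yourself flag as ``the main obstacle'' is precisely the crux, and you have not resolved it. The paper's proof puts $F$ in the form of a disk with $2g$ bands governed by a string link, and its key observation is that a positive full-twist with $0$-linking --- applied to a disk meeting the surface in two oppositely-oriented sheets of band --- realizes a \emph{pass move in either direction}, at the cost of changing the framings of the bands involved. Because both directions are available, no monotone complexity measure is needed: one simply performs whatever sequence of pass moves trivializes the string link, and then uses further positive $0$-linking twists (which only \emph{increase} framings) to make all the resulting framing parameters $m_i, n_i$ positive. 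Your proposed alternative --- orchestrating the twists so that each one strictly decreases some norm of the off-diagonal Seifert-matrix blocks --- is speculative and, as far as I can see, unnecessary once the two-sided realizability of pass moves is established; conversely, without that realizability your positivity constraint is a genuine obstruction, since a single positive full twist changes crossings in a fixed direction determined by the orientations of the strands through the disk.

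A secondary imprecision: you ask for a curve $\gamma \subset S^3 \setminus F$ bounding a disk $D_\gamma$ that ``separates the two pairs.'' A disk that literally separates two collections of bands is disjoint from $F$, and then the full twist has no effect on $K$ at all (the operation requires $D$ to intersect $K$). What is actually needed is a disk meeting the bands transversally with algebraic intersection number zero with $K$, chosen so that the induced crossing changes unlink the bands --- which brings you back to the pass-move mechanism above. Also note that the statement requires the target to be $K_{m_1,n_1}\#\cdots\#K_{m_g,n_g}$ with $g = g(K)$ the Seifert genus, so one starts from a genus $g$ Seifert surface in $S^3$; your phrase ``minimal genus'' is consistent with this, but be careful not to conflate it with the $4$-genus, which enters only in the other lemmas of this section.
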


\begin{proof}
Let $K$ be a genus $g$ knot
and $F$ a genus $g$ surface with boundary $K$.
By an isotopy, we can assume that $F$ is of the form of Figure~\ref{surface1},
where $L$ is obtained from a string link (with $4g$ strings) by parallelizing the string link with some framings. (The framings are characterized by the choice of $\{m'_1, n'_1, \ldots, m'_g, n'_g\}$.)  
Then, as shown in Figure \ref{pass move}, positive full-twists with 0-linking can realize both directions of pass moves with framings changing, and hence such full-twists can deform $F$ into a surface $F'$ with new framings $\{m_1, n_1, \ldots, m_g, n_g\}$, which is shown in Figure \ref{surface2}. 
Moreover, by 
adding positive full-twists with 0-linking as shown in Figure \ref{framing}, we may assume that
all $m_i, n_i$ are positive.  Here it is obvious that the boundary of $F'$ is 
$K_{m_1,n_1}\# \cdots \# K_{m_{g}, n_{g}}$, and this fact completes the proof.
\end{proof}

\begin{figure}[htbp]
\begin{center}
\includegraphics[scale= 1.0]{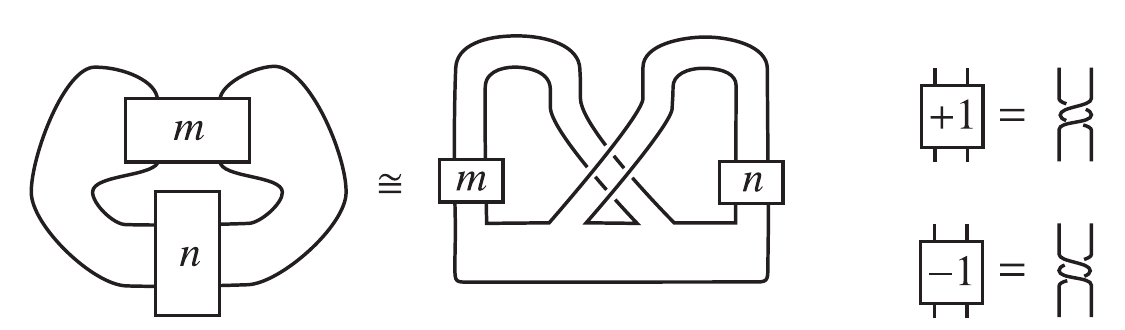}
\caption{The $(m,n)$-twist knot $K_{m,n}$ \label{fig twist knot}}
\end{center}
\end{figure}
\begin{figure}[htbp]
\begin{center}
\includegraphics[scale= 1.0]{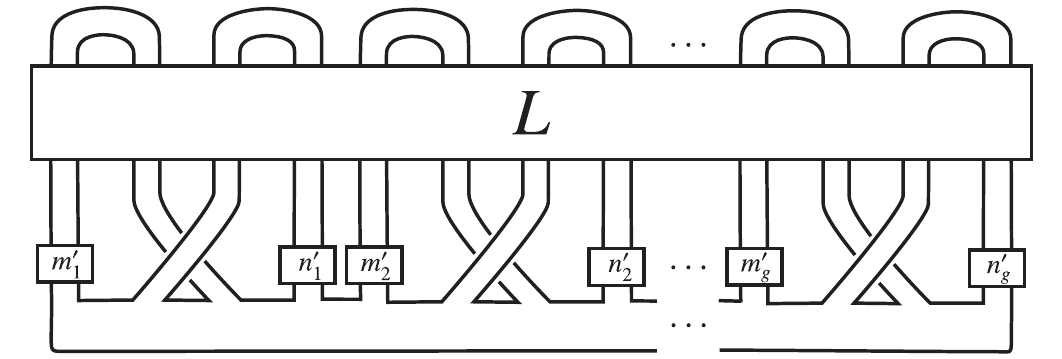}
\caption{A description of $F$ by a string link \label{surface1}}
\end{center}
\end{figure}
\begin{figure}[htbp]
\begin{center}
\includegraphics[scale= 1.0]{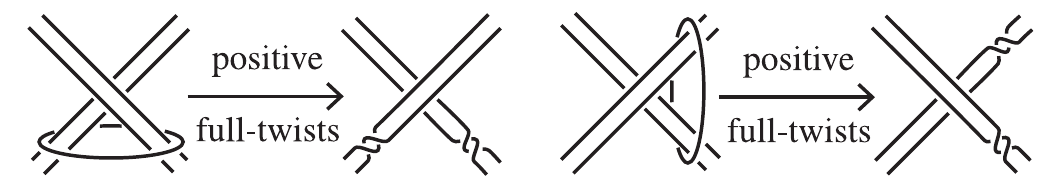}
\caption{Pass moves with framings changing \label{pass move}}
\end{center}
\end{figure}
\begin{figure}[htbp]
\begin{center}
\includegraphics[scale= 1.0]{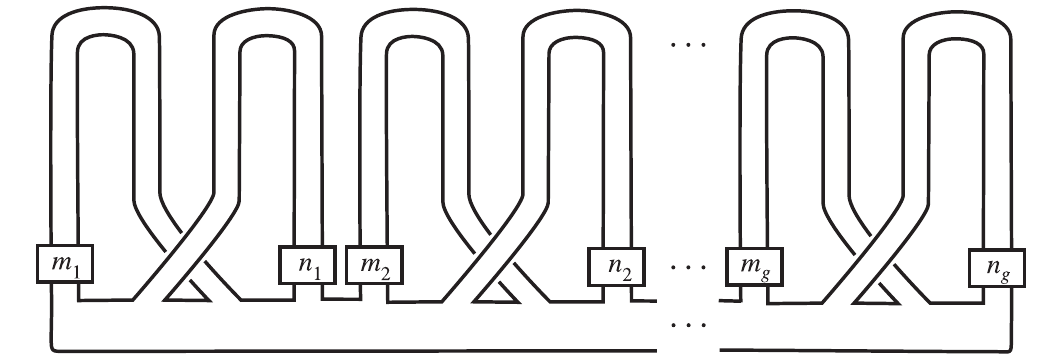}
\caption{A surface $F'$ \label{surface2}}
\end{center}
\end{figure}
\begin{figure}[htbp]
\begin{center}
\includegraphics[scale= 1.3]{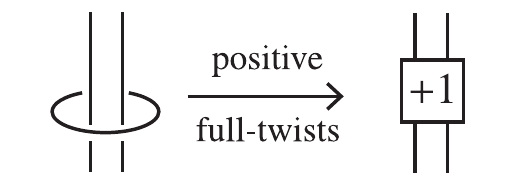}
\caption{A positive full-twist increase a framing \label{framing}}
\end{center}
\end{figure}

Here we note that all $K_{m,n}$ are 2-bridge knots and hence alternating knots.
For alternating knots, the following strong classification theorem of $\nu^+$-classes
follows from \cite[Section~3.1]{Pe13}.
\begin{thm}[\text{\cite[Section~3.1]{Pe13}}]
\label{alt}
For any alternating knot $K$, we have $[K]_{\np} =-\frac{\sigma(K)}{2}[T_{2,3}]_{\np}$,
where $\sigma(K)$ is the knot signature of $K$.
\end{thm}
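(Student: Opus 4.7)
The plan is to exploit the thinness of alternating knots in Heegaard Floer homology. Ozsv\'ath--Szab\'o proved that $\widehat{HFK}(K)$ of an alternating knot $K$ is supported on the single diagonal $j-i = -\sigma(K)/2$. Petkova's \cite[Section~3.1]{Pe13} promotes this to a description of the full $\z^2$-filtered chain homotopy type of $CFK^{\infty}(K)$, and my job is to translate that description into the language of $\np$-classes.

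First I would recall Petkova's structural theorem and reinterpret it in our framework: up to $\z^2$-filtered chain homotopy equivalence, $CFK^{\infty}(K)$ decomposes as a direct sum of \emph{box-type} summands (four generators forming a rectangle with two parallel pairs of differentials) and exactly one \emph{staircase} summand with $n := |\sigma(K)|/2$ steps, oriented according to the sign of $-\sigma(K)$. Each box-type summand satisfies $H_*(\falex{0}) = H_*(\falg{0}) = 0$ by inspection, hence by Lemma \ref{acyclic} it is a stabilizer. Consequently, by Theorem \ref{stable2}, only the staircase contributes to $[K]_{\np}$.

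Second I would identify the staircase. It coincides with $CFK^{\infty}(T_{2, 2n+1})$ when $\sigma(K) = -2n$, and with its dual when $\sigma(K) = 2n$. The theorem therefore reduces to the identity $[T_{2, 2n+1}]_{\np} = n\,[T_{2, 3}]_{\np}$ in $\mCf$, which in turn follows from Petkova's classification applied to $T_{2, 3}^{\#n}$: both $T_{2, 2n+1}$ and $T_{2, 3}^{\#n}$ have complexes that, after deleting the stabilizer box summands, reduce to the same length-$n$ staircase. Equivalently, one can construct explicit $\z^2$-filtered quasi-isomorphisms between $CFK^{\infty}(T_{2, 2n+1})$ and $CFK^{\infty}(T_{2, 3})^{\otimes n}$ in both directions via Lemma \ref{quasi-isom basis} and invoke Theorem \ref{quasi thm1}.

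The main obstacle is the first step: justifying Petkova's decomposition into boxes plus a single staircase for an arbitrary thin complex. This depends on a delicate \emph{simultaneous} filtered normal-form for the differentials, performed by successive filtered Gaussian eliminations that preserve both the Alexander and algebraic filtrations at once; na\"ively eliminating arrows may respect one filtration while violating the other. Reproducing the argument from scratch would be lengthy, so the cleaner route is to invoke \cite[Section~3.1]{Pe13} directly and then carry out only the monoid-theoretic reformulation above in the language of $\np$-classes.
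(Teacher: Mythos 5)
Your proposal is correct and is essentially the argument the paper intends: the paper gives no proof of its own beyond citing \cite[Section~3.1]{Pe13}, and your outline — thin complex splits as boxes plus one staircase of length $|\sigma(K)|/2$, boxes are stabilizers by Lemma \ref{acyclic} so Theorem \ref{stable2} kills them, and the surviving staircase agrees with that of $T_{2,2n+1}\simeq (T_{2,3})^{\# n}$ up to stabilizers — is the standard derivation of the statement from Petkova's decomposition. You also correctly flag that the only nontrivial input is the simultaneous bifiltered normal form, which is exactly what the citation is carrying.
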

Now we can determine the $\np$-classes of the $K_{m,n}$.
\begin{lem}
\label{twist knot}
For any $m,n>0$, $[K_{m,n}]_{\nu^+}=-[T_{2,3}]_{\nu^+}$. 
\end{lem}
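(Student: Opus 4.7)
My plan is to reduce the computation to the known classification of $\np$-classes for alternating knots via Theorem~\ref{alt}. The key facts are that each $K_{m,n}$ is a $2$-bridge knot (it admits an obvious $2$-bridge presentation since it arises from a rational tangle with two twist regions), hence alternating, so Theorem~\ref{alt} applies and it only remains to show $\sigma(K_{m,n}) = 2$ for $m,n>0$.

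To compute the signature, I would read off a Seifert matrix from the obvious genus-one Seifert surface $\Sigma$ of $K_{m,n}$ visible in Figure~\ref{fig twist knot}: this surface consists of two disks joined by two bands, one carrying $m$ and the other $n$ full twists (with signs compatible with the diagram convention used in Lemma~\ref{3-dim lem}). Choosing the standard basis $\{a,b\}$ of $H_1(\Sigma)\cong \z^2$ given by the core curves of the two bands, the linking form yields a Seifert matrix of the shape
\[
V \;=\; \begin{pmatrix} \pm m & 1 \\ 0 & \pm n \end{pmatrix},
\]
where the diagonal signs are determined by the twist convention (positive twists with $1$-linking appear as positive self-linking with the appropriate orientation). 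The symmetrization is then
\[
V+V^{T} \;=\; \begin{pmatrix} \pm 2m & 1 \\ 1 & \pm 2n \end{pmatrix}.
\]
For $m,n>0$, this matrix has determinant $4mn-1>0$, so both eigenvalues have the same sign as the diagonal entries; in particular $\sigma(K_{m,n})=\pm 2$. Combining Theorem~\ref{alt} with the identity $[T_{2,3}]_{\np}=-[T_{2,3}^{*}]_{\np}$ then yields $[K_{m,n}]_{\np}=-[T_{2,3}]_{\np}$.

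The main obstacle is simply pinning down the sign convention so that the conclusion reads $-[T_{2,3}]_{\np}$ rather than $+[T_{2,3}]_{\np}$; this must be consistent with how Lemma~\ref{3-dim lem} uses \emph{positive} full-twists with $0$-linking to produce the summands $K_{m_i,n_i}$ (so that the overall sign in Theorem~\ref{geom} comes out to $-g_4(K)[T_{2,3}]_{\np}$ as required). Once the sign is fixed correctly, one can alternatively verify $[K_{1,1}]_{\np}=-[T_{2,3}]_{\np}$ directly by recognizing $K_{1,1}$ as the left-handed trefoil (or the figure-eight, depending on convention, in which case signature $0$ and a minor relabeling is needed) and then observing that for general $m,n>0$ the signature computation above is uniform in $m,n$, so the $\np$-class is independent of $(m,n)$ for $m,n>0$.
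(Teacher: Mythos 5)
Your proposal follows exactly the paper's route: the paper's proof consists of the single observation that $\sigma(K_{m,n})=2$ for $m,n>0$ and an appeal to Theorem~\ref{alt}, which is precisely your argument (you merely flesh out the Seifert-matrix computation that the paper leaves as ``easy to verify''). The only loose end in your write-up is the sign of the diagonal entries, which with the paper's conventions (where $\sigma(T_{2,3})=-2$ and positive twist regions contribute positive self-linking) comes out to $\sigma(K_{m,n})=+2$, giving $[K_{m,n}]_{\np}=-[T_{2,3}]_{\np}$ as required.
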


\begin{proof}
It is easy to verify that for any $m,n>0$, we have $\sigma(K_{m,n})=2$.
Therefore, Theorem~\ref{alt} completes the proof.
\end{proof}
Now we prove Theorem~\ref{geom}.
\def\proofname{Proof of Theorem \ref{geom}}
\begin{proof}
Fix a knot $K$. Then,
Lemma~\ref{slice lem} provides a knot $K'$ such that $[K']_{\np}=[K]_{\np}$ and
$K'$ bounds a ribbon surface with genus $g_4(K)$.
Moreover, it follows from Lemma~\ref{ribbon lem} 
and Lemma~\ref{3-dim lem}
that there exists a sequence of finitely many positive full-twists with 0 or 1-linking
which deforms $K'$ into $K_{m_1,n_1}\# \cdots \# K_{m_{g_4(K)}, n_{g_4(K)}}$
for some $m_i,n_i \in \z_{>0}$ ($i \in \{1, \ldots, g_4(K)\}$).
Therefore, by Theorem~\ref{full-twist} and Lemma~\ref{twist knot}, 
we have
$$
[K]_{\np}=[K']_{\np} \geq \sum_{1\leq i \leq g_4(K)} [K_{m_i,n_i}]_{\np} = -g_4(K)[T_{2,3}]_{\np}.
$$
Since $g_4(-K^*)=g_4(K)$, we also have 
$$
-[K]_{\np}= [-K^*]_{\np} \geq -g_4(K)[T_{2,3}]_{\np}.
$$
This completes the proof.
\end{proof} 
\def\proofname{Proof}

\section{Algebraic estimates}
In this section, we establish several algebraic estimates for the $\np$-classes,
and prove Theorem~\ref{main thm} and Theorem~\ref{discriminant}.
\subsection{Genus of a formal knot complex}
We first define the genus of formal knot complexes.
\subsubsection{Maximal and minimal degrees}
For a formal knot complex $C$, set
$$
\Mdeg(C) := \min \{ m\in \z \mid \mhF_m = \hC \}
$$
and
$$
\mdeg(C) := \min \{m \in \z \mid \mhF_m \neq 0 \}.
$$
(For the definition of $\{\mhF_m\}_{m \in \z}$, see Subsection~\ref{section tau}.)
Let $\{x_k\}_{1 \leq k \leq r}$ be a filtered basis for $C$.
The finiteness of the above values follows from the following lemma.
\begin{lem}
\label{deg basis}
The equalities
$$
\Mdeg(C)= \max_{1 \leq k \leq r}\{\Alex(x_k)-\Alg(x_k)\}
$$
and
$$
\mdeg(C)= \min_{1 \leq k \leq r}\{\Alex(x_k)-\Alg(x_k)\}
$$
hold.
\end{lem}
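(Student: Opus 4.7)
The plan is to compute both $\hC$ and $\mhF_m$ explicitly as direct sums of the summands $C_{(i,j)}$ coming from the decomposition of $C$ induced by the filtered basis $\{x_k\}_{1 \leq k \leq r}$, and then read off the two equalities. The key input is Lemma~\ref{subcpx decomp}, which says that for every $R \in \CR(\z^2)$ one has $C_R = \bigoplus_{(i,j) \in R} C_{(i,j)}$.

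Applying this to the closed regions $\{i \leq 0\}$, $\{i \leq -1\}$, $\{i \leq 0, j \leq m\}$, and $\{i \leq -1, j \leq m\}$, and then quotienting, I obtain the identifications
$$
\hC \;=\; \bigoplus_{j \in \z} C_{(0,j)}
\qquad \text{and} \qquad
\mhF_m \;=\; \bigoplus_{j \leq m} C_{(0,j)}.
$$
Next, by the very definition of $C_{(i,j)}$, the subspace $C_{(0,j)}$ is $\F$-spanned by those $U^l x_k$ with $\Alg(U^l x_k) = \Alg(x_k)-l = 0$ and $\Alex(U^l x_k) = \Alex(x_k)-l = j$; these two conditions force $l = \Alg(x_k)$ and $\Alex(x_k) - \Alg(x_k) = j$. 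Hence $C_{(0,j)} \neq 0$ if and only if some basis element $x_k$ satisfies $\Alex(x_k) - \Alg(x_k) = j$.

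Combining these two observations, $\mhF_m = \hC$ is equivalent to $C_{(0,j)} = 0$ for all $j > m$, which is equivalent to $\Alex(x_k) - \Alg(x_k) \leq m$ for every $k$; the smallest such $m$ is $\max_{k}\{\Alex(x_k)-\Alg(x_k)\}$, yielding the first formula. Dually, $\mhF_m \neq 0$ is equivalent to $C_{(0,j)} \neq 0$ for some $j \leq m$, equivalently $\Alex(x_k) - \Alg(x_k) \leq m$ for some $k$; the smallest such $m$ is $\min_{k}\{\Alex(x_k)-\Alg(x_k)\}$, giving the second formula.

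There is no real obstacle here: the lemma is essentially bookkeeping once Lemma~\ref{subcpx decomp} is invoked. The only point that deserves a moment of care is confirming that the quotient $\falg{0}/\falg{-1}$ indeed decomposes as an $\F$-vector space into the column $\bigoplus_{j \in \z} C_{(0,j)}$, but this is immediate from applying Lemma~\ref{subcpx decomp} separately to the numerator and the denominator and noting that the denominator is already a direct summand of the numerator in the $(i,j)$-decomposition.
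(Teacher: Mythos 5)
Your proof is correct and takes essentially the same route as the paper: the paper's entire proof is the observation that $\mhF_m = \spanF\{U^{\Alg(x_k)}x_k \mid \Alex(x_k) -\Alg(x_k) \leq m \}$, which is precisely your identification $\mhF_m = \bigoplus_{j \leq m} C_{(0,j)}$ together with your description of $C_{(0,j)}$. You merely make the bookkeeping explicit by invoking Lemma~\ref{subcpx decomp} where the paper says ``we can see that.''
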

\begin{proof}
From the definition of $\{\mhF_m\}_{m \in \z}$, we can see that
$$
\mhF_m = \spanF\{U^{\Alg(x_k)}x_k \mid \Alex(x_k) -\Alg(x_k) \leq m \}.
$$
This completes the proof.
\end{proof}
\begin{cor}
\label{deg dual}
The equalities 
$$
\Mdeg(C^*)= - \mdeg(C) \text{ and } 
\mdeg(C^*)= - \Mdeg(C)
$$
hold.
\end{cor}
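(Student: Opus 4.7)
The plan is to reduce the claim to Lemma~\ref{deg basis} by identifying the Alexander and algebraic filtration levels of a dual basis. Fix a filtered basis $\{x_k\}_{1\leq k\leq r}$ for $C$, and consider the dual basis $\{x_k^*\}_{1\leq k\leq r}$ for $C^*$, which by the proof of Proposition~\ref{dual} is itself a filtered basis for $C^*$.

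The key step is to observe that $\Alex(x_k^*) = -\Alex(x_k)$ and $\Alg(x_k^*) = -\Alg(x_k)$ for every $k$. Indeed, in the proof of Proposition~\ref{dual} it is shown that $\{U^{-\Alex(x_k)}x_k^*\}_{1\leq k\leq r}$ is a free $\F[U]$-basis for $\falex{0}(C^*)=\Phi(C/\falex{-1})$, and analogously for the algebraic filtration. Combined with the fact that $\{x_k^*\}$ satisfies condition~(5) in the definition of formal knot complex, this forces $\Alex(x_k^*)=-\Alex(x_k)$ and $\Alg(x_k^*)=-\Alg(x_k)$.

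Now I would apply Lemma~\ref{deg basis} to the basis $\{x_k^*\}$ of $C^*$:
\[
\Mdeg(C^*) = \max_{1\leq k\leq r}\{\Alex(x_k^*)-\Alg(x_k^*)\}
= \max_{1\leq k\leq r}\{-(\Alex(x_k)-\Alg(x_k))\}
= -\mdeg(C),
\]
and symmetrically $\mdeg(C^*) = -\Mdeg(C)$. There is no genuine obstacle here: the entire content of the corollary is the sign reversal of Alexander and algebraic filtration levels on the dual basis, which is already contained in the construction of $C^*$. The only thing worth double-checking is that the identifications $\Alex(x_k^*)=-\Alex(x_k)$ and $\Alg(x_k^*)=-\Alg(x_k)$ really do hold on the nose (rather than only as inequalities), which is immediate from the fact that $\{U^{-\Alex(x_k)}x_k^*\}$ forms a \emph{free} $\F[U]$-basis of $\falex{0}(C^*)$ and not merely a generating set.
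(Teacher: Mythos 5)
Your proposal is correct and follows essentially the same route as the paper: the paper's proof also takes the dual filtered basis $\{x_k^*\}$ with $\Alex(x_k^*)=-\Alex(x_k)$ and $\Alg(x_k^*)=-\Alg(x_k)$ (established in the proof of Proposition~\ref{dual}) and concludes via Lemma~\ref{deg basis}. Your extra check that these are equalities rather than inequalities is a reasonable point of care but introduces nothing beyond what the paper already records.
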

\begin{proof}
As shown in the proof of Proposition~\ref{dual}, 
we can take a filtered basis $\{x^*_k\}_{1 \leq k \leq r}$
such that 
$$
\Alex(x^*_k)= -\Alex(x_k) \text{ and } \Alg(x^*_k) = -\Alg(x_k).
$$
This completes the proof.
\end{proof}
Moreover, about the decomposition $\{C_{(i,j)}\}_{(i,j) \in \z^2}$ induced by 
a filtered basis $\{x_k\}$, 
we have the following lemma.
\begin{lem}
\label{supp}
The support 
$
\{(i,j) \mid C_{(i,j)} \neq 0  \}
$
is contained in the set
$$
\{\mdeg(C) \leq j-i \leq \Mdeg(C) \}.
$$
\end{lem}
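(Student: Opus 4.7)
The plan is to unwind the definition of the decomposition and then invoke Lemma~\ref{deg basis} directly. Recall that
$$
C_{(i,j)} = \spanF\{U^l x_k \mid (\Alg(U^l x_k), \Alex(U^l x_k)) = (i,j)\},
$$
so $C_{(i,j)} \neq 0$ precisely when there exist $l \in \z$ and $k \in \{1, \ldots, r\}$ with $\Alg(x_k) - l = i$ and $\Alex(x_k) - l = j$ (using the formulas $\Alg(U^l x_k) = \Alg(x_k) - l$ and $\Alex(U^l x_k) = \Alex(x_k) - l$ noted in the remark following the definition of formal knot complex).

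First I would subtract these two equalities to eliminate $l$, obtaining
$$
j - i = \Alex(x_k) - \Alg(x_k).
$$
Thus the value $j-i$ attached to any nonzero $C_{(i,j)}$ is realized as the difference $\Alex(x_k) - \Alg(x_k)$ for some filtered basis element $x_k$. Then I would apply Lemma~\ref{deg basis}, which identifies $\Mdeg(C)$ and $\mdeg(C)$ with the maximum and minimum of exactly these differences over $\{x_k\}_{1\leq k \leq r}$; this immediately gives
$$
\mdeg(C) \leq j - i \leq \Mdeg(C),
$$
as required.

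There is no real obstacle here: the lemma is essentially a bookkeeping consequence of Lemma~\ref{deg basis} together with the behavior of $\Alex$ and $\Alg$ under multiplication by powers of $U$. The only thing worth noting is that the support is manifestly contained in a diagonal strip of slope $1$ in $\z^2$ whose width is $\Mdeg(C) - \mdeg(C)$, which is the geometric content of the statement and the reason this strip controls the subsequent definition of genus.
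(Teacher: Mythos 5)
Your proof is correct and follows exactly the same route as the paper's: take a generator $U^l x_k$ of a nonzero $C_{(i,j)}$, observe that $j-i=\Alex(x_k)-\Alg(x_k)$ since the $U$-shift cancels in the difference, and conclude via Lemma~\ref{deg basis}. Nothing further is needed.
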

\begin{proof}
If $U^l x_k$ is lying in $C_{(i,j)}$, then 
$$
\Alex(x_k) -\Alg(x_k)= \Alex(U^l x_k) - \Alg(U^l x_k) = j-i. 
$$
Therefore, by Lemma~\ref{deg basis}, we have $\mdeg(C) \leq j-i \leq \Mdeg(C)$.
\end{proof}
For a coordinate $(k,l) \in \z^2$, set
$$
R_{(k,l)}:=\{(i,j) \in \z^2 \mid i \leq k \text{ and } j \leq l \},
$$
and then $R_{(k,l)} \in \CR$. For any subset $S \subset \z^2$, 
define the {\it closure} of $S$ by
$$
cl(S) := \bigcup_{(i,j) \in S} R_{(i,j)}.
$$
Then we also have $cl(S) \in \CR(\z^2)$. 
In addition,  the equality 
$$
cl(S) = \bigcap_{R \in \CR(\z^2), S \subset R} R
$$ 
holds.
For any $R \in \CR(\z^2)$ and $m,M \in \z$ with $m \leq M$, define 
$$
S_{m,M}^R := \{(i,j)\in R \mid m \leq j-i \leq M  \}.
$$
Then, as a corollary of Lemma~\ref{supp}, we have the following.
\begin{cor}
\label{red}
For any formal knot complex $C$ and $R \in \CR(\z^2)$, the equality
$$
C_R = C_{cl(S_{\mdeg(C),\Mdeg(C)}^R)}
$$
holds.
\end{cor}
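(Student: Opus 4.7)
The plan is to prove the two inclusions separately, and in both directions the argument reduces to combining the decomposition Lemma~\ref{subcpx decomp} with the support constraint of Lemma~\ref{supp}.

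First I would establish the easy inclusion $C_{cl(S^R_{\mdeg(C),\Mdeg(C)})} \subset C_R$. Since $S^R_{\mdeg(C),\Mdeg(C)} \subset R$ by definition and $R$ is already a closed region, the characterization $cl(S) = \bigcap_{R'\in\CR(\z^2),\,S\subset R'} R'$ recorded just before the statement gives $cl(S^R_{\mdeg(C),\Mdeg(C)}) \subset R$. The monotonicity of the filtration $\{C_{R'}\}_{R'\in\CR(\z^2)}$ then yields the desired inclusion.

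For the reverse direction, I would fix a filtered basis of $C$ and use the induced decomposition $\{C_{(i,j)}\}_{(i,j)\in\z^2}$. Lemma~\ref{subcpx decomp} writes both sides as $\F$-vector space direct sums of the pieces $C_{(i,j)}$ indexed by $(i,j)\in R$ and by $(i,j)\in cl(S^R_{\mdeg(C),\Mdeg(C)})$ respectively, so it is enough to check that every nonzero summand in the first sum also appears in the second. For $(i,j)\in R$ with $C_{(i,j)}\neq 0$, Lemma~\ref{supp} forces $\mdeg(C)\leq j-i\leq \Mdeg(C)$, which combined with $(i,j)\in R$ places $(i,j)$ in $S^R_{\mdeg(C),\Mdeg(C)}$ and hence in $cl(S^R_{\mdeg(C),\Mdeg(C)})$. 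This yields $C_R\subset C_{cl(S^R_{\mdeg(C),\Mdeg(C)})}$.

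I do not expect a substantive obstacle: the corollary is a straightforward bookkeeping consequence of the two lemmas, with the only subtlety being the use of the closedness of $R$ to keep $cl(S^R_{\mdeg(C),\Mdeg(C)})$ inside $R$ so that the two direct-sum expressions can be compared term by term.
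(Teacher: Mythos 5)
Your proof is correct and follows essentially the same route as the paper: the easy inclusion from $cl(S^R_{\mdeg(C),\Mdeg(C)})\subset R$ together with monotonicity of the filtration, and the reverse inclusion by comparing the direct-sum decompositions of Lemma~\ref{subcpx decomp} term by term using the support constraint of Lemma~\ref{supp}. The paper phrases the second step contrapositively (showing $C_{(i,j)}=0$ for $(i,j)\in R\setminus cl(S^R_{\mdeg(C),\Mdeg(C)})$), but this is the same argument.
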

\begin{proof}
Since $R \supset cl(S_{\mdeg(C),\Mdeg(C)}^R)$, 
obviously we have $C_R \supset C_{cl(S_{\mdeg(C),\Mdeg(C)}^R)}$.
Next we prove the converse. Fix a filtered basis $\{x_k\}_{1 \leq k \leq r}$
and denote the induced decomposition by $\{C_{(i,j)}\}$.
By Lemma~\ref{subcpx decomp},
it suffices to show that for any $(i,j) \in R \setminus cl(S_{\mdeg(C),\Mdeg(C)}^R)$,
the equality $C_{(i,j)}=0$ holds.
Indeed, 
for any such coordinate $(i,j)$, at least one of the
inequalities
$$j-i < \mdeg(C) \text{ and } j-i > \Mdeg(C)$$ 
holds.
Therefore, it follows from Lemma~\ref{supp} that $C_{(i,j)}=0$.
\end{proof}
\subsubsection{Genus of a formal knot complex}
Now we define the {\it genus of a formal knot complex $C$} by
$$
g(C) := \max\{\Mdeg(C), -\mdeg(C) \}.
$$ 
Then it is obvious that $g(C) \geq 0$, and Corollary~\ref{deg dual} gives
$g(C^*)=g(C)$.
Moreover, for knot complexes, we have the following.
\begin{thm}[ \text{\cite{OS04genus}, \cite[Section~5]{OS04knot}}]
\label{knot genus}
For any knot $K$, the equality
$$
g(K)=\min \{g(C) \mid C \in [C^K]  \}
$$
holds.
\end{thm}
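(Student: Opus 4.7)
The plan is to prove two inequalities: for every $C \in [C^K]$, $g(C) \geq g(K)$; and for some specific $C \in [C^K]$, $g(C) \leq g(K)$. Combined with the definition of the minimum, these give the equality.

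For the lower bound, I would fix an arbitrary $C \in [C^K]$ and analyze the associated graded of the filtration $\{\mhF_m\}$ on $\hC$. By Proposition~\ref{exact}, $\z^2$-filtered homotopy equivalence preserves the homology $H_*(\mhF_m/\mhF_{m-1})$; for any representative of $[C^K]$ this recovers the Alexander-graded knot Floer homology $\widehat{HFK}(K,m)$. Ozsv\'{a}th--Szab\'{o}'s genus detection theorem \cite{OS04genus} asserts $\widehat{HFK}(K, g(K)) \neq 0$, and the conjugation symmetry $\widehat{HFK}(K,s) \cong \widehat{HFK}(K,-s)$ then yields $\widehat{HFK}(K, -g(K)) \neq 0$ as well. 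The non-vanishing at level $g(K)$ forces $\mhF_{g(K)}/\mhF_{g(K)-1} \neq 0$, hence $\mhF_{g(K)-1} \neq \hC$ and $\Mdeg(C) \geq g(K)$. The non-vanishing at level $-g(K)$ forces $\mhF_{-g(K)} \neq 0$, hence $\mdeg(C) \leq -g(K)$. Combining, $g(C) = \max\{\Mdeg(C),-\mdeg(C)\} \geq g(K)$.

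For the upper bound, I would apply the construction of $CFK^{\infty}$ from \cite[Section~5]{OS04knot} to a doubly-pointed Heegaard diagram adapted to a minimal-genus Seifert surface of $K$. In that setup, the Alexander grading of every generating intersection point lies in $[-g(K), g(K)]$. Translated into the present framework, this produces a specific formal knot complex $C$ representing $[C^K]$ together with a filtered basis $\{x_k\}$ satisfying $|\Alex(x_k) - \Alg(x_k)| \leq g(K)$ for every $k$. Lemma~\ref{deg basis} then immediately gives $\Mdeg(C) \leq g(K)$ and $-\mdeg(C) \leq g(K)$, so $g(C) \leq g(K)$.

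The principal obstacle will be the upper bound: one must verify that the representative of $[C^K]$ produced from a minimal-genus Seifert surface actually satisfies all seven axioms of a formal knot complex, in particular the symmetry condition (6) involving $\iota$ and the finite generation condition (5). Both points are implicit in \cite{OS04knot}, but require a careful translation between the Heegaard-theoretic viewpoint and the present axiomatic framework, together with an explicit choice of filtered basis coming from the intersection points of the adapted diagram.
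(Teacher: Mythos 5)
The paper does not actually prove Theorem~\ref{knot genus}; it is quoted without proof from \cite{OS04genus} and \cite[Section~5]{OS04knot}, so there is no in-paper argument to compare against line by line. Your reconstruction is essentially the standard one and its two halves are correctly matched to the two citations. The lower bound is complete as written: Proposition~\ref{exact} (applied to the closed regions $R'=\{i\leq 0,\ j\leq m\}$ and $R=\{i\leq -1,\ j\leq m\}\cup\{i\leq 0,\ j\leq m-1\}$, using Lemma~\ref{add}) shows $H_*(\mhF_m/\mhF_{m-1})$ is an invariant of the class $[C^K]$, this group is $\widehat{HFK}(K,m)$, and genus detection plus conjugation symmetry force $\Mdeg(C)\geq g(K)$ and $\mdeg(C)\leq -g(K)$ for every representative, hence $g(C)\geq g(K)$.

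The one point that needs more care than you give it is the upper bound. The assertion that \emph{every} generator of a diagram adapted to a minimal-genus Seifert surface has Alexander grading in $[-g(K),g(K)]$ is stronger than what the adjunction argument of \cite[Section~5]{OS04knot} is usually quoted for: the Chern-class evaluation on the periodic domain naturally yields a one-sided chain-level bound, and the vanishing of $\widehat{HFK}(K,s)$ for $s<-g(K)$ is then deduced from the symmetry of the \emph{homology}, not of the chain complex. So the specific geometric representative may a priori have filtered basis elements with $\Alex-\Alg<-g(K)$. The robust way to close this is algebraic rather than geometric: use the vanishing of $\widehat{HFK}(K,s)$ for $|s|>g(K)$ together with a reduction of the filtered chain homotopy type (cancelling components of $\partial$ that preserve the bifiltration level) to produce a representative whose filtered basis is in bijection with a basis of the associated graded homology; Lemma~\ref{deg basis} then gives $g(C)\leq g(K)$ for that representative. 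With that substitution, your outline is a faithful and correct reconstruction of the cited result; your closing concern about verifying conditions (5) and (6) for the geometric representative is legitimate but is handled by the same references.
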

Moreover,
by definition, we have $-g(C) \leq \mdeg(C) \leq \Mdeg(C) \leq g(C)$.
Hence Corollary~\ref{red} gives the following.

\begin{cor}
\label{genus red}
For any formal knot complex $C$ and $R \in \CR(\z^2)$, the equality
$$
C_R = C_{cl(S^R_{-g(C), g(C)})}
$$
holds.
\end{cor}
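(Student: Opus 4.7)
The plan is to deduce this directly from Corollary~\ref{red} using only the definition of $g(C)$ together with the fact that $\CR(\z^2)$-membership plays well with set inclusion on filtrations. The key observation is that the strip $S^R_{\mdeg(C),\Mdeg(C)}$ is contained in the (a priori wider) strip $S^R_{-g(C),g(C)}$, but wider strips give at most $R$ itself, so a sandwich argument forces equality.

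More concretely, I would proceed as follows. First, by definition $g(C)=\max\{\Mdeg(C),-\mdeg(C)\}$, so $-g(C)\leq\mdeg(C)$ and $\Mdeg(C)\leq g(C)$. Therefore $S^R_{\mdeg(C),\Mdeg(C)}\subset S^R_{-g(C),g(C)}\subset R$. Taking closures and using that $R$ is already a closed region (so $cl(R)=R$), one obtains the inclusions
$$
cl(S^R_{\mdeg(C),\Mdeg(C)})\ \subset\ cl(S^R_{-g(C),g(C)})\ \subset\ R.
$$
Since the assignment $R'\mapsto C_{R'}$ is monotone in $R'$ (a basic property of $P$-filtrations), we get
$$
C_{cl(S^R_{\mdeg(C),\Mdeg(C)})}\ \subset\ C_{cl(S^R_{-g(C),g(C)})}\ \subset\ C_R.
$$
Now Corollary~\ref{red} says the leftmost and rightmost complexes coincide, so all three are equal, and in particular the middle equals $C_R$, as claimed.

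There is essentially no obstacle here: the work has already been done in establishing Corollary~\ref{red} via Lemma~\ref{supp} and the decomposition lemma (Lemma~\ref{subcpx decomp}). The corollary is just the statement that any bound on $\mdeg$ and $\Mdeg$ provided by an upper bound on $g(C)$ still captures the whole $C_R$, and this follows formally from monotonicity of the filtration together with the trivial inequalities $-g(C)\leq\mdeg(C)\leq\Mdeg(C)\leq g(C)$.
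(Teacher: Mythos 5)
Your proof is correct and matches the paper's own (essentially unwritten) argument: the paper simply notes $-g(C)\leq\mdeg(C)\leq\Mdeg(C)\leq g(C)$ and declares that Corollary~\ref{red} gives the result, and your sandwich $C_{cl(S^R_{\mdeg(C),\Mdeg(C)})}\subset C_{cl(S^R_{-g(C),g(C)})}\subset C_R$ is exactly the intended deduction. No issues.
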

The following lemma is useful for reducing $C_R$ 
in concrete situations.
\begin{lem}
\label{useful red}
The following assertions hold:
\begin{enumerate}
\item For any $k \in \z$, we have $C_{\{i \leq k\}} = C_{R_{(k,g(C)+k)}}$.
\item For any $l \in \z$, we have $C_{\{ j \leq l\}} = C_{R_{(g(C)+l,l)}}$.
\end{enumerate}
\end{lem}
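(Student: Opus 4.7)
The plan is to deduce both assertions directly from Corollary~\ref{genus red}, which allows me to replace any closed region $R$ with the closure of its intersection with the diagonal strip $-g(C)\leq j-i\leq g(C)$. The cones $R_{(k,g(C)+k)}$ and $R_{(g(C)+l,l)}$ appearing on the right-hand side are precisely the lower-left cones anchored at the two extreme corners that arise when one intersects the half-planes $\{i\leq k\}$ and $\{j\leq l\}$ with that strip.

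For assertion~(1), I will set $R:=\{i\leq k\}$ and invoke Corollary~\ref{genus red} to obtain $C_R=C_{cl(S^R_{-g(C),g(C)})}$. The key step is then to identify $cl(S^R_{-g(C),g(C)})$ with $R_{(k,g(C)+k)}$. For the inclusion ``$\subseteq$'', any $(i,j)\in S^R_{-g(C),g(C)}$ satisfies $i\leq k$ and $j\leq i+g(C)\leq g(C)+k$, so $R_{(i,j)}\subseteq R_{(k,g(C)+k)}$ and hence the union defining $cl(\cdot)$ is also contained in $R_{(k,g(C)+k)}$. For the reverse inclusion, the extremal corner $(k,g(C)+k)$ itself lies in $S^R_{-g(C),g(C)}$ because $j-i=g(C)$, so $R_{(k,g(C)+k)}\subseteq cl(S^R_{-g(C),g(C)})$ follows immediately from the definition of closure.

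Assertion~(2) will follow by the symmetric argument with $R:=\{j\leq l\}$: the extremal corner is now $(g(C)+l,l)$, which lies in $S^R_{-g(C),g(C)}$ since $j-i=-g(C)$, and the same two inclusions go through with the roles of the bounds $g(C)$ and $-g(C)$ interchanged.

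I do not anticipate any real obstacle; the argument is essentially a one-step unpacking of Corollary~\ref{genus red}. The only point requiring care is keeping track of which side of the diagonal strip is binding at the corner: the upper bound $j-i\leq g(C)$ is the active constraint for~(1), while the lower bound $j-i\geq -g(C)$ governs~(2). Once the correct corner is identified in each case, the verification of the two inclusions is immediate from the definitions of $S^R_{-g(C),g(C)}$ and $cl(\cdot)$.
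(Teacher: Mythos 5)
Your proof is correct and follows essentially the same route as the paper: both reduce assertion (1) to Corollary~\ref{genus red} by comparing $S^{\{i\leq k\}}_{-g(C),g(C)}$ with the cone $R_{(k,g(C)+k)}$. The only (harmless) difference is that you establish the exact equality $cl(S^{\{i\leq k\}}_{-g(C),g(C)})=R_{(k,g(C)+k)}$ via the extremal corner, whereas the paper only proves the inclusion $cl(S^{\{i\leq k\}}_{-g(C),g(C)})\subset R_{(k,g(C)+k)}\subset\{i\leq k\}$ and concludes by sandwiching the corresponding subcomplexes.
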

\begin{proof}
Here we verify the assertion~(1). 
For any $k \in \z$,
we see
\begin{eqnarray*}
S^{\{ i \leq k\}}_{-g(C),g(C)}&=&\{i \leq k\} \cap \{-g(C) \leq j-i \leq g(C)\} \\
&\subset& \{i \leq k\} \cap \{j \leq  g(C)+i\}\\
&\subset& \{i \leq k\} \cap \{j \leq  g(C)+k\}= R_{(k,g(C)+k)}.
\end{eqnarray*}
Therefore, we have $\{i \leq k\} \supset (R_{(k,g(C)+k)}) \supset  cl(S^{\{i \leq k\}}_{-g(C),g(C)})$,
and hence Corollary~\ref{genus red} gives $C_{\{ i \leq k\}} = C_{R_{(k,g(C)+k)}}$.
Similarly, we can verify the assertion~(2).
\end{proof}
\subsection{Comparison with $[(T_{2,2g+1})^*]_{\np}$}
For $g \in \z_{\geq 0}$, let $T_{2,2g+1}$ be the $(2,2g+1)$-torus knot.
These knots are alternating knots such that $\sigma(T_{2,2g+1})=-2g$,
and hence it follows from Theorem~\ref{alt} that
$[T_{2,2g+1}]= g[T_{2,3}]$.
In this subsection, we consider comparing $\np$-classes with $[(T_{2g+1})^*]_{\np}$. 
First, we recall that the knot complex
$C^{(T_{2,2g+1})^*}$ has a filtered basis
$$
\{a_k, b_l \mid 0 \leq k \leq g, 0 \leq l \leq g-1\}
$$
satisfying:
\begin{itemize}
\item $\gr(a_k)=0$ and $\gr(b_l)= -1$.
\item $(\Alg(a_k), \Alex(a_k))=(-g+k, -k)$ and $(\Alg(b_l), \Alex(b_l))=(-g+l, -l-1)$.
\item $\partial a_k =b_{k-1} + b_{k}$ and $\partial b_l = 0$, where $b_{-1}=b_g=0$.\end{itemize}
Here we note that $a := a_0 + \cdots + a_g$ is a unique homological generator of 
$C^{(T_{2,2g+1})^*}$.
For any $g \in \z_{\geq 0}$, define
$$
R^g := \bigcup_{0 \leq n \leq g} R_{(-g+n, -n)}.
$$
Then we have the following sufficient condition for satisfying the inequality 
$[C]_{\np} \leq [(T_{2,2g+1})^*]_{\np}$.
\begin{prop}
\label{compare}
For any formal knot complex $C$, if $C_{R^{g}}$ contains a homological generator,
then the inequality 
$$
[C]_{\nu^+} \leq [(T_{2,2g+1})^*]_{\np}
$$
holds.
\end{prop}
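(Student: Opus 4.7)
The plan is to apply Theorem~\ref{quasi thm2}, which reduces the claim to constructing a $\z^2$-filtered quasi-isomorphism $f \colon C^{(T_{2,2g+1})^*} \to C$. Since $R^g = \bigcup_{n=0}^{g} R_{(-g+n,-n)}$, iterated application of Lemma~\ref{add} gives $C_{R^g} = \sum_{n=0}^{g} C_{R_{(-g+n,-n)}}$, so the given homological generator $z \in C_{R^g}$ can be decomposed as $z = z_0 + z_1 + \cdots + z_g$ with $z_n \in C_{R_{(-g+n,-n)}}$. Writing $y_l := z_0 + \cdots + z_l$ (with $y_{-1}:=0$), I would define $f$ on the filtered basis by $f(a_k) := z_k$ and $f(b_l) := \partial y_l$, extended $\Lambda$-linearly.

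Verifying that $f$ is a chain map will be routine: since $y_{k-1} + y_k = z_k$ over $\F$, one computes $f(\partial a_k) = f(b_{k-1}+b_k) = \partial(y_{k-1}+y_k) = \partial z_k = \partial f(a_k)$, with the boundary cases $k=0,g$ handled by $b_{-1}=b_g=0$ together with $\partial z=0$ (the latter forcing $\partial y_{g-1} = \partial z_g$). To apply Lemma~\ref{quasi-isom basis} it then suffices to check $(\Alg(f x), \Alex(f x)) \leq (\Alg(x), \Alex(x))$ for each filtered basis element $x$; for $f(a_k)=z_k$ this is immediate from $z_k \in C_{R_{(-g+k,-k)}}$.

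I expect the main obstacle to be the Alexander filtration bound on $f(b_l) = \partial y_l$. The algebraic bound $\Alg(\partial y_l) \leq -g+l$ follows at once from $\partial y_l \in \sum_{k \leq l} C_{R_{(-g+k,-k)}} \subset C_{\{i \leq -g+l\}}$, but the required bound $\Alex(\partial y_l) \leq -l-1$ is not visible from this representation, since each $\partial z_k$ with $k \leq l$ only satisfies $\Alex \leq -k$. The trick I would use is to exploit that $z$ is a cycle: $\partial z = 0$ over $\F$ yields $\partial y_l = \partial(z_{l+1} + \cdots + z_g)$, and every $z_n$ with $n \geq l+1$ satisfies $\Alex(z_n) \leq -n \leq -l-1$. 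Combining the two expressions via Lemma~\ref{subcpx decomp} places $\partial y_l$ in $C_{\{i \leq -g+l\}} \cap C_{\{j \leq -l-1\}} = C_{R_{(-g+l,-l-1)}}$, as required. Since $f(a_0+\cdots+a_g) = z$ sends a homological generator of $C^{(T_{2,2g+1})^*}$ to the homological generator $z$ of $C$, Lemma~\ref{quasi-isom basis} then yields that $f$ is a $\z^2$-filtered quasi-isomorphism, and Theorem~\ref{quasi thm2} completes the proof.
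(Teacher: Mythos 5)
Your proposal is correct and follows essentially the same route as the paper: decompose the homological generator $z$ over the corners of $R^g$, define $f(a_k)=z_k$ and $f(b_l)=\partial(z_0+\cdots+z_l)$, and use the cycle condition $\partial z=0$ to get the second expression $\partial(z_{l+1}+\cdots+z_g)$ that supplies the Alexander bound, before invoking Lemma~\ref{quasi-isom basis} and Theorem~\ref{quasi thm2}. The only cosmetic difference is that the paper splits $R^g$ into disjoint pieces $S_k$ so that the decomposition of $z$ is unique, whereas you use the overlapping regions $R_{(-g+n,-n)}$ via Lemma~\ref{add}; either choice works.
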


\begin{proof}
Fix a filtered basis and denote the induced decomposition
by $\{C_{(i,j)}\}$.
Define the subsets $S_k \subset \z^2$ ($k=0,1, \ldots, g$) by 
$$
S_0 := R_{(-g,0)}
$$ 
and
$$
S_k := \{ i= -g+k, j \leq -k  \}
$$
for $1 \leq k \leq g$.
Then $R^{g}= \amalg_{0 \leq k \leq g} S_k$,
and hence we can uniquely decompose a homological generator $z \in C_{R^{g}}$
into a linear combination
$z = \sum_{k=0}^g z_k$, where $z_k \in \bigoplus_{(i,j)\in S_k}C_{(i,j)}$.
We denote $y_l := \partial (z_0 + \ldots + z_l)$ for any $0 \leq l \leq g-1$.
\begin{claim}
\label{y_l}
$y_l$ is lying in $C_{R_{(-g+l,-l-1)}}$.
\end{claim}
\begin{proof}
Since $z$ is a cycle, we see  
$y_l = \partial(z_0 + \cdots + z_l)=\partial(z_{l+1} + \cdots + z_{g})$.
Moreover, since the relations
$$\bigcup_{0\leq k \leq l} S_k = \bigcup_{0 \leq k \leq l}R_{(-g+k,-k)}$$
and
$$\bigcup_{l+1 \leq k \leq g} S_k \subset \bigcup_{l+1 \leq k \leq g}R_{(-g+k,-k)}$$
hold, we have 
$y_l \in C_{\bigcup_{0 \leq k \leq l}R_{(-g+k,-k)}} \cap 
C_{\bigcup_{l+1 \leq k \leq g}R_{(-g+k,-k)}}$. Here, Lemma~\ref{subcpx decomp}
gives
$$
\begin{array}{l}
\displaystyle
C_{\bigcup_{0 \leq k \leq l}R_{(-g+k,-k)}} \cap 
C_{\bigcup_{l+1 \leq k \leq g}R_{(-g+k,-k)}} \\
\ \\
\displaystyle
= 
(\bigoplus_{(i,j) \in \bigcup_{0 \leq k \leq l}R_{(-g+k,-k)}}C_{(i,j)})
\cap
(\bigoplus_{(i,j) \in \bigcup_{l+1 \leq k \leq g}R_{(-g+k,-k)}}C_{(i,j)})\\
\ \\
=C_{(\bigcup_{0 \leq k \leq l}R_{(-g+k,-k)}) \cap (\bigcup_{l+1 \leq k \leq g}R_{(-g+k,-k)})}
=C_{R_{(-g+l, -l-1)}}.
\end{array}
$$
\end{proof}
Now, we define a $\Lambda$-linear map $f \colon C^{(T_{2,2g+1})^*} \to C$
by
$$
f a_k = z_k \text{ and } f b_l = y_l.
$$
Then we can check that $f$ is a chain map over $\Lambda$.
(Notice that $\partial z_k = \partial (z_0 + \cdots + z_{k-1}) + \partial (z_0 + \cdots + z_{k})
= y_{k-1} + y_k$.)
Moreover, by Claim~\ref{y_l}, we have 
$$
(\Alg(fa_k), \Alex(fa_k)) \leq (-g+k,-k)
$$ 
and
$$
(\Alg(f b_l), \Alex(f b_l)) \leq (-g+l,-l-1).
$$
In addition, $f(a)=f(a_0+ \cdots + a_g) =z_0+\cdots + z_g =z$.
Now, Lemma~\ref{quasi-isom basis} proves that $f$ is a 
$\z^2$-filtered quasi-isomorphism.
\end{proof}

\subsection{An estimate of genus one complexes}
Here, we consider an estimate for genus one formal knot complexes.
\begin{thm}
\label{alg tau}
Let $C$ be a formal knot complex with $g(C)=1$.
\begin{enumerate}
\item
If $\tau(C)=1$, then $[C]_{\np} \geq [T_{2,3}]_{\np}$.
\item
If $\tau(C)=0$, then $[C]_{\np}= 0$.
\item
If $\tau(C)=-1$, then $[C]_{\np} \leq -[T_{2,3}]_{\np}$. 
\end{enumerate}
\end{thm}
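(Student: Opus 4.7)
The plan is to prove part~(3) directly via Proposition~\ref{compare}, deduce~(1) from it by duality, and handle~(2) by combining both the direct and the dualized arguments. In each case, the essential step is to use the hypothesis $g(C)=1$ together with Lemma~\ref{useful red} to show that the homological generator provided by the $\tau$-invariant actually lies inside a sufficiently small closed region, namely $R^{1}=R_{(-1,0)}\cup R_{(0,-1)}$.

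For~(3), suppose $g(C)=1$ and $\tau(C)=-1$. Applying Lemma~\ref{tau homol gen} with $m=-1$ produces a homological generator
\[
x \in C_{\{i \leq -1\} \cup \{i \leq 0,\, j \leq -1\}}.
\]
Lemma~\ref{add} decomposes the right-hand side as $C_{\{i \leq -1\}} + C_{\{i \leq 0,\, j \leq -1\}}$. Because $g(C)=1$, Lemma~\ref{useful red} gives $C_{\{i \leq -1\}} = C_{R_{(-1,0)}}$; moreover $\{i \leq 0,\, j \leq -1\}\subset\{j \leq -1\}$, so Lemma~\ref{useful red} again gives $C_{\{i \leq 0,\, j \leq -1\}} \subset C_{\{j \leq -1\}} = C_{R_{(0,-1)}}$. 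Hence $x \in C_{R_{(-1,0)}} + C_{R_{(0,-1)}} = C_{R^{1}}$, and Proposition~\ref{compare} with $g=1$ yields $[C]_{\np} \leq [(T_{2,3})^{*}]_{\np}$. Since $T_{2,3}^{*}$ is alternating with signature $2$, Theorem~\ref{alt} finally identifies $[(T_{2,3})^{*}]_{\np} = -[T_{2,3}]_{\np}$.

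For~(1), apply~(3) to the dual complex $C^{*}$. Corollary~\ref{deg dual} gives $g(C^{*}) = g(C) = 1$, while Lemma~\ref{nuplus dual} yields $[C^{*}]_{\np} = -[C]_{\np}$ in $\mCf$, so the additivity of $\tau$ in Proposition~\ref{tau hom} forces $\tau(C^{*}) = -\tau(C) = -1$. Part~(3) then gives $[C^{*}]_{\np} \leq -[T_{2,3}]_{\np}$, which by the order-reversing property of negation on $\mCf$ becomes $[C]_{\np} \geq [T_{2,3}]_{\np}$.

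For~(2), if $\tau(C)=0$ and $g(C)=1$, Lemma~\ref{tau homol gen} provides a homological generator in $C_{\{i \leq -1\} \cup \{i \leq 0,\, j \leq 0\}}$. The same reduction as in~(3) now places this generator inside $C_{R_{(-1,0)}} + C_{R_{(0,0)}} = C_{R_{(0,0)}}$, which shows $\nu^{+}(C) = 0$ and hence $[C]_{\np} \leq 0$. Applying the identical argument to $C^{*}$ (which also satisfies $g=1$ and $\tau=0$) yields $[C^{*}]_{\np} \leq 0$, i.e. $[C]_{\np} \geq 0$, so $[C]_{\np}=0$. The entire argument is driven by the support reductions afforded by the genus-one hypothesis via Lemma~\ref{useful red}; once these are set up cleanly, the remainder is bookkeeping with the partial order and duality, and I do not anticipate a genuine obstacle beyond this point.
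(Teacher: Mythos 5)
Your proof is correct and follows essentially the same route as the paper: Lemma~\ref{tau homol gen} combined with Lemma~\ref{add} and the genus-one reductions of Lemma~\ref{useful red} to place a homological generator in $C_{R^1}$ (resp.\ $C_{R_{(0,0)}}$), then Proposition~\ref{compare} for part~(3) and duality for parts~(1) and~(2). The only cosmetic difference is that the paper runs the region reduction once for general $\tau(C)$ before splitting into cases, whereas you treat each case separately.
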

\begin{proof}
By Lemma~\ref{tau homol gen}, we have a homological generator 
lying in $C_{\{i \leq -1\} \cup R_{(0,\tau(C))}}$.
Moreover, Lemma~\ref{add} and Lemma~\ref{useful red} imply that
$$
C_{\{i \leq -1\} \cup R_{(0,\tau(C))}}=
C_{\{i \leq -1\}} + C_{R_{(0,\tau(C))}}
=C_{R_{(-1,0)}} + C_{R_{(0,\tau(C))}}
=C_{R_{(-1,0)}\cup R_{(0,\tau(C))}}.
$$
As a result, 
we have a homological generator in $C_{R_{(-1,0)}\cup R_{(0,\tau(C))}}$.

First, suppose that $\tau(C)=0$. Then 
$C_{R_{(-1,0)}\cup R_{(0,\tau(C))}} = C_{R_{(0,0)}}$.
This proves $\np(C)=0$. Moreover, since $\tau(C^*)= - \tau(C)=0$ and 
$g(C^*)=g(C)=1$, we also have
$\np(C^*)=0$. Therefore, the assertion~(2) holds.

Next, suppose that $\tau(C)=-1$. Then  
$C_{R_{(-1,0)}\cup R_{(0,\tau(C))}} = C_{R_{(-1,0)}\cup R_{(0,-1)}}=C_{R^1}$.
Therefore, it follows from Proposition~\ref{compare}
that $[C]_{\np} \leq [(T_{2,3})^*]= -[T_{2,3}]$, and the assertion~(3) holds.

Finally, the assertion~(1) follows from the assertion~(3) and the fact
that $\tau(C^*) = -\tau(C)=-1$ and $[C^*]_{\np} =-[C]_{\np}$.
\end{proof}
Now we can prove the main theorem.
\setcounter{section}{1}
\setcounter{thm}{1}
\begin{thm}
For any knot $K$ with $g(K)=1$,
we have
$$
[K]_{\nu^+}
=
\begin{cases}
\left[ T_{2,3} \right]_{\nu^+} & \text{if } \tau(K)=1\\
\left[\text{{\rm unknot}}\right]_{\nu^+}=0 & \text{if } \tau(K)=0\\
\left[ (T_{2,3})^* \right]_{\nu^+}=-[T_{2,3}]_{\nu^+} & \text{if } \tau(K)=-1
\end{cases}.
$$
In other words, any genus one knot is $\nu^+$-equivalent
to one of the trefoil, its mirror and the unknot.
\end{thm}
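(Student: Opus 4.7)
The plan is to deduce Theorem~\ref{main thm} as a short case analysis, combining the two principal estimates already established in this paper: the geometric bound of Theorem~\ref{geom} and the algebraic bound of Theorem~\ref{alg tau}. First I will invoke the classical inequalities $|\tau(K)| \leq g_4(K) \leq g(K)$ together with the hypothesis $g(K)=1$ to conclude that $\tau(K) \in \{-1,0,1\}$ and, moreover, that $\tau(K) = \pm 1$ forces $g_4(K)=1$. By Theorem~\ref{knot genus} I can pick a representative $C \in [C^K]$ with $g(C)=1$; since both $\tau$ and the $\nu^+$-class are invariants of the $\z^2$-filtered homotopy type (cf.\ Corollary~\ref{tau invariance}), one has $\tau(C) = \tau(K)$ and $[C]_{\nu^+} = [K]_{\nu^+}$, so Theorem~\ref{alg tau} is applicable to this $C$.

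Next I will split according to the value of $\tau(K)$. When $\tau(K) = 0$, Theorem~\ref{alg tau}(2) immediately gives $[K]_{\nu^+} = 0 = [\text{unknot}]_{\nu^+}$. When $\tau(K) = 1$, Theorem~\ref{alg tau}(1) yields the lower bound $[T_{2,3}]_{\nu^+} \leq [K]_{\nu^+}$, while Theorem~\ref{geom} applied with $g_4(K)=1$ yields the matching upper bound $[K]_{\nu^+} \leq [T_{2,3}]_{\nu^+}$; antisymmetry of the partial order on $\mCf$ then forces $[K]_{\nu^+} = [T_{2,3}]_{\nu^+}$. The case $\tau(K) = -1$ is perfectly symmetric: Theorem~\ref{alg tau}(3) provides $[K]_{\nu^+} \leq -[T_{2,3}]_{\nu^+}$, the lower bound of Theorem~\ref{geom} provides $-[T_{2,3}]_{\nu^+} \leq [K]_{\nu^+}$, and we conclude $[K]_{\nu^+} = -[T_{2,3}]_{\nu^+} = [(T_{2,3})^*]_{\nu^+}$.

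Essentially all the work has already been done in the two bounding theorems, so the proof of the main theorem itself reduces to a two-sided sandwich argument. The only point requiring care is the switch from $C^K$ to a representative $C$ with $g(C)=g(K)=1$: this step is harmless precisely because both $\tau$ and $[\cdot]_{\nu^+}$ are $\z^2$-filtered homotopy invariants. In that sense, the real obstacle lies not in this final deduction but in the proof of the algebraic estimate Theorem~\ref{alg tau}, where the interplay between the genus-one hypothesis and the value of $\tau$ must be leveraged (via the reductions supplied by Lemma~\ref{useful red} and Proposition~\ref{compare}) to produce the requisite $\z^2$-filtered quasi-isomorphisms to $C^{T_{2,3}}$ or its dual.
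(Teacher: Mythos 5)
Your proposal is correct and follows essentially the same route as the paper: pass to a representative complex of genus one via Theorem~\ref{knot genus}, apply the algebraic estimate of Theorem~\ref{alg tau} according to the value of $\tau(K)$, and sandwich with the geometric bound of Theorem~\ref{geom} (the paper simply uses $g_4(K)\leq g(K)=1$ there, while you note explicitly that $\tau(K)=\pm1$ forces $g_4(K)=1$ — an immaterial variation).
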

\setcounter{section}{4}
\setcounter{thm}{9}

\begin{proof}
Let $K$ be a genus one knot.
Then, by Theorem~\ref{geom}, we have
$$
-[T_{2,3}]_{\np} \leq [K]_{\np} \leq [T_{2,3}]_{\np}.
$$
Moreover, by Theorem~\ref{knot genus}, we can take a knot complex $C^K$ 
with $g(C^K)=1$.
Hence, Theorem~\ref{alg tau} gives
$$
[K]_{\nu^+}
\begin{cases}
\geq \left[ T_{2,3} \right]_{\nu^+} & \text{if } \tau(K)=1\\
=0 & \text{if } \tau(K)=0\\
\leq -[T_{2,3}]_{\nu^+} & \text{if } \tau(K)=-1
\end{cases}.
$$
This completes the proof.
\end{proof}

\subsection{An estimate using $\Upsilon$}
Here we show an estimate which is obtained by using $\Upsilon$.

\begin{thm}
\label{alg Upsilon}
If $\Upsilon_C(1) = g(C)$, then
$[C]_{\np} \leq -g(C)[T_{2,3}]_{\np}$.
\end{thm}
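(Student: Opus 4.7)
The plan is to reduce the statement to an application of Proposition~\ref{compare} by showing that the hypothesis produces a homological generator inside the staircase region $R^g$, where $g := g(C)$. Since $T_{2,2g+1}$ is alternating with $\sigma(T_{2,2g+1}) = -2g$, Theorem~\ref{alt} gives $[T_{2,2g+1}]_{\np} = g[T_{2,3}]_{\np}$, and hence $[(T_{2,2g+1})^*]_{\np} = -g[T_{2,3}]_{\np}$; the target inequality is therefore equivalent to $[C]_{\np} \leq [(T_{2,2g+1})^*]_{\np}$.

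First I would unpack the hypothesis: $\Upsilon_C(1) = g$ translates into $\upsilon_C(1) = -g/2$, so the subcomplex $\mF^1_{-g/2} = C_{R_0}$ contains a homological generator $z$, where $R_0 := \{(i,j) \in \z^2 \mid i+j \leq -g\}$.

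Next I would perform a geometric reduction using the genus bound. A direct inspection of the definition of $R^g = \bigcup_{0\leq n\leq g} R_{(-g+n,-n)}$ yields the compact description
$$R^g = \{(i,j)\in\z^2 \mid i \leq 0,\; j\leq 0,\; i+j \leq -g\};$$
indeed, $(i,j)\in R_{(-g+n,-n)}$ for some admissible $n$ iff $\max(0, g+i) \leq n \leq \min(g, -j)$ has a solution, which amounts exactly to the three listed inequalities. Now, Corollary~\ref{genus red} together with $g(C)=g$ gives $C_{R_0} = C_{cl(S^{R_0}_{-g,g})}$, and each $(i,j) \in S^{R_0}_{-g,g}$ satisfies $i+j \leq -g$ together with $-g \leq j-i \leq g$: adding the first inequality to $j - i \leq g$ gives $j \leq 0$, while adding it to $i-j \leq g$ gives $i \leq 0$. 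Thus $S^{R_0}_{-g,g} \subseteq R^g$, and since $R^g$ is already a closed region (being a union of such), $cl(S^{R_0}_{-g,g}) \subseteq R^g$. Consequently $z \in C_{R_0} \subseteq C_{R^g}$.

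Finally, Proposition~\ref{compare} applied to $z \in C_{R^g}$ yields the desired $[C]_{\np} \leq [(T_{2,2g+1})^*]_{\np}$. The main obstacle is conceptual rather than technical: one has to notice that the two algebraic constraints --- the generator lies on the Alexander--algebraic half-plane $\{i+j \leq -g\}$ (forced by $\Upsilon_C(1) = g(C)$), and the support of $C$ lies in the diagonal band $\{-g \leq j-i \leq g\}$ (forced by $g(C) = g$) --- intersect precisely in the corner region $R^g$ corresponding to $(T_{2,2g+1})^*$.
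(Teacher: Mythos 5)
Your proposal is correct and follows essentially the same route as the paper: translate $\Upsilon_C(1)=g(C)$ into a homological generator lying in $C_{\{i+j\leq -g(C)\}}$, use the genus constraint to push that generator into $C_{R^{g(C)}}$, and conclude via Proposition~\ref{compare} together with $[(T_{2,2g+1})^*]_{\np}=-g[T_{2,3}]_{\np}$. The only (cosmetic) difference is in the middle step: the paper covers $\{i+j\leq -g\}$ by $\{i\leq -g\}\cup R^{g}\cup\{j\leq -g\}$ and invokes Lemma~\ref{add} and Lemma~\ref{useful red}, whereas you apply Corollary~\ref{genus red} directly to the half-plane and check that the band $S^{R_0}_{-g,g}$ sits inside $R^{g}$ --- both rest on the same support-band fact.
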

\begin{proof}
By the definition of $\Upsilon$,
we have a homological generator lying in 
$C_{\{i+j \leq -g(C)\}}$.
Here, we note that 
$$
\{i+j \leq -g(C)\} \subset \{i \leq -g(C) \} \cup R^{g(C)} \cup \{j \leq -g(C)\}.
$$
Moreover,
Lemma~\ref{add} and Lemma~\ref{useful red} imply that
\begin{eqnarray*}
C_{\{i \leq -g(C) \} \cup R^{g(C)} \cup \{j \leq -g(C)\}}
&=&
C_{\{i \leq -g(C) \}}+C_{R^{g(C)}} + C_{\{j \leq -g(C)\}}\\
&=&
C_{R_{(-g(C),0)}}+C_{R^{g(C)}} + C_{R_{(0, -g(C))}} = C_{R^{g(C)}}.
\end{eqnarray*}
As a result, we have a homological generator in $C_{R^{g(C)}}$.
Therefore, Proposition~\ref{compare} proves that
$[C]_{\np} \leq [(T_{2,2g(C)+1})^*]_{\np} = -g(C) [T_{2,3}]_{\np}$.
\end{proof}

Now we can prove the following discriminant.

\setcounter{section}{1}
\setcounter{thm}{5}
\begin{thm}
The equality $[K]_{\np}= -g(K)[T_{2,3}]_{\nu^+}$ holds
if and only if $\Upsilon_{K}(1)= g(K)$.
\end{thm}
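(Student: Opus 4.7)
The plan is to deduce both directions by chaining the structural estimates already proved in the paper: Theorem~\ref{geom}, Theorem~\ref{alg Upsilon}, Theorem~\ref{knot genus}, and the fact that $\Upsilon(1)\colon\mCf\to\R$ is a group homomorphism. One direction becomes a sandwich between an upper and a lower bound on $[K]_{\np}$; the other becomes a one-line computation of $\Upsilon_{T_{2,3}}(1)$.

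For the ``if'' direction, suppose $\Upsilon_K(1)=g(K)$. By Theorem~\ref{knot genus} I would choose a representative $C\in[C^K]$ with $g(C)=g(K)$, so Theorem~\ref{alg Upsilon} yields $[K]_{\np}\leq -g(K)[T_{2,3}]_{\np}$. For the reverse inequality, Theorem~\ref{geom} gives $[K]_{\np}\geq -g_4(K)[T_{2,3}]_{\np}$. To chain this into the desired bound I would observe that $n[T_{2,3}]_{\np}\geq 0$ for every integer $n\geq 0$, which follows by induction from $[T_{2,3}]_{\np}\geq 0$ (in turn a short direct verification, since the dual staircase for $C^{(T_{2,3})^*}$ contains a homological generator at bifiltration $(0,-1)$, giving $\np((T_{2,3})^*)=0$). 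Applying this with $n=g(K)-g_4(K)\geq 0$ together with the translation-invariance of the partial order on $\mCf$ produces $-g(K)[T_{2,3}]_{\np}\leq -g_4(K)[T_{2,3}]_{\np}\leq [K]_{\np}$, closing the sandwich.

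For the ``only if'' direction, assume $[K]_{\np}=-g(K)[T_{2,3}]_{\np}$ and apply the group homomorphism $\Upsilon(1)\colon\mCf\to\R$ from Proposition~\ref{upsilon hom} to obtain $\Upsilon_K(1)=-g(K)\,\Upsilon_{T_{2,3}}(1)$. It then remains to check $\Upsilon_{T_{2,3}}(1)=-1$. Taking the standard staircase basis for $C^{T_{2,3}}$ with the two degree-zero generators at bifiltrations $(0,1)$ and $(1,0)$, a homological generator in $\mF^1_s$ exists exactly when $s\geq 1/2$; hence $\upsilon_{T_{2,3}}(1)=1/2$ and $\Upsilon_{T_{2,3}}(1)=-1$, so $\Upsilon_K(1)=g(K)$.

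The only delicate point is the partial-order comparison in the first direction, which hinges on the observation $[T_{2,3}]_{\np}\geq 0$; once this is in hand the remainder is a direct assembly of previously established tools and I do not anticipate a substantive obstacle.
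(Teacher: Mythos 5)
Your proposal is correct and follows essentially the same route as the paper: the lower bound from Theorem~\ref{geom} combined with the upper bound from Theorem~\ref{alg Upsilon} applied to a representative of minimal genus (Theorem~\ref{knot genus}), and additivity of $\Upsilon(1)$ for the converse. The only difference is that you spell out two steps the paper treats as immediate, namely the passage from $-g_4(K)[T_{2,3}]_{\np}$ to $-g(K)[T_{2,3}]_{\np}$ via $[T_{2,3}]_{\np}\geq 0$ and the computation $\Upsilon_{T_{2,3}}(1)=-1$, both of which you verify correctly.
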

\setcounter{section}{4}
\setcounter{thm}{10}
\begin{proof}
The only-if-part obviously holds.
We prove the if-part.
For any knot $K$, by Theorem~\ref{geom}, we have
$$
[K]_{\np} \geq -g(K)[T_{2,3}]_{\np}.
$$
Moreover, by Theorem~\ref{knot genus}, we can take a knot complex $C^K$ 
with $g(C^K)=g(K)$.
Hence, if $\Upsilon_K(1)=g(K)$, then Theorem~\ref{alg Upsilon} gives
$$
[K]_{\nu^+} \leq -g(K)[T_{2,3}]_{\np}.
$$
This completes the proof.
\end{proof}

\section{New concordance invariants}
In this section, we discuss new invariants $\{\G_n\}$ of $\np$-classes whose values
are finite subsets of $\CR(\z^2)$.
\subsection{The invariants $\tG_0$ and $\G_0$}
As seen in Subsection~\ref{section invariants}, 
many invariants introduced in previous work  can be translated into the words
of closed regions containing a homological generator. From the view point,
it is natural to consider the universal set
$$
\tG_0(C) := \{R \in \CR(\z^2) \mid C_R \text{ contains a homological generator} \}.
$$
In fact, it behaves very naturally in terms of filtered quasi-isomorphism.
\begin{thm}
\label{tG_0 ineq}
If $[C]_{\np} \leq [C']_{\np}$, then $\tG_0(C) \supset \tG_0(C')$.
\end{thm}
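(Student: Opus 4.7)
The plan is to reduce this directly to the characterization of the partial order by $\z^2$-filtered quasi-isomorphisms given in Theorem~\ref{quasi thm2}. That theorem tells us that $[C]_{\np} \leq [C']_{\np}$ is equivalent to the existence of a $\z^2$-filtered quasi-isomorphism $f \colon C' \to C$. So the entire content of Theorem~\ref{tG_0 ineq} is the assertion that such an $f$ transports homological generators inside a closed region $R$ from $C'$ to $C$.

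Concretely, I would fix an arbitrary $R \in \tG_0(C')$ and choose a homological generator $x \in C'_R$. Since $f$ is $\z^2$-filtered, the inclusion $f(C'_R) \subset C_R$ is immediate, so $f(x) \in C_R$. Next I would verify that $f(x)$ is itself a homological generator of $C$: it is a cycle because $f$ is a chain map, it sits in grading $0$ because $f$ is graded and $\gr(x)=0$, and its homology class is nonzero because $f_* \colon H_0(C') \to H_0(C)$ is an isomorphism (both groups are $\F$, with $[x]$ the generator of $H_0(C')$, and an iso sends a generator to a generator). Hence $R \in \tG_0(C)$, which gives $\tG_0(C) \supset \tG_0(C')$.

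There is really no obstacle here beyond invoking Theorem~\ref{quasi thm2}; the statement is essentially a direct functoriality property of the sets $\tG_0$ under $\z^2$-filtered quasi-isomorphisms. The only small thing to be careful about is the direction of the arrow (the quasi-isomorphism goes from $C'$ to $C$ under the hypothesis $[C]_{\np} \leq [C']_{\np}$), which matches the reverse inclusion $\tG_0(C) \supset \tG_0(C')$ in the conclusion.
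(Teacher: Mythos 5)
Your proposal is correct and follows exactly the paper's own argument: invoke Theorem~\ref{quasi thm2} to obtain a $\z^2$-filtered quasi-isomorphism $f \colon C' \to C$ and observe that it carries a homological generator of $C'_R$ to one of $C_R$. The paper states this in two sentences; you have merely spelled out the routine verifications (cycle, grading $0$, nonzero class under the induced isomorphism on $H_0$), all of which are correct.
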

\begin{proof}
By assumption, we have a $\z^2$-filtered quasi-isomorphism 
$f \colon C' \to C$. 
Therefore, for any element $R \in \tG_0(C')$ and a homological generator 
$x \in C'_R$, we see that 
$C_R$ also contains a homological generator $f(x)$, and hence 
$R \in \tG_0(C)$.
\end{proof}

As a corollary, we have the invariance of $\tG_0$.
Here $\mathcal{P}(\CR(\z^2))$ denotes the power set of $\CR(\z^2)$.
\begin{cor}
\label{tG_0 invariance}
$\tG_0(C)$ is invariant under $\np$-equivalence. In particular, 
$$\tG_0 \colon [C]_{\np} \mapsto \tG_0(C)$$ 
is a well-defined map
$
\mCf \to \mathcal{P}(\CR(\z^2)).
$
\end{cor}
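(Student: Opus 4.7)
The plan is to derive this corollary immediately from Theorem~\ref{tG_0 ineq} by exploiting the antisymmetric flavor of the partial order on $\mCf$. Suppose two formal knot complexes $C$ and $C'$ satisfy $C \nuplus C'$. By the definition of $\np$-equivalence given in Subsection~2.6, this is precisely the statement that $\nu^+(C \otimes_{\Lambda} C'^*) = 0$ and $\nu^+(C^* \otimes_{\Lambda} C') = 0$, which by the definition of the partial order on $\mCf$ translates into the pair of inequalities $[C]_{\np} \leq [C']_{\np}$ and $[C']_{\np} \leq [C]_{\np}$.

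Having this, I would apply Theorem~\ref{tG_0 ineq} twice: the first inequality yields $\tG_0(C) \supset \tG_0(C')$, while the second yields $\tG_0(C') \supset \tG_0(C)$. Combining both gives $\tG_0(C) = \tG_0(C')$, so that the set $\tG_0(C)$ depends only on the $\np$-class $[C]_{\np}$. This is exactly what is needed for $\tG_0 \colon \mCf \to \mathcal{P}(\CR(\z^2))$ to be well-defined.

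There is essentially no obstacle here, since all the work has been absorbed into Theorem~\ref{tG_0 ineq} (which in turn rests on Theorem~\ref{quasi thm2}, the translation of $\leq$ into the existence of $\z^2$-filtered quasi-isomorphisms, together with the observation that such a map sends homological generators in $C'_R$ to homological generators in $C_R$ for every $R \in \CR(\z^2)$). The only care needed is to make explicit that the conjunction of $[C]_{\np}\leq[C']_{\np}$ and $[C']_{\np}\leq[C]_{\np}$ is literally the $\np$-equivalence relation, which is immediate from the definitions and requires no additional argument.
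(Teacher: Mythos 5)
Your proposal is correct and follows exactly the route the paper intends: the corollary is stated as an immediate consequence of Theorem~\ref{tG_0 ineq}, obtained by noting that $C \nuplus C'$ is equivalent to the two inequalities $[C]_{\np} \leq [C']_{\np}$ and $[C']_{\np} \leq [C]_{\np}$ (using the commutativity $C' \otimes_{\Lambda} C^* \simeq C^* \otimes_{\Lambda} C'$ to match the definitions), and then applying the theorem in both directions to get $\tG_0(C) = \tG_0(C')$. No gap.
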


By definition, $\tG_0(C)$ obviously has the following property.
\begin{prop}
\label{tG prop}
For any $R \in \tG_0(C)$ and $R' \in \CR(\z^2)$, if $R \subset R'$,
then $R' \in \tG_0(C)$.
\end{prop}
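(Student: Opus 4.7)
The plan is to use only the definitions involved; no serious machinery is needed. Recall from Subsection~2.1 that a $P$-filtration on a chain complex assigns to each closed region $R$ a subcomplex $C_R$ in such a way that the assignment is monotone: if $R \subset R'$ then $C_R \subset C_{R'}$. The statement to prove is that the set $\tG_0(C)$ of closed regions whose associated subcomplex contains a homological generator is closed upward under $\subset$ in $\CR(\z^2)$. So the entire content of the proposition is packaged inside this monotonicity property of $P$-filtrations, applied to the particular case $P = \z^2$.

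Concretely, I would start from $R \in \tG_0(C)$ and unpack the definition: there exists a cycle $x \in C_R$ that is homogeneous with $\gr(x) = 0$ and whose homology class $[x] \in H_0(C)$ is non-zero. Given any $R' \in \CR(\z^2)$ with $R \subset R'$, the monotonicity of the $\z^2$-filtration gives $C_R \subset C_{R'}$, so the same element $x$ lies in $C_{R'}$. Since $x$ retains its Maslov grading $0$, remains a cycle (the differential is unchanged), and continues to represent the same non-zero class in $H_0(C)$, it still qualifies as a homological generator in the sense of the definition of $\np$-invariant, now witnessed inside $C_{R'}$. Hence $R' \in \tG_0(C)$.

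There is no genuine obstacle here; the proposition is a formal consequence of how $\tG_0$ was defined, and its role is to record a structural feature used later in Section~5. The only thing to be careful about is not to accidentally invoke anything about filtered bases or the decomposition $\{C_{(i,j)}\}$: the argument should stay at the level of the abstract $P$-filtration so that it does not depend on any choice, making the upward closure manifestly intrinsic to $\tG_0(C)$.
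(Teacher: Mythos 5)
Your argument is correct and is exactly the reasoning the paper has in mind: the paper offers no written proof, introducing the proposition with ``By definition, $\tG_0(C)$ obviously has the following property,'' and the intended justification is precisely the monotonicity $C_R \subset C_{R'}$ of the $\z^2$-filtration together with the fact that a homological generator of $C$ lying in $C_R$ then lies in $C_{R'}$. Nothing is missing, and you are right that no appeal to filtered bases or the decomposition $\{C_{(i,j)}\}$ is needed.
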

In particular, we see that $\tG_0(C)$ is an infinite set.
To extract an essential part of $\tG_0$,
we consider the minimalization of $\tG_0$.

For a subset $\mathcal{S} \subset \CR(\z^2)$,
an element $R \in \mathcal{S}$ is {\it minimal in $\mathcal{S}$}
if it satisfies
$$
\text{if } R' \in \mathcal{S} \text{ and } R' \subset R, \text{ then } R'=R.
$$
Define the map 
$$
\min \colon \mathcal{P}(\CR(\z^2)) \to \mathcal{P}(\CR(\z^2))
$$
by
$$
\mathcal{S} \mapsto \{ R \in \mathcal{S} \mid R \text{ is minimal in } \mathcal{S}\}.
$$ 
Now we define $\G_0(C)$ by 
$$
\G_0(C) := \min \tG_0(C).
$$
The invariance of $\G_0(C)$ under 
$\nuplus$ immediately follows from 
Corollary~\ref{tG_0 invariance}.

Here, for referring later, we prove the following lemma.
\begin{lem}
\label{min of finite}
Let $\mathcal{S} \subset \CR(\z^2)$ be a non-empty finite subset.
Then, for any $R \in \mathcal{S}$, there exists an element 
$R' \in \min \mathcal{S}$ with $R' \subset R$.
In particular, $\min \mathcal{S}$ is non-empty.
\end{lem}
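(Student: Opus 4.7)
The plan is to prove this by a straightforward descent argument, exploiting the finiteness of $\mathcal{S}$ to guarantee termination. Fix $R \in \mathcal{S}$. I will construct a strictly decreasing chain
$$R = R_0 \supsetneq R_1 \supsetneq R_2 \supsetneq \cdots$$
of elements of $\mathcal{S}$ as follows: having chosen $R_n \in \mathcal{S}$, if $R_n$ is minimal in $\mathcal{S}$, stop and set $R' := R_n$; otherwise, by negation of the defining property of minimality, there exists some $R_{n+1} \in \mathcal{S}$ with $R_{n+1} \subsetneq R_n$, and I continue.

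The main (and only) observation is that this process must terminate. Since $R_0 \supsetneq R_1 \supsetneq \cdots$ are all distinct elements of the finite set $\mathcal{S}$, the chain has length at most $|\mathcal{S}|$; hence some $R_n$ must be minimal in $\mathcal{S}$. That $R_n$ is the desired $R' \in \min \mathcal{S}$, and by construction $R' \subset R$. The second assertion that $\min \mathcal{S}$ is non-empty follows immediately by applying the first assertion to any $R \in \mathcal{S}$, which exists since $\mathcal{S}$ is non-empty.

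There is no real obstacle here; this is a purely set-theoretic lemma about finite posets (under inclusion), and nothing about the specific structure of $\CR(\z^2)$ or formal knot complexes enters the argument. The only subtlety to state cleanly is the logical negation: \emph{$R_n$ is not minimal} means precisely that there exists $R_{n+1} \in \mathcal{S}$ with $R_{n+1} \subset R_n$ and $R_{n+1} \neq R_n$, i.e.\ $R_{n+1} \subsetneq R_n$, which is exactly what drives the descent.
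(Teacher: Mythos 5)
Your proof is correct. The only point that needs care is the termination of the descent, and you handle it properly: the chain $R_0 \supsetneq R_1 \supsetneq \cdots$ consists of pairwise distinct elements of the finite set $\mathcal{S}$, so it must stop at some $R_n$ that admits no proper subset in $\mathcal{S}$, i.e.\ at an element of $\min\mathcal{S}$ contained in $R$. The non-emptiness of $\min\mathcal{S}$ then follows as you say.

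Your route differs slightly from the paper's, which argues by induction on $|\mathcal{S}|$: in the inductive step one picks $R' \subsetneq R$ in $\mathcal{S}$, applies the inductive hypothesis to $\mathcal{S}\setminus\{R\}$ to get $R'' \in \min(\mathcal{S}\setminus\{R\})$ with $R'' \subset R'$, and then checks that $R''$ is still minimal in the full set $\mathcal{S}$ because $R \not\subset R''$. Your descent argument and the paper's induction are of course two standard faces of the same finiteness principle (a finite poset has a minimal element below any given element), but the descent version is arguably the cleaner of the two here: it avoids the bookkeeping step of verifying that minimality in $\mathcal{S}\setminus\{R\}$ persists in $\mathcal{S}$, and it makes the element $R'$ you produce visibly lie below the $R$ you started from, which is the form of the statement actually used later (e.g.\ in Corollary~\ref{minimalize} and Theorem~\ref{G'_0 thm}). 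You are also right that nothing about $\CR(\z^2)$ enters; the lemma is purely about finite posets under inclusion.
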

\begin{proof}
We prove the lemma by the induction of the order of $S$.
If $|S|=1$, then $\min \mathcal{S} = \mathcal{S}$, and the assertion obviously holds.

Assume that for any subset of $\CR(\z^2)$ with order $n$, the assertion holds.
Let $\mathcal{S} \subset \CR(\z^2)$ be a subset
with order $n+1$. If any element of $\mathcal{S}$ is minimal in $\mathcal{S}$,
then the assertion holds for $\mathcal{S}$. 
Suppose that there exist elements 
$R,R' \subset \mathcal{S}$ such that $R' \subsetneq R$.
Then, since $\mathcal{S} \setminus \{ R\}$ has order $n$, the assertion holds for
$\mathcal{S} \setminus \{ R\}$. In particular, we have an element $R'' \in \min (\mathcal{S} \setminus \{R\})$
with $R'' \subset R'$. Here we note that  
$R \not\subset R''$, and hence $R'' \in \min \mathcal{S}$.
Moreover, we have $R'' \subset R' \subset R$.
This implies that the assertion holds for $\mathcal{S}$,
and completes the proof.
\end{proof}
\subsection{Finiteness of $\G_0$}
In this subsection, we show that $\G_0(C)$ is a finite set for any formal knot complex $C$.

\subsubsection{The region of a chain}
For a non-zero element $p=p(U) \in \Lambda$,
denote the lowest degree of $p$ by $l(p)$.
Let $C$ be a formal knot complex, and 
$\{x_k\}_{1 \leq k \leq r}$ a filtered basis for $C$.
For any non-zero chain $x= \sum_{1 \leq k \leq r} p_k(U) x_k$,
 we define \textit{the region of} $x$ as 
$$
R_x := cl\left\{(\Alg(U^{l(p_k)}x_k), \Alex(U^{l(p_k)}x_k)) \  \middle| \  
\begin{array}{ll}
1 \leq k \leq r\\
p_k(U) \neq 0
\end{array}
\right\}.
$$
Then we see that $R_x \in \CR(\z^2)$ and $x \in C_{R_x}$.
The following lemma implies that $R_x$ does not depend on the choice of $\{x_k\}$.
\begin{lem}
\label{chain's region cl}
The equality
$$R_x = \bigcap_{R \in \CR, x \in C_R} R
$$
holds.
In particular, $x \in C_R$ if and only if $R_x \subset R$.
\end{lem}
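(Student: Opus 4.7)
The plan is to derive the lemma from the concrete decomposition established in Lemma~\ref{subcpx decomp}, namely $C_R = \bigoplus_{(i,j) \in R} C_{(i,j)}$, where $C_{(i,j)} = \spanF\{U^l x_k \mid (\Alg(U^l x_k), \Alex(U^l x_k)) = (i,j)\}$. The two assertions of the lemma are actually equivalent once one knows $x \in C_{R_x}$, so I would split the proof into three short steps.

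\medskip

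\textbf{Step 1: $x \in C_{R_x}$.} Expand each $p_k(U) x_k$ as an $\F$-linear combination $\sum_{l \geq l(p_k)} a_{k,l} U^l x_k$. For every summand with $a_{k,l} \neq 0$, the relations $\Alg(U^l x_k) = \Alg(x_k)-l$ and $\Alex(U^l x_k) = \Alex(x_k)-l$ together with $l \geq l(p_k)$ give the componentwise inequality
$$(\Alg(U^l x_k), \Alex(U^l x_k)) \leq (\Alg(U^{l(p_k)} x_k), \Alex(U^{l(p_k)} x_k)).$$
Hence this coordinate lies in $R_{(\Alg(U^{l(p_k)} x_k), \Alex(U^{l(p_k)} x_k))} \subset R_x$, and by Lemma~\ref{subcpx decomp} the monomial $U^l x_k$ lies in $C_{R_x}$. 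Summing yields $x \in C_{R_x}$.

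\medskip

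\textbf{Step 2: If $x \in C_R$, then $R_x \subset R$.} Because $\{U^l x_k\}$ is an $\F$-basis for $C$ and each $C_{(i,j)}$ is $\F$-spanned by the monomials with the prescribed bifiltration level, Lemma~\ref{subcpx decomp} gives a unique expression $x = \sum_{(i,j) \in R} y_{(i,j)}$ with $y_{(i,j)} \in C_{(i,j)}$. Comparing this with the expansion $x = \sum_{k,l} a_{k,l} U^l x_k$ from Step~1, uniqueness of the coordinates in the monomial basis forces $(\Alg(U^l x_k), \Alex(U^l x_k)) \in R$ whenever $a_{k,l} \neq 0$. Specializing to $l = l(p_k)$ for each $k$ with $p_k \neq 0$, we obtain $(\Alg(U^{l(p_k)} x_k), \Alex(U^{l(p_k)} x_k)) \in R$; by the closed-region property, $R_{(\Alg(U^{l(p_k)} x_k), \Alex(U^{l(p_k)} x_k))} \subset R$. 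Taking the union over all such $k$ and applying $cl$ yields $R_x \subset R$.

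\medskip

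\textbf{Step 3: Assemble the two assertions.} Step~1 shows that $R_x$ belongs to the collection $\{R \in \CR(\z^2) \mid x \in C_R\}$, so $\bigcap_{R, x \in C_R} R \subset R_x$. Step~2 shows every member of this collection contains $R_x$, so $R_x \subset \bigcap_{R, x \in C_R} R$. This proves the displayed equality. For the ``in particular'' part: if $x \in C_R$ then $R_x \subset R$ by Step~2, and conversely if $R_x \subset R$ then $C_{R_x} \subset C_R$ by the defining monotonicity of a $\z^2$-filtration, so $x \in C_{R_x} \subset C_R$ by Step~1.

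\medskip

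There is no real obstacle: the whole content is bookkeeping with the monomial basis, and Lemma~\ref{subcpx decomp} does all the heavy lifting. The only subtle point is justifying uniqueness of the expansion in Step~2, but this is immediate from the fact that the $U^l x_k$ form an $\F$-basis of $C$ and are each homogeneous for the induced $\z^2$-decomposition.
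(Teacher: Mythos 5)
Your proof is correct and follows essentially the same route as the paper: both reduce everything to the decomposition $C_R = \bigoplus_{(i,j)\in R} C_{(i,j)}$ of Lemma~\ref{subcpx decomp} and the fact that the monomials $U^l x_k$ form an $\F$-basis. The only difference is that you spell out the verification of $x \in C_{R_x}$ (which the paper asserts without proof just before the lemma statement), so your write-up is, if anything, slightly more complete.
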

\begin{proof}
It is obvious that $R_x \supset \bigcap_{R \in \CR, x \in C_R} R.$
We prove the converse. Let $\{C_{(i,j)}\}$ be the decomposition of $C$
induced by $\{x_k\}$, and take $R \in \CR(\z^2)$ with $x \in C_R$.
Then, since $C_R = \bigoplus_{(i,j) \in R}C_{(i,j)}$
and 
$$C_{(i,j)}= \spanF\{U^l x_k \mid (\Alg(U^{l}x_k), \Alex(U^{l} x_k)) =(i,j)\},$$ 
we see that
$$
((\Alg(U^{l(p_k)}x_k), \Alex(U^{l(p_k)}x_k))) \in R
$$
for any $k \in \{1, \ldots, r\}$ with $p_k(U) \neq 0$.
This completes the proof.
\end{proof}
\begin{lem}
\label{chain's region map}
For any $\z^2$-filtered chain map $f \colon C \to C'$ and $x \in C$,
we have $R_{f(x)} \subset R_x$.
\end{lem}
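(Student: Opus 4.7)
The plan is very short because Lemma~\ref{chain's region cl} already does the heavy lifting: it characterises $R_x$ as the intersection of all closed regions $R \in \CR(\z^2)$ with $x \in C_R$, which shows in particular that $R_x$ is the unique minimal such closed region. So I can reduce everything to the single observation that $f$, being $\z^2$-filtered, sends $C_R$ into $C'_R$ for every closed region $R$.

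Concretely, I would proceed as follows. First, by the definition of $R_x$, we have $x \in C_{R_x}$ (alternatively, this follows from Lemma~\ref{chain's region cl} by taking the intersection as a particular closed region containing $x$). Next, since $f \colon C \to C'$ is $\z^2$-filtered, the inclusion $f(C_{R_x}) \subset C'_{R_x}$ holds; hence $f(x) \in C'_{R_x}$. Finally, applying Lemma~\ref{chain's region cl} on the side of $C'$ gives
\[
R_{f(x)} \;=\; \bigcap_{R \in \CR(\z^2),\; f(x) \in C'_R} R \;\subset\; R_x,
\]
since $R_x$ is one of the closed regions appearing in the intersection.

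One minor bookkeeping point I would handle: the region $R_y$ is only defined for a non-zero chain $y$, so the case $f(x) = 0$ needs a convention. The cleanest choice is to set $R_0 := \emptyset$, which is consistent with the characterisation in Lemma~\ref{chain's region cl} (the intersection of all $R \in \CR(\z^2)$) and makes the inclusion $R_{f(x)} \subset R_x$ trivially true in that degenerate case. There is no real obstacle to this argument; the only thing to be careful about is that one never actually needs to pick a filtered basis to compute $R_x$, precisely thanks to Lemma~\ref{chain's region cl}, and this is what makes the compatibility with an arbitrary $\z^2$-filtered chain map immediate.
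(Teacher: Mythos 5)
Your proposal is correct and follows exactly the same route as the paper: note $x \in C_{R_x}$, use that $f$ is $\z^2$-filtered to get $f(x) \in C'_{R_x}$, and conclude via Lemma~\ref{chain's region cl}. The only addition is your convention for the degenerate case $f(x)=0$, which the paper leaves implicit but which does no harm.
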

\begin{proof}
Since $x \in C_{R_{x}}$, we see 
$$
f(x) \in  f(C_{R_x}) \subset C'_{R_x}.
$$
Hence, Lemma~\ref{chain's region cl} proves $R_{f(x)} \subset R_x$.
\end{proof}

\subsubsection{The regions of homological generators}

For a formal knot complex $C$, define
$$
\tgen_0(C) := \{\text{homological generator of } C\},
$$
$$
\tG'_0(C) := \{R_x \in \CR(\z^2) \mid x \in \tgen_0(C) \},
$$
and
$$
\G'_0(C) := \min \tG'_0(C).
$$
In addition, for $R \in \G_0(C)$, set
$$
\gen_0(C;R) := \{x \in \tgen_0(C) \mid R_x= R \},
$$
and call $x \in \gen_0(C;R)$ {\it a realizer of $R$}.
Notice that since $\dim_{\F}C_0 < \infty$,
$C$ has finitely many homological generators, and hence
both $\tG'_0(C)$ and $\G'_0(C)$ are finite and non-empty.
Therefore, the following theorem implies the finiteness 
and non-emptiness of $\G_0(C)$.
\begin{thm}
\label{G'_0 thm}
The equality $\G_0(C)=\G'_0(C)$ holds.
\end{thm}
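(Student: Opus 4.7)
The plan is to prove the equality $\G_0(C)=\G'_0(C)$ by mutual inclusion, with both directions relying on the key observation (already proved as Lemma~\ref{chain's region cl}) that $R_x$ is the smallest closed region containing a chain $x$, i.e.\ $x \in C_R$ if and only if $R_x \subset R$. Combined with the trivial inclusion $\tG'_0(C) \subset \tG_0(C)$ (since $x \in C_{R_x}$ for any homological generator $x$), this is essentially all we need.

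For the inclusion $\G_0(C) \subset \G'_0(C)$, I would start with $R \in \G_0(C)$. Then $R \in \tG_0(C)$, so $C_R$ contains some homological generator $x$. By Lemma~\ref{chain's region cl}, $R_x \subset R$, and $R_x \in \tG'_0(C) \subset \tG_0(C)$. Minimality of $R$ in $\tG_0(C)$ forces $R_x = R$, so $R \in \tG'_0(C)$. To show $R$ is also minimal in $\tG'_0(C)$, suppose $R_y \subset R$ for some homological generator $y$; since $R_y \in \tG_0(C)$, minimality of $R$ in $\tG_0(C)$ yields $R_y = R$. Hence $R \in \G'_0(C)$.

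For the converse inclusion $\G'_0(C) \subset \G_0(C)$, I would take $R \in \G'_0(C)$, so $R = R_x$ for some homological generator $x$, and $R$ is minimal among the $R_y$ for $y$ a homological generator. Clearly $R \in \tG_0(C)$. Now suppose $R' \in \tG_0(C)$ with $R' \subset R$; pick a homological generator $y \in C_{R'}$. By Lemma~\ref{chain's region cl}, $R_y \subset R'$, hence $R_y \subset R' \subset R$. Minimality of $R$ in $\tG'_0(C)$ forces $R_y = R$, and therefore $R' = R$. Thus $R \in \G_0(C)$.

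There is no real obstacle here: the proof is purely bookkeeping once Lemma~\ref{chain's region cl} is in hand. A minor subtlety is that $\tG_0(C)$ is an infinite set by Proposition~\ref{tG prop}, so \emph{a priori} it is not obvious that $\G_0(C) = \min\tG_0(C)$ is non-empty; however, the inclusion $\G'_0(C)\subset\G_0(C)$ combined with the finiteness of $\tG'_0(C)$ and Lemma~\ref{min of finite} resolves this automatically, and in fact shows that $\G_0(C)$ is finite.
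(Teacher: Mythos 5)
Your proof is correct and follows essentially the same route as the paper: both inclusions are established by the identical bookkeeping with Lemma~\ref{chain's region cl} (that $x \in C_R$ iff $R_x \subset R$) together with the trivial inclusion $\tG'_0(C) \subset \tG_0(C)$ and the two minimality conditions. The closing remark on non-emptiness is a harmless bonus; no gaps.
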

\begin{proof}
We first prove $\G_0(C) \supset \G'_0(C)$.
Note that since $x \in C_{R_x}$ for any homological generator $x$,
we have $\tG_0(C) \supset \tG'_0(C)$.
Take $R_x \in \G'_0(C)$, and suppose that
 $R \in \tG_0(C)$ and $R \subset R_x$.
Then, there exists a homological generator $x'$ in $C_R$,
and hence Lemma~\ref{chain's region cl} implies 
$R_{x'} \subset R \subset R_{x}$.
Here, since $R_{x'} \in \tG'_0(C)$ and $R_{x}$ is minimal in $\tG'_0(C)$,
we have $R_{x'}=R=R_{x}$. This proves $R_x \in \G_0(C)$, and hence 
$\G_0(C) \supset \G'_0(C)$.

Next we prove $\G_0(C) \subset \G'_0(C)$.
For a given element $R \in \G_0(C)$,
we first need to prove that
$R \in \tG'_0(C)$. Here, in a similar way to the above arguments, we see that
there exists a homological generator $x$ such that $R_x \subset R$.
Moreover, since $R_x$ is also in $\tG_0(C)$ and $R$ is minimal in $\tG_0(C)$,
we have $R=R_x \in \tG'_0(C)$.
Now, the minimality of $R$ in $\tG'_0(C)$ immediately follows from
the minimality in $\tG_0(C)$. 
Therefore, we have $R \in \G'_0(C)$, and hence $\G_0(C) \subset \G'_0(C)$.
\end{proof}

As a corollary, we have the following useful property of $\G_0(C)$.
\begin{cor}
\label{minimalize}
For any formal knot complex $C$ and $R \in \CR(\z^2)$,the following holds:
$$
R \in \tG_0(C) \Leftrightarrow \exists R' \in \G_0(C), R' \subset R.
$$
\end{cor}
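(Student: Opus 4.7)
The plan is to prove the two directions separately, leveraging the identification $\G_0(C)=\G'_0(C)$ from Theorem~\ref{G'_0 thm} together with the finiteness of $\tG'_0(C)$ (which holds because $\dim_{\F}C_0<\infty$ forces $C$ to have only finitely many homological generators, hence only finitely many regions $R_x$).

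For the backward direction ($\Leftarrow$), I would argue directly: if $R'\in\G_0(C)\subset\tG_0(C)$ and $R'\subset R$, then by definition $C_{R'}$ contains a homological generator $x$, and the inclusion $R'\subset R$ gives $C_{R'}\subset C_R$, so $x\in C_R$. Hence $R\in\tG_0(C)$.

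For the forward direction ($\Rightarrow$), suppose $R\in\tG_0(C)$, so there is a homological generator $x\in C_R$. By Lemma~\ref{chain's region cl}, $R_x$ is the intersection of all closed regions containing $x$, so $R_x\subset R$. Now $R_x\in\tG'_0(C)$ and $\tG'_0(C)$ is a finite nonempty subset of $\CR(\z^2)$ as noted above, so Lemma~\ref{min of finite} applied to $\tG'_0(C)$ yields an element $R'\in\min\tG'_0(C)=\G'_0(C)$ with $R'\subset R_x$. Finally, Theorem~\ref{G'_0 thm} identifies $\G'_0(C)$ with $\G_0(C)$, so $R'\in\G_0(C)$ and $R'\subset R_x\subset R$, as desired.

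The main conceptual input is the finiteness of $\tG'_0(C)$, which justifies invoking Lemma~\ref{min of finite}; everything else is a direct manipulation of inclusions and the definitions. There is no real obstacle, since all the work has been done by the preceding lemmas and by Theorem~\ref{G'_0 thm}---the corollary is essentially a bookkeeping statement that allows one to test membership in $\tG_0(C)$ by checking against the finite list $\G_0(C)$.
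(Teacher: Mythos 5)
Your proof is correct and follows essentially the same route as the paper: the forward direction uses Lemma~\ref{chain's region cl} to get $R_x \subset R$, then the finiteness of $\tG'_0(C)$ together with Lemma~\ref{min of finite} and the identification $\G'_0(C)=\G_0(C)$ from Theorem~\ref{G'_0 thm}. The backward direction you spell out is the immediate one (it is just Proposition~\ref{tG prop}), which the paper leaves implicit.
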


\begin{proof}
By the definition of $\tG_0(C)$, there exists a homological generator $x \in C$
with $R_x \subset R$. Moreover, since $R_x \in \tG'_0(C)$ and 
$\tG'_0(C)$ is a non-empty finite set, Lemma~\ref{min of finite} 
gives an element $R' \in \G'_0(C)=\G_0(C)$
with $R' \subset R_x \subset R$.
\end{proof}

Here we also mention the relationship of $\G_0(C)$ to the partial order on $\mCf$.

\begin{prop}
If $[C]_{\np} \leq [C']_{\np}$, then for any $R' \in \G_0(C')$, there
exists an element $R \in \G_0(C)$ with $R \subset R'$.
\end{prop}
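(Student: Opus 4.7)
The plan is to translate the hypothesis into a concrete $\z^2$-filtered quasi-isomorphism and then track a realizer of $R'$ through it. By Theorem~\ref{quasi thm2}, the assumption $[C]_{\np} \leq [C']_{\np}$ furnishes a $\z^2$-filtered quasi-isomorphism $f \colon C' \to C$. Given $R' \in \G_0(C')$, Theorem~\ref{G'_0 thm} tells us that $\G_0(C') = \G'_0(C')$, so $R'$ is realized by a homological generator: there exists $x' \in \tgen_0(C')$ with $R_{x'} = R'$.

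Now I would push this realizer forward. Since $f$ is a graded chain map with $f_* \colon H_0(C') \to H_0(C)$ an isomorphism, the image $f(x')$ is homogeneous of Maslov grading $0$ and its homology class $f_*([x'])$ is non-zero; thus $f(x') \in \tgen_0(C)$. By Lemma~\ref{chain's region map}, the region of this new generator satisfies
\[
R_{f(x')} \subset R_{x'} = R'.
\]
In particular $R_{f(x')} \in \tG'_0(C) \subset \tG_0(C)$.

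Finally, to pass from $\tG_0(C)$ to $\G_0(C)$, I would invoke Corollary~\ref{minimalize}: since $R_{f(x')} \in \tG_0(C)$, there exists $R \in \G_0(C)$ with $R \subset R_{f(x')}$. Combining the two inclusions yields $R \subset R_{f(x')} \subset R'$, as required. There is no genuine obstacle here; the whole argument is a one-line functoriality check for $\tG'_0$ under $\z^2$-filtered quasi-isomorphisms (Lemma~\ref{chain's region map}), followed by the minimalization principle already established in Corollary~\ref{minimalize}. The only small point worth stating cleanly is that $f(x')$ really is a homological generator (i.e.\ that quasi-isomorphisms preserve non-triviality of the $H_0$-class), which is immediate from $f_*$ being an isomorphism.
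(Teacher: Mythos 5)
Your proof is correct and follows essentially the same route as the paper: the paper simply cites Theorem~\ref{tG_0 ineq} (whose proof is exactly your step of pushing a homological generator through the quasi-isomorphism from Theorem~\ref{quasi thm2}) to get $R' \in \tG_0(C)$, and then applies Corollary~\ref{minimalize}. Your extra bookkeeping via Lemma~\ref{chain's region map}, tracking the intermediate region $R_{f(x')} \subset R'$, is harmless but not needed, since membership of $R'$ in $\tG_0(C)$ already suffices for the minimalization step.
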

\begin{proof}
For any $R' \in \G_0(C')$, Theorem~\ref{tG_0 ineq} shows $R' \in \tG_0(C)$. 
Now, by Corollary~\ref{minimalize}, 
we have an element $R \in \G_0(C)$ with $R \subset R'$.
\end{proof}
\subsection{Higher invariants $\G_n$}
Here, we discuss higher invariants.
\subsubsection{The secondary invariant $\G_1$}
For a formal knot complex $C$, suppose that
$\G_0(C)$ has distinct two elements $R_1$ and $R_2$.
Under the hypothesis, we define the secondary invariant 
$\G_1(C;R_1, R_2)$ as follows.
First, set
$$
\tgen_1(C;R_1, R_2) := \{x \in C_1 \mid \exists z_i \in \gen_0(C;R_i), \partial x = z_1 + z_2\},
$$
and
$$
\tG'_1(C;R_1, R_2) := \{R_x \mid x \in \tgen_1(C; R_1, R_2) \}.
$$
Then we define $\G_1(C; R_1, R_2)$ by
$$
\G_1(C; R_1, R_2) := \min \tG'_1(C;R_1, R_2).
$$
Here, for $R \in \G_1(C; R_1, R_2)$, we also define {\it the realizers of $R$} by
$$
\gen_1(C; R_1, R_2; R) := \{x \in \tgen_1(C; R_1, R_2) \mid R_x=R \}.
$$
Note that the above notions are independent of the order of $\{R_1, R_2\}$. 
\begin{lem}
\label{G_1 non-empty}
$\G_1(C; R_1, R_2)$ is a non-empty finite set.
\end{lem}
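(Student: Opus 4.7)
The plan is to reduce the statement to showing that $\tG'_1(C; R_1, R_2)$ is itself non-empty and finite; once this is established, Lemma~\ref{min of finite} applied to $\tG'_1$ immediately yields that $\G_1(C; R_1, R_2) = \min \tG'_1(C; R_1, R_2)$ is non-empty, and finiteness is automatic as $\G_1$ is then a subset of a finite set.

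To establish non-emptiness of $\tG'_1$, I will first pick representatives $z_i \in \gen_0(C; R_i)$, $i = 1, 2$, which exist since $R_1, R_2 \in \G_0(C) = \G'_0(C)$ by Theorem~\ref{G'_0 thm}. Because $H_0(C) \cong \F$ and both $[z_1]$ and $[z_2]$ are the unique non-zero class, working over $\F$ gives $[z_1 + z_2] = 0$, so $z_1 + z_2 = \partial x$ for some $x \in C_1$. The assumption $R_1 \neq R_2$ forces $z_1 \neq z_2$, so $z_1 + z_2 \neq 0$ and hence $x \neq 0$; thus $R_x$ is well-defined and lies in $\tG'_1(C; R_1, R_2)$.

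For finiteness of $\tG'_1$, I will fix a filtered basis $\{x_k\}_{1 \leq k \leq r}$ for $C$ and note that any $x \in C_1$ has the unique expansion $x = \sum_k c_k U^{(\gr(x_k)-1)/2} x_k$, where the sum is taken over those indices $k$ with $\gr(x_k)$ odd. Therefore, by the definition of $R_x$, its generating corners all belong to the fixed finite set
$$
P := \{(\Alg(U^{(\gr(x_k)-1)/2} x_k),\, \Alex(U^{(\gr(x_k)-1)/2} x_k)) \mid 1 \leq k \leq r,\ \gr(x_k)\text{ odd}\},
$$
so $R_x = cl(S)$ for some $S \subset P$. This gives at most $2^{|P|}$ possible values of $R_x$ across all $x \in C_1$, hence $\tG'_1(C; R_1, R_2)$ is finite.

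The main point to identify, beyond the elementary cycle argument over $\F$, is that only finitely many regions $R_x$ can arise from chains in a single Maslov-graded piece $C_n$: this is what the finiteness of a filtered basis ultimately buys us, since it constrains the corner sets of such regions to lie inside a fixed finite set of points $P \subset \z^2$. Once this observation is made, Lemma~\ref{min of finite} closes the argument without further computation.
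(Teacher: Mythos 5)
Your proof is correct and follows essentially the same route as the paper: choose realizers $z_i\in\gen_0(C;R_i)$, use $H_0(C)\cong\F$ to produce a $1$-chain $x$ with $\partial x=z_1+z_2$, and then combine finiteness (coming from the finite filtered basis for $C_1$) with Lemma~\ref{min of finite}. The only differences are cosmetic refinements — you explicitly verify $x\neq 0$ so that $R_x$ is defined, and you bound the number of possible regions by $2^{|P|}$ where the paper simply invokes $\dim_{\F}(C_1)<\infty$.
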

\begin{proof}
Take an arbitrary realizer $z_i \in \gen_0(C;R_i)$ for each $i =1,2$.
Then we see that $0 \neq [z_1]=[z_2] \in H_0(C) \cong \F$,
and hence there exists a 1-chain $x \in C_1$ such that $\partial x = z_1 + z_2$.
Moreover, $\dim_{\F}(C_1) < \infty$.
These facts shows that  $\tgen_1(C;R_1, R_2)$ is non-empty and finite.
Combining this fact with Lemma~\ref{min of finite}, 
we see that
$\G_1(C; R_1, R_2)$ is non-empty and finite.
\end{proof}

\begin{thm}
\label{G_1 ineq}
Suppose that $[C]_{\np} \leq [C']_{\np}$ and 
$\G_0(C) \cap \G_0(C')$ has distinct two elements $R_1$ and $R_2$.
Then, for any $R' \in \G_1(C'; R_1, R_2)$, there exists an element
$R \in \G_1(C; R_1, R_2)$ with $R \subset R'$.
\end{thm}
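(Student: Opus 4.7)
The plan is to pull back a realizer of $R'$ through a $\z^2$-filtered quasi-isomorphism $f\colon C'\to C$ (which exists by Theorem~\ref{quasi thm2}, since $[C]_{\np}\leq[C']_{\np}$), and then apply the minimalization lemma. Concretely, by assumption we may pick a realizer $x'\in\gen_1(C';R_1,R_2;R')$, so $x'\in C'_1$ and there exist realizers $z'_1\in\gen_0(C';R_1)$, $z'_2\in\gen_0(C';R_2)$ with $\partial x'=z'_1+z'_2$. Apply $f$ to obtain $f(x')\in C_1$ and $\partial f(x')=f(z'_1)+f(z'_2)$.

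Next I would verify that $f(x')$ lies in $\tgen_1(C;R_1,R_2)$, which amounts to showing $f(z'_i)\in\gen_0(C;R_i)$ for $i=1,2$. Since $f$ is a quasi-isomorphism and $z'_i$ is a homological generator of $C'$, the image $f(z'_i)$ is a homological generator of $C$; by Lemma~\ref{chain's region map} we have $R_{f(z'_i)}\subset R_{z'_i}=R_i$. This is the key step: I need the inclusion to be an equality. This follows because $R_i\in\G_0(C)$ by hypothesis, so $R_i$ is minimal in $\tG_0(C)$, while $R_{f(z'_i)}\in\tG_0(C)$ and $R_{f(z'_i)}\subset R_i$ force $R_{f(z'_i)}=R_i$. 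Thus $f(z'_i)\in\gen_0(C;R_i)$ and $f(x')\in\tgen_1(C;R_1,R_2)$.

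Finally, Lemma~\ref{chain's region map} gives $R_{f(x')}\subset R_{x'}=R'$, so $R_{f(x')}\in\tG'_1(C;R_1,R_2)$. As shown in the proof of Lemma~\ref{G_1 non-empty}, the set $\tG'_1(C;R_1,R_2)$ is non-empty and finite (it is parametrized by subsets of the finite filtered basis of $C_1$). Therefore Lemma~\ref{min of finite} applied to $R_{f(x')}\in\tG'_1(C;R_1,R_2)$ produces an element $R\in\min\tG'_1(C;R_1,R_2)=\G_1(C;R_1,R_2)$ with $R\subset R_{f(x')}\subset R'$, as required.

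The main obstacle is the middle step: ensuring that $f$ does not shrink the regions of the $z'_i$ strictly below $R_i$, which would break the condition that the preimage pair realizes exactly $(R_1,R_2)$. The hypothesis $R_1,R_2\in\G_0(C)\cap\G_0(C')$ is precisely what rules this out, via the minimality of $R_i$ in $\tG_0(C)$; without it, one could only conclude the existence of a pair of regions $\tilde R_i\subset R_i$ in $\G_0(C)$ together with an element in $\G_1(C;\tilde R_1,\tilde R_2)$ contained in $R'$, which is a weaker statement.
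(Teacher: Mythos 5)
Your proposal is correct and follows essentially the same route as the paper's proof: push a realizer of $R'$ forward through the $\z^2$-filtered quasi-isomorphism $f\colon C'\to C$, use the minimality of $R_i$ in $\tG_0(C)$ together with Lemma~\ref{chain's region map} to see that $f(z'_i)$ remains a realizer of $R_i$, and then apply Lemma~\ref{min of finite} to $R_{f(x')}\in\tG'_1(C;R_1,R_2)$. Your closing remark correctly identifies why the hypothesis $R_1,R_2\in\G_0(C)\cap\G_0(C')$ is essential.
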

\begin{proof}
Take $z_i \in \gen_0(C';R_i)$ ($i=1,2$) and $x \in \gen_1(C';R_1, R_2; R')$ such that
$\partial x = z_1 + z_2$. Let $f \colon C' \to C$ be a $\z^2$-filtered quasi-isomorphism.
Then we see from the assumption and Lemma~\ref{chain's region map} that
$$
R_{f(z_i)}, R_i \in \tG'_0(C)
$$ 
and
$$
R_{f(z_i)} \subset R_{z_i} =R_i.
$$ 
Moreover, 
$R_i$ is minimal in $\tG'_0(C)$, and hence we have 
$R_{f(z_i)} = R_i$.
In particular, $f(z_i) \in \gen_0(C;R_i)$. 
Here, note that 
$$
\partial (f(x)) = f(\partial x) = f(z_1) + f(z_2),
$$
and hence $f(x) \in \tgen_1(C; R_1, R_2)$ and $R_{f(x)} \in \tG_1(C; R_1, R_2)$.
Now, Lemma~\ref{min of finite} and Lemma~\ref{chain's region map} give
an element $R \in \G_1(C; R_1, R_2)$ with
$$
R \subset R_{f(x)} \subset R_x=R'.
$$
\end{proof}
\begin{cor}
\label{G_1 invariance}
For any $[C]_{\np} \in \mCf$ and distinct two elements 
$R_1, R_2 \in \G_0(C)$, 
$\G_1(C;R_1,R_2) \in \mathcal{P}(\CR(\z^2))$ 
is an invariant of the $\np$-class $[C]_{\np}$. 
\end{cor}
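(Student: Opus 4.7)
The plan is to derive the invariance directly from Theorem~\ref{G_1 ineq} by applying it in both directions and using the minimality built into the definition of $\G_1$.

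First I would observe that if $[C]_{\np}=[C']_{\np}$, then both $[C]_{\np} \leq [C']_{\np}$ and $[C']_{\np}\leq [C]_{\np}$ hold, so Theorem~\ref{tG_0 ineq} gives $\tG_0(C)=\tG_0(C')$, and consequently $\G_0(C)=\min\tG_0(C)=\min\tG_0(C')=\G_0(C')$. In particular, the hypothesis $R_1,R_2\in\G_0(C)$ is equivalent to $R_1,R_2\in\G_0(C')$, so Theorem~\ref{G_1 ineq} applies with either complex playing the role of $C$.

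Next, fix an arbitrary $R'\in\G_1(C';R_1,R_2)$. Applying Theorem~\ref{G_1 ineq} (using $[C]_{\np}\leq[C']_{\np}$), I obtain $R\in\G_1(C;R_1,R_2)$ with $R\subset R'$. Applying Theorem~\ref{G_1 ineq} again in the reverse direction (using $[C']_{\np}\leq[C]_{\np}$), I obtain $R''\in\G_1(C';R_1,R_2)$ with $R''\subset R\subset R'$. Since $R''\in\G_1(C';R_1,R_2)\subset\tG'_1(C';R_1,R_2)$ and $R'$ is minimal in $\tG'_1(C';R_1,R_2)$, the containment $R''\subset R'$ forces $R''=R'$, which squeezes $R=R'$. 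Hence $R'\in\G_1(C;R_1,R_2)$, giving $\G_1(C';R_1,R_2)\subset\G_1(C;R_1,R_2)$. The reverse inclusion follows by the symmetric argument, so the two sets coincide.

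There is essentially no hard step here once Theorem~\ref{G_1 ineq} and the elementary properties of $\G_0$ are in hand; the only subtlety is the bookkeeping for the squeeze argument, namely noting that the intermediate element $R''$ produced by the second application of Theorem~\ref{G_1 ineq} automatically lies in $\tG'_1(C';R_1,R_2)$ so that minimality of $R'$ can be invoked. This gives the well-definedness of $\G_1(-\,;R_1,R_2)$ as a map on $\mCf$ (restricted to classes $[C]_{\np}$ with $R_1,R_2\in\G_0(C)$), completing the proof of Corollary~\ref{G_1 invariance}.
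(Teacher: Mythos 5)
Your proposal is correct and follows essentially the same route as the paper's proof: equality of $\G_0$'s, a double application of Theorem~\ref{G_1 ineq} in both directions, and the minimality of the chosen element in $\tG'_1$ to squeeze the chain of inclusions into equalities. The only cosmetic difference is that you start from an element of $\G_1(C';R_1,R_2)$ while the paper starts from an element of $\G_1(C;R_1,R_2)$, which is immaterial by symmetry.
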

\begin{proof}
Suppose that $[C]_{\np} = [C']_{\np}$.
Then, since $\G_0(C)=\G_0(C')$,  
we have $R_1, R_2 \in \G_0(C) \cap \G_0(C')$.
Let $R \in \G_1(C;R_1,R_2)$. Since $[C]_{\np} \geq [C']_{\np}$,
Theorem~\ref{G_1 ineq} gives 
$R' \in \G_1(C';R_1,R_2)$ with $R' \subset R$.
Moreover,  since $[C]_{\np} \leq [C']_{\np}$, 
we also have 
$R'' \in \G_1(C;R_1,R_2)$ with $R'' \subset R' \subset R$.
Here, since $R$ is minimal in $\tG'_1(C;R_1,R_2)$, we have
$$
R'' = R' = R,
$$
and hence $R=R' \in \G_1(C';R_1,R_2)$. This proves 
$\G_1(C;R_1,R_2) \subset \G_1(C';R_1,R_2)$. 

In the same way, we also have
$\G_1(C;R_1,R_2) \supset \G_1(C';R_1,R_2)$.
\end{proof}

\subsubsection{Higher invariants $\G_n$ with $n \geq 2$}
Now we construct more higher invariants $\G_n$ by induction.
Let $n$ be an integer with $n \geq 2$, and assume that 
$$
\begin{array}{lll}
\exists R^0_1, R^0_2 \in \G_0(C) &\text{ with }& R^0_1 \neq R^0_2,\\
\exists R^1_1, R^1_2 \in \G_1(C;\{R^0_1, R^0_2\}) &\text{ with }& R^1_1 \neq R^1_2,\\
\cdots &\ & \ \\
\exists R^{n-1}_1, R^{n-1}_2 \in 
\G_{n-1}(C; \{R^j_1, R^j_2\}_{j=0}^{n-2})
 &\text{ with }& R^{n-1}_1 \neq R^{n-1}_2.\\
\end{array}
$$
Then, we define 
\begin{eqnarray*}
\tgen_n(C;\{R^j_1, R^j_2\}_{i=0}^{n-1}) &:= &
\left\{x \in C_n \  \middle| \  
\begin{array}{r}
\exists z_i \in 
\gen_{n-1}(C;\{R^j_1,R^j_2\}_{j=0}^{n-2};R^{n-1}_i) \\
\text{ s.t.\ }
\left\{
\begin{array}{l}
\partial z_1 = \partial z_2 \\
\partial x = z_1 + z_2
\end{array}
\right.
\end{array}
\right\},\\
\tG'_n(C;\{R^j_1, R^j_2\}_{j=0}^{n-1}) &:= &\{R_x \mid x \in 
\tgen_n(C; \{R^j_1, R^j_2\}_{j=0}^{n-1}) \}, \text{ and }\\
\G_n(C; \{R^j_1, R^j_2\}_{j=0}^{n-1}) &:=& \min \tG'_n(C;\{R^j_1, R^j_2\}_{j=0}^{n-1}).\\
\end{eqnarray*}
In addition, for $R \in \G_n(C; \{R^j_1, R^j_2\}_{j=0}^{n-1})$, we define
$$
\gen_n(C; \{R^j_1, R^j_2\}_{j=0}^{n-1}; R) := 
\{x \in \tgen_n(C; \{R^j_1, R^j_2\}_{j=0}^{n-1}) \mid R_x=R \}.
$$
Unlike the cases of $\G_0$ and $\G_1$,
the author doesn't know whether  
$\G_n(C; \{R^j_1, R^j_2\}_{j=0}^{n-1})$ is empty or not,
while we see that it is finite.
(This is caused from the condition $\partial z_1 = \partial z_2$.)
However, if $\G_n(C; \{R^j_1, R^j_2\}_{j=0}^{n-1})$ is non-empty,
then we can show that it is invariant under $\np$-equivalence.
(As a consequence, the emptiness of $\G_n$ is also an invariant
of $\np$-classes.)

\begin{thm}
\label{G_n ineq}
Suppose that $[C]_{\np} \leq [C']_{\np}$ and the intersection
$$\G_{k}(C;\{R^j_1,R^j_2\}_{j=0}^{k-1}) \cap \G_{k}(C';\{R^j_1,R^j_2\}_{j=0}^{k-1})$$ 
has distinct two elements $R^k_1$ and $R^k_2$
(where $k=0,1, \ldots, n-1$, and $\{R^{j}_1, R^{j}_2\}_{j=0}^{-1}= \emptyset$).
Then, for any $R' \in \G_n(C'; \{R^j_1,R^j_2\}_{j=0}^{k-1})$, there exists an element
$R \in \G_n(C; \{R^j_1,R^j_2\}_{j=0}^{k-1})$ with $R \subset R'$.
In particular, the non-emptiness of 
$\G_n(C'; \{R^j_1,R^j_2\}_{j=0}^{k-1})$ implies the non-emptiness of
$\G_n(C; \{R^j_1,R^j_2\}_{j=0}^{k-1})$.
\end{thm}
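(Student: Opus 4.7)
My plan is to prove the theorem by induction on $n$, closely following the template of the proof of Theorem~\ref{G_1 ineq} (which is the case $n=1$); the case $n=0$ is the proposition immediately following Theorem~\ref{G'_0 thm}. The hypothesis $[C]_{\np} \leq [C']_{\np}$ enters through Theorem~\ref{quasi thm2}, which supplies a $\z^2$-filtered quasi-isomorphism $f \colon C' \to C$, and essentially all of the work will consist of tracking realizers across $f$.

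The key technical step, which I will build into the induction as a strengthened hypothesis, is the assertion that whenever a region $R^k$ lies in $\G_k(C;\{R^j_1,R^j_2\}_{j=0}^{k-1}) \cap \G_k(C';\{R^j_1,R^j_2\}_{j=0}^{k-1})$, the map $f$ carries $\gen_k(C';\ldots;R^k)$ into $\gen_k(C;\ldots;R^k)$. This is exactly what makes the proof of Theorem~\ref{G_1 ineq} work at its degree-zero level: $R_{f(z)} \subset R_z$ by Lemma~\ref{chain's region map}, $R_{f(z)} \in \tG'_k(C;\ldots)$ because $f$ respects the algebraic conditions defining $\tgen_k$, and minimality of $R^k$ in $\tG'_k(C;\ldots)$ forces $R_{f(z)} = R^k$. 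I will verify this strengthening alongside the main statement at every inductive level.

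For the inductive step, assume both statements through level $n-1$. Given $R' \in \G_n(C';\ldots)$, I will pick a realizer $x' \in \gen_n(C';\ldots;R')$ together with witnesses $z'_i \in \gen_{n-1}(C';\ldots;R^{n-1}_i)$ ($i=1,2$) satisfying $\partial z'_1 = \partial z'_2$ and $\partial x' = z'_1 + z'_2$. The strengthened hypothesis applied to $z'_1$ and $z'_2$ places each $f(z'_i)$ in $\gen_{n-1}(C;\ldots;R^{n-1}_i)$. Because $f$ is a chain map, $\partial f(z'_1) = \partial f(z'_2)$ and $\partial f(x') = f(z'_1)+f(z'_2)$, so $f(x') \in \tgen_n(C;\ldots)$ and hence $R_{f(x')} \in \tG'_n(C;\ldots)$. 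The analog of Lemma~\ref{min of finite} for the finite non-empty set $\tG'_n(C;\ldots)$---finite because any chain of fixed Maslov grading $n$ expands uniquely into the monomials $U^{(\gr(x_k)-n)/2}x_k$ in a filtered basis, so its region is determined by which basis elements appear---then supplies $R \in \G_n(C;\ldots)$ with $R \subset R_{f(x')} \subset R_{x'} = R'$, where the last containment uses Lemma~\ref{chain's region map}. The ``in particular'' assertion follows immediately: any $R' \in \G_n(C';\ldots)$ produces such an $R \in \G_n(C;\ldots)$.

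The main obstacle I anticipate is bookkeeping rather than genuine mathematics: the strengthened ``realizers go to realizers'' statement must be carried simultaneously with the main induction, because the definition of $\tgen_n$ feeds on realizers one level down. Once that is set up, every individual step reduces to the chain-map property of $f$, the finiteness of $\tG'_n(C;\ldots)$, the finite-set minimalization Lemma~\ref{min of finite}, or Lemma~\ref{chain's region map}, and no conceptually new ingredient beyond what already drives the proof of Theorem~\ref{G_1 ineq} will be needed.
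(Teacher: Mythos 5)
Your proposal is correct and follows essentially the same route as the paper, which simply states that the argument of Theorem~\ref{G_1 ineq} carries over verbatim once one knows (by induction) that $R_{f(z_i)} \in \tG'_{n-1}(C;\{R^j_1,R^j_2\}_{j=0}^{n-2})$ so that minimality forces $f(z_i)$ to be a realizer. Your explicit formulation of the strengthened inductive hypothesis that realizers map to realizers is precisely the point the paper leaves implicit, and the remaining steps (chain-map property, finiteness of $\tG'_n$, Lemma~\ref{min of finite}, Lemma~\ref{chain's region map}) match the paper's.
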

\begin{proof}
The proof follows from arguments exactly the same as the proof of 
Theorem~\ref{G_1 ineq}. (We only need to care about the fact that
$R_{f(z_i)} \in \tG'_{n-1}(C; \{R^j_1, R^j_2\}_{j=0}^{n-2})$,
but this also can be proved by induction.)
\end{proof}
\begin{cor}
\label{G_n invariance}
For any $[C]_{\np} \in \mCf$ and 
sequence of distinct two elements 
$R^k_1, R^k_2 \in \G_k(C;\{R^j_1, R^j_2\}_{j=0}^{k-1})$ 
$(k=0,1, \ldots, n-1)$, 
$\G_n(C;\{R^j_1,R^j_2\}_{j=0}^{n-1}) \in \mathcal{P}(\CR(\z^2))$ 
is an invariant of the $\np$-class $[C]_{\np}$. 
\end{cor}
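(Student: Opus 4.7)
The plan is to prove this by induction on $n$, using Theorem~\ref{G_n ineq} twice (once in each direction under $[C]_{\np} = [C']_{\np}$) and then closing with minimality, exactly mimicking the ``sandwich'' template of Corollary~\ref{G_1 invariance}. The base case $n = 0$ is Corollary~\ref{tG_0 invariance} and the case $n = 1$ is already Corollary~\ref{G_1 invariance}, so the only new content is managing the tower of parameters $\{R^j_1, R^j_2\}_{j=0}^{n-1}$ at higher levels.

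Suppose that $[C]_{\np} = [C']_{\np}$ and that $\{R^j_1, R^j_2\}_{j=0}^{n-1}$ is a valid parameter sequence for $C$. Before invoking Theorem~\ref{G_n ineq} at level $n$, I would first verify by a secondary induction on $k \in \{0, 1, \ldots, n-1\}$ that
$$
\G_k(C; \{R^j_1, R^j_2\}_{j=0}^{k-1}) \;=\; \G_k(C'; \{R^j_1, R^j_2\}_{j=0}^{k-1}).
$$
The case $k = 0$ is Corollary~\ref{tG_0 invariance}. For the inductive step, the previous equalities ensure that $R^{k-1}_1$ and $R^{k-1}_2$ lie simultaneously in $\G_{k-1}(C; \ldots)$ and $\G_{k-1}(C'; \ldots)$, which is exactly the hypothesis needed to apply Theorem~\ref{G_n ineq} at level $k$ in both directions. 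Given $R \in \G_k(C; \ldots)$, one application produces $R' \in \G_k(C'; \ldots)$ with $R' \subset R$, and a second application produces $R'' \in \G_k(C; \ldots)$ with $R'' \subset R' \subset R$. Minimality of $R$ in $\tG'_k(C; \ldots)$ forces $R = R' = R''$, so $R \in \G_k(C'; \ldots)$; symmetry gives the reverse inclusion and hence equality at level $k$.

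Once this tower of equalities is established, the intersection $\G_k(C; \ldots) \cap \G_k(C'; \ldots)$ contains $\{R^k_1, R^k_2\}$ for every $k \leq n-1$, so the hypothesis of Theorem~\ref{G_n ineq} at level $n$ is met from both $C$ to $C'$ and from $C'$ to $C$. Running the identical sandwich-and-minimality argument one last time at level $n$ yields $\G_n(C; \{R^j_1, R^j_2\}_{j=0}^{n-1}) = \G_n(C'; \{R^j_1, R^j_2\}_{j=0}^{n-1})$, which is the desired invariance.

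The argument is essentially bookkeeping on top of Theorem~\ref{G_n ineq}, and I do not expect a serious obstacle. The one point that requires care, and which is handled precisely by the secondary induction, is ensuring that the parameters $R^j_i$ (initially chosen only as elements of $\G_j(C; \ldots)$) actually belong to $\G_j(C'; \ldots)$ before one is allowed to invoke Theorem~\ref{G_n ineq} for $C'$; without the layer-by-layer propagation, $\G_k(C'; \{R^j_1, R^j_2\}_{j=0}^{k-1})$ would not even be defined. Apart from that, the proof is a direct transcription of the Corollary~\ref{G_1 invariance} argument at every level of the tower.
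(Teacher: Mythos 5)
Your proof is correct and follows essentially the same route as the paper, whose own proof simply says the argument is identical to that of Corollary~\ref{G_1 invariance} with symbols replaced. Your secondary induction propagating the equality $\G_k(C;\ldots)=\G_k(C';\ldots)$ level by level is precisely the bookkeeping the paper leaves implicit, and it correctly ensures the parameter sequence is valid for $C'$ before Theorem~\ref{G_n ineq} is invoked.
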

\begin{proof}
The proof follows from arguments exactly the same as the proof of 
Corollary~\ref{G_1 invariance}. 
(In fact, we only need to replace some symbols suitably.)
\end{proof}

\subsection{Relationship to other invariants}
In this subsection, we study the relationship of the new invariants 
$\G_0$ and $\G_1$ to the invariants reviewed in Subsection~\ref{section invariants}.
\subsubsection{Relationship of $\G_0$ to $\np$, $V_k$, $\tau$ and $\Upsilon$}
We first discuss the relationship of $\G_0$ to $\np$.
Here, recall $R_{(k,l)} := \{(i,j) \in \z^2 \mid (i,j) \leq (k,l) \}$.
\begin{prop}
\label{G_0 and np}
For any formal knot complex $C$, the values $\np(C)$ and $\np(C^*)$ are determined from
$\G_0(C)$ by the formulas
$$
\np(C)= \min\{m \in \z_{\geq 0} \mid \exists R \in \G_0(C), R \subset R_{(0,m)} \}
$$
and
$$
\np(C^*)= \min\{ m \in \z_{\geq 0} \mid \forall R \in \G_0(C), R \supset 
R_{(0,-m)} \}.
$$
\end{prop}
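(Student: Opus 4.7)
The plan is to unpack both formulas directly from the definition of $\nu^+$, using Corollary~\ref{minimalize} as the bridge between $\tG_0(C)$ and $\G_0(C)$.

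For the first formula, I simply observe that the subset $\{i \leq 0,\, j \leq m\}$ coincides with the closed region $R_{(0,m)}$. Hence $\nu^+(C) \leq m$ if and only if $R_{(0,m)} \in \tG_0(C)$, which by Corollary~\ref{minimalize} is equivalent to the existence of some $R \in \G_0(C)$ with $R \subset R_{(0,m)}$. Taking the minimum over $m \in \z_{\geq 0}$ yields the formula.

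For the second formula, I will first translate the inequality $\nu^+(C^*) \leq m$ into a statement about $C$ itself via Lemma~\ref{stb lem1}: it is equivalent to injectivity of
$$
p_{*,0}\colon H_0(C) \to H_0\bigl(C/C_{\{i \leq -1\} \cup \{j \leq -m-1\}}\bigr).
$$
Since $H_0(C) \cong \F$, injectivity fails precisely when some homological generator $x$ satisfies $x = \partial y + z$ with $z \in C_{\{i \leq -1\} \cup \{j \leq -m-1\}}$. Here $z$ is automatically a cycle with $[z] = [x] \neq 0$, hence itself a homological generator lying in the subcomplex; conversely, any homological generator in the subcomplex witnesses non-injectivity via $x = \partial y + z$. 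In the language of $\tG_0(C)$ this says $\nu^+(C^*) \leq m$ if and only if $\{i \leq -1\} \cup \{j \leq -m-1\} \notin \tG_0(C)$, and Corollary~\ref{minimalize} then reformulates this as: no $R \in \G_0(C)$ is contained in $\{i \leq -1\} \cup \{j \leq -m-1\}$.

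The remaining step is a combinatorial rewriting. Since each $R \in \G_0(C)$ is a closed region, the condition $R \not\subset \{i \leq -1\} \cup \{j \leq -m-1\}$ is equivalent to $R$ containing some $(i_0, j_0)$ with $i_0 \geq 0$ and $j_0 \geq -m$; closedness then forces $R \supset R_{(i_0, j_0)} \supset R_{(0,-m)}$. Conversely, $R \supset R_{(0,-m)}$ immediately gives $(0,-m) \in R$ and thus $R \not\subset \{i \leq -1\} \cup \{j \leq -m-1\}$. Combining these observations yields the stated formula. The main subtlety I anticipate is the extraction of a homological generator inside the subcomplex from the failure of injectivity of $p_{*,0}$; once that boundary-chasing argument is written out carefully, the rest reduces to straightforward manipulations with closed regions together with Corollary~\ref{minimalize}.
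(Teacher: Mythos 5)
Your proposal is correct and follows essentially the same route as the paper: the first formula via the identification $\{i\le 0,\,j\le m\}=R_{(0,m)}$ plus Corollary~\ref{minimalize}, and the second via Lemma~\ref{stb lem1} together with the observation that a closed region $R$ satisfies $R\not\subset\{i\le -1\}\cup\{j\le -m-1\}$ iff $R\supset R_{(0,-m)}$. The only cosmetic difference is that you pass from $\tG_0(C)$ to $\G_0(C)$ by negating Corollary~\ref{minimalize}, while the paper works with $\tG'_0(C)$ and invokes Lemma~\ref{min of finite} directly; these amount to the same argument.
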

\begin{proof}
We can see that the equality
$$
\np(C)= \min \{m \in \z_{\geq 0} \mid R_{(0,m)} \in \tG_0(C) \}
$$
holds. Therefore, the first assertion immediately follows from Corollary~\ref{minimalize}.

Next, by Lemma~\ref{stb lem1}, the inequality
$\np(C^*) > m$ holds if and only if there is a homological generator $x \in C$ 
with $R_x \subset \{i \leq -1 \text{ or } j \leq -m-1 \}$.
Here, we note that 
$R_x \subset \{i \leq -1 \text{ or } j \leq -m-1 \}$ if and only if 
$R_x \not\supset  R_{(0,-m)}$.
Therefore, we have
$$
\np(C)= \min \{m \in \z_{\geq 0} \mid \forall R_x \in \tG'_0(C), R_x \supset R_{(0,-m)} \}.
$$
Moreover, Lemma~\ref{min of finite} implies
that any $R_x \in \tG'_0(C)$ includes $R_{(0,-m)}$
if and only if any $R_x \in \G_0(C)$ includes $R_{(0,-m)}$.
This completes the proof.
\end{proof}

From Proposition~\ref{G_0 and np}, we see that 
$\G_0$ detects the zero element as a $\np$-class. 
\begin{thm}
\label{detect zero}For any formal knot complex $C$,
the following holds:
$$
[C]_{\np} = 0 \Leftrightarrow \G_0(C) = \{ R_{(0,0)}\}
$$
\end{thm}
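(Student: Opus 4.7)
The plan is to deduce both directions from three tools already established: the $\np$-invariance of $\G_0$ (Corollary~\ref{tG_0 invariance} together with the fact that $\G_0 = \min \tG_0$), Proposition~\ref{G_0 and np} which reads off $\np(C)$ and $\np(C^*)$ from $\G_0(C)$, and Proposition~\ref{stable1} which characterizes the zero $\np$-class by the vanishing $\np(C) = \np(C^*) = 0$.

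For the ($\Leftarrow$) direction, I would assume $\G_0(C) = \{R_{(0,0)}\}$ and feed this into Proposition~\ref{G_0 and np}. The trivial containments $R_{(0,0)} \subset R_{(0,0)}$ and $R_{(0,0)} \supset R_{(0,0)}$ immediately give $\np(C) = 0$ and $\np(C^*) = 0$, respectively. Proposition~\ref{stable1} then yields a $\z^2$-filtered homotopy equivalence $C \simeq \Lambda \oplus A$ with $A$ a stabilizer, hence $[C]_{\np} = [\Lambda]_{\np} = 0$.

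For the ($\Rightarrow$) direction, I would use $[C]_{\np} = 0 = [\Lambda]_{\np}$ together with the $\np$-invariance of $\G_0$ to reduce the problem to the explicit computation of $\G_0(\Lambda)$. The complex $\Lambda$ has the single filtered basis element $1$, with $\gr(1) = 0$ and $(\Alg(1), \Alex(1)) = (0,0)$. Since the differential on $\Lambda$ is zero and $\Lambda_0$ is one-dimensional over $\F$, the only homological generator of $\Lambda$ is $1$ itself, whose region is $R_1 = R_{(0,0)}$. Hence $\tG'_0(\Lambda) = \{R_{(0,0)}\}$, and by Theorem~\ref{G'_0 thm} we obtain $\G_0(\Lambda) = \min \tG'_0(\Lambda) = \{R_{(0,0)}\}$.

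There is essentially no substantive obstacle here: the theorem is a routine assembly of Proposition~\ref{G_0 and np} and Proposition~\ref{stable1}, and the only calculation needed is the identification $\G_0(\Lambda) = \{R_{(0,0)}\}$, which is immediate from the definitions. The one point requiring a bit of care is to make sure that when we invoke Proposition~\ref{G_0 and np} in the $\Leftarrow$ direction we correctly interpret the quantifiers when $\G_0(C)$ is a singleton, but in that case the ``$\exists$'' and ``$\forall$'' formulas reduce to the same trivial check on $R_{(0,0)}$.
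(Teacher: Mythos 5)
Your proof is correct and follows essentially the same route as the paper: the $\Leftarrow$ direction reads off $\np(C)=\np(C^*)=0$ from Proposition~\ref{G_0 and np}, and the $\Rightarrow$ direction reduces to the computation $\G_0(\Lambda)=\{R_{(0,0)}\}$ via the $\np$-invariance of $\G_0$. The only (harmless) difference is that you route through Proposition~\ref{stable1} to conclude $[C]_{\np}=0$, whereas this already follows directly from the definition of $\np$-equivalence with $\Lambda$.
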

\begin{proof}
By the invariance of $\G_0$ under $\nuplus$ and easy computation 
$\G_0(\Lambda)=\{R_{(0,0)}\}$, the only-if-part obviously holds.
Moreover. the if-part immediately follows from Proposition~\ref{G_0 and np},
since the unique element $R :=R_{(0,0)} \in \G_0(C)$ satisfies 
$R \subset R_{(0,0)}$ and $R \supset R_{(0,0)}$.
\end{proof}
On the other hand, we will see in Subsection~\ref{no realizing}
that $\G_0$ is not a perfect invariant of $\np$-classes.

We can also translate the invariants $V_k$, $\tau$ and $\Upsilon$ as follows:
\begin{eqnarray*}
V_k(C)&=&\min\{m \in \z_{\geq 0} \mid  R_{(m,k+m)} \in \tG_0(C) \}\\
\tau(C)&=&\min\{m \in \z \mid (\{ i \leq -1\} \cup R_{(0,m)})  \in \tG_0(C) \}\\
\Upsilon_C(t)&=&-2\left(\min\{s \in \R \mid R^t(s) \in \tG_0(C) \}\right)
\end{eqnarray*}
(Here, recall $R^t(s) := \{(i,j) \in \z^2 \mid (1-t/2)i + (t/2)j \leq s \}$.)
Therefore, we have the following formulas.
\begin{prop}
\label{G_0 and others}
For any formal knot complex $C$, the invariants $V_k(C)$, $\tau(C)$ and 
$\Upsilon_C(t)$ are determined from
$\G_0(C)$ by the formulas:
\begin{eqnarray*}
V_k(C)&=&\min\{m \in \z_{\geq 0} \mid  
\exists R \in \G_0(C), R \subset R_{(m,k+m)} \}\\
\tau(C)&=&\min\{m \in \z \mid 
\exists R \in \G_0(C), R \subset
(\{ i \leq -1\} \cup R_{(0,m)})   \}\\
\Upsilon_C(t)&=&-2 \left( \min\{s \in \R \mid \exists R \in \G_0(C), R \subset
 R^t(s)\} \right)
\end{eqnarray*}
\end{prop}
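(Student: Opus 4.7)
The plan is to deduce each of the three formulas by applying Corollary~\ref{minimalize} to the ``$\tG_0$-translations'' already displayed immediately above the statement. Those translations themselves are justified as follows: the $V_k$ formula is Lemma~\ref{V_k homol gen} combined with the identification $\{i\leq m,\, j\leq k+m\}=R_{(m,k+m)}$, which turns ``$C_{R_{(m,k+m)}}$ contains a homological generator'' into ``$R_{(m,k+m)}\in\tG_0(C)$''; the $\tau$ formula is Lemma~\ref{tau homol gen}, rewritten in the same way; and the $\Upsilon$ formula is immediate from the definition of $\upsilon_C(t)$ once one observes $\mF^t_s = C_{R^t(s)}$.

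Once these $\tG_0$-characterizations are in hand, the second step is essentially formal. First I check that each region appearing on the right---namely $R_{(m,k+m)}$, $\{i\leq -1\}\cup R_{(0,m)}$, and $R^t(s)$---lies in $\CR(\z^2)$; only the last requires a brief remark, using $t\in[0,2]$ so that both coefficients in the defining inequality for $R^t(s)$ are nonnegative. Corollary~\ref{minimalize} then asserts that for any $R\in\CR(\z^2)$, one has $R\in\tG_0(C)$ if and only if there exists $R'\in\G_0(C)$ with $R'\subset R$. Substituting this equivalence into each $\tG_0$-translation produces the three displayed formulas verbatim.

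There is no substantive obstacle: the content of the proposition is entirely absorbed by Corollary~\ref{minimalize} together with the homological-generator characterizations of $V_k$, $\tau$, and $\Upsilon$ that were developed in Section~2.9. The only housekeeping point worth noting is that the minima on each right-hand side are genuinely attained, but this is automatic from the fact that $\G_0(C)$ is a non-empty finite subset of $\CR(\z^2)$ (by Theorem~\ref{G'_0 thm} together with $\dim_{\F}C_0<\infty$), so each indexing set is a non-empty subset of $\z_{\geq 0}$, $\z$, or $\R$ bounded below by the value of the corresponding invariant.
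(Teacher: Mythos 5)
Your proposal is correct and follows essentially the same route as the paper: the paper likewise records the $\tG_0$-characterizations of $V_k$, $\tau$ and $\Upsilon$ (via Lemma~\ref{V_k homol gen}, Lemma~\ref{tau homol gen} and the definition of $\upsilon_C$, together with $\mF^t_s=C_{R^t(s)}$) and then passes to $\G_0$ by Corollary~\ref{minimalize}. The housekeeping remarks you add (that the regions lie in $\CR(\z^2)$ and that the minima are attained, using finiteness of $\G_0(C)$) are harmless refinements of the same argument.
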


\subsubsection{Relationship of $\G_1$ to $\Upsilon^2$}
Next, we discuss the relationship of $\G_1$ to $\Upsilon^2$.
(Precisely, we compare $\G_1$ with $\upsilon^2$ rather than $\Upsilon^2$.)
Let 
$$
\G^{t\pm}_0(C) := \{R \in \G_0(C) \mid R \subset \mF^{t \pm \delta}_{\upsilon_C(t \pm \delta)}  \},
$$
and then we see that the inequality
$$
\mathcal{Z}^{\pm}_t(C) \supset \bigcup_{R \in \G^{t\pm}_0(C)}
\gen_0(C;R)
$$
holds for each sign. (Remark that it does not become the equality in general, since 
we might have $x \in \tgen_0(C)$ such that 
$R \subsetneq R_x \subset \mF^{t \pm \delta}_{\upsilon_C(t \pm \delta)}$
for some $R \in  \G^{t\pm}_0(C)$. Such $x$ is lying in 
$\mathcal{Z}^{\pm}(C)$ but not in the right-hand side.)
In particular, $\mathcal{Z}^-_t(C) \cap \mathcal{Z}^+_t(C) = \emptyset$
 only if 
$\G^{t-}_0(C) \cap \G^{t+}_0(C) = \emptyset$.

For any $t \in (0,2)$, we set
$$
\G_1^t(C)
:= \bigcup_{R^{\pm}\in \G^{t\pm}_0(C), R^-\neq R^+} 
\G_1(C;R^-, R^+).
$$
Then, we have the following inequality.
(In light of the inequality, we can regard $\upsilon^2_{C,t}$ as a linear approximation of $\G^t_1(C)$.)
\begin{prop}
\label{G_1 and upsilon^2}
For any formal knot complex $C$, $t \in (0,2)$ and $s \in [0,2]$, 
the inequality
$$
\upsilon^2_{C,t}(s) \leq \min \{r \in \R \mid \exists R \in \G^t_1(C), R \subset
(R^t(\upsilon_C(t)) \cup R^s(r))\}.
$$
holds.
\end{prop}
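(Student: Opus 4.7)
The plan is to unpack the definitions on both sides and trace through what a realizer of an element of $\G_1^t(C)$ produces in the chain complex. Fix $t \in (0,2)$ and $s \in [0,2]$. Let $r$ be any real number for which there exists $R \in \G_1^t(C)$ with $R \subset R^t(\upsilon_C(t)) \cup R^s(r)$; it suffices to show $\upsilon^2_{C,t}(s) \leq r$, and then take the infimum over such $r$. By the definition of $\G_1^t(C)$, we have $R \in \G_1(C;R^-,R^+)$ for some distinct $R^\pm \in \G_0^{t\pm}(C)$, which means (by the definition of $\G_0^{t\pm}$) that $R^\pm \subset \mF^{t\pm\delta}_{\upsilon_C(t\pm\delta)}$.

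Next, pick a realizer $x \in \gen_1(C; R^-, R^+; R)$; by the definition of $\gen_1$, there exist $z^\pm \in \gen_0(C; R^\pm)$ with $\partial x = z^- + z^+$. Each $z^\pm$ is a homological generator with $R_{z^\pm} = R^\pm \subset \mF^{t\pm\delta}_{\upsilon_C(t\pm\delta)}$, so by Lemma~\ref{chain's region cl} we have $z^\pm \in \mF^{t\pm\delta}_{\upsilon_C(t\pm\delta)}$, i.e.\ $z^\pm \in \mathcal{Z}^\pm_t(C)$. On the other hand, $R_x = R \subset R^t(\upsilon_C(t)) \cup R^s(r)$, so by Lemma~\ref{chain's region cl} and Lemma~\ref{add},
\[
x \in C_{R^t(\upsilon_C(t)) \cup R^s(r)} = C_{R^t(\upsilon_C(t))} + C_{R^s(r)} = \mF^t_{\upsilon_C(t)} + \mF^s_r.
\]

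Consequently, the equality $\partial x = z^- + z^+$ takes place inside $\mF^t_{\upsilon_C(t)} + \mF^s_r$, which shows that $[z^-] = [z^+]$ in $H_0\bigl(\mF^t_{\upsilon_C(t)} + \mF^s_r\bigr)$. By the very definition of $\upsilon^2_{C,t}(s)$ this gives $\upsilon^2_{C,t}(s) \leq r$, completing the proof. No step is really an obstacle here; the only subtlety is bookkeeping, namely making sure that a realizer of $R^\pm \in \G_0^{t\pm}(C)$ genuinely lies in $\mathcal{Z}^\pm_t(C)$, which is immediate from Lemma~\ref{chain's region cl} together with the containment defining $\G_0^{t\pm}(C)$.
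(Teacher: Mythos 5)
Your proof is correct and follows essentially the same route as the paper's: unpack $\G_1^t(C)$ to get $R^\pm \in \G_0^{t\pm}(C)$ and a realizer $x$ with $\partial x = z^- + z^+$, place $z^\pm$ in $\mathcal{Z}^\pm_t(C)$ via the containment $R^\pm \subset \mF^{t\pm\delta}_{\upsilon_C(t\pm\delta)}$, and use Lemma~\ref{add} to identify $C_{R^t(\upsilon_C(t))\cup R^s(r)}$ with $\mF^t_{\upsilon_C(t)}+\mF^s_r$ so that $[z^-]=[z^+]$ there. The only cosmetic difference is that you argue for an arbitrary admissible $r$ and then pass to the infimum, where the paper applies the same argument directly to the minimizing value.
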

\begin{proof}
Denote the right-hand side of the inequality in Proposition~\ref{G_1 and upsilon^2}
by $\upsilon^2_{\G^t_1(C)}(s)$.
Then, we can take $R \in \G^t_1(C)$ with 
$R \subset (R^t(\upsilon_C(t)) \cup R^s(\upsilon^2_{\G_1^t(C)}(s)))$.
Moreover, by the definition of $\G_1^t(C)$, there exists elements
$R^{\pm} \in \G^{t\pm}_0(C)$ such that $R^- \neq R^+$
and $R \in \G_1(C; R^-, R^+)$.
This implies that we have a homological generator
$$
z^{\pm} \in \gen_0(C;R^{\pm}) \subset \mathcal{Z}^{\pm}_t(C)
$$
for each sign
and 1-chain $x \in C_{R_x}\subset 
C_{R^t(\upsilon_C(t)) \cup R^s(\upsilon^2_{\G_1^t(C)}(s))}$
such that $\partial x = z^- - z^+$.
Here, by Lemma~\ref{add}, we see
$$
C_{R^t(\upsilon_C(t)) \cup R^s(\upsilon^2_{\G_1^t(C)}(s))}
=C_{R^t(\upsilon_C(t))} + C_{R^s(\upsilon^2_{\G_1^t(C)}(s))}
=\mF^t_{\upsilon_C(t)} + \mF^s_{\upsilon^2_{\G_1^t(C)}(s)},
$$
and hence $[z^-]-[z^+]= [\partial x]= 0 \in 
H_0(\mF^t_{\upsilon_C(t)} + \mF^s_{\upsilon^2_{\G_1^t(C)}(s)})$.
This shows the desired inequality
$\upsilon^2_{C,t}(s) \leq \upsilon^2_{\G_1^t(C)}(s)$.
\end{proof}

\subsection{Genus one complexes with no realizing knot}
\label{no realizing}
In this subsection, we define the complexes $C^n$ in Section~1 precisely, 
and prove Theorem~\ref{formal genus1} and Corollary~\ref{formal genus1 cor}.

For any $n \in \z_{>0}$, we define a $\F$-vector space $\bC^n$
with a basis $\{x_k, x'_k , y\}_{k=0}^{n-1}$
and $\F$-linear map $\bpartial \colon \bC^{n} \to \bC^{n}$ as follows:
\begin{itemize}
\item $\gr(x_k)=\gr(x'_k)=k$ and $\gr(y)=n$ ($0 \leq k \leq n-1$).
\item $\bpartial x_k = \bpartial x'_k = x_{k-1} + x'_{k-1}$ and 
$\bpartial y=x_{n-1}+x'_{n-1}$ ($0 \leq k \leq n-1$), 
where we define $x_{-1}+x'_{-1} :=0$.
\item $(\Alg(x_k), \Alex(x_k))=(k,k+1)$, $(\Alg(x'_k), \Alex(x'_k))=(k+1,k)$ and 
$(\Alg(y), \Alex(y))=(n,n)$ ($0 \leq k \leq n-1$).
\end{itemize}
Then we can check that $(\bC, \bpartial)$ satisfies all conditions of 
Lemma~\ref{construction}. 
(Figure~\ref{C^n} in Section~1 depicts the complex $(\bC^n, \bpartial)$.)
Therefore, we have a formal knot complex
$(C, \partial)$ which is related to $(\bC, \bpartial)$
as described in Lemma~\ref{construction}.
Note that $C^1$ coincides with the knot complex for the right-hand trefoil $T_{2,3}$.
Moreover, $g(C^n)=1$ for any $n$. 
\begin{prop}
\label{regions of C^n}
For any $n \in \z_{>0}$, the formal knot complex $C^n$ satisfies the following:
\begin{enumerate}
\item $\G_0(C^n)=\{R_{(0,1)}, R_{(1,0)}\}$. 
\item $\G_k(C^n;\{R_{(j,j+1)}, R_{(j+1,j)}\}_{j=0}^{k-1})=\{R_{(k,k+1)}, R_{(k+1,k)}\}$ 
$(1 \leq k \leq n-1)$. 
\item
$\G_n(C^n;\{R_{(j,j+1)}, R_{(j+1,j)}\}_{j=0}^{n-1})=\{R_{(n,n)}\}$. 
\end{enumerate}
\end{prop}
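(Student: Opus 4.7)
The plan is to prove all three assertions simultaneously by induction on $k$. The complex $C^n$ has a very rigid staircase structure: two parallel legs of basis elements $\{x_k\}_{k=0}^{n-1}$ and $\{x'_k\}_{k=0}^{n-1}$ joined at the top by $y$, with $\bar\partial x_k = \bar\partial x'_k = x_{k-1}+x'_{k-1}$ for all $k \geq 1$ and $\bar\partial y = x_{n-1}+x'_{n-1}$. This self-similarity makes the invariants $\G_k$ computable level-by-level, because at each stage the equation $\bar\partial x = z_1+z_2$ admits essentially only the two symmetric solutions $x_{k+1}$ and $x'_{k+1}$.

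For the base case, I would first check that $H_0(C^n) \cong \F$ is generated by $[x_0] = [x'_0]$, since $x_0+x'_0 = \bar\partial x_1$ is a boundary. Enumerating grade-$0$ cycles, every such cycle is a linear combination of $x_0, x'_0$ together with the ``higher'' cycles $U^m(x_{2m}+x'_{2m})$ for even indices, and the latter are all boundaries (of $U^m x_{2m+1}$ or of $U^{(n-1)/2}y$ when $n$ is odd). Hence every homological generator has the form $x_0+b$ for a grade-$0$ boundary $b$. By direct inspection, the only choices of $b$ keeping $R_{x_0+b}$ minimal are $b=0$ and $b = x_0+x'_0$, yielding the incomparable regions $R_{(0,1)}$ and $R_{(1,0)}$. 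By Theorem~\ref{G'_0 thm} this gives $\G_0(C^n) = \{R_{(0,1)}, R_{(1,0)}\}$.

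For the inductive step with $1 \leq k \leq n-1$, assume the claim is established through level $k$, and that $x_k, x'_k$ are realizers of $R_{(k,k+1)}$ and $R_{(k+1,k)}$ respectively. Since $\bar\partial x_k = x_{k-1}+x'_{k-1} = \bar\partial x'_k$, the pair $(z_1,z_2) = (x_k, x'_k)$ satisfies the compatibility condition in the definition of $\tgen_{k+1}$. The chains $x_{k+1}$ and $x'_{k+1}$ then solve $\bar\partial x = x_k+x'_k$ and realize the regions $R_{(k+1,k+2)}$ and $R_{(k+2,k+1)}$. For minimality, I would argue that any $x \in \tgen_{k+1}$ has $\bar\partial x$ supported (modulo cancellations by cycles) on $x_k+x'_k$ plus ``lower'' terms; tracing through the grade-$(k+1)$ basis elements whose $\bar\partial$ can produce $x_k$ or $x'_k$ at the top positions, the only candidates are $x_{k+1}$ and $x'_{k+1}$ themselves. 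Any other contribution comes from grade-$(k+1)$ cycles $U^m(x_{k+1+2m}+x'_{k+1+2m})$, which only enlarge the region when $m \geq 0$ and lie inside $R_{(k+1,k+2)}$ or $R_{(k+2,k+1)}$ when $m \leq -1$. Hence either $(k+1,k+2)$ or $(k+2,k+1)$ must appear in $\mathrm{supp}(x)$, forcing $R_x \supset R_{(k+1,k+2)}$ or $R_x \supset R_{(k+2,k+1)}$.

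For the top step $k = n$, the equation $\bar\partial x = x_{n-1}+x'_{n-1}$ can only be solved using $y$: the other grade-$n$ basis elements are of the form $U^{-m}x_{n-2m}, U^{-m}x'_{n-2m}$ with $m \geq 1$, whose boundaries live at the strictly smaller positions $(n-m-1,n-m)$ and $(n-m,n-m-1)$, never producing the pair $(n-1,n), (n,n-1)$. Thus $y$ must appear in $x$, forcing $(n,n) \in \mathrm{supp}(x)$, hence $R_x \supset R_{(n,n)}$. Since $y$ itself realizes $R_{(n,n)}$ and all grade-$n$ cycles (spanned by $U^{-m}(x_{n-2m}+x'_{n-2m})$ for $m \geq 1$) are supported inside $R_{(n,n)}$, we conclude $\G_n(C^n;\ldots) = \{R_{(n,n)}\}$. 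The main obstacle is the minimality verification in the inductive step: one must carefully account for the non-uniqueness of realizers at earlier stages (e.g.\ $x_2$ and $x_2 + U^{-1}(x_0+x'_0)$ both realize $R_{(2,3)}$), and show that changing realizers only affects the ``lower'' part of $\mathrm{supp}(x)$, never removing the top positions that determine the region.
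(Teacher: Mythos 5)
Your proposal is correct and follows essentially the same route as the paper's own proof: an explicit, level-by-level induction identifying the homological generators and realizers of $C^n$ and checking minimality of the resulting regions. In fact you are more careful than the paper on one point --- the paper asserts $\tgen_0(C^n)=\{x_0,x'_0\}$ and $\gen_m(\cdots)=\{x_m\}$, which is literally false for $n\geq 3$ (resp.\ $m\geq 2$) since generators and realizers may be modified by grade-preserving cycles such as $U(x_2+x'_2)$ or $U^{-1}(x_0+x'_0)$ supported at lower filtration positions; your explicit accounting of this non-uniqueness, together with the observation that such modifications never remove the corner position that determines the region, is exactly the detail needed to make the argument airtight.
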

\begin{proof}
Obviously, we see that
$$\tgen_0(C^n) = \{x_0, x'_0\}$$ 
and 
$$\tG'_0(C^n)= \{R_{x_0}, R_{x'_0} \}=\{R_{(0,1)}, R_{(1,0)}\}.$$ 
Moreover,
both $R_{(0,1)}$ and $R_{(1,0)}$ are minimal in $\{R_{(0,1)}, R_{(1,0)}\}$,
and hence we have
$\G_0(C^n)=\{R_{(0,1)}, R_{(1,0)}\}$.

Next, fix $m \in \{0, 1, \ldots, n-2\}$, 
and assume that the assertion~(2)
holds for any $1 \leq k \leq m$. Then the equalities 
$$
\gen_m(C^n;\{R_{(j,j+1)}, R_{(j+1,j)}\}_{j=0}^{m-1};R_{(m,m+1)})=\{ x_{m}\}
$$
and
$$
\gen_m(C^n;\{R_{(j,j+1)}, R_{(j+1,j)}\}_{j=0}^{m-1};R_{(m+1,m)})=\{ x'_{m}\}
$$
must hold. Now we see 
$$
\tgen_{m+1}(C^n;\{R_{(j,j+1)}, R_{(j+1,j)}\}_{j=0}^{m})= \{x_{m+1}, x'_{m+1} \},
$$
and hence we can conclude
$$
\G_{m+1}(C^n;\{R_{(j,j+1)}, R_{(j+1,j)}\}_{j=0}^{m})=\{ R_{(m+1,m+2)}, R_{(m+2,m+1)}\}.
$$
This proves the assertion~(2).
Similarly, we can prove the assertion~(3).
\end{proof}

Now, we can easily prove the following theorems from the above computation.

\setcounter{section}{1}
\setcounter{thm}{7}
\begin{thm}
The $\np$-classes $\{[C^{n}]_{\np}\}_{n=1}^{\infty}$ are mutually distinct in $\mCf$,
while $\tau(C^n)=1$ for any $n$. In particular, the complement $\mF^f_1 \setminus \pi_{\np}(\mF_1)$ is infinite.
\end{thm}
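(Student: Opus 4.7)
The plan is to exploit the computations of Proposition~\ref{regions of C^n} together with the invariance of the higher invariants $\G_k$ established in Corollary~\ref{G_n invariance}. For any two indices $1 \leq n_1 < n_2$, I will build a sequence of pair-choices $\{R^j_1, R^j_2\}_{j=0}^{n_1 - 1}$ that is simultaneously admissible for $C^{n_1}$ and $C^{n_2}$, then compare $\G_{n_1}$ of the two complexes.

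First, Proposition~\ref{regions of C^n}(1) gives $\G_0(C^{n_1}) = \G_0(C^{n_2}) = \{R_{(0,1)}, R_{(1,0)}\}$, so setting $R^0_1 := R_{(0,1)}$ and $R^0_2 := R_{(1,0)}$ is valid for both complexes. Inductively, for $0 \leq k \leq n_1 - 1$, Proposition~\ref{regions of C^n}(2) applied to both $C^{n_1}$ (for $k \leq n_1 - 1$) and $C^{n_2}$ (for $k \leq n_2 - 1$) allows the common choice $R^k_1 := R_{(k,k+1)}$, $R^k_2 := R_{(k+1,k)}$. At the final level $k = n_1$, Proposition~\ref{regions of C^n}(3) gives
\[
\G_{n_1}\bigl(C^{n_1}; \{R^j_1, R^j_2\}_{j=0}^{n_1 - 1}\bigr) = \{R_{(n_1, n_1)}\},
\]
while Proposition~\ref{regions of C^n}(2) (with $k = n_1 \leq n_2 - 1$) gives
\[
\G_{n_1}\bigl(C^{n_2}; \{R^j_1, R^j_2\}_{j=0}^{n_1 - 1}\bigr) = \{R_{(n_1, n_1 + 1)}, R_{(n_1 + 1, n_1)}\},
\]
which is a different subset of $\CR(\z^2)$. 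If $[C^{n_1}]_{\np} = [C^{n_2}]_{\np}$ held, Corollary~\ref{G_n invariance} would force these invariants to coincide; hence $[C^{n_1}]_{\np} \neq [C^{n_2}]_{\np}$.

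The claim $\tau(C^n) = 1$ follows from the formula in Proposition~\ref{G_0 and others}. The element $R_{(0,1)} \in \G_0(C^n)$ is contained in $\{i \leq -1\} \cup R_{(0,1)}$, while $R_{(1,0)} \not\subset \{i \leq -1\} \cup R_{(0,m)}$ for any $m \in \z$ (the point $(1,0)$ lies in neither component). So the minimum is attained at $m = 1$.

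Finally, for $\mF^f_1 \setminus \pi_{\np}(\mF_1)$, I combine this with Theorem~\ref{main thm}. Every genus one knot has $\np$-class in $\{0, \pm [T_{2,3}]_{\np}\}$, so $\pi_{\np}(\mF_1) \subset \z \cdot [T_{2,3}]_{\np}$. Since $\tau$ is a homomorphism (Proposition~\ref{tau hom}) with $\tau([T_{2,3}]_{\np}) = 1$, the only element of $\z \cdot [T_{2,3}]_{\np}$ with $\tau = 1$ is $[T_{2,3}]_{\np} = [C^1]_{\np}$. Each $[C^n]_{\np}$ lies in $\mF^f_1$ since $g(C^n) = 1$; by the distinctness shown above, the infinitely many classes $\{[C^n]_{\np}\}_{n \geq 2}$ all lie in $\mF^f_1 \setminus \pi_{\np}(\mF_1)$. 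The main obstacle is the bookkeeping to ensure that the chosen sequence of pairs is admissible simultaneously for both complexes; once this is set up correctly, the argument is a direct appeal to the invariance statement.
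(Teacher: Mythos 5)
Your proof is correct and follows essentially the same route as the paper: the paper's own proof states that the distinctness "directly follows" from Proposition~\ref{regions of C^n} via the invariance of the $\G_n$ (Corollary~\ref{G_n invariance}), and handles the final assertion through $\tau(k[T_{2,3}]_{\np})=k$ combined with $[C^n]_{\np}\neq[C^1]_{\np}$, exactly as you do. Your write-up merely makes explicit the admissibility bookkeeping for the sequence of pairs that the paper leaves implicit.
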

\setcounter{section}{5}
\setcounter{thm}{18}
\begin{proof}
The first half assertion directly follows from Proposition~\ref{regions of C^n}.
Moreover, since the relations $\tau(k[T_{2,3}]_{\np})=k$, $\tau(C^n)=1$  and 
$[C^n]_{\np} \neq [C^1]_{\np} = [T_{2,3}]_{\np}$ hold
for any $k\in \z$ and $n \geq 2$,
we have $[C^n]_{\np} \neq k [T_{2,3}]_{\np}$.
This proves the second half assertion.
\end{proof}

\setcounter{section}{1}
\setcounter{thm}{8}
\begin{cor}
The formal knot complexes $\{C^{n}\}_{n = 2}^{\infty}$ 
cannot be realized by any knot in $S^3$.
\end{cor}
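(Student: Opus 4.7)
The plan is a proof by contradiction combining three ingredients that the paper has already developed: the bound $g(K) \leq g(C^K)$ coming from Theorem~\ref{knot genus}, the invariance of $\tau$ under $\z^2$-filtered homotopy equivalence (Corollary~\ref{tau invariance}), and the main theorem classifying $\np$-classes of genus one knots together with Theorem~\ref{formal genus1} distinguishing the $[C^n]_{\np}$.

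So fix $n \geq 2$ and suppose toward contradiction that there exists a knot $K$ with $C^K \simeq C^n$ as $\z^2$-filtered chain complexes. First I would use Theorem~\ref{knot genus}, which gives
\[
g(K) \;=\; \min\{ g(C) \mid C \in [C^K] \} \;\leq\; g(C^n) \;=\; 1,
\]
so $g(K) \in \{0,1\}$. Next, since $\tau$ is a $\z^2$-filtered homotopy invariant and $\tau(C^n)=1$ by Theorem~\ref{formal genus1}, we have $\tau(K)=\tau(C^K)=\tau(C^n)=1$. This immediately rules out $g(K)=0$, because the unknot has $\tau=0$. Therefore $g(K)=1$ and $\tau(K)=1$.

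With $K$ a genus one knot of $\tau$-invariant $1$, Theorem~\ref{main thm} forces
\[
[K]_{\np} \;=\; [T_{2,3}]_{\np} \;=\; [C^1]_{\np},
\]
and hence $[C^n]_{\np}=[C^K]_{\np}=[C^1]_{\np}$ in $\mCf$. But Theorem~\ref{formal genus1} asserts that the classes $\{[C^k]_{\np}\}_{k=1}^{\infty}$ are mutually distinct, so for $n\geq 2$ we have $[C^n]_{\np}\neq [C^1]_{\np}$, a contradiction. Since all the supporting results are already in hand, there is no genuine obstacle; the only thing to verify carefully is that the hypothesis $C^K\simeq C^n$ really does give us both $g(K)\leq 1$ and $\tau(K)=1$, which is exactly what the invariance statements above provide.
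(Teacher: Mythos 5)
Your proof is correct and follows essentially the same route as the paper: use Theorem~\ref{knot genus} to get $g(K)\leq 1$, the invariance of $\tau$ to get $\tau(K)=1$ (hence $g(K)=1$), then apply Theorem~\ref{main thm} to force $[C^n]_{\np}=[T_{2,3}]_{\np}=[C^1]_{\np}$, contradicting Theorem~\ref{formal genus1}. The only cosmetic difference is that the paper excludes $g(K)=0$ via the inequality $g(K)\geq\tau(K)$ rather than by citing the unknot directly.
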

\setcounter{section}{5}
\setcounter{thm}{18}

\begin{proof}
If there exists a knot $K$ with $[C^K]=[C^n]$ for some $n \geq 2$, 
then it follows from Proposition~\ref{regions of C^n} and Theorem~\ref{knot genus}
that $\tau(K)=\tau(C^n)=1$ and $g(K)=1$. 
(Note that $1 = g(C^n) \geq \min \{g(C) \mid C \in  [C^n]=[C^K] \} =g(K) \geq \tau(K) =1$.)
Therefore, by Theorem~\ref{main thm}, we have 
$$
[C^n]_{\np} = [K]_{\np} = [T_{2,3}]_{\np}= [C^1]_{\np},
$$
which contradicts to Theorem~\ref{formal genus1}.
\end{proof}

\bibliographystyle{hplain}
\bibliography{tex_genus1}

\end{document}